\documentclass[]{report}%

\usepackage{setspace}
\usepackage{mathtools}
\usepackage{amsmath}
\usepackage{amsthm}
\usepackage{amssymb}
\usepackage{amsbsy}
\usepackage{amsfonts}
\usepackage{amstext}
\usepackage{amscd}
\usepackage{nccmath}
\usepackage{bbm}
\usepackage{graphicx}
\usepackage{color}
\usepackage{enumerate}
\usepackage[utf8]{inputenc}
\usepackage{epsfig}%
\usepackage{hyperref}
\usepackage{float}

\usepackage{faktor}
\usepackage{quiver}
\usepackage{appendix}

\usepackage{tocloft}
\allowdisplaybreaks

\newtheorem{theorem}{Theorem}[section]
\newtheorem{proposition}[theorem]{Proposition}
\newtheorem{corollary}[theorem]{Corollary}
\newtheorem{lemma}[theorem]{Lemma}

\theoremstyle{definition}
\newtheorem{definition}[theorem]{Definition}

\newtheorem{example}[theorem]{Example}
\newtheorem{remark}[theorem]{Remark}

\newtheorem{conjecture}[theorem]{Conjecture}

\renewcommand{\Re}{\operatorname{Re}} 
\renewcommand{\Im}{\operatorname{Im}}

\newcommand{\dist}{\operatorname{dist}}
\newcommand{\norm}[1]{\lVert #1 \rVert}

\DeclareMathOperator{\diam}{diam}
\DeclareMathOperator{\interior}{int}

\begin{document} 

\begin{titlepage}
\begin{center}
\large{
On the Intersections of Homogeneous Self-similar\\ Sets with their Translates in $\mathbb{R}^{n}$ and \\ a Formulation of Multiplicative Invariance in $\mathbb{Z}^{n}$}
\end{center}
\vspace{1cm}
\begin{center}
by
\end{center}
\vspace{1cm}
\begin{center}
Neil Austin MacVicar
\end{center}
\vspace{1cm}
\begin{center}
A thesis submitted to the \\
Department of Mathematics and Statistics \\
in conformity with the requirements for \\
the degree of Doctor of Philosophy
\end{center}
\vspace{1cm}
\begin{center}
Queen's University\\
Kingston, Ontario, Canada \\
July 2025
\end{center}
\vspace{1cm}
\begin{center}
Copyright \textcopyright\space Neil Austin MacVicar, 2025
\end{center}
\end{titlepage}

\addcontentsline{toc}{chapter}{Abstract}
\chapter*{Abstract}

This thesis generalizes the study of $C\cap(C+\alpha)$ where $C$ is the middle third Cantor set to self-affine sets in $\mathbb{R}^{n}$. We present sufficient and necessary conditions for when the translation $\alpha$ produces a self-affine intersection for a particular class of self-affine sets. In the case where the attractor is self-similar, we improve results concerning the function from $\alpha$ to the dimension of the intersection. This lends itself to a case study of the complex number system $(-n+i, \{0, 1, \ldots, n^{2}\})$, when $n$ is an integer greater than or equal to $2$. Lastly, we present a definition of multiplicative invariance for subsets of $\mathbb{Z}^{n}$ and establish a connection, known in the one-dimensional case, between them and invariants sets of the $n$-dimensional torus. 

\cleardoublepage
\addcontentsline{toc}{chapter}{Acknowledgements}
\chapter*{Acknowledgements}

First, I thank my committee for their time and energy. I give special thanks to my advisors Professor Jamie Mingo and Professor Francesco Cellarosi for their expertise, wisdom, patience, and encouragement, without which I would not have gotten very far. I also want to thank Professor J\"{o}rg Thuswaldner at the University of Leoben for being generous with his time, both over email and in person, when I had questions about finding neighbours of integral digit tiles. I thank my parents and sister for their support despite not knowing what I spend my time thinking about or why anyone would willingly pursue a project like this. I thank my peers who have been the source of many great conversations and experiences, both mathematical and otherwise: Nic, Julia, Luke, Sonja, Sasha, Quinn, Skye, Lauryn, and Chris. In particular, I must acknowledge the special support that I received from Matt, Becca, and Tariq. As for my friends outside of the department, I'd be bankrupt if Joy and Melissa made me payback the food and gasoline they spent on me during my time in Kingston. Similar things can be said of the inhabitants and frequent visitors of ``the dog house" on Division street: Dakota, Samson, Scott, Kyle, Johanna, and Mairead. They all did their best to keep me sane through the writing process and this document would never have seen completion without them. 

\begin{singlespace}
\tableofcontents
\end{singlespace}
\newpage
\begin{singlespace}
{%
\let\oldnumberline\numberline%
\renewcommand{\numberline}{\figurename~\oldnumberline}%
\listoffigures%
}
\addcontentsline{toc}{chapter}{List of Figures}
\end{singlespace}

\cleardoublepage\pagenumbering{arabic} 


\chapter{Introduction}\label{chp:Intro}

If geometry is the study of shapes, then fractal geometry might be called the study of shapes that exhibit complexity at all scalings. Consider your favourite polygon. If we zoom in on a point on the boundary of the polygon, we eventually lose sight of the polygon. A sufficiently small neighbourhood of a point on an edge will only capture a line segment. If the point is a vertex, we may reproduce the polygon if we know it is regular, but otherwise all we can see is a vertex. 

These are not the kind of shapes fractal geometry is concerned with. Fractal shapes, or simply fractals, are shapes that are ``rough" in a way that their complexity is not hidden after zooming in on a point. In some cases, we even maintain sight of the original fractal. Consider the subset of the plane in Figure~\ref{fig:SierpinskiTriangle}. 

\begin{figure}
\centering
\includegraphics[scale=0.45]{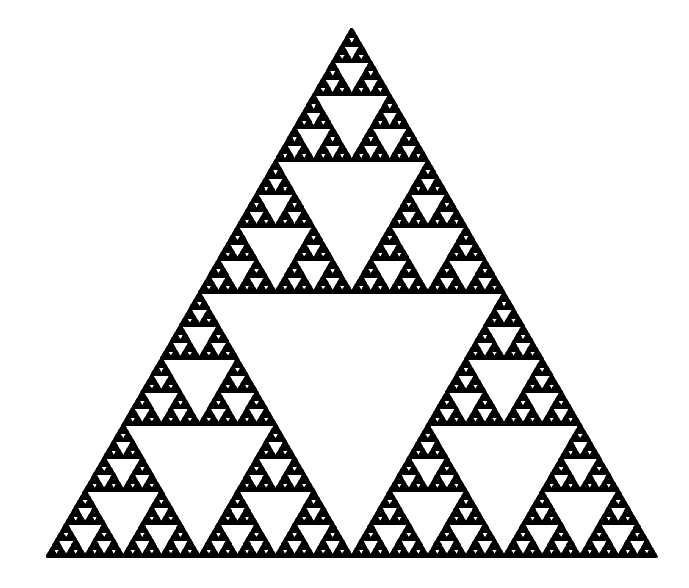}
\caption{The Sierpinski Triangle}
\label{fig:SierpinskiTriangle}
\end{figure}

Figure~\ref{fig:SierpinskiTriangle} is the Sierpinski triangle. Its construction begins with an equilateral triangle. The first step is the removal of an equilateral triangle that is a quarter of the area of the original triangle in such a way that three equilateral triangles, of size equal to what was removed, remain. At each stage, the procedure is performed on all of the smallest triangles that remain. If $a$ is the area of the original triangle, then the area remaining after $k\geq 1$ iterations is $a - \sum\limits_{j=1}^{k}3^{j-1}/4^{j}a = a(3/4)^{k}$. This tends to zero as $k$ tends to infinity. The area of the Sierpinski triangle must be zero since it is a subset of the shape at the $k$th-stage. On the other hand, we can keep track of the total length of the edges of the shape at the $k$th-stage. If $s$ is the length of a side of the original triangle, this is the quantity $3s + \sum\limits_{j=1}^{\infty}(3/2)^{j}s = 3s + 3s((3/2)^{k} - 1)$. This becomes arbitrarily large as $k$ tends to infinity. In this sense, the Sierpinski triangle has infinite length. 

These features make it seem that the Sierpinski triangle is between dimension one and dimension two. The word dimension here refers to the imprecise colloquial usage of calling shapes one dimensional if they only exhibit length, two dimensional if they boast length and width, and three dimensional if they have length, width, and depth. This concept of dimension is capturing the kind of measure that produces meaningful information that can be used to compare it with other like objects. That is to say, a line segment is not two dimensional because it has zero area and is best compared with other line segments. We do not think of a line of length 5 units as larger than a square of area 1 units-squared. In fact, we tend to think of the square as larger because it has area.

Fractal dimensions are a classification of size that assigns shapes a value on the continuum of the real line by extending the classification of dimension by volume-measure. Hausdorff dimension, specifically, accomplishes this by using a continuum of volume-measures as opposed to the common discrete spectrum of length, area, and three-dimensional volume. It is a distance-based concept that can be defined on any metric space. 

We provide a precise description of Hausdorff dimension in Section~\ref{sec:dimTheory} (see the first three definitions). For now, understand that the Hausdorff dimension is the critical point $r$ at the which the volume-measures, indexed by the nonnegative real numbers, assign a shape (a set of points) the value of positive infinity at all indices less than $r$ and the value zero at all indices greater than $r$. This is an extension of the idea that the index $3$ is larger than the dimension of a square because the square has zero $3$-dimensional volume and the index $1$ is too small because a square can be thought of as having infinite length. The latter can be interpreted as a consequence of the interior of the square containing uncountably many disjoint line segments. 

In summary, Hausdorff dimension is the parameter at which our measuring tool is just right. Hausdorff dimension reassuringly assigns non-fractals such as line segments, squares, and cubes dimension one, two, and three respectively, but it can be difficult to determine its value when assessing a ``true fractal" whose Hausdorff dimension is a non-integer value. The Hausdorff dimension of the Sierpinski Triangle is about $1.58$. Somewhere between $1$ and $2$. The $1.58$ish-dimensional volume of the Sierpinski triangle, when the side length is one, is somewhere between $0.67$ and $0.82$, but the exact value is unknown \cite{J07}. 

The Hausdorff dimension of the Sierpinski triangle is computable because it is among a collection of sets that are rough enough to be considered fractals, but not $\emph{too}$ rough. Consider a tree in a forest. We may verbally distinguish between the trunk and its branches, but we could imagine all of the branches to be their own trunks from which other trunks spring. From this perspective, a tree is made up of scaled-down versions of itself. This concept is called self-similarity. 

\begin{definition}\label{def:streamSelfSim} 
A nonempty compact set $S\subset\mathbb{R}^{n}$ is \textit{self-similar} if there exists a finite collection of functions $\{f_{i}\}_{i=1}^{N}$, $f_{i}:\mathbb{R}^{n}\rightarrow\mathbb{R}^{n}$, such that
\begin{itemize}
\item[(i)] $\bigcup\limits_{i=1}^{N}f_{i}(S) = S$,
\item[(ii)] for each $i=1, 2, \ldots, N$, there exists $c_{i}\in[0, 1)$ such that 
$\norm{f_{i}(x) - f_{i}(y)} = c_{i}\norm{x-y}$ for all $x, y\in\mathbb{R}^{n}$. 
\end{itemize}
\end{definition}

\begin{remark}
This definition is formulated with respect to Euclidean space, where the norm above is the standard norm. In Chapter~\ref{chp:IFS}, we present sets of this kind in greater generality and discuss how they are generated by collections of functions with the properties in Definition~\ref{def:streamSelfSim}. 
\end{remark}

Self-similarity is a desirable fractal property. There are non-integer dimensions other than Hausdorff dimension, such as packing dimension and box-counting dimension, whose values do not always agree with the Hausdorff dimension. They do for self-similar sets. Furthermore, under certain conditions (Theorem~\ref{thm:SepSimDim}) the dimension can be expressed algebraically in terms of the $c_{i}$. In many ways, self-similar sets are foundational to fractal geometry. It is worth asking how much of that structure is maintained after transformations and set-wise operations. 

Let us consider the following construction often given in a first course in real analysis. We start by taking the closed unit interval, $[0, 1]$, and remove the middle third to produce the union of $[0, 1/3]$ and $[2/3, 1]$. Let $C_{1}$ denote that union. We then remove the middle thirds of the components of $C_{1}$ to obtain the union of $[0, 1/9]$, $[2/9, 1/3]$, $[2/3, 7/9]$, and $[8/9, 1]$. We label this union $C_{2}$. Iterating this procedure produces a sequence of unions of intervals $C_{n}$ for which $C_{n+1} \subset C_{n}$. Let $C$ denote the countable intersection of the $C_{n}$. The set $C$ is called the middle third Cantor set. It is not empty. In fact, it has uncountably many points, yet also has zero ``length" (precisely, $1$-dimensional Lebesgue measure zero). The set $C$ is a self-similar set and has Hausdorff dimension $\log{2}/\log{3}$.  

In 1995, Davis and Hu and examined the structure of the intersections of the middle third Cantor set with translations of itself \cite{DH95}. The paper treats questions about the cardinality and Hausdorff dimension of the intersections. Soon after Davis and Hu's paper was published, Moreira published a paper about intersections of Cantor sets that appear in the study of homoclinic bifurcations of dynamical systems \cite{M96}, and eventually collaborated with Yoccoz on the subject \cite{MY01}. This seemingly brought more attention to the intersection of Cantor sets because over the next decade a series of papers about the set $T \cap (T+\alpha)$ where $T$ is a self-similar subset of the unit interval were published. This line of papers has a specific theme: Given a self-similar set $T\subset  [0, 1]$, for which translations $\alpha$ is $T \cap (T+\alpha)$ self-similar? We chronicle some of the key developments.  

Sufficient and necessary conditions on $\alpha$ were given by Deng, He, and Wen in 2008 when $T$ is the midde third Cantor set \cite{DHW08}. That same year, Zou, Lu, and Li generalized the statement from ``middle third" to middle $(1-2\beta)$ where $\beta\in(1/3, 1/2)$ \cite{ZLL08}. In 2011, Li, Yao, and Zhang generalized to multiple gaps which are uniformly spaced from each other \cite{ZLY11}. In 2014, both Pedersen and Philips \cite{PP14} and Kong \cite{K14} generalized the treatment using other partitions of the interval. In \cite{K14}, Kong treated what he called generalized Cantor sets. These are sets containing expansions of the form $\sum\limits_{j=1}^{\infty}d_{j}\beta^{j}$, where $\beta\in(0, 1)$ and $(d_{j})_{j=1}^{\infty}$ is a sequence of integers. We demonstrate the connection between these sets and $T \cap (T+\alpha)$ in Chapter~\ref{chp:highDimSEP}. Other variations on this theme from that time are treated in \cite{KLD11} and \cite{ZLY11} involving $\beta$-expansions and nonhomogeneous Cantor sets (gaps of different sizes) respectively. 

For now, we want to draw attention to the fact that all of these papers concern self-similar sets on the real line. This raises two questions. Firstly, are there higher dimensional versions of their findings? Secondly, can we modify existing strategies to prove them?

We take inspiration from sets that are self-similar with respect to the collection of functions $f_{d}(x) = (-n+i)^{-1}(x+d)$, $d\in\{0, 1, \ldots, n^{2}\}$, for some fixed positive integer $n$. Division by the Gaussian integer $-n+i$ where $n$ is a positive integer is a linear transformation from $\mathbb{C}$ to itself and thus can be encoded by the inverse of a two by two matrix. This matrix, call it $A$, has eigenvalues $-n\pm i$, which have modulus greater than one. The matrix $A$ also has integer entries and has an inverse satisfying $\norm{A^{-1}x} = \norm{x}/\sqrt{n^{2}+1}$. 

In this thesis, we establish sufficient and necessary conditions for the self-similarity of the intersections $T \cap (T+\alpha)$ where the self-similar set $T$ is defined by functions of the form $A^{-1}(x+d)$, $d \in D\subset\mathbb{R}^{n}$, where $A$ is an $n$ by $n$ matrix with real entries and eigenvalues all lying outside the unit disc. This is done with conditions on the set $D$. We also extend a result of Davis and Hu in \cite{DH95}. 

Let $C(\alpha)$ denote the intersection of $C$ and its translate $C+\alpha$. It is shown in \cite{DH95} that the level sets of the function that maps a translation $\alpha$ to the Hausdorff dimension of $C(\alpha)$ are dense when the function is restricted to translations that produce nonempty intersections. In 2021, Pedersen and Shaw showed the same property holds for box-counting dimension when the self-similar sets are defined by functions of the form $f_{d}(x) = (-n+i)^{-1}(x+d)$ on the complex plane where $d\in D\subset\{0, 1, \ldots, \left\lfloor n^{2}/2 \right\rfloor\}$ satisfy $|d-d^{'}|\geq n+1$ for all distinct pairs $d, d^{'}\in D$ \cite{PS21}. In this thesis, we improve upon the latter result by incorporating information about when sequences of the form $(f_{d_{1}}\circ\ldots\circ f_{d_{k}}(0))_{k=1}^{\infty}$ have the same limit, where $f_{d}(x) = (-n+i)^{-1}(x+d)$. This allows us to strengthen the $n$-dependent condition $|d-d^{'}| > n$ to the $n$-independent condition $|d-d^{'}| > 3$ for $n\geq 5$ and extend the set $D$ to a subset of $\{0, 1, \ldots, n^{2}\}$. 

The majority of this thesis, extending from Chapter~\ref{chp:IFS} through Chapter~\ref{chp:neighbours}, concerns itself with these ideas. In Chapter~\ref{chp:discMultInv} and part of Chapter~\ref{chp:mirai}, we pivot to the study of multiplicatively invariant sets. These sets also have equal Hausdorff and box-counting dimension and overlap with the class of self-similar sets.  

There is an established theme of dynamically invariant sets having equal Hausdorff and box-counting dimension (see any of \cite{B96}, \cite{GP97}, \cite{P97}). The origin of this theme can be found in a pair of papers at the intersection of dynamical systems and fractal geometry written by Furstenberg (\cite{F67}, \cite{F70}). Therein, Furstenberg proved results and made conjectures about the fractal properties of multiplicatively invariant subsets of $\mathbb{T} := \faktor{\mathbb{R}}{\mathbb{Z}}$. Multiplicatively invariant subsets of $\mathbb{T}$ are the closed subsets that are invariant under the function $f_{r}(x) = rx$ where $r$ is a positive integer greater than or equal to $2$. For a specific value $r$, this is called $\times r$-invariance.

Furstenberg showed that the Hausdorff dimension and box-counting dimension of a $\times r$-invariant set are equal, this dimension is equal to the topological entropy of an underlying subset, and, perhaps most strikingly, that the only infinite subset of $\mathbb{T}$ that is simultaneously $\times 2$ and $\times 3$ invariant is $\mathbb{T}$ itself. This prompted Furtstenberg to conjecture that the only atomless probability measure on $\mathbb{T}$ that is invariant under both multiplication by $2$ and $3$ is the Lesbesgue measure. As of this writing, this is an open problem. 

In the time between then and now, there has been considerable development of the theory of multiplicatively invariant subsets of $\mathbb{T}$. Furstenberg's sumset conjecture, which offers sufficient conditions under which the Hausdorff and box-counting dimensions of sumsets of multiplicative invariant subsets split into the sum of the dimensions of those subsets, was proven by Hochman and Shmerkin in \cite{HS12}. An improvement was made independently by Shmerkin in \cite{S19} and Wu in \cite{W19} using different methods.

A discrete version was published in 2024 for subsets of the nonnegative integers \cite{GMR24} by Glasscock, Moreira, and Richter. This necessarily requires a reasonable definition of multiplicative invariance in the nonnegative integers. In Chapter~\ref{chp:discMultInv} we provide a generalization for subsets of $\mathbb{Z}^{n}$ and demonstrate the initial step of connecting the discrete sets to the comparatively well-studied subsets of $\mathbb{T}^{n}$ \cite{B83}. 

\section{Organization of the Thesis} 

\begin{itemize}
\item[(i)] Chapter~\ref{chp:litReview} reviews the relevant literature and explains the decision-making behind the work that inspired this thesis.
\item[(ii)] Chapter~\ref{chp:IFS} provides precise definitions of Hausdorff, box-counting dimension, attractors of iterated functions systems and their types, and the tools we use to study sets of the form $T\cap(T+\alpha)$
\item[(iii)]  Chapter~\ref{chp:highDimSEP} generalizes the ``self-similarity if and only if SEP" theme for intersections $T \cap (T + \alpha)$ where $T$ is a self-affine defined by functions of the form $f(x) = A^{-1}(x+d)$ where $A$ is an expanding matrix. 
\item[(iv)] Chapter~\ref{chp:limFormula} generalizes the functions that map translations $\alpha$ to the Hausdorff dimension and box-counting dimension of $T\cap(T+\alpha)$ to the case when the inverse of the defining matrix $A$ is a similarity. 
\item[(v)] Chapter~\ref{chp:neighbours} presents a framework for understanding when representations of $\alpha$, relative to the matrix $A$, are unique. The two dimensional set of expansions $\sum\limits_{j=1}^{\infty}(-n+i)^{-1}d_{j}$ where $n$ is a positive integer and $d_{j} \in \{0, 1, \ldots, n^{2}\}$ is used as a case study.  
\item[(vi)]  Chapter~\ref{chp:discMultInv} discusses a definition for multiplicative invariance in $\mathbb{Z}^{n}$. 
\item[(vii)] Chapter~\ref{chp:mirai} presents ideas for future work regarding the previous chapters. 
\end{itemize}

\chapter{Literature Review}\label{chp:litReview}

Many of the relevant papers that inspired this thesis were mentioned in the previous section. Although others are mentioned here, we focus on discussing the finer points of the literature that underpin the contents of this thesis. We stress the themes of the literature and not the precise definitions therein. The modern standard of rigorous mathematics is employed from Chapter~\ref{chp:IFS} through Chapter~\ref{chp:discMultInv}. 

The middle third Cantor set is the unique nonempty compact set satisfying $f_{0}(C) \cup f_{2}(C) = C$ where $f_{0} = x/3$ and $f_{2}(x) = (x+2)/3$. It can also be expressed as the collection of elements of the unit interval that have a ternary expansion that only use the digits 0 and 2. In set notation this is
\begin{equation}
\bigg\{\sum_{j=1}^{\infty}d_{j}3^{-j}: d_{j}\in\{0, 2\}\bigg\}.
\end{equation}

If a real number $\alpha$ has the property that $C\cap(C+\alpha)$ is nonempty, then it must be that $\alpha$ is an element of the $C-C := \{x-y: x, y\in C\}$. This means $\alpha$ has an expansion of the form $\sum\limits_{j=1}^{\infty}\alpha_{j}3^{-j}$ where $\alpha_{j}$ is either $0, 2$, or $-2$. The necessary and sufficient condition given in \cite{DHW08} on the translations $\alpha$ for which $C\cap(C+\alpha)$ is a property of the sequence $(2-|\alpha_{j}|)_{j=1}^{\infty}$. 

\begin{definition} 
A sequence $(a_{j})_{j=1}^{\infty}$ of integers is \textit{strongly eventually periodic} (SEP) if there exists a finite sequence $(b_{\ell})_{\ell = 1}^{p}$ and a nonnegative sequence $(c_{\ell})_{\ell = 1}^{p}$, where $p$ is a positive integer, such that
\begin{equation}
(a_{j})_{j\geq1} = (b_{\ell})\overline{(b_{\ell} + c_{\ell})_{\ell = 1}^{p}}, 
\end{equation}
where $\overline{(d_{\ell})_{\ell = 1}^{p}}$ denotes the infinite repetition of the finite sequence $(d_{\ell})_{\ell = 1}^{p}$.   
\end{definition}

We can now state the main theorem in \cite{DHW08}. 

\begin{theorem}
Suppose $C$ is the middle third Cantor set and $\alpha\in(0, 1)$ has the expansion $\sum\limits_{j=1}^{\infty}\alpha_{j}3^{-j}$ where $\alpha_{j}\in\{0, \pm2\}$ for each $j$. Suppose further that $\alpha$ is not an element of $E = \bigcup\limits_{k=1}^{\infty}\{\sum\limits_{j=1}^{k}\alpha_{j}3^{-j} \pm 3^{-(k+1)}:\alpha_{j}\in\{0, \pm 2\}\}$.
The set $C\cap(C+\alpha)$ is self-similar if and only if the sequence of integers $(2 - |\alpha_{j}|)_{j=1}^{\infty}$ is SEP. 
\end{theorem}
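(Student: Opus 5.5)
The plan has three parts. First identify $C\cap(C+\alpha)$ with a digit set. Then prove sufficiency by constructing an explicit iterated function system. Then prove necessity by extracting periodicity from an assumed similarity structure.

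\textbf{Structure of the intersection.} Write $x\in C$ as $\sum x_j3^{-j}$ with $x_j\in\{0,2\}$. Then $x\in C+\alpha$ means $x-\alpha\in C$. Because $\alpha\notin E$, the expansion of $\alpha$ with digits in $\{0,\pm2\}$ is unique and no carries occur. So $x\in C\cap(C+\alpha)$ if and only if $x_j-\alpha_j\in\{0,2\}$ for every $j$. This forces $x_j=2$ when $\alpha_j=2$ and $x_j=0$ when $\alpha_j=-2$, and leaves $x_j\in\{0,2\}$ free when $\alpha_j=0$. With $\varepsilon_j:=2-|\alpha_j|\in\{0,2\}$ and $a_j:=\max(\alpha_j,0)$ this gives
\[
C\cap(C+\alpha)=\sum_j a_j3^{-j}+\Big\{\sum_j \delta_j3^{-j}:\delta_j\in\{0,\varepsilon_j\}\Big\}=:\beta+D(\varepsilon).
\]
Self-similarity is invariant under translation, so the theorem reduces to showing that $D(\varepsilon)$ is self-similar if and only if $(\varepsilon_j)$ is SEP.

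\textbf{Sufficiency.} Suppose $\varepsilon=(b_\ell)\overline{(b_\ell+c_\ell)}$ with period $p$. Since $c_\ell\ge0$ and all entries lie in $\{0,2\}$, we have $b_\ell\le b_\ell+c_\ell$, so the first block's support is contained in the periodic block's support. Let $F=\{\sum_{\ell\le p}\delta_\ell3^{-\ell}:\delta_\ell\in\{0,b_\ell+c_\ell\}\}$. Then the tail set $T:=3^{-p}D(\sigma^p\varepsilon)$ satisfies $T=\bigcup_{t\in F}3^{-p}(T+t)$, so $T$ is a self-similar attractor.

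Now $D(\varepsilon)=G+T$, where $G$ is the finite set of prefix sums using $b_\ell$. Because $b_\ell\le b_\ell+c_\ell$, the set $G$ is contained in the digit set of one period. So for each $g\in G$ we get $3^{-p}(D(\varepsilon))+g\subseteq D(\varepsilon)$. Combining these maps $x\mapsto 3^{-p}x+g$ with the maps $x\mapsto 3^{-p}x+t$ and translates, I would show $D(\varepsilon)=\bigcup_{g\in G,\,t'\in F'} (3^{-p}x+\dots)$ for a suitable finite family. The cleanest concrete choice is the family $x\mapsto 3^{-p}x+h$ with $h$ ranging over $G+3^{-p}F\cdot$ appropriately shifted. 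I would verify the covering identity $D(\varepsilon)=\bigcup_h(3^{-p}D(\varepsilon)+h)$ digit by digit.

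\textbf{Necessity, the main obstacle.} Suppose $D:=D(\varepsilon)$ is self-similar with maps $f_i(x)=r_ix+t_i$.
1. Each $f_i(D)\subseteq D$ must respect the gap structure, since $D$ is a Cantor set with gaps at ternary scales. A rescaled copy of $D$ sitting inside $D$ must therefore have $r_i=\pm3^{-m_i}$. I would prove this by a gap-ratio argument: the sequence of relative gap lengths of $D$ is determined by $\varepsilon$, and an embedded similar copy must match it.
2. Taking one map with $m=m_i>0$, the inclusion forces $\varepsilon_{j+m}\ge\varepsilon_j$, or its reflected analogue, for all large $j$, with a fixed offset. Iterating gives a monotone nondecreasing sequence of $\{0,2\}$-words along the shift by $m$. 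Such a sequence stabilizes, which yields eventual periodicity with prefix dominated by the period, that is, SEP.

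Steps 1 and 2 are the hard part: forcing the contraction ratios to be powers of $3$ and handling orientation-reversing maps. There I would first treat finite sequences with infinitely many $2$'s and compare convex-hull lengths and first gaps.
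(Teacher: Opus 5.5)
Your reduction of $C\cap(C+\alpha)$ to $\beta+D(\varepsilon)$ and your sufficiency direction match the paper (Lemma~\ref{lem:seqExpress}, Lemma~\ref{lem:sepGen}), with one correction. The translation parts must range over $G+3^{-p}V$, where $V=\{\sum_{\ell\le p}\nu_\ell 3^{-\ell}:\nu_\ell\in\{0,c_\ell\}\}$. At most one of $b_\ell,c_\ell$ is nonzero, so $F=G+V$, and hence $D(\varepsilon)=G+T=G+3^{-p}F+3^{-p}T=G+3^{-p}V+3^{-p}D(\varepsilon)$. If you use $G+3^{-p}F$ instead, you create the digit $4$ wherever $b_\ell=2$ and leave $D(\varepsilon)$.

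The genuine gap is Step~1 of necessity. The claim that every map of a generating IFS has ratio $\pm3^{-m}$ is a hard rigidity statement, and the gap-matching argument you sketch does not prove it. The relation $f(D)\subseteq D$ is only a containment, so gaps of $f(D)$ need not be gaps of $D$. For example, $\varepsilon=(2,0,\overline{2})$ is SEP and $f(x)=x/9$ satisfies $f(D)=\{0,2/27\}+C/81\subseteq D$. Yet $2/81\in D$ lies inside the gap $(1/81,6/81)$ of $f(D)$.

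Step~2 as written is also too weak. The inequality $\varepsilon_j\le\varepsilon_{j+m}$ for large $j$ only gives eventual periodicity, whereas SEP needs the prefix dominated by the period. For instance, $(2,0,\overline{0,2})$ satisfies $\varepsilon_j\le\varepsilon_{j+2}$ for all $j\ge2$ but is not SEP. If you did know a ratio is $3^{-m}$ (after using $D=L-D$ to make it positive), a digitwise comparison would fix this. Compare $f(0)=t\in D$ with $f(2\cdot3^{-j})=t+2\cdot3^{-(j+m)}\in D$; this gives the inequality for every $j\ge1$.

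The missing idea is to exploit the extreme point $0=\min D$, so the ratio never has to be identified in advance. This is how the paper describes the Deng--He--Wen argument, and it carries it out in Theorem~\ref{thm:selfSimSEPSp}.
\begin{itemize}
\item Some $f_i$ has $0\in f_i(D)\subseteq D\subseteq[0,L]$ with $L=\max D$, so $0=\min f_i(D)$.
\item If $r_i>0$, then $f_i(x)=r_ix$. If $r_i<0$, then because $D=L-D$, the map $x\mapsto f_i(L-x)=|r_i|x$ also sends $D$ into $D$. Either way $cD\subseteq D$ for some $c\in(0,1)$.
\item If $\varepsilon\equiv0$ there is nothing to prove. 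Otherwise let $q$ be the first index with $\varepsilon_q=2$.
\item From $2c3^{-q}\in D$ you get $c=\sum_{j\ge q}c_j3^{q-j}$ with $c_j\in\{0,1\}$. Since $c<1$, $c_q=0$; since $c>0$, some $c_t=1$ with $t>q$.
\item For every $s$ with $\varepsilon_s=2$, the point $2c3^{-s}=\sum_{j>q}2c_j3^{-(j+s-q)}$ lies in $D$. Uniqueness of $\{0,2\}$-ternary expansions then forces $\varepsilon_{s+t-q}=2$.
\item Hence $\varepsilon_s\le\varepsilon_{s+p}$ for every $s\ge1$ with $p=t-q$, and Lemma~\ref{lem:monoSEP} gives SEP.
\end{itemize}
No information about the contraction ratios of the IFS is needed.
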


The set $E$ is the collection of elements in $C-C$ that have more than one base-$3$ expansion using digits $0, \pm 2$. In \cite{DHW08}, they argue that when $\alpha$ is an element of $E$, then $C\cap(C+\alpha)$ is not self-similar and none of the expansions yields SEP sequences. When $\alpha$ is in the complement of $E$, its expansion is unique. This property is crucial because it allows the intersection to be expressed in the form $\{\sum\limits_{j=1}^{\infty}d_{j}3^{-j}: d_{j} \in \{0, 2\} \cap (\{0, 2\} + \alpha_{j})\}$. We show in Chapter~\ref{chp:highDimSEP} that for sets of this form, the direction ``SEP implies self-similarity" follows from a simple argument. 

In this particular case, which we examine in Section~\ref{sec:sepSimSp}, the translation of $C\cap(C+\alpha)$ by its minimum value yields the set $\{\sum\limits_{j=1}^{\infty}d_{j}3^{-j}: d_{j}\in\{0, 2\}, d_{j} \leq 2 - |\alpha_{j}|\}$. This transformation of the set does not limit the generality of their argument because translates of self-similar sets are still self-similar. In addition to revealing a relationship between the intersection and the sequence $(2 - |\alpha_{j}|)_{j=1}^{\infty}$, the translated set contains zero. In particular, zero is its leftmost element. If the set is self-similar then using positivity it can be shown that a function of the form $f(x) = cx$ is an element of the defining IFS. This fact is used in the direction ``self-similarity implies SEP" in their proof and in many of the proofs of the generalizations of this theorem. 

We present results for the case when the self-similar set can be described as the set of expansions $\sum\limits_{j=1}^{\infty}A^{-j}d_{j}$ where $A$ is a matrix with eigenvalues all having modulus greater than one and $d_{j} \in D \subset \mathbb{R}^{n}$ for each $j$. When $n$ is greater than one, a compact subset of $\mathbb{R}^{n}$ does not have a canonically defined leftmost element. If the set of digits contains only two real elements, as it does for the middle third Cantor set, we can specify a ``leftmost" element by taking the vector whose $j$ digit is the smaller of the two in $\{d, e\} \cap (\{d, e\} + \alpha_{j})$. For more general digit sets, this is not an option. Instead we subtract whatever can yield us the existence of a map of the form $g(x) = B^{-1}x$ where $B$ is now a matrix. 

The use of more than two digits and contractions by a ratio other than a third in order to generate the self-similar sets has been studied in one dimension. Li, Yao, and Zhang treated the case when the digits formed an arithmetic progression \cite{LYZ11}. That is, a set of the form $\{t, t+m, t+2m, t+3m, \ldots, t+(k-1)m\}$ where $m$ and $k$ are positive integers and $t$ is constant. Further generalizations to digit sets that were not ``uniform" were given by Pedersen and Philips in \cite{PP14} and Kong in \cite{K14}. Both required a generalization of the SEP condition to sequences of sets. 

\begin{definition} 
A sequence $(A_{j})_{j=1}^{\infty}$ of nonempty finite subsets of $\mathbb{R}^{n}$ is called \textit{strongly eventually periodic} (SEP) if there exist two finite sequences of subsets of $\mathbb{R}^{n}$, $(B_{\ell})_{\ell = 1}^{p}$ and $(C_{\ell})_{\ell = 1}^{p}$, where $p$ is a positive integer, such that 
\begin{equation}
(A_{j})_{j=1}^{\infty} = (B_{\ell})_{\ell=1}^{p}\overline{(B_{\ell} + C_{\ell})_{\ell = 1}^{p}}, 
\end{equation}
where $B + C = \{b + c : b\in B, c\in C\}$ and $\overline{(D_{\ell})_{\ell = 1}^{p}}$ denotes the infinite repetition of the finite sequence of sets $(D_{\ell})_{\ell = 1}^{p}$. 
\end{definition}

There is a connection between the SEP property of sequences of sets and that of sequences of integers. Suppose $(D_{j})_{j=1}^{\infty}$ is a sequence of sets satisfying the SEP condition and that each $D_{j}$ is finite. Then the sequence of integers $(|D_{j}| - 1)_{j=1}^{\infty}$ is SEP. The converse does not hold in general. This is clear when we observe that the cardinality of the sets provides no information about their elements and thus any eventually periodic series of cardinalities could be realized by an aperiodic sequence of sets. Under the added assumption that each set is an arithmetic progression with a common step size for all sets, then the sequence of sets $(D_{j} - \gamma_{j})_{j=1}^{\infty}$ where $\gamma_{j} = \min D_{j}$ is SEP.

The proof of the first statement is the easier of the two. Let $(D_{j})_{j=1}^{\infty} = (A_{\ell})^{p}\overline{(A_{\ell} + B_{\ell})_{\ell = 1}^{p}}$. For $\ell = 1, 2, \ldots, p$, we have $|D_{\ell}| - 1 = |A_{\ell}| - 1$. Then, for those same $\ell$ and $k\geq1$, $|D_{\ell + kp}| = (|A_{\ell}| - 1) + (|A_{\ell} + B_{\ell}| - |A_{\ell}| + 1)$. 

Conversely, suppose $(|D_{j}| - 1) = (a_{\ell})_{\ell = 1}^{p}\overline{(a_{\ell} + b_{\ell})_{\ell = 1}^{p}}$. For all $j$, the $D_{j}$ is of the form $\{\gamma_{j}, \gamma_{j} + d, \gamma_{j} + 2d, \ldots, \gamma_{j} + m_{j}d\}$. When $\ell = 1, 2, \ldots, p$, the value of $m_{\ell}$ is $a_{\ell}$. When $j = \ell + kp$ where $\ell = 1, 2, \ldots, p$ and $k\geq1$, the value of $m_{\ell}$ is $a_{\ell} + b_{\ell}$. It follows that $D_{\ell + kp} - \gamma_{\ell + kp} = \{0, d, \ldots, a_{\ell}d\} + \{0, d, \ldots, b_{\ell}d\}$. The second set in this equation is not uniquely determined. For example, if $A_{\ell} = \{0, 1, 2, 3\}$ and $D_{\ell + kp} = \{0, 1, 2, 3, 4, 5\}$, we could choose $B_{\ell} = \{0, 1, 2\}$ or $\{0, 2\}$.

In both \cite{PP14} and \cite{K14}, the uniqueness of the expansion of the translate with coefficients in $D-D$ is also used to write the intersection $T\cap(T+\alpha)$ as a set of expansions $\sum\limits_{j=1}^{\infty}d_{j}m^{-j}$ where $d\in D\cap(D+\alpha_{j})$. We proceed in the same way. The SEP condition is a component-wise condition. If we cannot express the intersection as expansions where the digits are contained in subsets of $D$ then there is seemingly no foundation for the SEP condition. Although, we do discuss intersections using translates which boast multiple expansions in Section~\ref{sec:nonUniqueReps}.

Our interest in the case when expansions of the form $\sum\limits_{j=1}^{\infty}A^{-j}e_{j}$ where $e_{j}\in(D-D)$ are unique leads us to develop general principles to ensure uniqueness. We conduct an in-depth study of expansions which use $-n+i$ as a base as opposed to decimals or binary. This base came into popularity when computer scientists could simulate fractals and were experimenting with different ways of representing numbers on computers. The case when $n=1$ is featured in the second volume of Knuth's The Art of Computer Programming (\cite{K81}, pg.190). Katai and Szabo demonstrated the following facts about this base in \cite{KS75}. 

\begin{theorem}[I. Katai, J. Szabo, \cite{KS75}, theorem 1]  
Given a Gaussian integer $b$, every Gaussian integer $g$ can be uniquely written as
\begin{equation}\label{eq:integerRepChp2} 
g = \lambda_{0} + \lambda_{1}b + \ldots + \lambda_{k}b^{k}. 
\end{equation}
with $\lambda_{j} \in \{0, 1, 2, \ldots, |b|^{2}-1\}$ if and only if $\Re(b) < 0$ and $\Im(b) = \pm 1$. 
\end{theorem}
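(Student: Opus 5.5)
Write $N = |b|^{2}$ and $D = \{0, 1, \ldots, N-1\}$. The plan is to prove the two directions separately. Throughout, the key observation is that $D$ is a complete residue system for $\mathbb{Z}[i]/b\mathbb{Z}[i]$ exactly when $b\mathbb{Z}[i]\cap\mathbb{Z} = N\mathbb{Z}$. Since $|\mathbb{Z}[i]/b\mathbb{Z}[i]| = N$, this happens if and only if the rational integers reduce injectively modulo $b$. Writing $b = x+iy$, the smallest positive rational integer in $b\mathbb{Z}[i]$ is $N/\gcd(x,y)$. So $D$ is a residue system if and only if $\gcd(x,y)=1$, which holds in particular when $y=\pm1$.

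\textbf{Necessity.} Assume every Gaussian integer has a unique representation with digits in $D$.

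First, uniqueness of the length-one representations forces the digits to be pairwise incongruent modulo $b$. Otherwise, if $d\equiv d' \pmod b$ with $d \neq d'$, then $d-d' = bq$ with $q \neq 0$, and I would aim to show this contradicts uniqueness via representations of $q$. A cleaner route for the same conclusion: the map $g \mapsto (g-\lambda_{0})/b$ must be defined for all $g$, so every residue class must contain a digit. Hence $D$ is a complete residue system, which gives $\gcd(x,y)=1$.

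Second, I need $|y| = 1$ and $x<0$.

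\begin{itemize}
\item The integer $-1$ must be representable, so $-1 = \sum \lambda_{j} b^{j}$ with $\lambda_{j} \geq 0$. If $b$ were real, or had $\Re(b) \geq 0$, I would derive a contradiction by taking real parts or arguing with positivity.
\item For $|y| \geq 2$, I would exhibit a nonzero $g$ with $g = \lambda_{0} + bg$, that is, $g = \lambda_{0}/(1-b)$. This requires $(1-b) \mid \lambda_{0}$ for some $\lambda_{0} \in D$. If such a $\lambda_{0} \neq 0$ exists, then $g$ is a fixed point of the digit map, so its representation would be an infinite expansion, a contradiction.
\item Since $\gcd(1-x,\,y)$ divides $N(1-b) = (1-x)^{2}+y^{2}$, the key is to find a rational integer multiple of $1-b$ lying in $(0, N)$. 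That multiple is $|1-b|^{2}/\gcd(1-x,y)$, and it is less than $N$ except in small cases. I expect this inequality check, together with the sign cases $x \geq 0$, to be the messiest part of necessity.
\end{itemize}

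\textbf{Sufficiency.} Assume $b=-n\pm i$ with $n \geq 1$, so $N = n^{2}+1$.

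Uniqueness follows because $D$ is a complete residue system. Comparing two representations modulo $b$ forces $\lambda_{0}=\lambda_{0}'$; then I divide by $b$ and induct on length, using that $0$ has only the trivial representation.

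For existence, define $T(g) = (g-\lambda(g))/b$, where $\lambda(g)\in D$ is the digit congruent to $g$. It suffices to show every orbit of $T$ reaches $0$.

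\begin{itemize}
\item Since $|T(g)| \leq (|g|+N-1)/\sqrt{N}$, the orbit eventually enters the finite disc $|g| \leq (N-1)/(\sqrt{N}-1)$. It is therefore eventually periodic.
\item The main obstacle is ruling out nonzero cycles. I would try to show directly that any $g$ in this disc reaches $0$.
\item One approach is to study the imaginary and real parts under $T$. Write $g = u+iv$ and $b = -n+i$ (the other sign is handled by conjugation). Then $g-\lambda$ is divisible by $b$ exactly when $\lambda \equiv u+nv \pmod{N}$. Here $T(g) = (g-\lambda)\bar b/N$, whose imaginary part is $(-nv - u + \lambda)/N$.
\item This gives a monotonicity or potential-function argument showing $|\Im T(g)|$ shrinks toward $\{0,1\}$. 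After that, the orbit can be checked to reach $0$ by a short case analysis on real integers: $T(m) = -(m-\lambda)\bar b /N$, and for real $m$ the orbit is explicitly computable.
\item In particular, the digit expansion of $-1$ must be verified directly; it is $-1 = n^{2} + (2n)b + b^{2}$ (valid since $n^{2} \leq N-1$ and $2n \leq N-1$).
\end{itemize}
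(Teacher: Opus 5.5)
The paper only quotes this theorem from Katai and Szabo and gives no proof, so your argument has to stand on its own. The residue-system reduction, the uniqueness argument and the contraction estimate are fine, but there is a genuine gap in each direction.

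\textbf{Necessity.} Your fixed-point device cannot handle $\Re b<0$, $|\Im b|\ge 2$.

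A nonzero fixed point $g=\lambda/(1-b)$ needs a digit $\lambda\in\{1,\dots,N-1\}$ divisible by $1-b$, i.e.\ $|1-b|^2/\gcd(1-x,y)<N$. For $x<0$, however, $|1-b|^2=N+1-2x>N$, so this requires $\gcd(1-x,y)\ge 2$. Hence ``less than $N$ except in small cases'' is false for infinitely many $b$. For example, $b=-1+3i$ has $\gcd(x,y)=1$, yet the least positive rational integer in $(2-3i)\mathbb{Z}[i]$ is $13>N=10$. Similarly, for $b=-2+5i$ you get $34>29$. For $b=-1+3i$ the actual obstruction is the $2$-cycle $1+2i\mapsto 1+i\mapsto 1+2i$, which your argument never sees.

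The missing idea is much simpler: every finite expansion lies in $\mathbb{Z}[b]=\mathbb{Z}+\mathbb{Z}\,iy$, which contains $i$ only if $|y|=1$. Once $y=\pm1$, your fixed-point idea does dispose of $x\ge 1$: take $\lambda=(x-1)^2+1\le N-1$, which gives the nonzero fixed point $(1-x)+iy$. The case $x=0$ gives $N=1$. Drop the ``taking real parts'' suggestion, since for $\Re b>0$ the powers of $b$ can have negative real part, e.g.\ $(1+i)^3=-2+2i$.

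\textbf{Sufficiency.} The step you call the main obstacle, excluding nonzero cycles of your division map $T$, is not carried out, and the sketched route does not close it.

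You have $T(u+iv)=(v+nv')+iv'$ with $v'=-\lfloor (u+nv)/N\rfloor$. So the imaginary parts along an orbit obey the second-order relation $0\le v_{k-1}+2nv_k+Nv_{k+1}\le N-1$, where the middle quantity is the digit of $g_k$. Consequently $|\Im|$ is not monotone (e.g.\ $-(N+1)\mapsto 2n+2i$). Even after confining $\Im$ to $\{0,\pm1\}$, you are left with the points $u\pm i$, which is exactly where a cycle must be excluded uniformly in $n$. Real integers do not help, because $T(m)=-\lfloor m/N\rfloor(n+i)$ leaves the real axis at once. Also, $-1=n^2+2nb+b^2$ is not admissible for $n=1$, since $2n>N-1$; there $-1=1+b^2+b^3+b^4$.

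One way to finish is an extremal argument on a nonzero cycle. Such a cycle has some $v_k\neq 0$, because $u_k=v_{k-1}+nv_k$. Let $M=\max v_k$ and $m=\min v_k$. Evaluating the relation where $v_{k+1}=M$ and where $v_{k+1}=m$ gives $M\ge 1$, $m\le -1$, $(2n+1)|m|\ge N(M-1)+1$ and $(2n+1)M\ge N|m|$.
\begin{itemize}
\item For $n\ge 3$ we have $N>2n+1$. The last inequality gives $|m|\le M-1$, and then the previous one gives $(N-2n-1)(M-1)\le -1$, which is impossible.
\item For $n=2$ the two inequalities force $|m|=M$. Then each $-M$ is preceded by $M,M$, whereas each $M$ is preceded by a term $\le 1-M\le 0$, a contradiction.
\item For $n=1$, check the $21$ Gaussian integers in your disc directly.
\end{itemize}
For $n\ge 3$ you could instead use the paper's results. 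A nonzero periodic point $w$ of $T$ gives $-w\in T_{-n+i,\mathcal{D}}\cap\mathbb{Z}[i]$, so $-w$ is a neighbour and $w\in\{\pm1,\pm(n-1+i),\pm(n+i)\}$ by Theorem~\ref{thm:neighbourset}. But $-1\mapsto n+i\mapsto 1\mapsto 0$, $-(n+i)\mapsto n-1+i\mapsto 1\mapsto 0$ and $-(n-1+i)\mapsto n-1+i$, so none of these points is periodic.
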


In other words, the base must be of the form $b = -n\pm i$ where $n$ is a positive integer and the set of digits is $\{0, 1, \ldots, n^{2}\}$.

\begin{corollary}[I. Katai, J. Szabo, \cite{KS75}, theorem 2] 
Suppose $n$ is a positive integer and set $b = -n+i$. Every complex number has an expansion of the form 
\begin{equation}\label{eq:compRep}
\zeta + \sum_{j=1}^{\infty}d_{j}b^{-j}
\end{equation}
where $d_{j}\in\{0, 1, \ldots, n^{2}\}$ for each $j$ and $\zeta\in\mathbb{Z}[i]$.
\end{corollary}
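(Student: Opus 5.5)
We deduce the corollary from the Katai--Szabo theorem with $b=-n+i$, for which every Gaussian integer has a finite expansion in base $b$ with digits in $D=\{0,1,\ldots,n^{2}\}$. The idea is to approximate $z\in\mathbb{C}$ by rescaled Gaussian integers $b^{-k}g_k$, expand each $g_k$ using the theorem, and pass to a limit by compactness of the digit space $D^{\mathbb{N}}$.

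Fix $z\in\mathbb{C}$. For each $k\geq 1$, choose a Gaussian integer $g_k$ with $|b^{k}z-g_k|\leq 1/\sqrt{2}$, for instance by rounding real and imaginary parts. Then $|z-b^{-k}g_k|\leq |b|^{-k}/\sqrt2\to0$.

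By the Katai--Szabo theorem, write $g_k=\sum_{m=0}^{M_k}\lambda^{(k)}_m b^{m}$ with $\lambda^{(k)}_m\in D$. Dividing by $b^{k}$ gives
\begin{equation*}
b^{-k}g_k=\zeta_k+\sum_{j=1}^{k}\lambda^{(k)}_{k-j}\,b^{-j},\qquad \zeta_k:=\sum_{m\geq k}\lambda^{(k)}_m b^{m-k}\in\mathbb{Z}[i].
\end{equation*}
Setting $d^{(k)}_j=\lambda^{(k)}_{k-j}$ for $j\leq k$ and $d^{(k)}_j=0$ for $j>k$, we get $b^{-k}g_k=\zeta_k+\sum_{j\geq1}d^{(k)}_j b^{-j}$.

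The tail sum is bounded uniformly: $\bigl|\sum_{j\geq1}d_jb^{-j}\bigr|\leq n^{2}\sum_{j\geq1}|b|^{-j}=n^{2}/(\sqrt{n^{2}+1}-1)$. Together with $b^{-k}g_k\to z$, this gives $|\zeta_k|\leq |z|+1+n^{2}/(\sqrt{n^{2}+1}-1)$ for all large $k$. So the $\zeta_k$ range over a finite set of Gaussian integers, and we may pass to a subsequence along which $\zeta_k=\zeta$ is constant.

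The space $D^{\mathbb{N}}$ is compact in the product topology, so we pass to a further subsequence along which $d^{(k)}\to(d_j)$ coordinatewise. The map $(d_j)\mapsto\sum_j d_jb^{-j}$ is continuous on $D^{\mathbb{N}}$, since the tails are uniformly small; concretely, agreement in the first $J$ coordinates forces a difference of at most $n^2|b|^{-J}/(|b|-1)$. Taking limits along the subsequence then gives $z=\zeta+\sum_{j\geq1}d_jb^{-j}$, which is the required expansion.

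The main obstacle is handling the integer part $\zeta_k$. It must be kept bounded so that it stabilizes, and the bound relies on the geometric estimate of the fractional tails, which in turn uses $|b|>1$. Everything else is routine compactness.
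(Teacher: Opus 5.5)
Your argument is correct, but there is no proof in the paper to compare it with. The paper quotes this statement from Katai and Szabo and, in Section~\ref{sec:caseStudy}, only remarks that it can be derived from their finite-expansion theorem for Gaussian integers. Your proof carries out exactly that deduction, and every step checks: rounding $b^{k}z$ to a Gaussian integer, splitting its expansion at $b^{k}$, bounding the integer parts with the tail estimate $n^{2}/(|b|-1)=\sqrt{n^{2}+1}+1$, and extracting a convergent subsequence in $D^{\mathbb{N}}$.

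You use less of that theorem than you invoke. All you need is that each $g_k$ can be written as $\lambda_0+\lambda_1 b+\cdots+\lambda_{k-1}b^{k-1}+b^{k}\zeta_k$ with $\lambda_m\in D$ and $\zeta_k\in\mathbb{Z}[i]$. This follows from $k$ steps of division with remainder, because $D=\{0,1,\ldots,n^{2}\}$ is a complete residue system mod $b$: an integer is divisible by $b$ iff it is divisible by $n^{2}+1=|\mathbb{Z}[i]/b\mathbb{Z}[i]|$. So the termination and uniqueness parts of the Katai--Szabo theorem are never used. Your argument therefore proves the more general fact $T_{A,\mathcal{D}}+\mathbb{Z}^{n}=\mathbb{R}^{n}$ for any expanding $A\in M_{n}(\mathbb{Z})$ and complete residue system $\mathcal{D}$.

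The compactness step can also be avoided. The set $U=T_{b,D}+\mathbb{Z}[i]$ is closed, being a locally finite union of compact sets. It satisfies $bU=U$, since $bT_{b,D}=T_{b,D}+D$ and $D+b\mathbb{Z}[i]=\mathbb{Z}[i]$. Hence $U$ contains the dense set $\bigcup_{k}b^{-k}\mathbb{Z}[i]$, so $U=\mathbb{C}$.
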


We owe much of our understanding of this number system to W. J. Gilbert. He published papers on when distinct representations capture the same complex number, provided an algorithm for finding the representations elements in $\mathbb{Q}[i]$, and computed the Hausdorff dimension of the boundary of the set containing all the points that have an expansion where $\zeta$ is the origin in (\ref{eq:compRep})  (\cite{G82}, \cite{G96}, \cite{G86}). That set, of the complex numbers of the form $\sum\limits_{j=1}^{\infty}d_{j}b^{-j}$ where $d_{j}\in\{0, 1, \ldots, n^{2}\}$, is an example of an integral digit tile. That is, the set is self-similar with respect to the collection of functions of the form $f_{d}(x) = A^{-1}(x + d)$ where $A$ is an integer matrix, $d$ is an element of $\mathbb{Z}^{2}$, the set has nonempty interior, the set of its translates by elements of $\mathbb{Z}^{n}$ covers $\mathbb{R}^{2}$, and the intersection of any finite number of those translates has $2$-dimensional Lebesgue measure zero.  We direct any interested readers to the survey \cite{AT04} by Akiyama and Thuswaldner which discusses the topology of the tiles, and the paper \cite{AL11} by Akiyama and Loridant describes a way of parameterizing their boundary. The collection of rules given in \cite{G82} are a specific instance of the graphs in \cite{ST03}. 

There is a more general tiling theory pioneered by Bandt \cite{B91}, Lagarias and Wang \cite{LW96}, and Strichartz and Wang \cite{SW99} that is related to our setting.  Many of the examples of self-similar sets in this thesis are either of this kind or are subsets of sets of this kind. On the whole, we do not make much use of this theory. We do rely on the algorithm in \cite{ST03} to complete one result and make a few references to theorems that apply to particular examples. It is also worth mentioning that the dimension of the boundary is determined in \cite{DKV00} by Duvall, Keesling, and Vince. Their methods rely on a subgraph of the graphs in \cite{ST03} and the ones we present in this thesis. 

Let us now discuss the functions that takes a translation $\alpha$ and maps it to a pre-specified fractal dimension of $C\cap(C+\alpha)$. The papers of David and Hu \cite{DH95} and Nekka and Li \cite{NL01} both prove the following theorem. 

\begin{theorem}\label{thm:DH}
Suppose $C$ is the middle third Cantor set. For any $\lambda\in[0, 1]$, the set $\{t:\dim_{H}(C\cap(C+t)) = \lambda\dim_{H}C\}$ is dense in $[0, 1]$. 
\end{theorem}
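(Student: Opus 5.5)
Our approach is to work with ternary digit expansions. Every $t\in[0,1]$ with $C\cap(C+t)\neq\emptyset$ lies in $C-C$ and has an expansion $t=\sum_{j\ge1}t_j3^{-j}$ with $t_j\in\{0,\pm2\}$. The density statement only requires exhibiting a suitable point in every open subinterval of $[0,1]$. Since $C-C=[-1,1]$, any interval $(a,b)\subset[0,1]$ contains a cylinder of translations obtained by fixing a finite prefix $t_1,\dots,t_k$ with $t_j\in\{0,\pm2\}$: the set of expansions with that prefix is an interval of length $2\cdot 3^{-k}$ about $\sum_{j\le k}t_j3^{-j}$. We are then free to choose the tail $(t_j)_{j>k}$. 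We choose the tail so that the expansion of $t$ is unique, for instance by avoiding the exceptional set $E$ through taking the tail eventually nonzero in a non-terminating way.

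For such a $t$ the intersection has the digit description
\[
C\cap(C+t)=\Big\{\sum_{j\ge1}d_j3^{-j}: d_j\in\{0,2\}\cap(\{0,2\}+t_j)\Big\}.
\]
Each digit set has two elements when $t_j=0$ and one element when $t_j=\pm2$. Writing $N_k(t)=\#\{j\le k: t_j=0\}$, the set is a Moran-type set with homogeneous ratio $1/3$. We then prove
\[
\dim_H(C\cap(C+t))=\liminf_{k\to\infty}\frac{N_k(t)\log2}{k\log3}.
\]
The upper bound follows by covering with the $2^{N_k}$ level-$k$ intervals of length $3^{-k}$. For the lower bound we put the uniform mass distribution $\mu$ on the admissible cylinders, so that a level-$k$ cylinder has mass $2^{-N_k}$. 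We then apply the mass distribution principle: a ball of radius $r\in[3^{-k-1},3^{-k}]$ meets at most two level-$k$ cylinders, because admissible level-$k$ intervals are separated by gaps of length at least $3^{-k}$. Hence $\mu(B)\lesssim 2^{-N_k}$, which gives $\liminf\log\mu(B(x,r))/\log r\ge\liminf N_k\log2/(k\log 3)$.

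With this formula, the density statement follows by choosing the tail to have a prescribed asymptotic frequency of zeros. Given $\lambda\in[0,1]$, we pick the tail $(t_j)_{j>k}$ so that the proportion of indices with $t_j=0$ converges to $\lambda$. For $\lambda\in(0,1)$, a Sturmian-type rule works: set $t_j=0$ exactly when $\lfloor j\lambda\rfloor>\lfloor (j-1)\lambda\rfloor$, and $t_j=2$ otherwise. Since the prefix is finite, it does not affect the limit, so the dimension equals $\lambda\log2/\log3=\lambda\dim_H C$. For $\lambda=0$ we take the tail constantly $2$; the intersection is then a single point, hence nonempty of dimension $0$. For $\lambda=1$ we take zeros with frequency $1$ but infinitely many $2$'s, inserted at sparse positions such as $j=m^2$, which also keeps the expansion unique.

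The main obstacle is the uniqueness and sign bookkeeping. We must make sure the chosen $t$ stays in the target subinterval and that its expansion is unique, so that the digit description of the intersection is valid. We must also ensure that expansions with $t_j=\pm2$ actually yield nonempty digit sets: the digit set is $\{2\}$ when $t_j=2$ and $\{0\}$ when $t_j=-2$. This is where we expect the real work. The point of making the tail contain infinitely many nonzero digits of a fixed sign, while avoiding patterns like $0\,\overline{2}$ versus $2\,\overline{(-2)}$-type coincidences from $E$, is that it excludes the points of $E$. Once these checks are done, the dimension formula above completes the proof.
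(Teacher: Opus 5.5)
The paper does not prove Theorem~\ref{thm:DH}; it cites it from the literature. Your argument is, however, the same construction the paper uses for its generalizations, Corollary~\ref{cor:boxDense} and Theorem~\ref{thm:extDense}: keep a finite prefix, fill the tail with $0$ or a maximal difference by the Sturmian rule $h_j\le j\lambda<h_j+1$, and read the dimension off $\prod_{j\le k}|D\cap(D+t_j)|$. There are two differences.
\begin{itemize}
\item You obtain the Hausdorff dimension directly, from the mass distribution principle and the gap structure of the Cantor intervals. The paper instead computes the box dimension (Theorem~\ref{thm:unboundedDigitFormula}) and upgrades it through the Moran-set formula (Theorem~\ref{thm:moranDim}), which needs the tile hypothesis. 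Your route is more self-contained.
\item You require uniqueness only for the constructed $t$, whereas the paper assumes all $(A,D-D)$-representations are unique. That global hypothesis fails here, since $(0,\overline{2})$ and $(2,\overline{-2})$ both represent $1/3$. So your localized version is what is actually needed for the Cantor set.
\end{itemize}

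The step you call ``the real work'' is short, but your stated criterion for it is off. Suppose $x\neq y$ are $\{0,\pm2\}$-expansions of the same number. Put $e_j=(x_j-y_j)/2\in\{0,\pm1,\pm2\}$ and $\zeta_k=\sum_{j\le k}3^{k-j}e_j\in\mathbb{Z}$. Since $\zeta_k=-\sum_{j>k}e_j3^{k-j}$, we get $|\zeta_k|\le 1$, and $|\zeta_k|=1$ forces $e_j=-2\zeta_k$ for every $j>k$. At the first index $J$ where $x$ and $y$ differ, $\zeta_J=e_J\neq0$, so $|\zeta_J|=1$. Hence one expansion is eventually $\overline{2}$ and the other eventually $\overline{-2}$.

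Consequently an expansion is unique as soon as it is not eventually constant equal to $2$ or $-2$. For a tail in $\{0,2\}$ this just means infinitely many zeros. Nonzero digits play no role: $\overline{0}$ is already a fine tail for $\lambda=1$. A tail that is ``eventually nonzero'' of one sign, as your first paragraph suggests, is exactly the dangerous case.

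In particular, your $\lambda=0$ choice (tail $\overline{2}$) puts $t$ in $E$ whenever the prefix contains a digit other than $2$. There the digit description of Lemma~\ref{lem:seqExpress} is unavailable, and the intersection is in general a finite set rather than a single point. The conclusion still survives: by Lemma~\ref{lem:unionExpress} the intersection is the union of two finite digit-product sets. More cleanly, use a tail of $2$'s with zeros exactly at the positions $j=m^2$. That tail has a unique expansion and zero frequency of zeros. With these corrections your proof is complete.
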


Here $\dim_{H}$ denotes the Hausdorff dimension. In \cite{PS21}, Pedersen and Shaw consider self-similar sets $T_{n, D}$ with respect to collections of functions of the form $f_{d}(z) = (-n+i)^{-1}(z + d)$ where $d\in D\subset\{0, 1, \ldots, n^{2}\}$ and $n$ is a positive integer. They prove a theorem concerning the box-counting dimension that is similar to Theorem~\ref{thm:DH}. Let $F_{n, D}$ denote the set of translates $\alpha$ such that $T(\alpha) = T_{n, D} \cap (T_{n, D} + \alpha)$ is nonempty. They define the function $\Phi_{n, D}$ on the subset of $F_{n, D}$ for which the box-dimension of $T(\alpha)$ exists and maps $\alpha$ to that quantity. The set $F_{n, D}$ is equal to $T_{n, D-D}$. In the case of the middle third Cantor set, this is $[-1, 1]$. 

\begin{theorem}[S. Pedersen, V. Shaw, \cite{PS21}, corollary 7.5] \label{thm:oldBound} 
Suppose that $n\geq3$ is an integer. If $D\subset\{0, 1, \ldots, \lfloor{n^{2}/2}\rfloor\}$ satisfies the condition $|e - e^{'}| > n$ for all $e \neq e^{'}$ in $D-D$, then the level sets of $\Phi_{n, D}$ are dense in $F_{n, D}$. 
\end{theorem}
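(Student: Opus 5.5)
We prove Theorem~\ref{thm:oldBound} by reducing to a symbolic description of $T(\alpha)$ and then perturbing the digit sequence of $\alpha$ locally. Write $b=-n+i$ and $E=D-D$. Every $\alpha\in F_{n,D}=T_{n,E}$ has at least one expansion $\alpha=\sum_{j\ge1}\alpha_j b^{-j}$ with $\alpha_j\in E$.

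\textbf{Step 1 (uniqueness of expansions).} The first step is to show that the separation hypothesis $|e-e'|>n$ for distinct $e,e'\in E$ forces this expansion to be unique. Suppose two expansions with digits in $E$ agree. After removing a common prefix and multiplying by $b^{k}$, we get a relation $\sum_{j\ge1}\delta_j b^{-j}=0$ with $\delta_j\in E-E$ and $\delta_1\neq 0$. This gives
\[
|\delta_1|\le \sum_{j\ge1}|\delta_{j+1}|\,|b|^{-j}.
\]
The digits lie in $\{0,\dots,\lfloor n^2/2\rfloor\}$, so $|\delta_j|\le 2\lfloor n^2/2\rfloor\le n^2$. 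Hence the right-hand side is at most
\[
\frac{n^2}{\sqrt{n^2+1}-1}.
\]
This crude bound is roughly $n$, so it does not yet contradict $|\delta_1|>n$. I will refine it by using that $\delta_1=b\cdot(\text{tail})$ must land near a Gaussian-integer multiple structure: $\delta_1$ is a real integer while $b\,\sum_{j\ge1}\delta_{j+1}b^{-j}$ has a constrained imaginary part. Comparing imaginary parts after one further iteration should pin the tail into a region that excludes real integers of modulus exceeding $n$. This estimate is where I expect the main difficulty, and where $n\ge3$ is used.

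\textbf{Step 2 (digit description of the intersection).} With uniqueness in hand,
\[
T(\alpha)=\Big\{\sum_j d_j b^{-j} : d_j\in D_j\Big\},\qquad D_j:=D\cap(D+\alpha_j).
\]
For the box dimension of this Moran-type set, the separation of $D$ (pairwise differences exceed $n$) gives enough separation of the cylinders. Counting level-$k$ cylinders then yields
\[
\underline{\dim}_B T(\alpha)=\liminf_{k\to\infty}\frac{\sum_{j\le k}\log|D_j|}{k\log|b|},
\]
and similarly for the upper box dimension with $\limsup$. So $\Phi_{n,D}(\alpha)$ exists exactly when this limit exists, and it equals the limit.

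\textbf{Step 3 (density of level sets).} Fix an attainable value $\lambda$, realized by some $\beta$ with digit sequence $(\beta_j)$. Fix any $\alpha\in F_{n,D}$ with digit sequence $(\alpha_j)$, and any $\varepsilon>0$. Define $\alpha'$ by taking $\alpha_1,\dots,\alpha_K$ followed by $\beta_{K+1},\beta_{K+2},\dots$. Then $|\alpha-\alpha'|\le C|b|^{-K}<\varepsilon$ for large $K$. Since the limit in Step 2 ignores any finite prefix, $\Phi_{n,D}(\alpha')=\lambda$.

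The range of attainable values is the interval of averages of $\log|D_j|/\log|b|$. It contains the values $\log|D|/\log|b|$ (take $\alpha=0$) and $0$. Intermediate values are reached by interleaving $\alpha_j=0$ and a singleton-producing digit, such as $\alpha_j=\max E$, with prescribed densities. This shows every $\lambda$ in the full range is realized, and hence every level set is dense.
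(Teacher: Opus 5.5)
Your overall route is the one the paper uses: uniqueness of $(b,D-D)$-representations (Theorem~\ref{thm:gaussianEquiv}(iii)), the product description of Lemma~\ref{lem:seqExpress}, the cylinder-counting formula of Theorem~\ref{thm:unboundedDigitFormula}, and the prefix-plus-density construction of Corollary~\ref{cor:boxDense}. Steps 2 and 3 are essentially fine. The gap is Step 1, which is the only non-routine step, and everything downstream depends on it: the product description of $T(\alpha)$, both for $\alpha$ and for every perturbed $\alpha'$, and the cylinder separation you use in Step 2. You correctly note that the modulus bound $\sqrt{n^2+1}+1$ leaves $|\delta_1|=n+1$ open. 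But you then only describe what the refinement ``should'' do, so uniqueness is never actually established.

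The imaginary-part idea you sketch does work, and one explicit term suffices. Multiply $\sum_{j\ge1}\delta_j b^{-j}=0$ by $b^2$ to get $b\delta_1+\delta_2+t=0$, where $t=\sum_{j\ge1}\delta_{j+2}b^{-j}$. Since $\delta_1,\delta_2$ are real and $\Im(b\delta_1)=\delta_1$, this gives $\delta_1=-\Im t$. Bound the first term of $t$ using $|\Im b^{-1}|=1/(n^2+1)$, and the rest by moduli:
\[
|\delta_1|\le \frac{n^2}{n^2+1}+n^2\sum_{j\ge2}|b|^{-j}=\frac{n^2}{n^2+1}+\frac{n^2}{n^2+1-\sqrt{n^2+1}}<1+2 .
\]
Hence $|\delta_1|\le 2<n$, contradicting $|\delta_1|>n$.

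This is precisely the paper's device. Since $\Re(s)-n\Im(s)=\Im(bs)$, Lemma~\ref{lem:reimbound} uses the same functional pushed to four terms, which gives $|\delta_1|\le 1$. That is why Theorem~\ref{thm:gaussianEquiv}(iii) needs only separation $>1$; the paper phrases it via the rewriting into $D+D\subset\{0,\dots,n^2\}$ and the real neighbours $\{0,\pm1\}$. So the hypothesis $>n$ is far from sharp, and nothing delicate about $n\ge3$ is needed at this step.

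Finally, in Step 2 the bounded overlap of level-$k$ cylinders should be derived properly. One way is from the strong separation condition for $\{b^{-1}(x+d)\}_{d\in D}$, which follows from uniqueness of $(b,D)$-representations (a consequence of Step 1). The other is the tile count of Lemma~\ref{lem:kbound}. The pairwise gaps in $D$ alone do not separate cylinders.
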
  

Conspicuously, the theorem requires that no difference of differences of $D$, that is an element of the form $(d_{1} - d_{2}) -(d_{3} - d_{4})$ where $d_{j}\in D$ for each $j$, has absolute value less than $n+1$. It also requires that $D$ contain no element larger than $n^{2}/2$. The separation and bounding condition in Theorem~\ref{thm:oldBound} together form a sufficient condition for the uniqueness of all expansions with respect to $-n+i$ which use coefficients in $D$. For any complex number $\alpha$ in $T_{n, D-D}$, this provides a way of expressing the intersection $T_{n, D} \cap (T_{n, D} + \alpha)$ as the set of expansions of the form $\sum\limits_{j=1}^{\infty}d_{j}(-n+i)^{-1}$ where $d_{j}\in D\cap(D+\alpha_{j})$ for each $j$. The sequence $(\alpha_{j})_{j=1}^{\infty}$ is the sequence of coefficients appearing, in order, in the base-$-n+i$ expansion of $\alpha$ using digits in $D-D$. The specific contributions of the two assumptions are as follows. 

The desired consequence of the bound $n^{2}/2$ on the elements of $D$ is that if
\begin{equation}
\sum_{j=1}^{\infty}(d_{j} - d_{j}^{'})(-n+i)^{-1} = \sum_{j=1}^{\infty}(d_{j}^{''} - d_{j}^{'''})(-n+i)^{-1}
\end{equation}
for elements $d_{j}, d_{j}, d_{j}, d_{j}\in D$ for each $j$, then the equivalence
\begin{equation}
\sum_{j=1}^{\infty}(d_{j} + d_{j}^{'''})(-n+i)^{-1} = \sum_{j=1}^{\infty}(d_{j}^{''} + d_{j}^{'})(-n+i)^{-1}
\end{equation}
is one of the expansions where the coefficients are in $\{0, 1, \ldots, n^{2}\}$. This is convenient because the condition that $|d - d^{'}| > n$ for all distinct $d, d^{'}\in D$ implies that those expansions are unique. Therefore $(d_{j} - d_{j}^{'}) = (d_{j}^{''} - d_{j}^{'''})$ for each $j$. 

We show in Chapter~\ref{chp:neighbours} that the expansions with coefficients in $D$ are unique when the translations of $T_{n, D}$ by the real elements of $D-D$ do not intersect $T_{n, D}$. The reason the condition $|d - d^{'}| > n$ is $n$-dependent has to do with the strategy by which $T_{n, D}$ was separated from its translates. The approach was to bound the set $T_{n}:= T_{n, \{0, 1, \ldots, n^{2}\}}$ in a ball and then discern the horizontal translations that ensure the balls do not intersect. 

Consider the approximations of $T_{n}$ when $n=2, 3, 4$ contained in Figures~\ref{fig:2t}, ~\ref{fig:3t}, and~\ref{fig:4t}.  

\begin{figure}
\centering
\includegraphics[scale=0.45]{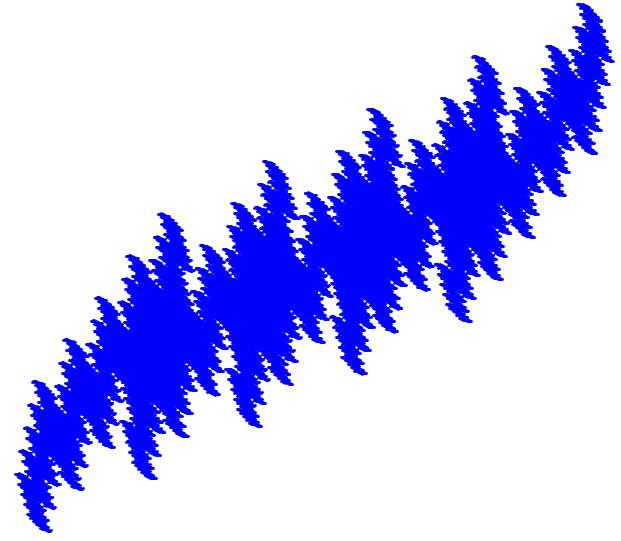}
\caption{The set $T_{2}$}
\label{fig:2t}
\end{figure}

\begin{figure}
\hspace{-3.0em}
\includegraphics[scale=0.45]{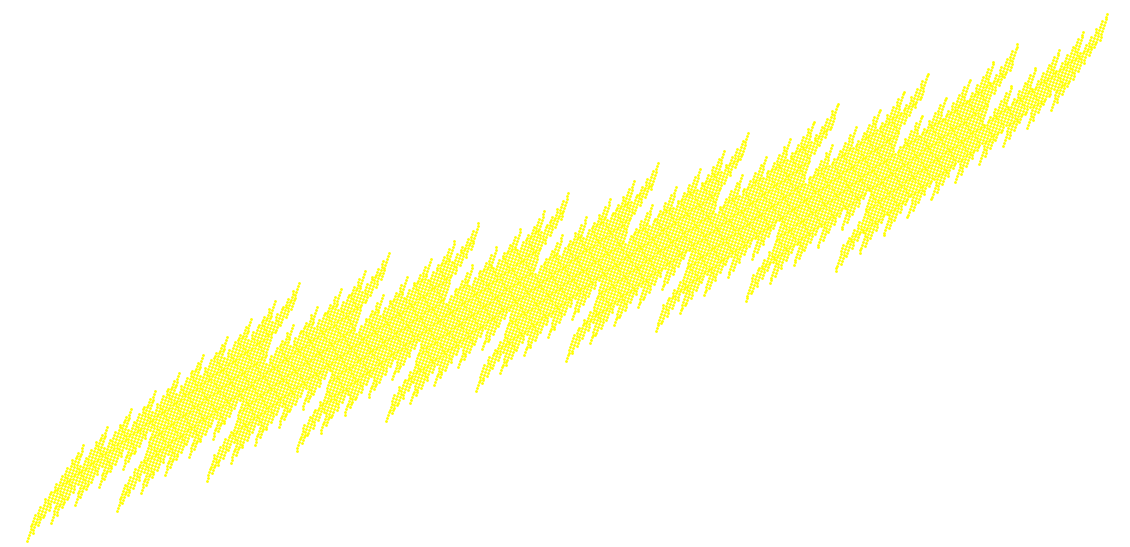}
\caption{The set $T_{3}$}
\label{fig:3t}
\end{figure}

\begin{figure}
\hspace{-3.0em}
\includegraphics[scale=0.45]{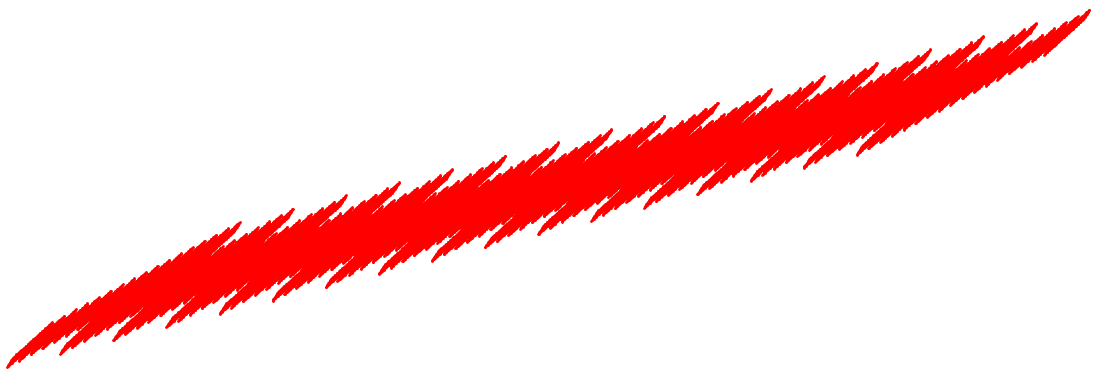}
\caption{The set $T_{4}$}
\label{fig:4t}
\end{figure}

In each of Figures~\ref{fig:2t}, ~\ref{fig:3t}, and~\ref{fig:4t}, the complement of $T_{n}$ within a ball containing it is large. In Chapter~\ref{chp:neighbours}, we lower the separation condition by refining our understanding of the boundary of $T_{n}$. For slightly larger separations, which are still $n$-independent, we even show that the bound of $n^{2}/2$ can be removed. 

Connected to this topic is a paper by Li and Xiao published in 1999 \cite{LX99}. In that paper, they investigate the Hausdorff dimension of the level sets of the function from a translate $\alpha$ to the Hausdorff dimension of $C\cap(C+\alpha)$ where $C$ is the middle third Cantor set \cite{LX99}. Twenty fours years later, Huang and Kong treated what might be considered the dual of this problem in \cite{HK23} concerning $(1-\lambda)$-Cantor sets. We present this dual version and our own discussion of the dimension of the level sets when our self-similar set is generated by affine transformations on $\mathbb{R}^{n}$ in Section~\ref{sec:levelSetDim}. 


Now we turn our attention to the literature that inspired Chapter~\ref{chp:discMultInv}. We begin with the precise definition of multiplicative invariance in $\mathbb{T} := \faktor{\mathbb{R}}{\mathbb{Z}}$.  

\begin{definition} \label{def:timesr} 
Let $r$ be an integer greater than one. 
We call a nonempty closed subset $Y \subset \mathbb{T}$ $\times r-$\textit{invariant} if $rY := \{ry: y\in Y\}$ is a subset $Y$. A subset $Y$ is called \textit{multiplicatively invariant} if it is $\times r$-invariant for some integer $r\geq2$.
\end{definition}

\remark{Furtstenberg showed in \cite{F67} (proposition III.1) that the box-counting dimension of a multiplicatively invariant set exists and is equal to its Hausdorff dimension.}

Glasscock, Moreira, and Richter provided the following version of multiplicative invariance for subsets of the nonnegative integers $\mathbb{N}_{0}$ \cite{GMR24}. Let us define two functions. 

Let $r$ be an integer greater than or equal to $2$. For an integer $n\geq1$, let $k := \lfloor \log_{r}n \rfloor$. This is the greatest integer power of $r$ which is less or equal to $n$. Define $\phi_{r}: \mathbb{N}_{0} \rightarrow  \mathbb{N}_{0}$ and $\psi_{r}: \mathbb{N}_{0} \rightarrow \mathbb{N}_{0}$ by
\begin{align}
&\phi_{r}(n) = \lfloor n/r \rfloor, \\
&\psi_{r}(n) =  n - r^{k}\lfloor n/r^{k} \rfloor.
\end{align}

\begin{definition}
A set $A \subset \mathbb{N}_{0}$ is called times-$r$ invariant if $\phi_{r}(A) \subset A$ and $\psi_{r}(A) \subset A$. We call $A\subset\mathbb{N}_{0}$ multiplicatively invariant if it is times-$r$ invariant for some $r\geq2$. 
\end{definition}

This use of two maps is admittedly curious. At first glance, $\phi_{r}$ is the natural analogue to multiplication by $r$ on $\mathbb{T}$. The latter can be thought of as multiplying an element of the unit interval by $r$ and taking the fractional part. The map $\phi_{r}$ divides by $r$ and takes the integer part. Meanwhile, what multiplication of an element of $x\in[0, 1)$ by $r$ does is map the base-$r$ expansion of $x$, $\sum\limits_{j=1}^{\infty}x_{j}r^{-j}$ where $r_{j}\in\{0, 1, \ldots, r-1\}$, to the expansion $x _{1} + \sum\limits_{j=2}^{\infty}x_{j}r^{-j}$. The latter is equal to $\sum\limits_{j=2}^{\infty}x_{j}r^{-j}$ as an element of $\mathbb{T}$. In this way, multiplication by $r$ mimics the left-shift operator on the sequence of coefficients defining the base-$r$ expansion. The map $\psi_{r}$ behaves similarly. To see this, we require a different perspective of the action of $\psi_{r}$. 

The images of the functions $\phi_{r}$ and $\psi_{r}$ are clearer if we replace $n$ by its base-$r$ representation. If $n = a_{k}r^{k} + \cdots a_{1}r + a_{0}$, $a_{k} \neq 0$, then 
\begin{align}
&\phi_{r}(n) = a_{k}r^{k-1} + \cdots + a_{2}r + a_{1}, \\
&\psi_{r}(n) = a_{k-1}r^{k-1} + \cdots a_{1}r + a_{0}. 
\end{align}

Multiplicative invariant subsets of $N_{0}$ can be linked to multiplicative invariant subsets of $\mathbb{T}$. If $E\subset\mathbb{N}_{0}$ is $\times r$ invariant, then the limiting set of the sequence $(r^{-k}(E\cap [0, r^{k})))_{k=1}^{\infty}$ is a subset of $[0, 1]$ which is $\times r$-invariant when viewed as a subset of $\mathbb{T}$ \cite{GMR24}. The elements of the sequence have the form $a_{k-1}r^{-1} + a^{k-2}r^{-2} +\cdots+a_{0}r^{-k}$ for each $k$. If we multiply by $r$ and take the fractional part, we obtain $a_{k-2}r^{-1} + a^{k-3}r^{-2} +\cdots+a_{0}r^{-k+1}$. This is the same result we would obtain if we applied $\psi_{r}$ to the expansion $a_{k-1}r^{k-1} + a^{k-2}r^{k-2} +\cdots+a_{0}$ and divided by $r^{k-1}$. In this way, $\psi_{r}$ is playing the role of the left-shift operator. Without means to define fractional or integer parts coherently, we define the invariance of a subset of $\mathbb{Z}^{n}$ using ``base-$B$" expansions where $B$ is a matrix: $d_{0} + Bd_{1} + \cdots + B^{k-1}d_{k-1}$. We discuss the existence of these expansions in Chapter~\ref{chp:discMultInv}. 

We do not treat Furstenberg's $\times 2, \times 3$ conjecture in this thesis. Interested readers can peruse \cite{T23} which maps out the development of near affirmative answers to the truth of the conjecture. Higher dimensional multiplicative invariance has been considered in \cite{B83}. Therein a higher dimensional version of Furstenberg's theorem that $\mathbb{T}$ is the only infinite subset of $\mathbb{T}$ that is simultaneously $\times 2$-invariant and $\times 3$-invariant is proven. We mention the precise result in Section~\ref{sec:multConj}.

\chapter{Fractal Geometry and Iterated Function Systems}\label{chp:IFS}

In this chapter we define the tools of fractal geometry that we use to study the structure of the intersections $T \cap (T+\alpha)$ where $T$ is a generalization of the middle third Cantor set. 

\section{Dimension Theory}\label{sec:dimTheory} 

Fractal dimensions of sets are, like many other dimensions in mathematics, descriptions of size. The Hausdorff dimension of a subset of a metric space is a refinement of the elementary idea that shapes be assigned dimensions one, two, or three based on whether or not they have finite length, area or volume. The box-counting
dimension of a bounded set is arguably the second most popular fractal dimension after Hausdorff dimension. Given a bounded set, it is roughly the rate at which the number of sets of diameter $\delta$ needed to cover the set increases as $\delta$ decreases. These descriptions might seem unrelated, but consider that we
can approximate the volume of a set by covering it with shapes we already know the volume of. We first describe the kind of coverings used to define Hausdorff dimension. 

\begin{definition} 
Let $\delta > 0$ and $V$ be a subset of a metric space $X$. A countable collection of sets $\{U_{k} \subset X\}$ is called a $\delta$-\textit{cover} of $V$ if 
\begin{enumerate}
\item[(i)] $V \subset \bigcup\limits_{k}U_{k}$,
\item[(ii)] $\diam{U_{k}} \leq \delta$ for each $k$. 
\end{enumerate}
Here and throughout this thesis, we use the notation $\diam{F}$ to denote the diameter of a subset $F$ of a metric space $(X, d)$. That is, $\diam{F} := \sup_{x, y\in F}d(x, y)$. 
\end{definition}
Now we define the continuum of measures used to assign dimensions (real numbers) to sets. 

\begin{definition} 
Let $V$ be a subset of a metric space and let $s > 0$. For every $\delta > 0$, define the quantity
\begin{equation}
\mathcal{H}_{\delta}^{s}(V) := \inf{\bigg\{\sum_{k}(\diam{U_{k}})^{s} : \{U_{k}\}\: \text{is a $\delta$-cover of}\: V \bigg\}}.
\end{equation}
The $s$-\textit{dimensional Hausdorff measure} of $V$ is the limiting value $\mathcal{H}^{s}(V) := \lim_{\delta\rightarrow 0^{+}}\mathcal{H}_{\delta}^{s}(V)$ (possibly positive infinity). 
\end{definition}

The discrete family of measures on Euclidean space that generalize the concept of everyday volume are the $n$-dimensional Lebesgue measures. When $n$ is a positive integer, there exists a positive constant such that the $n$-dimensional Hausdorff measure on the measurable sets of $\mathbb{R}^{n}$ is equal to the $n$-dimensional Lebesgue measure scaled by a constant (\cite{M95}, pg. 56). In this sense, the Hausdorff measures on Euclidean space are an extension of Lebesgue measures from the positive integers to the entirety of $[0,\infty)$. The $s$-dimensional Hausdorff measure is not a measure on the power set of the metric space (definition 1.18 in \cite{R87}). It is at best countably subadditive on that domain. The $s$-dimensional Hausdorff measure is a measure on the Borel sigma algebra associated with the metric (\cite{M95}, theorem 4.2). The following property of the Hausdorff measure is crucial to defining the Hausdorff dimension. 

Suppose that $V$ is a subset of a metric space, the pair of real numbers $s, t$ satisfy $0\leq s < t$, and $\delta > 0$. If $\{U_{k}\}_{k=1}^{\infty}$ is a $\delta$-cover of $V$, then 
\begin{equation}
\sum_{k=1}^{\infty}(\diam{U_{k}})^{t} = \sum_{k=1}^{\infty}(\diam{U_{k}})^{t-s}(\diam{U_{k}})^{s} \leq \delta^{t-s}\sum_{k=1}^{\infty}(\diam{U_{k}})^{s}. 
\end{equation}
It follows from the definition of the infimum that $\mathcal{H}_{\delta}^{t}(V) \leq \delta^{t-s}\mathcal{H}_{\delta}^{s}(V)$. If we take the limit as $\delta$ tends to zero from the right and we know that $\mathcal{H}^{t}(V)$ is nonzero, then it must be that $\mathcal{H}^{s}(V)$ is positive infinity. On the other hand, if $\mathcal{H}^{s}(V)$ is zero, then so is $\mathcal{H}^{t}(V)$. 

\begin{definition} 
Let $V$ be a subset of a metric space. We call the quantity
\begin{equation}
\dim_{H}{V} := \inf{\{s\geq0 : \mathcal{H}^{s}(V) = 0\}}
\end{equation}
the \textit{Hausdorff dimension} of $V$. 
\end{definition}

Since the $s$-dimensional Hausdorff measure of any set is positive infinity whenever $s$ is less than the Hausdorff dimension and zero after, it follows that $\dim_{H}V = \sup{\{s\geq0 : \mathcal{H}^{s}(V) > 0\}}$. 

\begin{example}
Let us determine the Hausdorff dimension of the open unit interval $J = (0, 1)$. It is sufficient to restrict the $\delta$ coverings used in the definition of the Hausdorff measures to $\delta$-covers of open balls \cite{F90}. For dimension one, the covering elements can be taken to be open intervals. We can cover $I$ using ($\lfloor 1/\delta \rfloor$ + 1) intervals of length $\delta$. Since $(\lfloor 1/\delta \rfloor + 1) \delta^{s}$ tends to zero as $\delta$ approaches zero for all $s > 1$, we see that $\mathcal{H}^{s}(J) = 0$ for all $s > 1$. On the other hand, suppose $s < 1$ and $(I_{k})_{k=1}^{\infty}$ is a  $\delta$-cover of $J$ comprised of intervals. For all sufficiently small $\delta$, we have $\sum\limits_{k=1}^{\infty} (\diam{I_{j}})^{s} \geq \sum\limits_{k=1}^{\infty}\text{length}(I_{k}) \geq \text{length}(J) = 1$. Therefore $\mathcal{H}^{1-\varepsilon}(J) > 0$ for every $\varepsilon$ in $(0, 1)$. We have already shown that $\dim_{H}J \leq 1$. Therefore $\dim_{H}J = \sup{\{s\in[0, 1] : \mathcal{H}^{s}(J) > 0\}} = 1$. 
\end{example}

Computing the Hausdorff dimension of a set without being able to make an educated guess is difficult when the only tool we have is the definition. In Section~\ref{sec:IFS}, we present a class of sets for which the task of computing their Hausdorff dimension is considerably easier. For this class, the Hausdorff dimension agrees with the following fractal dimension. 

\begin{definition}\label{def:boxcounting} 
Let $V$ be a bounded subset of a metric space. Given $\delta>0$, we let $N_{\delta}(V)$ denote the smallest number of sets of diameter $\delta$
needed to cover $V$. The \textit{upper box-counting dimension} and the \textit{lower box-counting dimension of $V$} are
\begin{align}
\overline{\dim}_{B}V &:= \limsup_{\delta\rightarrow0^{+}}\frac{\log(N_{\delta}(V))}{-\log(\delta)}, \\
\underline{\dim}_{B}V &:= \liminf_{\delta\rightarrow0^{+}}\frac{\log(N_{\delta}(V))}{-\log(\delta)}, 
\end{align}
respectively. If these quantities are equal, then that value is \textit{the box-counting dimension of $V$} and is denoted by $\dim_{B}V$. 
\end{definition} 

If the box-counting dimension of a set $V$ exists, then $N_{\delta}(V) \approx \delta^{-\dim_{B}}$. In other words, the box-counting dimension of $V$ is the exponential rate at which $N_{\delta}(V)$ increases as $\delta$ decreases. The reason it is called box-counting dimension is that, in $\mathbb{R}^{n}$, the counter $N_{\delta}(V)$ can be swapped with the number of $\delta$-mesh squares that intersect $V$ (\cite{F90}, equivalent definitions 3.1 ). The collection of $\delta$-mesh squares in $\mathbb{R}^{n}$ are the cubes $[k_{1}\delta, (k_{1} + 1)\delta] \times \cdots \times [k_{n}\delta, (k_{n} + 1)\delta]$ where $k_{i}$ is an integer for each $i$. This is a grid of $\delta$ by $\delta$ boxes in the plane when considered for $n=2$. 

The primary advantage of box-counting dimension is that it is easier to compute than Hausdorff dimension. The primary disadvantage is that it is not derived from a measure. The classification of the size of sets by dimension is coarser than that of classification by volume. The existence of an $s$-dimensional Hausdorff measure allows us to better grasp the size of the set. Additionally, the subadditivity of the Hausdorff measure is also a useful property that translates to the Hausdorff dimension. That is, if $\{V_{i}\}_{i=1}^{\infty}$ is a countable collection of subsets of a metric space, then $\dim_{H}(\cup_{i=1}^{\infty}V_{i}) = \sup_{i\geq1}\dim_{H}(V_{i})$. This is called countable stability. The Hausdorff dimension of a point in a metric space is zero because we can always cover the point by a single neighbourhood of diameter $\delta$. It follows that the Hausdorff dimension of a countable set is zero because we can decompose it into a countable union of single element sets (singletons). The box-counting dimension is not countably stable. For example, the box-counting dimension of the rational numbers contained in the unit interval is $1$ because we require approximately $1/\delta$ intervals of length $\delta$ to cover the set. 

This discrepancy is due to the fact that Hausdorff dimension treats covers that have diameter in $[0, \delta]$, while box-counting dimension restricts itself to covers with elements of strictly diameter $\delta$. Another notable example is that the box-counting dimension of the sequence $(1/k)_{k=1}^{\infty}$ is one half. This is undesirable because this sequence is not what most mathematicians and scientists mean when they speak of fractals. 

In more than one sense, Hausdorff dimension and box-counting dimension are opposites. The Hausdorff dimension is a well behaved mathematical object that is impractical to compute directly. The box-counting dimension fails to satisfy some ``nice" properties, but has a practical utility. In the next section, we present a class of sets for which the two notions of dimension agree.

\section{Iterated Function Systems}\label{sec:IFS}

In this section, we cover facts about iterated function systems which we refer back to throughout the later chapters. It is here that we provide a mathematical formulation of the concept of a shape being made up of smaller versions of itself. 
 
\begin{definition} 
Let $(X, d)$ be a metric space. We call a function $f: X \rightarrow X$ a \textit{contraction on $(X, d)$} if there exists $c\in[0, 1)$, called the \textit{contraction coefficient}, such that for every pair $x_{1}, x_{2}\in X$, 
\begin{equation} \label{eq:contractDef}
d(f(x_{1}), f(x_{2})) \leq cd(x_{1}, x_{2}).
\end{equation}
We call a contraction a \textit{similarity} if we always have equality in (\ref{eq:contractDef}). 
\end{definition}

\begin{example}
Let $c\in(-1, 1)$ and let $r$ be any real number. The simplest contractions on $\mathbb{R}$ are the functions $f(x) = cx +r$. More generally, if $v\in\mathbb{R}^{n}$, the affine transformation $g(x) = cx + v$ is a similarity on $\mathbb{R}^{n}$ with respect to Euclidean distance. 
\end{example}

\begin{definition} 
Let $(X, d)$ be a metric space. We call a finite set of contractions on $(X, d)$ an \textit{iterated function system defined on $(X, d)$}. 
\end{definition}

We use the abbreviation IFS for iterated function system and often omit mention of the metric space that a contraction or set of contractions are defined on when there is no ambiguity. The Banach fixed point theorem states that, given a contraction $f$ defined on a nonempty complete metric space $X$, there exists a unique element $x_{0}\in X$ such that $f(x_{0}) = x_{0}$ (\cite{DD02}, 11.1.6). Hutchinson showed that finite collections of contractions on a complete metric space have fixed points at the level of compact sets \cite{H81}. A precise description is given by the following theorem. 

\begin{theorem} \label{thm:hutchinson} [J.E. Hutchinson, \cite{H81}, theorem 1] 
Let $\mathcal{F}$ be an IFS defined on a complete metric space $X$. There exists a unique nonempty closed and bounded set $A\subset X$ such that 
\begin{equation}\label{eq:hutchOp}
\bigcup\limits_{f\in\mathcal{F}}f(A) = A.
\end{equation}
\end{theorem}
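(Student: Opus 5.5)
The plan is to realize the attractor as the fixed point of the Hutchinson operator on a suitable complete metric space of sets and to apply the Banach fixed point theorem there. Let $\mathcal{K}$ denote the collection of nonempty closed and bounded subsets of $X$, equipped with the Hausdorff distance
\begin{equation*}
d_{H}(E, F) := \max\Big\{\sup_{x\in E}\inf_{y\in F}d(x,y),\ \sup_{y\in F}\inf_{x\in E}d(x,y)\Big\}.
\end{equation*}
Define $\Phi(E) := \bigcup_{f\in\mathcal{F}}\overline{f(E)}$. Since $\mathcal{F}$ is finite and each $f$ is Lipschitz, $\Phi(E)$ is again nonempty, closed and bounded, so $\Phi:\mathcal{K}\to\mathcal{K}$.

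The steps, in order, are as follows.
\begin{enumerate}
\item[(1)] Verify that $d_{H}$ is a metric on $\mathcal{K}$. Closedness is what gives $d_{H}(E,F)=0\Rightarrow E=F$, and boundedness gives finiteness.
\item[(2)] Show that $\Phi$ is a contraction with coefficient $c := \max_{f\in\mathcal{F}} c_{f} < 1$. This uses two elementary facts: $d_{H}\big(\bigcup_{i} E_{i}, \bigcup_{i} F_{i}\big) \leq \max_{i} d_{H}(E_{i}, F_{i})$, and $d_{H}\big(\overline{f(E)}, \overline{f(F)}\big) \leq c_{f}\, d_{H}(E,F)$.
\item[(3)] Prove that $(\mathcal{K}, d_{H})$ is complete.
\item[(4)] Invoke the Banach fixed point theorem (\cite{DD02}, 11.1.6). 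This gives a unique $A\in\mathcal{K}$ with $\Phi(A)=A$.
\item[(5)] Remove the closures, so that $A$ satisfies (\ref{eq:hutchOp}) exactly.
\end{enumerate}

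For step (5), if $X$ is proper (as $\mathbb{R}^{n}$ is), then $A$ is compact, each $f(A)$ is compact, and the closures are superfluous. In general I would instead run the whole argument on the subspace of nonempty compact sets. That subspace is closed in $\mathcal{K}$, and $\Phi$ preserves it because continuous images and finite unions of compact sets are compact. This yields a fixed point satisfying (\ref{eq:hutchOp}) literally. Uniqueness then holds among closed bounded sets satisfying (\ref{eq:hutchOp}), since any such set is a fixed point of $\Phi$ on $\mathcal{K}$.

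The main obstacle is the completeness in step (3). Take a $d_{H}$-Cauchy sequence $(E_{k})$ and pass to a subsequence with $d_{H}(E_{k},E_{k+1})<2^{-k}$. Define the candidate limit $E := \bigcap_{m}\overline{\bigcup_{k\geq m}E_{k}}$.

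To show $E$ is nonempty, and in fact that every point of $E_{m}$ lies within $2^{-m+1}$ of $E$, I would build chains $x_{m}\in E_{m}, x_{m+1}\in E_{m+1},\ldots$ with $d(x_{k},x_{k+1})<2^{-k}$. Such a chain is Cauchy in $X$, so it converges by completeness of $X$, and its limit lies in $E$.

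Conversely, every point of $E$ is a limit of points from the tails $E_{k}$, $k\ge m$, so it lies within $2^{-m+1}$ of $E_{m}$.

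Together these give $d_{H}(E_{m},E)\to 0$. The set $E$ is closed by construction and bounded since it lies near $E_{m}$. In the compact-set version, one must also check that $E$ is totally bounded, which follows because each $E_{m}$ is and $E$ lies within $2^{-m+1}$ of $E_{m}$.
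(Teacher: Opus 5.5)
Your proposal is correct and takes the same route the paper indicates. The paper, citing Hutchinson, realizes the attractor as the Banach fixed point of the operator $K\mapsto\bigcup_{f\in\mathcal{F}}f(K)$ on the nonempty compact sets under the Hausdorff distance. You first work on closed bounded sets with the closure-augmented operator $\Phi$ and then restrict to compact sets. That is a worthwhile refinement, because it is what gives uniqueness among closed and bounded sets as the statement claims, a point the paper's compact-set sketch leaves implicit.
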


The idea behind its proof is that the set $A$ is the fixed point of the contraction $K \mapsto \bigcup\limits_{f\in\mathcal{F}}f(K)$ defined on the space of nonempty compact subsets of $X$ equipped with the Hausdorff distance. We make use of Hausdorff distance in Chapter~\ref{chp:discMultInv} (Definition~\ref{def:hausDist}).

\begin{definition} 
Let $\mathcal{F}$ be an IFS defined on a complete metric space $(X, d)$. We call the unique set satisfying (\ref{eq:hutchOp}) the attractor of $\mathcal{F}$. We call the attractor a \textit{self-similar set} or simply \textit{self-similar} if all the contractions in $\mathcal{F}$ are similarities. If $X \subset \mathbb{R}^{n}$ and $\mathcal{F}$ exclusively contains affine transformations, then we call the attractor a \textit{self-affine set} or simply \textit{self-affine}. Both $\mathcal{F}$ and its attractor are called \textit{homogeneous} if all the contraction coefficients of the elements of $\mathcal{F}$ are equal. 
\end{definition}

\begin{remark}
An IFS has a unique attractor, but it is not true that the IFS is unique to the attractor. Observe that for any IFS $\mathcal{F}$, we can define the IFS $\mathcal{G} = \{h \circ f : h, f\in\mathcal{F}\}$. For any $h\in\mathcal{F}$, it follows from (\ref{eq:hutchOp}) that $h(A) = \bigcup\limits_{f\in\mathcal{F}}h(f(A))$. Therefore $\bigcup\limits_{g\in\mathcal{G}}g(A) = \bigcup\limits_{h\in\mathcal{F}}(\bigcup\limits_{f\in\mathcal{F}}h(f(A))) = A$. 
\end{remark}

We present some examples. 

\begin{example}\label{ex:selfSimCantor}
The set of numbers of the form $\sum\limits_{j=1}^{\infty}d_{j}3^{-j}$ where $d_{j}\in\{0, 2\}$ is the attractor of the IFS $\{f_{0}(x) = x/3,  f_{2}(x) = (x+2)/3\}$. The attractor is the classical middle third Cantor set. 
\end{example} 

\begin{example}
Let $x_{0}$ be a fixed real number and consider the family of contractions $f_{c}: \mathbb{R}\rightarrow\mathbb{R}$ given by $f_{c}(x) = c(x-x_{0}) + x_{0}$ where $c\in(0, 1)$. The set $\{x_{0}\}$ is the attractor of $\{f_{c}\}$ for every $c\in(0, 1)$.  
\end{example}

Although subsets of a metric space containing only one element is self-similar, finite sets with more than one element are never self-similar. Suppose that $A$ is a finite set contained in a complete metric space $(X, d)$ and that $A$ is the attractor of a finite collection of similarities $\mathcal{F}$. Let $f$ be an element of $\mathcal{F}$ and suppose $A$ contains two distinct elements, $x$ and $y$. The function $f$ is injective and thus $d(f^{m}(x), f^{m}(y)) > 0$ for each $m$. The notation $f^{m}$ denotes the $m$-fold composition of $f$ with itself. By assumption $x$ and $y$ are elements of $A$. This means that $f^{m}(x)$ and $f^{m}(y)$ are element of $A$ for each $m$. In particular, because $A$ is finite (but has at least two elements), the distance between $f^{m}(x)$ and $f^{m}(y)$ is bounded below by $\eta = \min\limits_{u \neq v\in A}d(u, v) > 0$. This is a contradiction because there exists a sufficiently large $m_{0}$ such that $d(f^{m_{0}}(x), f^{m_{0}}(y)) \leq c^{m_{0}}d(x, y) < \eta$ where $c$ is the contraction coefficient of $f$. 

Self-similar sets are the ``nice" sets of fractal geometry. We are guaranteed that the box-counting dimension of a self-similar set exists and is equal to its Hausdorff dimension (\cite{F97}, corollary 3.3). There are self-similar sets that are particularly nice. These are the self-similar sets which are generated by an IFS that satisfies a separation condition. While there are a number of separation conditions that an IFS can satisfy, in this thesis we discuss arguably the two most popular: the strong separation condition (SSC) and the open set condition (OSC). 

\begin{definition}\label{def:strongSep} 
Suppose $A$ is the attractor of an IFS given by $\mathcal{F} = \{f_{i}\}_{i=1}^{N}$. We say that $\mathcal{F}$ satisfies the \textit{strong separation condition} (SSC) if the images $f_{i_{1}}(A)$ and $f_{i_{2}}(A)$ are disjoint for every distinct pair $1 \leq i_{1}, i_{2} \leq N$. 
\end{definition}

\begin{definition}\label{def:openSep} 
Suppose that $\mathcal{F} = \{f_{i}\}_{i=1}^{N}$ is an IFS. We say that $\mathcal{F}$ satisfies the \textit{open set condition} (OSC) if there exists an open set $\mathcal{O}$ such that images $f_{i_{1}}(\mathcal{O})$ and $f_{i_{2}}(\mathcal{O})$ are disjoint for every distinct pair $1 \leq i_{1}, i_{2} \leq N$ and $\bigcup\limits_{i=1}^{N}f_{i}(\mathcal{O}) \subset \mathcal{O}$. 
\end{definition}

The strong separation condition implies the open set condition because, for a sufficiently small $\varepsilon>0$, we could take the set of all points within $\varepsilon$ distance of an element of the attractor as an open set that satisfies the definition. The converse is not true. For example, the interval $[0, 1]$ is the attractor of the functions $\{g_{0}(x) = x/2, g_{1}(x) = (x+1)/2\}$. The union $g_{0}((0, 1)) \cup g_{1}((0, 1))$ is equal to the disjoint union $(0, 1/2) \cup (1/2, 1) \subset [0, 1]$, but the number $1/2$ is an element of $g_{0}([0, 1]) \cap g_{1}([0, 1])$. 

The reason these conditions are desirable is because they imply that the Hausdorff dimension of the self-similar set satisfies a simple equation involving the contraction coefficients of the functions in the IFS. 

\begin{definition} 
Let $\mathcal{F}$ be finite collection of similarities and let $c_{f}$ denote the contraction coefficient of $f\in\mathcal{F}$. We call the unique $s>0$ satisfying the equation 
\begin{equation}
\sum_{f\in\mathcal{F}}c_{f}^{s} = 1
\end{equation}
the \textit{similarity dimension}. 
\end{definition}

To see that this definition is well-defined, first observe that the function $h(t) = \sum\limits_{f\in\mathcal{F}}c_{f}^{t}$ is continuous on $[0, \infty)$. Since $h(0) = |\mathcal{F}| \geq 1$ and $\lim\limits_{t\rightarrow\infty}h(t) = 0$, there exists $s\in[0, \infty)$ such that $h(s) = 1$ by the intermediate value theorem. The solution is unique because $h(t)$ is strictly decreasing on $[0, \infty)$. 

\begin{theorem}[K. Falconer, \cite{F97}, corollary 3.3] \label{thm:SepSimDim} 
Suppose $A$ is the attractor of the set of similarities $\mathcal{F} = \{f_{i}\}_{i=1}^{N}$. For each $i$, let $r_{i}$ be the contraction ratio of $f_{i}$. If $\mathcal{F}$ satisfies the OSC, then $\dim_{B}A = \dim_{H}A = s$ where $s$ is the similarity dimension of $\mathcal{F}$. 
\end{theorem}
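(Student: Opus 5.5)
The plan is the classical mass-distribution argument for the lower bound, paired with a covering argument for the upper bound. Throughout, let $s$ be the similarity dimension and work in $\mathbb{R}^{n}$, where the open set condition has its usual force. Index the cylinders $f_{\mathbf{i}}(A)=f_{i_{1}}\circ\cdots\circ f_{i_{k}}(A)$ by finite words $\mathbf{i}=i_{1}\cdots i_{k}$. Put $r_{\mathbf{i}}=r_{i_{1}}\cdots r_{i_{k}}$ and $r_{\min}=\min_{i} r_{i}$.

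\textbf{Upper bound.} For $0<\delta<1$, let $Q_{\delta}$ be the set of words $\mathbf{i}$ with $r_{\mathbf{i}}\le\delta<r_{\mathbf{i}^{-}}$, where $\mathbf{i}^{-}$ is $\mathbf{i}$ with its last letter deleted. This stopping set is a finite cut of the coding tree, so iterating (\ref{eq:hutchOp}) gives $A=\bigcup_{\mathbf{i}\in Q_{\delta}}f_{\mathbf{i}}(A)$. The identity $\sum_{i}r_{i}^{s}=1$ propagates down the tree to give $\sum_{\mathbf{i}\in Q_{\delta}}r_{\mathbf{i}}^{s}=1$. Each $\mathbf{i}\in Q_{\delta}$ has $r_{\mathbf{i}}>r_{\min}\delta$, so
\[
|Q_{\delta}|\le(r_{\min}\delta)^{-s}.
\]
Each $f_{\mathbf{i}}(A)$ has diameter $r_{\mathbf{i}}\diam A\le\delta\diam A$. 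Hence $N_{\delta\diam A}(A)\le C\delta^{-s}$, and therefore $\overline{\dim}_{B}A\le s$. Since $\dim_{H}\le\underline{\dim}_{B}\le\overline{\dim}_{B}$ always holds (the first inequality because $\mathcal{H}^{t}_{\delta}(A)\le N_{\delta}(A)\delta^{t}$), it remains to prove $\dim_{H}A\ge s$.

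\textbf{Lower bound.} Let $\mu$ be the image on $A$ of the Bernoulli measure on $\{1,\dots,N\}^{\mathbb{N}}$ with weights $r_{i}^{s}$, pushed forward by the coding map. Then $\mu(f_{\mathbf{i}}(A))\ge r_{\mathbf{i}}^{s}$ and $\mu(A)=1$. The key claim is that there is a constant $M$ such that every ball $B(x,\rho)$ with $\rho<1$ meets at most $M$ cylinders $f_{\mathbf{i}}(A)$ with $\mathbf{i}\in Q_{\rho}$. Granting this, $\mu(B(x,\rho))\le\sum r_{\mathbf{i}}^{s}\le M\rho^{s}$. The mass distribution principle then gives $\mathcal{H}^{s}(A)\ge \mu(A)/(M2^{s})>0$, and so $\dim_{H}A\ge s$.

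\textbf{The counting claim.} This is the main obstacle, and it is the only place the OSC is used. Let $\mathcal{O}$ be the open set from Definition~\ref{def:openSep}. The first step is to show $A\subset\overline{\mathcal{O}}$. This holds because the map $K\mapsto\bigcup f_{i}(K)$ sends $\overline{\mathcal{O}}$ into itself, and its iterates starting from the compact set $\overline{\mathcal{O}}$ converge to $A$; if $\mathcal{O}$ is unbounded, I would first intersect it with a large ball, which is harmless. By iterating the disjointness and the inclusion $\bigcup f_{i}(\mathcal{O})\subset\mathcal{O}$, the sets $f_{\mathbf{i}}(\mathcal{O})$ for $\mathbf{i}\in Q_{\rho}$ are pairwise disjoint. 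This uses that the words in $Q_{\rho}$ are mutually incomparable (none is a prefix of another): two such words first differ at some letter, and disjointness at that level persists after composing with the common prefix.

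Now fix a ball $B\subset\mathcal{O}$ of radius $a_{1}$, and suppose $\mathcal{O}\subset B(0,a_{2})$. Each $f_{\mathbf{i}}(\mathcal{O})$ with $\mathbf{i}\in Q_{\rho}$ then contains a ball of radius $r_{\min}\rho a_{1}$ and lies in a ball of radius $\rho a_{2}$. If $f_{\mathbf{i}}(A)$ meets $B(x,\rho)$, then $f_{\mathbf{i}}(\overline{\mathcal{O}})\subset B(x,\rho(1+2a_{2}))$. Comparing Lebesgue measure of the disjoint small balls with the measure of this enclosing ball bounds their number by
\[
M=\frac{(1+2a_{2})^{n}}{(r_{\min}a_{1})^{n}}.
\]
Care is needed at two points. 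First, one must check that the similarities are affine, so that Lebesgue measure scales by $r_{\mathbf{i}}^{n}$; this holds because every similarity of $\mathbb{R}^{n}$ is affine. Second, the disjointness of the $f_{\mathbf{i}}(\mathcal{O})$ has to be correctly inherited through compositions.
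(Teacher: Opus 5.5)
Your argument is correct. The paper gives no proof of this statement; it only cites Falconer. What you have written is the standard Hutchinson argument (as in Falconer's \emph{Fractal Geometry}, Theorem 9.3): a cut-set covering for $\overline{\dim}_{B}A\le s$, and for $\dim_{H}A\ge s$ the mass distribution principle combined with the OSC volume-packing count. There is no alternative route in the paper to compare it with.

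A few points deserve a line each in a write-up.
\begin{itemize}
\item The estimate $\mu(B(x,\rho))\le\sum r_{\mathbf{i}}^{s}$ has to be justified in coding space. Use $\pi^{-1}(B(x,\rho))\subset\bigcup\{[\mathbf{i}]:\mathbf{i}\in Q_{\rho},\ f_{\mathbf{i}}(A)\cap B(x,\rho)\neq\emptyset\}$, where the cylinder $[\mathbf{i}]$ has Bernoulli mass exactly $r_{\mathbf{i}}^{s}$. The bound you record, $\mu(f_{\mathbf{i}}(A))\ge r_{\mathbf{i}}^{s}$, goes the wrong way for this step.
\item Truncating an unbounded $\mathcal{O}$ is harmless for a concrete reason. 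We have $f_{i}(B(0,R))\subset B(0,R)$ once $R\ge\max_{i}\norm{f_{i}(0)}/(1-r_{i})$. So $\mathcal{O}\cap B(0,R)$ still satisfies the OSC, and it is nonempty for large $R$.
\item You implicitly use that $\mathcal{O}\neq\emptyset$ and that every $r_{i}>0$. The first lets you place a ball inside $\mathcal{O}$. The second gives injectivity of $f_{\mathbf{w}}$ and $r_{\min}>0$. Neither is written into the paper's Definition~\ref{def:openSep} or its definition of contraction coefficient. The first is genuinely needed: with $\mathcal{O}=\emptyset$ the condition as stated is vacuous and the theorem fails.
\end{itemize}
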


\begin{example} \label{ex:simDimCantor}
The similarity dimension of the IFS $\{f_{0}(x) = x/3, f_{2}(x) = (x+2)/3\}$ is the value $s$ such that $3^{-s} + 3^{-s} = 1$. This is $\log(2)/\log(3)$. Since this IFS satisfies the strong separation condition, this quantity is the Hausdorff dimension of its attractor, the middle third Cantor set. 
\end{example}

Another well-known self-similar set and fractal is the Sierpinski triangle that is depicted in Figure~\ref{fig:SierpinskiTriangle}. 

The Sierpinski triangle is the attractor of the IFS containing the functions $f_{d}: \mathbb{R}^{2} \rightarrow \mathbb{R}^{2}$ given by $f_{d}(x) = B(x+d)$ where $B = \begin{bmatrix}1/2 & 0 \\ 0 & 1/2\end{bmatrix}$ and $d\in\left\{\begin{bmatrix}0 \\ 0 \end{bmatrix},\begin{bmatrix}1/2 \\ 0 \end{bmatrix}, \begin{bmatrix}1/4 \\ \sqrt{3}/4 \end{bmatrix}\right\}$.\\

There is a natural way of relating the attractor to a symbolic space. This is accomplished by viewing the IFS $\mathcal{F}$ as a finite alphabet. 

\begin{definition} 
Let $\mathcal{F} = \{f_{1}, f_{2}, \ldots, f_{n}\}$ be an IFS defined on a complete metric space $X$. Let $A$ denote the attractor of $\mathcal{F}$ and $\Sigma := \{1, 2,\ldots, n\}^{\mathbb{N}}$. The map $\pi_{\mathcal{F}}:\Sigma\to A$ given by $\pi_{\mathcal{F}}(i_{j})_{j=1}^{\infty} = y$
where $y$ is the unique element of 
\begin{equation}
\bigcap_{k=1}^{\infty}(f_{i_{1}}\circ f_{i_{2}}\circ \cdots \circ f_{i_{k}})(A)
\end{equation}
is called the \textit{coding map} and $\Sigma$ is referred to as the \textit{coding space}.
\end{definition}

\begin{remark}
In \cite{H81}, it is demonstrated that the attractor is compact. The images in the intersection are then compact because the compositions are continuous functions. The unique element $y$ then exists by an application of Cantor's Intersection Theorem (see \cite{DD02}). 
\end{remark}

In certain metric spaces, such as Euclidean space, we can understand the form of the elements of an attractor using the following reformulation of the coding map. 

\begin{lemma}\label{lem:codingMapForm} 
Suppose $\mathcal{F} = \{f_{1}, f_{2}, \ldots, f_{n}\}$ is an IFS on a metric space $X$. For every $x\in X$, 
\begin{equation}
\pi_{\mathcal{F}}(i_{j})_{j=1}^{\infty} = \lim_{k\rightarrow\infty} (f_{i_{1}}\circ f_{i_{2}} \circ \cdots\circ f_{i_{k}})(x)
\end{equation}
for any $(i_{j})_{j=1}^{\infty}\in\Sigma$. 
\end{lemma}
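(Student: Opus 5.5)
Fix $x\in X$ and a sequence $(i_{j})_{j=1}^{\infty}\in\Sigma$, and write $g_{k} := f_{i_{1}}\circ\cdots\circ f_{i_{k}}$ and $y := \pi_{\mathcal{F}}(i_{j})_{j=1}^{\infty}$. The plan is to show $d(g_{k}(x), y)\to 0$ by comparing $g_{k}(x)$ with a point of $g_{k}(A)$ and using the fact that $g_{k}(A)$ shrinks onto $y$. As in the definition of the coding map, I take $X$ to be complete, so that the attractor $A$ of Theorem~\ref{thm:hutchinson} exists; by \cite{H81} it is compact.

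\textbf{Step 1: contraction bounds for the compositions.} Let $c := \max_{i} c_{i} < 1$, where $c_{i}$ is the contraction coefficient of $f_{i}$. Induction on $k$ gives
\begin{equation*}
d(g_{k}(u), g_{k}(v)) \leq c^{k} d(u, v) \quad \text{for all } u, v \in X.
\end{equation*}
It follows that $\diam g_{k}(A) \leq c^{k}\diam A$, which is finite because $A$ is bounded.

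\textbf{Step 2: $y$ lies in every $g_{k}(A)$.} The sets $g_{k}(A)$ are nested. Indeed, by (\ref{eq:hutchOp}) we have $f_{i_{k+1}}(A)\subset A$, so
\begin{equation*}
g_{k+1}(A) = g_{k}\bigl(f_{i_{k+1}}(A)\bigr) \subset g_{k}(A).
\end{equation*}
By definition, $y$ belongs to the intersection of all the $g_{k}(A)$, hence $y\in g_{k}(A)$ for every $k$. Writing $y = g_{k}(a_{k})$ with some $a_{k}\in A$ is not actually needed; it is enough that both $y$ and $g_{k}(a)$, for any fixed $a\in A$, lie in $g_{k}(A)$.

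\textbf{Step 3: the estimate.} Fix any $a\in A$. By the triangle inequality and Steps 1 and 2,
\begin{equation*}
d(g_{k}(x), y) \leq d(g_{k}(x), g_{k}(a)) + d(g_{k}(a), y) \leq c^{k} d(x, a) + c^{k}\diam A,
\end{equation*}
which tends to $0$ as $k\to\infty$. Hence the limit exists and equals $y$, independently of $x$.

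There is no real obstacle here. The only points needing care are the nesting in Step 2 and the boundedness of $A$, both of which come directly from Hutchinson's theorem.
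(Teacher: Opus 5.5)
Your proof is correct and takes essentially the same route as the paper: compare $g_k(x)$ with a point of $g_k(A)$, which contains $y$, and apply the contraction estimate for the $k$-fold composition. The only difference is cosmetic. The paper writes $y = g_k(x_k)$ with $x_k\in A$ and bounds $d(g_k(x), g_k(x_k))$ by $R\prod_{j\le k} c_{i_j}$, where $R=\sup_{a\in A} d(a,x)$, whereas you fix one $a\in A$ and add the term $\diam g_k(A)\le c^k\diam A$.
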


\begin{proof} 
 Fix $x\in X$ and let $y$ denote $\pi_{\mathcal{F}}(i_{j})_{j=1}^{\infty}$. Observe that the quantity $R := \sup\limits_{a\in A}d(a, x)$ is finite because $A$ is bounded. 

Let $\varepsilon > 0$. There exists $K$ such that  $\prod\limits_{j=1}^{K} c_{i_{j}} < \varepsilon/R$. For all $k$, $y = (f_{i_{1}}\circ f_{i_{2}} \circ \cdots\circ f_{i_{k}})(x_{k})$ for some $x_{k}\in A$. If $k\geq K$, then
\begin{equation}
d((f_{i_{1}}\circ f_{i_{2}} \circ \cdots\circ f_{i_{k}})(x), y) \leq \bigg(\prod\limits_{j=1}^{K}c_{i_{j}}\bigg) d(x_{k}, y) \leq  R\bigg(\prod\limits_{j=1}^{K}c_{i_{j}}\bigg) < \varepsilon. 
\end{equation}
\end{proof}

We end with a convenient fact about translations of attractors that we use frequently in Chapter~\ref{chp:highDimSEP} and Chapter~\ref{chp:limFormula}. Let $M_{n}(\mathbb{R})$ denote the set of $n$ by $n$ matrices with real entries. 

\begin{lemma}\label{lem:attractorShift} 
Let $u$ be an element of $\mathbb{R}^{n}$. If $S\subset\mathbb{R}^{n}$ is the attractor of the IFS given by $\{f_{i}(x) = r_{i}x + v_{i}\}_{i=1}^{N}$ where $r_{i} \in M_{n}(\mathbb{R})$ and $v_{i}\in\mathbb{R}^{n}$, then the translation $S + u$ is the attractor of the IFS $\{g_{i}(x) = f_{i}(x) + (1 - r_{i})u\}_{i=1}^{N}$. 
\end{lemma}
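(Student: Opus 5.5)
The plan is to use the uniqueness part of Theorem~\ref{thm:hutchinson}. It suffices to show that $S+u$ is nonempty, closed and bounded, and that it satisfies the fixed-point equation (\ref{eq:hutchOp}) for the family $\{g_{i}\}_{i=1}^{N}$. Here $1$ in $(1-r_{i})$ is read as the identity matrix. The first thing to record is that each $g_{i}$ is a contraction with the same coefficient as $f_{i}$. This holds because $g_{i}(x)-g_{i}(y) = r_{i}(x-y) = f_{i}(x)-f_{i}(y)$, so $\{g_{i}\}$ is an IFS on $\mathbb{R}^{n}$ and has a unique attractor. Nonemptiness, closedness and boundedness of $S+u$ are immediate, since translation is an isometry of $\mathbb{R}^{n}$.

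The key step is a conjugation identity. Let $\tau(x) = x+u$. Then for every $x$,
\begin{equation*}
g_{i}(\tau(x)) = r_{i}(x+u) + v_{i} + u - r_{i}u = r_{i}x + v_{i} + u = \tau(f_{i}(x)),
\end{equation*}
so $g_{i} = \tau\circ f_{i}\circ\tau^{-1}$. Applying this to $S$ gives $g_{i}(S+u) = f_{i}(S)+u$ for each $i$. Taking the union over $i$ and using $\bigcup_{i} f_{i}(S) = S$, we obtain $\bigcup_{i} g_{i}(S+u) = S+u$. Uniqueness in Theorem~\ref{thm:hutchinson} then identifies $S+u$ as the attractor of $\{g_{i}\}$.

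There is no real obstacle. The only point needing care is the bookkeeping in the conjugation identity: interpreting $(1-r_{i})u$ as $(I-r_{i})u$ so that the $r_{i}u$ terms cancel.
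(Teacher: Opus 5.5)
Your proposal is correct and is essentially the paper's proof. Both show that $S+u$ is nonempty and compact and that $g_{i}(S+u) = f_{i}(S)+u$, so the union over $i$ gives back $S+u$ and uniqueness of the attractor finishes the argument; your extra check that each $g_{i}$ is a contraction, via $g_{i}(x)-g_{i}(y) = r_{i}(x-y)$, is a detail the paper leaves implicit.
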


\begin{proof}
Firstly $S + u$ is a nonempty compact set because $S$ is a nonempty compact set. Since attractors are unique we need only observe that $\bigcup\limits_{i=1}^{N}g_{i}(S + u) = \bigcup\limits_{i=1}^{N}(f_{i}(S) + u) = \bigg(\bigcup\limits_{i=1}^{N}f_{i}(S)\bigg) + u = S + u$. 
\end{proof}

\chapter{Self-Affinity and Strong Eventual Periodicity}\label{chp:highDimSEP}

In this chapter, we begin our examination of the intersection $T\cap(T+\alpha)$ when $T\subset\mathbb{R}^{n}$ is an attractor of an IFS and $\alpha$ is an element of $\mathbb{R}^{n}$. We specifically consider IFSs that contain affine transformations that share a common linear factor. This is the class that has received the most study on the real line and so it is natural for it to be the first type to extend to $\mathbb{R}^{n}$. Let us introduce the class of matrices that serve as the linear factor of the affine transformations. We use the notation $M_{n}(S)$ to denote the set of $n$ by $n$ matrices with entries in $S$. 

\section{Intersections Involving Self-Affine Sets}

\begin{definition} 
Let $A$ be an element of $M_{n}(\mathbb{R})$. The matrix $A$ is called an \textit{expanding matrix} if all its eigenvalues have modulus greater than one. 
\end{definition}

An expanding matrix $A$ is necessarily invertible since it does not have an eigenvalue equal to zero. With respect to a choice of norm on $\mathbb{R}^{n}$, multiplication by $A^{-1}$ is a contraction. In particular, given a fixed $d\in\mathbb{R}^{n}$, the function $f(x) = A^{-1}(x+d)$ is a contraction. It follows from Theorem~\ref{thm:hutchinson} that finite collections of affine maps of this kind generate an attractor. We provide the construction of the norm from \cite{LW96b} with the desired property. 

\begin{lemma}\label{lem:contractingNorm}
Suppose that $A\in M_{n}(\mathbb{R})$ is an expanding matrix and let $r = \min_{\lambda\in\sigma(A)}|\lambda|$. For any $\rho\in(1, r)$, the function $f:\mathbb{R}^{n} \rightarrow \mathbb{R}^{n}$ given by $f(x) = A^{-1}x$ is a contraction with respect to the norm $\norm{x}_{A} := \sum\limits_{j=1}^{\infty} \rho^{j}\norm{A^{-j}x}$ where $\norm{\cdot}$ is the Euclidean norm. 
\end{lemma}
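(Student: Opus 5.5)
The argument has three parts. First, the series defining $\norm{\cdot}_{A}$ converges for every $x$. Second, it defines a norm. Third, multiplication by $A^{-1}$ shrinks it by the factor $\rho^{-1}<1$.

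For convergence, I will use Gelfand's spectral radius formula applied to $A^{-1}$. The eigenvalues of $A^{-1}$ are the reciprocals of those of $A$, so its spectral radius is $1/r$, and $\lim_{j\to\infty}\norm{A^{-j}}^{1/j} = 1/r$ in the operator norm. Since $\rho<r$, I can fix $\theta$ with $\rho/r<\theta<1$. Then for all sufficiently large $j$ we have $\norm{A^{-j}}\leq (\theta/\rho)^{j}$, and hence $\rho^{j}\norm{A^{-j}x}\leq \theta^{j}\norm{x}$. So the series is dominated by a convergent geometric series. This gives $\norm{x}_{A}\leq K\norm{x}$ for some constant $K$, and the constant absorbs the finitely many small $j$. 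This is the only step with real content, and the main obstacle is stating the spectral radius formula cleanly. If I want to avoid it, the alternative is to put $A^{-1}$ in Jordan form and bound $\norm{A^{-j}}\leq C j^{n-1}r^{-j}$ directly.

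For the norm axioms, each term $\rho^{j}\norm{A^{-j}x}$ is a seminorm, since $A^{-j}$ is linear and $\norm{\cdot}$ is a norm. Absolute homogeneity and the triangle inequality therefore hold termwise and pass to the convergent sum. For definiteness: if $\norm{x}_{A}=0$, then in particular $\norm{A^{-1}x}=0$, and invertibility of $A$ forces $x=0$.

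For the contraction, I compute directly:
\begin{equation*}
\norm{A^{-1}x}_{A}=\sum_{j=1}^{\infty}\rho^{j}\norm{A^{-(j+1)}x}=\rho^{-1}\sum_{j=2}^{\infty}\rho^{j}\norm{A^{-j}x}=\rho^{-1}\big(\norm{x}_{A}-\rho\norm{A^{-1}x}\big)\leq \rho^{-1}\norm{x}_{A}.
\end{equation*}
Applying this to $x-y$ and using linearity gives $\norm{f(x)-f(y)}_{A}\leq \rho^{-1}\norm{x-y}_{A}$. Since $\rho^{-1}\in(0,1)$, the map $f$ is a contraction. Finally, $\norm{\cdot}_{A}$ is equivalent to the Euclidean norm, because all norms on $\mathbb{R}^{n}$ are equivalent. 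So $\mathbb{R}^{n}$ with this norm is complete, and Theorem~\ref{thm:hutchinson} applies to the affine maps $A^{-1}(x+d)$.
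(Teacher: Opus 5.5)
Your proposal is correct and follows the paper's proof essentially step for step: the spectral radius formula for $A^{-1}$ dominates the series by a geometric one, and the index-shift computation then gives $\norm{A^{-1}x}_{A}\leq\rho^{-1}\norm{x}_{A}$. Your version is slightly more careful than the paper's in two places: you keep the factor $\norm{x}$ in the geometric bound, and you verify definiteness explicitly via invertibility of $A$ rather than asserting that the norm properties are inherited from the Euclidean norm.
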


\begin{proof}
First we argue that $||x||_{A}$ converges for all $x\in\mathbb{R}^{n}$. Let $\norm{\cdot}_{\text{op}}$ be the operator norm on the space of linear operators on $\mathbb{R}^{n}$ with respect to the Euclidean norm. For any $A\in M_{n}(\mathbb{R})$, $\norm{A}_{op} = \sup_{\norm{u}\leq1}\norm{Au}$. By the spectral radius formula, the limit of $\norm{A^{-k}}_{\text{op}}^{1/k}$ as $k$ tends to infinity is $1/r$ (\cite{Z93}, theorem 5.5). Since $1/r < 1/\rho$, there exists $\delta \in (0, 1)$ such that $1/r < \delta/\rho$. Using the definition of the existence of the limit, there exists $K$ such that, $\norm{A^{-k}} < \delta^{k}/\rho^{k}$ for all $k \geq K$. It follows that
\begin{equation}
\sum_{k=K}^{\infty}\rho^{k}\norm{A^{-k}x} \leq \sum_{k=K}^{\infty}\rho^{k}\norm{A^{-k}}_{\text{op}}\norm{x} < \sum_{k=K}^{\infty}\delta^{k} < \infty. 
\end{equation}
since the series of powers of $\delta$ is a convergent geometric series. The function $\norm{x}_{A}$ inherits the properties of a norm from the Euclidean norm. 
To see that $f(x) = A^{-1}x$ is a contraction on the normed space $(\mathbb{R}^{n}, \norm{\cdot}_{A})$, observe that
$\norm{A^{-1}x}_{A} = \sum\limits_{j=1}^{\infty} \rho^{j}\norm{A^{-j}(A^{-1}x)} = \sum\limits_{j=2}^{\infty}\rho^{j-1}\norm{A^{-j}x} \leq \rho^{-1}\norm{x}_{A}$. Since $\rho$ was assumed to be greater than one, we have that $\rho^{-1}$ is in $(0, 1)$ and is a valid contraction coefficient. 
\end{proof}

\begin{remark}
Not all expansive matrices on $\mathbb{R}^{n}$ are contractions with respect to the Euclidean norm. Consider the matrix $A = \begin{bmatrix}
\sqrt{2} & 1 \\
0 & \sqrt{2} \\
\end{bmatrix}.$
Given a matrix $B\in M_{n}(\mathbb{R})$, let $\rho(B)$ denote the spectral radius of $B$. We can compute $\norm{A^{-1}}_{\text{op}}$ using the formula $\sqrt{\rho(A^{-t}A^{-1})}$. This yields the value $1$. Therefore, for any $\tau\in(0, 1)$, there exists an element $x$ of the closed unit ball such that $\norm{A^{-1}x} > \tau\norm{x}$. Despite this, since all norms on $\mathbb{R}^{n}$ are equivalent, all sufficiently large powers of an expansive matrix are contractions with respect to the Euclidean norm. 
\end{remark}

\begin{definition} 
Let $A\in M_{n}(\mathbb{R})$ be an expanding matrix and let $D$ be a finite subset of $\mathbb{R}^{n}$. We call the attractor of the iterated function system containing precisely the maps $\phi_{d}(x) = A^{-1}(x + d)$ where $d\in D$ the \textit{self-affine set generated by $(A, D)$}. We denote the set by $T_{A, D}$. 
\end{definition}

\begin{definition} 
Let $A\in M_{n}(\mathbb{R})$ be an expanding matrix and let $D$ be a finite subset of $\mathbb{R}^{n}$. We call the coding map associated with IFS $\{f_{d}(x) = A^{-1}(x+d):d\in D\}$ the \textit{$(A, D)$-coding map} and we denote it be $\pi_{A, D}$. 
\end{definition}

\begin{definition} 
Let $A\in M_{n}(\mathbb{R})$ be an expanding matrix and let $D$ be a finite subset of $\mathbb{R}^{n}$. Let $x$ be an element of $T_{A, D}$.  We call any sequence $(x_{j})_{j=1}^{\infty}\in D^{\mathbb{N}}$ satisfying $\pi_{A, D}(x_{j})_{j=1}^{\infty} = x$ an \textit{$(A, D)$-representation of $x$}. We say that a pair of $(A, D)$-representations, $(x_{j})_{j=1}^{\infty}$ and $(y_{j})_{j=1}^{\infty}$, are \textit{equivalent} if $\pi_{A, D}(x_{j})_{j=1}^{\infty} = \pi_{A, D}(y_{j})_{j=1}^{\infty}$. An $(A, D)$-representation is called \textit{unique} it is only equivalent to itself. 
\end{definition}

\begin{lemma}\label{lem:selfSimForm} 
Let $A\in M_{n}(\mathbb{R})$ be an expanding matrix and let $D$ be a finite subset of $\mathbb{R}^{n}$. The self-affine set generated by $(A, D)$ is equal to $$\left\{\sum_{j=1}^{\infty}A^{-j}d_{j} : d_{j}\in D\right\}.\nonumber$$ 

In particular, if $(x_{j})_{j=1}^{\infty}$ is a sequence of elements of $D$, then $\pi_{A, D}(x_{j})_{j=1}^{\infty} = \sum\limits_{j=1}^{\infty}A^{-j}x_{j}$.
\end{lemma}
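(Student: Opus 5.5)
The plan is to prove the second assertion first, namely the formula for the coding map, and then obtain the set equality from it together with surjectivity of the coding map. Fix a sequence $(x_{j})_{j=1}^{\infty}\in D^{\mathbb{N}}$. By Lemma~\ref{lem:codingMapForm} we may evaluate $\pi_{A, D}(x_{j})_{j=1}^{\infty}$ as the limit of $(\phi_{x_{1}}\circ\cdots\circ\phi_{x_{k}})(x)$ for any convenient starting point, and we choose $x = 0$. A routine induction on $k$ with $\phi_{d}(y) = A^{-1}(y+d)$ gives
\begin{equation}
(\phi_{x_{1}}\circ\phi_{x_{2}}\circ\cdots\circ\phi_{x_{k}})(y) = \sum_{j=1}^{k}A^{-j}x_{j} + A^{-k}y .
\end{equation}
The inductive step is the computation $\phi_{x_{1}}\bigl(\sum_{j=1}^{k-1}A^{-j}x_{j+1} + A^{-(k-1)}y\bigr) = A^{-1}x_{1} + \sum_{j=2}^{k}A^{-j}x_{j} + A^{-k}y$, applied to the tail composition. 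Setting $y = 0$, the partial sums $\sum_{j=1}^{k}A^{-j}x_{j}$ converge to $\pi_{A, D}(x_{j})_{j=1}^{\infty}$, so the series converges and equals the coding map value.

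The one point needing care is that Lemma~\ref{lem:codingMapForm} was phrased for contractions, while $A^{-1}$ need not contract in the Euclidean norm (see the remark after Lemma~\ref{lem:contractingNorm}). We handle this by working throughout in the norm $\norm{\cdot}_{A}$ of Lemma~\ref{lem:contractingNorm}. In that norm each $\phi_{d}$ is a contraction with coefficient $\rho^{-1}$, and $(\mathbb{R}^{n},\norm{\cdot}_{A})$ is complete. Since all norms on $\mathbb{R}^{n}$ are equivalent, the attractor (a compact set, defined via Theorem~\ref{thm:hutchinson}) and limits are the same as in the Euclidean setting. Convergence of the series can also be seen directly, since $\norm{A^{-j}x_{j}}_{A}\le\rho^{-j}\max_{d\in D}\norm{d}_{A}$, but this is not needed once the limit is identified. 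This norm bookkeeping is the main obstacle.

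For the set equality, every point $\sum_{j}A^{-j}d_{j}$ with $d_{j}\in D$ equals $\pi_{A, D}(d_{j})_{j=1}^{\infty}$, so it lies in $T_{A, D}$ because the coding map takes values in the attractor. Conversely, we must show that every $y\in T_{A, D}$ has a representation. Using $T_{A, D} = \bigcup_{d}\phi_{d}(T_{A, D})$ repeatedly, we choose $d_{1}, d_{2},\ldots$ so that $y\in(\phi_{d_{1}}\circ\cdots\circ\phi_{d_{k}})(T_{A, D})$ for every $k$. Hence $y$ lies in the nested intersection defining the coding map, so $y = \pi_{A, D}(d_{j})_{j=1}^{\infty} = \sum_{j}A^{-j}d_{j}$.
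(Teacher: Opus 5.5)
Your proposal is correct and follows the same route as the paper. Both evaluate the coding map via Lemma~\ref{lem:codingMapForm} at the starting point $0$ and identify the compositions with the partial sums $\sum_{j=1}^{k}A^{-j}x_{j}$. You also make explicit two points the paper leaves implicit: working in the norm $\norm{\cdot}_{A}$ so that the maps $\phi_{d}$ are genuine contractions, and the surjectivity of $\pi_{A,D}$ needed to conclude $T_{A,D} = \pi_{A,D}(D^{\mathbb{N}})$.
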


\begin{proof}
If we label the affine transformations $f_{d}(x) = A^{-1}(x+d)$ that generate $T_{A, D}$ by their parameter $d$, then the coding map is defined on the sequences contained in $D^{\mathbb{N}}$. Let $(x_{j})_{j=1}^{\infty}$ be a sequence of elements of $D$. By Lemma~\ref{lem:codingMapForm},
\begin{equation}
\pi_{A, D}(x_{j})_{j=1}^{\infty} = \lim_{k\rightarrow\infty}(f_{x_{1}} \circ \cdots \circ f_{x_{k}})(0) = \lim_{k\rightarrow\infty} \sum_{j=1}^{k}A^{-j}x_{j}.
\end{equation}
It follows that $T_{A, D} = \pi_{A, D}(D^{N})$. 
\end{proof}

For any subset $S \subset\mathbb{R}^{n}$ and $t\in\mathbb{R}^{n}$, we define the operation $S+t := \{s + t: s\in S\}$. Given an expanding matrix $A\in M_{n}(\mathbb{R})$ and finite set $D\subset\mathbb{Z}^{n}$, we want to determine when the intersections of the form $T_{A, D} \cap (T_{A, D} + \alpha)$ are themselves self-affine for $\alpha\in\mathbb{R}^{n}$. This is only interesting for those $\alpha\in\mathbb{R}^{n}$ which yield nonempty intersections. This occurs if and only if $\alpha$ has an $(A, D-D)$-representation $(\alpha_{j})_{j=1}^{\infty}$ where $D-D := \{d-d^{'}:d, d^{'}\in D\}$. Under certain conditions, it is a property of the sequence of sets $(D \cap (D + \alpha_{j}))_{j=1}^{\infty}$ that determines when the intersection $T_{A, D} \cap (T_{A, D} + \alpha)$ is self-affine.

\begin{definition} 
A sequence $(A_{j})_{j=1}^{\infty}$ of nonempty finite subsets of $\mathbb{R}^{n}$ is called \textit{strongly eventually periodic} (SEP) if there exist two finite sequences of subsets of $\mathbb{R}^{n}$, $(B_{\ell})_{\ell = 1}^{p}$ and $(C_{\ell})_{\ell = 1}^{p}$, where $p$ is a positive integer, such that 
\begin{equation}
(A_{j})_{j=1}^{\infty} = (B_{\ell})_{\ell=1}^{p}\overline{(B_{\ell} + C_{\ell})_{\ell = 1}^{p}}, 
\end{equation}
where $B + C = \{b + c : b\in B, c\in C\}$ and $\overline{(D_{\ell})_{\ell = 1}^{p}}$ denotes the infinite repetition of the finite sequence of sets $(D_{\ell})_{\ell = 1}^{p}$. 
\end{definition}


\begin{example}
Let $(A_{j})_{j=1}^{\infty}$ be the sequence $$ (\{0, 4\}, \{0\}, \overline{\{0, 4, 8\}, \{A\}}).$$ Here $p=2$, $(B_{\ell})_{\ell=1}^{2} = (\{0, 4\}, \{0\})$, and $(C_{\ell})_{\ell=1}^{2} = (\{0, 4\}, \{8\})$. If $D = \{0, 4, 8\}$ and $(\alpha_{j})_{j=1}^{\infty} = (-4, -8, \overline{0, 8})$, then $_{j} = D\cap (D + \alpha_{j})$. This is relevant in the context of Theorem~\ref{thm:attractSEP}. 
\end{example}

The following theorem captures a necessary and sufficient condition for when $T_{A, D} \cap (T_{A, D} + \alpha)$ is self-affine. 

\begin{theorem}\label{thm:attractSEP} 
Suppose an expanding matrix $A\in M_{n}(\mathbb{R})$ and finite set $D\subset\mathbb{R}^{n}$ are chosen such that $(A, D-D)$ representations are unique. Suppose that $\alpha$ has the $(A, D-D)$-representation $(\alpha_{j})_{j=1}^{\infty}$. The set $T_{A, D}\cap(T_{A, D} + \alpha)$ is the attractor of an IFS of the form $\{A^{-p}x + r_{i}\}_{i=1}^{N}$, where $r_{i}$ is an element of $\mathbb{R}^{n}$ for $i=1, 2,\ldots, N$ and $p$ is a positive integer, if and only if the sequence $((D\cap (D+\alpha_{j}) - \beta_{j})_{j=1}^{\infty}$ is SEP for some sequence $(\beta_{j})_{j=1}^{\infty} \in D$. 

In particular, if $((D\cap (D + \alpha_{j})) - \beta_{j})_{j=1}^{\infty} = (U_{\ell})_{\ell=1}^{p}\overline{(U_{\ell} + V_{\ell})_{\ell=1}^{p}}$, then the set $T_{A, D} \cap (T_{A, D} + \alpha)$ is the attractor of the IFS precisely containing the maps
\begin{equation*}
f(x) = A^{-p}\bigg(x + \sum_{\ell=1}^{p}(A^{p-\ell}u_{\ell} +A^{-\ell} v_{\ell})-\beta\bigg)+\beta
\end{equation*}
where $u_{\ell} \in U_{\ell}$ and $v_{\ell} \in V_{\ell}$ for each $\ell$, and $\beta = \pi_{A, D}(\beta_{j})_{j=1}^{\infty}$. 
\end{theorem}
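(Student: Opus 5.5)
The plan starts by reducing the intersection to a "digit-set" form using uniqueness. Put $E_j := D\cap(D+\alpha_j)$. Take $x\in T_{A,D}\cap(T_{A,D}+\alpha)$. By Lemma~\ref{lem:selfSimForm} we can write $x=\sum_j A^{-j}d_j=\sum_j A^{-j}d'_j+\alpha$ with $d_j,d'_j\in D$. Then $\alpha=\sum_j A^{-j}(d_j-d'_j)$ is an $(A,D-D)$-representation of $\alpha$. Uniqueness forces $d_j-d'_j=\alpha_j$, so $d_j\in E_j$. The reverse inclusion is immediate, giving
\[
T_{A,D}\cap(T_{A,D}+\alpha)=\Big\{\sum_{j=1}^\infty A^{-j}d_j : d_j\in E_j\Big\}.
\]
For any $(\beta_j)\in D^{\mathbb N}$ with $\beta=\pi_{A,D}(\beta_j)$, the translate $S:=T_{A,D}\cap(T_{A,D}+\alpha)-\beta$ equals $\{\sum_j A^{-j}e_j: e_j\in E_j-\beta_j\}$. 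By Lemma~\ref{lem:attractorShift}, the intersection is the attractor of an IFS $\{A^{-p}x+r_i\}$ iff $S$ is (with $r_i$ replaced by $r_i-(1-A^{-p})\beta$). So it suffices to work with $S$.

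For the direction SEP $\Rightarrow$ self-affine, suppose $(E_j-\beta_j)=(U_\ell)_{\ell=1}^p\overline{(U_\ell+V_\ell)_{\ell=1}^p}$. Since the digits in the expansion are chosen independently in each coordinate, $S$ splits as a (Minkowski) sum:
\[
S=\sum_{\ell=1}^p A^{-\ell}U_\ell+\sum_{k\ge1}\sum_{\ell=1}^p A^{-kp-\ell}(U_\ell+V_\ell).
\]
Peeling off the first block and the $V_\ell$-parts of the second block gives $S=A^{-p}S+\sum_{\ell=1}^p\big(A^{-\ell}U_\ell+A^{-p-\ell}V_\ell\big)$. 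This is exactly the statement that $S$ is the union of the images of $S$ under the maps $x\mapsto A^{-p}\big(x+\sum_\ell(A^{p-\ell}u_\ell+A^{-\ell}v_\ell)\big)$. Theorem~\ref{thm:hutchinson} (uniqueness of attractors) then identifies $S$ as the attractor. Translating back by $\beta$ via Lemma~\ref{lem:attractorShift} yields the displayed maps.

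For the direction self-affine $\Rightarrow$ SEP, assume $S=\bigcup_{i=1}^N(A^{-p}S+r_i)$. The basic tool is that differences of elements of $S$ lie in $T_{A,D}-T_{A,D}=T_{A,D-D}$, and hence have unique $(A,D-D)$-representations. First I would choose $\beta_j\in E_j$ so that $0\in S$. For each $i$, pick $t_i\in A^{-p}S+r_i\subset S$ with $t_i=A^{-p}s_i+r_i$, $s_i\in S$, and write digit sequences $(t_{i,j})$ and $(s_{i,j})$. Then for every $s\in S$ we have $A^{-p}(s-s_i)=(A^{-p}s+r_i)-t_i$. Both sides are differences of elements of $S$, so comparing digits through uniqueness gives $t_{i,j}=0$ for $j\le p$ (after choosing $t_i$ suitably), and $E_{j+p}-\beta_{j+p}\supseteq (E_j-\beta_j)-s_{i,j}+t_{i,j+p}$. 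The covering $S=\bigcup_i(\cdots)$, read off digitwise, should upgrade this to
\[
E_{j+p}-\beta_{j+p}=\bigcup_i\big((E_j-\beta_j)-s_{i,j}+t_{i,j+p}\big).
\]
For $j\le p$ this is of the form $U_j+V_j$ with $U_j=E_j-\beta_j$.

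The main obstacle is turning this into genuine strong eventual periodicity, namely showing $E_{j+2p}-\beta_{j+2p}=E_{j+p}-\beta_{j+p}$ with the same $V_\ell$ for all later blocks. Here I would exploit the freedom in $\beta$: I would re-choose $\beta_j$ (equivalently, translate $S$) so that one of the maps becomes exactly $x\mapsto A^{-p}x$, i.e.\ $0$ is the fixed point of some map in the IFS. With that normalization the translation digits $t_{i,j}-s_{i,j}$ should be forced to be $p$-periodic beyond index $p$. If that fails, the fallback is a cardinality argument. The chain $|E_j|\le|E_{j+p}|\le\cdots\le|D|$ must stabilize, and uniqueness then forces each inclusion to become an equality of translates.
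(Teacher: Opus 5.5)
Your reduction, your SEP $\Rightarrow$ self-affine direction, and your digit comparison giving $F_{j+p}=F_j+W_j$ for all $j\ge 1$ are sound. Here $F_j:=E_j-\beta_j$ and $W_j:=\{t_{i,j+p}-s_{i,j}\}_{i=1}^{N}$. One small correction: uniqueness says each point of the $i$th piece shares its first $p$ digits with $t_i$, not that those digits vanish, but nothing depends on this.

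The gap is in the final step. The target you set, $F_{j+2p}=F_{j+p}$ for $j\le p$ (SEP with period exactly $p$), is false in general. So is the claim that after normalization the translation digits become $p$-periodic. Here is a counterexample:
\begin{itemize}
\item Take $n=1$, $A=4$, $D=\{0,1\}$. The $(4,\{0,\pm1\})$-representations are unique, because $T_{4,\{0,\pm1\}}\subset[-1/3,1/3]$, so no element of $\{\pm1,\pm2\}$ is a neighbour.
\item Let $\alpha$ have representation $(1,1,0,0,\ldots)$. Then $(E_j)=(\{1\},\{1\},D,D,\ldots)$.
\item With $(\beta_j)=(1,1,0,0,\ldots)$, the set $S$ is the attractor of $\{x/4,\ x/4+4^{-3}\}$, so the IFS exponent is $p=1$.
\item However, $|F_2|=1<2=|F_3|$ for every choice of $(\beta_j)$. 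The translation digits of $4^{-3}$ past index $1$ are $(0,1,0,0,\ldots)$, which is not $1$-periodic.
\end{itemize}
This sequence is SEP only with period $2$.

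What is missing is the argument the paper gives in Lemma~\ref{lem:selfSimGen} combined with Lemma~\ref{lem:sepPP}. Your normalization is the right move, but what it actually buys you is $0\in W_j$. To get this, take $\beta$ to be the fixed point $(I-A^{-p})^{-1}r_1$ of one map. It lies in the intersection, so its digits lie in $E_j$. Then $0\in W_j$ gives the inclusion $F_j\subseteq F_{j+p}$.

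It is this inclusion, not uniqueness, that upgrades your cardinality stabilization to genuine equality. It gives an $m$ with $F_{j+kp}=F_{j+mp}$ for all $k\ge m$ and $j=1,\ldots,p$. Without the inclusion you would only learn that the later sets are translates of one another, and you would need $|X+Y|\ge|X|+|Y|-1$ in $\mathbb{R}^n$ even to get that. You would then also have to re-choose $(\beta_j)$ to remove the translations.

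Finally, telescoping gives $F_{\ell+mp}=F_\ell+\sum_{k=0}^{m-1}W_{\ell+kp}$ for $\ell=1,\ldots,mp$. This exhibits the SEP form with period $mp$, a multiple of $p$. Your proposal never allows the SEP period to exceed $p$ and does not carry out this reassembly, so the self-affine $\Rightarrow$ SEP direction is not established as written.
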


Our goal is to prove Theorem~\ref{thm:attractSEP}. Before we proceed, we admit that the condition that \emph{all} $(A, (D-D))$ representations be unique is strong. We treat a special case (Theorem~\ref{thm:selfSimSEPSp}) for which the sufficient and necessary condition can be reached using the weaker assumption that the translation $\alpha$ has a unique $(A, D-D)$-representation. The treatment of that special case also proceeds without assuming that the affine maps in the IFS feature the same linear component $A^{-p}$. 

The following result shows that the intersections $T_{A, D} \cap (T_{A, D} + \alpha)$ can be expressed as the image of a product of subsets of $D$ under $\pi_{A, D}$ when $\alpha$ has a unique $(A, D-D)$-representation. 

\begin{lemma} \label{lem:seqExpress} 
Suppose $A\in M_{n}(\mathbb{R})$ is an expanding matrix and $D\subset\mathbb{R}^{n}$ is finite. If $\alpha\in\mathbb{R}^{n}$ has a unique $(A, D-D)$-representation $(\alpha_{j})_{j=1}^{\infty}$, then $T_{A, D} \cap (T_{A, D} + \alpha) = \pi_{A, D}\left(\prod\limits_{j=1}^{\infty}D\cap(D+\alpha_{j})\right)$. 
\end{lemma}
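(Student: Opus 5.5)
We prove the two inclusions separately, using the series form of the coding map from Lemma~\ref{lem:selfSimForm}. Note first that the product $\prod_{j} D\cap(D+\alpha_{j})$ is nonempty: since $\alpha_{j}\in D-D$, each factor $D\cap(D+\alpha_{j})$ contains some element.

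For the inclusion $\supseteq$, take $(x_{j})_{j=1}^{\infty}$ with $x_{j}\in D\cap(D+\alpha_{j})$ for every $j$, and let $x=\pi_{A,D}(x_{j})=\sum_{j}A^{-j}x_{j}$. This point lies in $T_{A,D}$ by Lemma~\ref{lem:selfSimForm}. Since $x_{j}-\alpha_{j}\in D$ for every $j$, the same lemma gives
\begin{equation*}
x-\alpha=\sum_{j}A^{-j}x_{j}-\sum_{j}A^{-j}\alpha_{j}=\sum_{j}A^{-j}(x_{j}-\alpha_{j})\in T_{A,D}.
\end{equation*}
Splitting the series termwise is legitimate because both series converge absolutely; the terms decay geometrically in the norm of Lemma~\ref{lem:contractingNorm}, and all norms on $\mathbb{R}^{n}$ are equivalent. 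Hence $x\in T_{A,D}+\alpha$.

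For the inclusion $\subseteq$, take $x\in T_{A,D}\cap(T_{A,D}+\alpha)$. By Lemma~\ref{lem:selfSimForm}, we can write $x=\sum_{j}A^{-j}x_{j}$ and $x-\alpha=\sum_{j}A^{-j}y_{j}$ with $x_{j},y_{j}\in D$. Subtracting the two series gives
\begin{equation*}
\alpha=\sum_{j}A^{-j}(x_{j}-y_{j}),
\end{equation*}
so $(x_{j}-y_{j})_{j=1}^{\infty}$ is an $(A,D-D)$-representation of $\alpha$. By the uniqueness hypothesis, $x_{j}-y_{j}=\alpha_{j}$ for all $j$. Then $x_{j}=y_{j}+\alpha_{j}\in D+\alpha_{j}$, and combined with $x_{j}\in D$ this gives $x_{j}\in D\cap(D+\alpha_{j})$. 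Therefore $x=\pi_{A,D}(x_{j})$ lies in the image of the product.

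The only substantive point is the second inclusion. There, uniqueness is used to identify the digit difference sequence with the given sequence $(\alpha_{j})$, since a priori $x-\alpha$ might have been represented with an unrelated digit sequence. The remaining steps, namely the linearity of convergent series and the identification of $\pi_{A,D}$ with the series, are routine.
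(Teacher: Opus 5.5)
Your proof is correct and follows the paper's argument. For $\subseteq$ you subtract the two $(A,D)$-expansions to get an $(A,D-D)$-representation of $\alpha$, then use uniqueness to conclude $x_j - y_j = \alpha_j$, exactly as the paper does; you also write out the reverse inclusion, which the paper leaves to the reader, and your argument there is sound (it does not even need uniqueness).
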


\begin{proof}
Let $\pi = \pi_{A, D}$. Assume that $\alpha$ has a unique $(A, D-D)$-representation $(\alpha_{j})_{j=1}^{\infty}$. Let $T(\alpha)$ denote $T_{A, D} \cap (T_{A, D} + \alpha)$. If $x\in T(\alpha)$, then $x \in T_{A, D}$ and $x \in T_{A, D} + \alpha$. On one hand, $x = \pi(x_{j})_{j=1}^{\infty}$ with $x_{j}\in D$ for all $j$. On the otherhand, there exists $y \in T_{A, D}$ such that $x = y + \alpha$. Similarly, $y = \pi(y_{j})_{j=1}^{\infty}$ where $y_{j}\in D$ for all $j$. We conclude that $\alpha = \pi(x_{j} - y_{j})_{j=1}^{\infty}$ where $x_{j} - y_{j} \in (D-D)$ for all $j$. Since the $(A, D-D)$-representation of $\alpha$ is unique, we conclude that $x_{j} - y_{j} = \alpha_{j}$ and, in particular, $x_{j} = y_{j} + \alpha_{j}$ for all $j$. This means that $x_{j}\in D\cap(D+\alpha_{j})$ for all $j$. We leave the inclusion $\pi(D\cap(D + \alpha_{j})) \subset T(\alpha)$ for the reader.
\end{proof}

By Lemma~\ref{lem:seqExpress}, the hypotheses of Theorem~\ref{thm:attractSEP} imply that $T\cap(T+\alpha)$ can be expressed as $\pi_{A, D}\left(\prod\limits_{j=1}^{\infty}(D\cap(D+\alpha_{j}))\right)$ where $\alpha_{j}$ is an $(A, D)$-representation of $\alpha$. The underlying set of sequences has the form $\prod\limits_{j=1}^{\infty}D_{j}$ where $D_{j}\subset \mathbb{R}^{n}$ for each $j$. We proceed by arguing that if $(D_{j} - \beta_{j})_{j=1}^{\infty}$ is SEP for some sequence $(\beta_{j})_{j=1}^{\infty}\in D^{\mathbb{N}}$, then the set $\pi_{A, D}\left(\prod\limits_{j=1}^{\infty}D_{j}\right)$ is self-affine. We note that a special case of these sets contained in the unit interval, called generalized Cantor sets, is treated in \cite{K14}. 

\begin{lemma}\label{lem:sepGen} 
Suppose that $A\in M_{n}(\mathbb{R})$ is an expanding matrix and $D\subset\mathbb{R}^{n}$ is finite. Suppose $(D_{j})_{j=1}^{\infty}$ is a sequence of subsets of $D$ and $(\beta_{j})_{j=1}^{\infty}$ is a sequence of elements of $D$.  If $(D_{j}-\beta_{j})_{j=1}^{\infty}$ is SEP, then the set $\pi_{A, D}\left(\prod\limits_{j=1}^{\infty}D_{j}\right)$ is self-affine. 

In particular, if $(D_{j})_{j=1}^{\infty} = (U_{\ell})_{\ell=1}^{p}\overline{(U_{\ell} + V_{\ell})_{\ell=1}^{p}}$, then $\pi_{A, D}\left(\prod\limits_{j=1}^{\infty}D_{j}\right)$ is the attractor of the IFS precisely containing the maps
\begin{equation*}
f(x) = A^{-p}\bigg(x - \beta + \sum_{\ell=1}^{p}(u_{\ell}A^{p-\ell} + v_{\ell}A^{-\ell})\bigg) + \beta
\end{equation*}
where $u_{\ell} \in U_{\ell}$ and $v_{\ell} \in V_{\ell}$ for each $\ell$, and where $\beta = \pi_{A, D}(\beta_{j})_{j=1}^{\infty}$. 
\end{lemma}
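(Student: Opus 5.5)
Write $E_j := D_j - \beta_j$, and assume $(E_j)_{j=1}^{\infty} = (U_\ell)_{\ell=1}^{p}\overline{(U_\ell+V_\ell)_{\ell=1}^{p}}$. I read the displayed statement's $(D_j)$ as $(D_j-\beta_j)$, and $u_\ell A^{p-\ell}$ as $A^{p-\ell}u_\ell$. The plan has three steps: reduce to a set of expansions containing no $\beta$, show directly that this set is invariant under an explicit family of affine maps, and finish with uniqueness of attractors and Lemma~\ref{lem:attractorShift}.

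\textbf{Step 1: reduce to $K'$.} By Lemma~\ref{lem:selfSimForm}, $\pi_{A,D}$ is the series $\sum_{j} A^{-j}x_j$, which is linear in the sequence. Setting $K := \pi_{A,D}\big(\prod_j D_j\big)$, we get
\[
K - \beta = K' := \Big\{\sum_{j=1}^{\infty} A^{-j}e_j : e_j \in E_j\Big\}.
\]
The set $K'$ is nonempty and compact. Indeed, the map $(e_j) \mapsto \sum_j A^{-j}e_j$ is continuous on the compact product $\prod_j E_j$ of finite sets, since the tails are uniformly small by Lemma~\ref{lem:contractingNorm}. Moreover, $A^{-p}$ is a contraction in the norm $\norm{\cdot}_A$ of Lemma~\ref{lem:contractingNorm}, so each map below is a contraction and Theorem~\ref{thm:hutchinson} applies.

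\textbf{Step 2: define the maps and regroup.} For $u=(u_\ell)\in\prod_\ell U_\ell$ and $v=(v_\ell)\in\prod_\ell V_\ell$, set
\[
h_{u,v}(x) = A^{-p}\Big(x+\sum_{\ell=1}^{p}\big(A^{p-\ell}u_\ell + A^{-\ell}v_\ell\big)\Big) = A^{-p}x+\sum_{\ell=1}^{p}A^{-\ell}u_\ell+\sum_{\ell=1}^{p}A^{-p-\ell}v_\ell .
\]
Group an element of $K'$ into blocks of length $p$. A point $x\in K'$ is exactly a sum $\sum_{k\ge0}\sum_{\ell=1}^{p}A^{-kp-\ell}e^{(k)}_\ell$ with $e^{(0)}\in\prod U_\ell$ and $e^{(k)}\in\prod(U_\ell+V_\ell)$ for $k\ge1$.

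\textbf{Step 3: prove $\bigcup_{u,v}h_{u,v}(K')=K'$.}

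For $\subset$: take $x\in K'$ as above. In $h_{u,v}(x)$ the first block is $u$, which lies in $\prod U_\ell$. The second block is $e^{(0)}_\ell + v_\ell \in U_\ell+V_\ell$. Every later block $e^{(k)}$, $k\ge1$, is shifted one block deeper and still lies in $U_\ell+V_\ell$. Hence $h_{u,v}(x)\in K'$.

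For $\supset$: take $y\in K'$. Its first block is some $u\in\prod U_\ell$. Its second block can be written $u'_\ell+v_\ell$ with $u'_\ell\in U_\ell$ and $v_\ell\in V_\ell$, choosing any such decomposition. Let $x$ be the element of $K'$ whose first block is $u'$ and whose $k$-th block is the $(k+1)$-st block of $y$ for $k\ge1$; these lie in $U_\ell+V_\ell$ as required. Then $y=h_{u,v}(x)$.

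\textbf{Conclusion.} By uniqueness in Theorem~\ref{thm:hutchinson}, $K'$ is the attractor of $\{h_{u,v}\}$. Lemma~\ref{lem:attractorShift} with $r_i=A^{-p}$ and translation $\beta$ shows that $K=K'+\beta$ is the attractor of $h_{u,v}(x)+(I-A^{-p})\beta = h_{u,v}(x-\beta)+\beta$, which is exactly the displayed form. In particular $K$ is self-affine.

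The only delicate point is the index bookkeeping in Step 3, namely the one-block shift. The reverse inclusion needs just a (possibly non-unique) decomposition of each element of $U_\ell+V_\ell$, and this always exists.
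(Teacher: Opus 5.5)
Your proof is correct and is essentially the paper's argument. Both regroup the expansions of $K-\beta$ into length-$p$ blocks, so that each new digit combines the $U$-part of one block with the $V$-part of the next; both identify $K-\beta$ as the attractor of the maps $A^{-p}(x+\sum_{\ell}(A^{p-\ell}u_{\ell}+A^{-\ell}v_{\ell}))$; and both transfer the result to $K$ via Lemma~\ref{lem:attractorShift}. The only difference is that the paper makes the identification through the series description of attractors (Lemma~\ref{lem:selfSimForm} applied to $A^{p}$ and the new digit set), whereas you verify the Hutchinson equation directly and invoke uniqueness, which is why you need your compactness check. Your reading of the hypothesis $(D_j)$ as $(D_j-\beta_j)$ is correct and matches how the paper's own proof uses it.
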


The proof of this result is a generalization of the one-dimensional version in \cite{LYZ11}. 

\begin{proof}
For convenience, let $X$ denote $\pi_{A, D}\left(\prod\limits_{j=1}^{\infty}D_{j}\right)$. Assume that there exist sets $U_{1}, U_{2}, \ldots, U_{p}$ and $V_{1}, V_{2}, \ldots, V_{p}$ such that $(D_{j})_{j=1}^{\infty} = U_{1}\ldots U_{p}\overline{(U_{\ell}+V_{\ell})_{\ell = 1}^{p}}$. 

If $x$ is an element of $X-\beta$, then $x = \sum\limits_{j=1}^{\infty}A^{-j}x_{j}$ with $x_{j}\in D_{j} - \beta_{j}$. By assumption, we can write
\begin{equation}
x = \sum_{j=1}^{p}A^{-j}u_{0, j} + \sum_{k=1}^{\infty}\bigg(\sum_{j=1}^{p}A^{-(kp + j)}(u_{k, j} + v_{k, j})\bigg)
\end{equation}
with $u_{k, j}\in U_{j}$ and $v_{k, j}\in V_{j}$ for each $j$. We rearrange the terms to obtain 
\begin{equation}\label{eq:selfSimForm}
x =\sum\limits_{k=0}^{\infty}A^{-(k+1)p}\bigg(\sum_{j=1}^{p}(A^{p-j}u_{k, j}+ A^{-j}v_{k+1, j})\bigg). 
\end{equation}

By Lemma~\ref{lem:selfSimForm}, $x$ is an element of the attractor of the IFS $\{h(x) = A^{-p}(x + \sum\limits_{\ell = 1}^{p}(A^{p-\ell}u_{\ell}+A^{-\ell}v_{\ell}) : u_{\ell}\in U_{\ell}, v_{\ell}\in V_{\ell}\}$ if and only if it can be expressed in the form given in (\ref{eq:selfSimForm}). To conclude, apply Lemma~\ref{lem:attractorShift} to the IFS that generates $X-\beta$ to obtain the form of the IFS for $X$.
\end{proof}

It is not true that all self-affine sets of the form $\pi_{A, D}(\prod\limits_{j=1}^{\infty}D_{j})$ come from SEP sequences. Consider the following example. 

\begin{example}\label{ex:nonSEP} 
We first give an example on $\mathbb{R}$ and then show that the main idea extends to $\mathbb{R}^{n}$. Let $A = 9$ and $D = \{0, 1, \ldots, 8\}$. Let the sequence of sets $(D_{j})_{j=1}^{\infty}$ be the sequence $(\{0\}, \{0, 6\}, \overline{\{0, 1\}, \{0, 3, 6\}})$. There is no sequence $(\beta_{j})_{j=1}^{\infty}\in D^{\mathbb{N}}$ for which $(D_{j} -\beta_{j})$ is SEP because there is no subset $S \subset \mathbb{R}$ for which $\{-\beta_{2}, 6-\beta_{2}\} + S = \{-\beta_{4}, 3-\beta_{4}, 6-\beta_{4}\}$. If such an $S$ existed, it would have to contain either $3-\beta_{4}+\beta_{2}$ or $-3-\beta_{4}+\beta_{2}$ to ensure that $3-\beta_{4}$ is an element of the sumset. The inclusion of $3-\beta_{4}+\beta_{2}$ or $-3-\beta_{4}+\beta_{2}$ implies that $9-\beta_{4}$ or $-3-\beta_{4}$, respectively, are elements of the sumset. 

Nonetheless, $X := \pi_{9, D}(\{0\} \times \{0, 6\} \times \overline{\{0, 1\} \times \{0, 3, 6\}})$ is the attractor of the IFS containing precisely the following eight maps: 
\begin{equation}
 \begin{split}
    &f_{1}(x) = 9^{-2}x, \\
    &f_{2}(x) = 9^{-2}x + 3/9^{4}, \\
    &f_{3}(x) = 9^{-2}x + 6/9^{4},\\
    &f_{4}(x) = 9^{-2}x + 1/9^{3},
  \end{split}
\quad
  \begin{split}
    &f_{5}(x) = f_{1}(x) + 6/9^{2}, \\
    &f_{6}(x) = f_{2}(x) + 6/9^{2},\\
    &f_{7}(x) = f_{3}(x) + 6/9^{2},\\
    &f_{8}(x) = f_{4}(x) + 6/9^{2}.
  \end{split}
\end{equation}

The set $X$ can be expressed as the union of the image of twelve cylinder sets which specify a single element in the first four places. For example, there are those numbers which have $(9, D)$-representations $(x_{j})_{j=1}^{\infty}$ such that $x_{1} = 0$, $x_{2} = 0$, $x_{3} = 1$, and $x_{4} = 3$. These numbers are contained in the image of $X$ under $f_{3}$. This is because $X$ contains the image of the cylinder $y_{1} = 0$, $y_{2} = 6$. The fourth place of the image of such a $y$ under $f_{3}$ is $6/9^{4} + 6/9^{4} = 1/9^{3} + 3/9^{3}$. On the other hand, $X$ contains the image of the cylinder given by $y_{1} = 0$ and $y_{2} = 0$. The image of that set under $f_{3}$ is the image of the cylinder $x_{1} = 0$, $x_{2} = 0$, $y_{1} = 0$, $y_{2} = 6$. This is contained in $X$. A similar analysis can be made of the remaining cylinder sets to verify that $\bigcup\limits_{i=1}^{8}f_{i}(X) = X$. 


Suppose we fix $v\in \mathbb{R}^{n}$ and choose $A = 9I$ where $I$ denotes the identity matrix. The argument above applies to the case
\begin{equation*}
(D_{j})_{j=1}^{\infty} =  (\{0\}, \{0, 6v\}, \overline{\{0, v\}, \{0, 3v, 6v\}})
\end{equation*}
by changing the translations in the one dimensional example to scalings of $v$. Moreover, for any finite sequence of subsets, $C_{1}, C_{2}, \ldots, C_{N}$, we could apply the same argument to
\begin{equation*}
(D_{j})_{j=1}^{\infty} = (C_{1}, C_{2}, \ldots, C_{N}, \{0\}, \{0, 6v\}, \overline{C_{1}, C_{2}, \ldots, C_{N}, \{0, v\}, \{0, 3v, 6v\}})
\end{equation*}
by taking the power of $9^{-1}I$ to be $N+2$ instead of $2$. 
\end{example}

To ensure an equivalence between self-similarity and the SEP property, we require further restrictions on the pair $(A, D)$ that defines the set. This is why, at the very least, the uniqueness of $(A, D-D)$-representations is assumed in Theorem~\ref{thm:attractSEP}. In particular, the assumption that $(A, D-D)$-representations are unique is sufficient to show that if a set of the form $\pi_{A, D}\left(\prod\limits_{j=1}^{\infty}D_{j}\right)$, with $D_{j}\subset D$ is self-similar, then $(D_{j} - \beta_{j})_{j=1}^{\infty}$ is SEP for some sequence of $(\beta_{j})_{j=1}^{\infty}$. In order to establish that claim, we make use of the following technical lemma concerning the SEP condition. 

\begin{lemma}\label{lem:sepPP} 
Suppose $(U_{j})_{j=1}^{\infty}$ is a sequence of finite subsets of $\mathbb{R}^{n}$ such that $U_{j} \subset U_{j + q}$ for some positive integer $q$. Suppose there exists a uniform bound, for all $j$, on the cardinality of $U_{j}$. If there exists a sequence of subsets $(V_{j})_{j=1}^{\infty}$ of $\mathbb{R}^{n}$ such that $U_{j} + V_{j} = U_{j+q}$, then $(U_{j})_{j=1}^{\infty}$ is SEP. 
\end{lemma}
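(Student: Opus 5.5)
Since each $U_{j}$ is a subset of $U_{j+q}$ and the cardinalities are uniformly bounded, the plan is to show that the sequence becomes periodic from some point on. Then we choose the finite data $(B_{\ell})$ and $(C_{\ell})$ to match the definition of SEP, with period $p$ a multiple of $q$.

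First I treat each residue class modulo $q$ separately. Fix $r\in\{1,\ldots,q\}$. The chain $U_{r}\subset U_{r+q}\subset U_{r+2q}\subset\cdots$ is increasing. Its cardinalities are bounded by the uniform bound $M$, so the cardinality can strictly increase at most $M$ times. Hence there is $k_{r}$ such that $U_{r+kq}=U_{r+k_{r}q}$ for all $k\geq k_{r}$. Let $K=\max_{r}k_{r}$. Then $U_{j+q}=U_{j}$ for every $j> Kq$, so the sequence is periodic with period $q$ after the index $Kq$.

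Next I take $p=(K+1)q$ and set $B_{\ell}=U_{\ell}$ for $\ell=1,\ldots,p$. For indices $j=\ell+mp$ with $m\geq1$, eventual periodicity gives $U_{\ell+mp}=U_{\ell+p}$, because both indices exceed $Kq$ and are congruent modulo $q$. So it suffices to exhibit $C_{\ell}$ with $U_{\ell}+C_{\ell}=U_{\ell+p}$.

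This is where the hypothesis $U_{j}+V_{j}=U_{j+q}$ is used. I telescope it $K+1$ times along the chain $U_{\ell},U_{\ell+q},\ldots,U_{\ell+(K+1)q}$:
\[
U_{\ell+p}=U_{\ell}+V_{\ell}+V_{\ell+q}+\cdots+V_{\ell+Kq}.
\]
This uses associativity of Minkowski sums: $U_{\ell+2q}=U_{\ell+q}+V_{\ell+q}=(U_{\ell}+V_{\ell})+V_{\ell+q}$, and so on. Define $C_{\ell}=V_{\ell}+V_{\ell+q}+\cdots+V_{\ell+Kq}$. Then $(U_{j})=(B_{\ell})_{\ell=1}^{p}\overline{(B_{\ell}+C_{\ell})_{\ell=1}^{p}}$.

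There is one point to check. The sets $V_{j}$ must be nonempty, and this holds because $U_{j+q}$ is nonempty and equals a sumset involving $V_{j}$. They should also be finite if the definition demands it. Finiteness follows because we may replace each $V_{j}$ by a finite subset: every element of $U_{j+q}$ uses only one element of $V_{j}$, and $U_{j}$ is nonempty and finite, so we can keep just enough elements of $V_{j}$ to still produce all of $U_{j+q}$.

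The main obstacle is only bookkeeping: the stabilisation step needs a single uniform $K$ across all residues, and $p$ must be a multiple of $q$ that is large enough that the whole tail is captured by one period. With $p=(K+1)q$ both requirements hold.
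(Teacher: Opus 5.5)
Your proof is correct and follows the paper's argument: each residue chain $U_r\subset U_{r+q}\subset\cdots$ stabilises by the cardinality bound, the period is a multiple of $q$, and the $C_\ell$ come from telescoping $U_j+V_j=U_{j+q}$ into sums of consecutive $V$'s. The differences are only bookkeeping. You use period $(K+1)q$ and telescope one full period forward from each $U_\ell$, whereas the paper, with stabilisation index $p$ and period $qp$, telescopes each $U_{j+qk}$ up to the stabilised set $U_{j+qp}$. Your finiteness repair is also unnecessary, since $V_j\subset U_{j+q}-u$ for any $u\in U_j$ already forces $V_j$ to be finite.
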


\begin{proof}
The uniform bound on the cardinality of $U_{j}$ for all $j$ together with the assumption $U_{j} \subset U_{j+q}$ implies the existence of a positive integer $p$, such that $U_{j + qp} = U_{j + q(p + k)}$ for all $k\geq0$ and $j = 1, 2, \ldots, q$. It follows that for $j = 1, 2, \ldots, q$, 
\begin{align}
U_{j + qp} &= U_{j+ q(p-1)} +V_{j + q(p-1)} \\
&= U_{j + q(p-2)} + V_{j + q(p-1)} + V_{j+q(p-2)} \\
&\;\;\vdots \\
&= U_{j} + \sum_{\ell = 1}^{p} V_{j + q(p-\ell)}. 
\end{align}
By choosing $W_{j + qk} = \sum\limits_{\ell = 1}^{p-k}V_{j + q(p-\ell)}$ for $j = 1, 2, \ldots, q$ and $k = 0, 1, \ldots, p - 1$, we obtain $(U_{j})_{j=1}^{\infty} = (U_{1}U_{2}\ldots U_{qp})\overline{(U_{1} + W_{1})(U_{2}+W_{2})\ldots (U_{qp}+W_{qp})}$. 
\end{proof}

We now apply this lemma to establish ``self-similarity implies SEP". The proof is a generalization of an argument in \cite{PP14}. 

\begin{lemma}\label{lem:selfSimGen} 
Suppose that an expanding matrix $A\in M_{n}(\mathbb{R})$ and a finite set $D\subset\mathbb{R}^{n}$ are chosen such that $(A, D-D)$-representations are unique. Suppose $(D_{j})_{j=1}^{\infty}$ is a sequence of subsets of $D$. If the set $\pi_{A, D}\left(\prod\limits_{j=1}^{\infty}D_{j}\right)$ is the attractor of an IFS of the form $\{f_{i}(x) = A^{-p}x + r_{i}\}_{i=1}^{N}$ where $p$ is a positive integer and $r_{i}\in\mathbb{R}^{n}$ for each $i$, then there exists a sequence $(\beta_{j})_{j=1}^{\infty} \in D^{\mathbb{N}}$ such that $(D_{j} - \beta_{j})_{j=1}^{\infty}$ is SEP. 
\end{lemma}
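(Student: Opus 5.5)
The plan is to extract from the IFS a map that fixes a well-chosen point, and then read off the SEP structure digit by digit using uniqueness of $(A, D-D)$-representations. Let $X = \pi_{A, D}\left(\prod_{j=1}^{\infty}D_{j}\right)$. Each $f_{i}(x) = A^{-p}x + r_{i}$ is a contraction and has a fixed point $z_{i}$. Because $X = \bigcup_{i} f_{i}(X)$, iterating $f_{1}$ shows $z_{1} = \lim_{m} f_{1}^{m}(x)$ for any $x\in X$, so $z_{1}\in X$ by closedness. Hence $z_{1} = \pi_{A, D}(\beta_{j})_{j=1}^{\infty}$ for some $\beta_{j}\in D_{j}\subset D$; this is the sequence $(\beta_{j})$ we use.

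Set $Y = X - z_{1}$ and $E_{j} = D_{j} - \beta_{j}$. By Lemma~\ref{lem:attractorShift}, $Y$ is the attractor of the translated IFS, and the translate of $f_{1}$ is the linear map $g(y) = A^{-p}y$. Thus $A^{-p}Y\subset Y$. Every $y\in Y$ has the form $\sum_{j} A^{-j}e_{j}$ with $e_{j}\in E_{j}\subset D-D$, and conversely every such sum lies in $Y$. Since $0\in E_{j}$ for all $j$, each set $E_{j}$ is naturally identified with the digits appearing in position $j$ of elements of $Y$. By uniqueness of $(A, D-D)$-representations, these digits are well defined.

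\textbf{Inclusion step.} Take $y = \sum_{j}A^{-j}e_{j}\in Y$. Then $A^{-p}y = \sum_{j}A^{-(j+p)}e_{j}$ is an $(A, D-D)$-representation with zeros in the first $p$ places. It also equals some $\sum A^{-j}e'_{j}$ with $e'_{j}\in E_{j}$. Uniqueness forces $e_{j}\in E_{j+p}$, so $E_{j}\subset E_{j+p}$ for all $j$.

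\textbf{Sumset step (main obstacle).} We need sets $V_{j}$ with $E_{j} + V_{j} = E_{j+p}$. The natural route is to use the other maps. Write the translated maps as $g_{i}(y) = A^{-p}y + s_{i}$, where $s_{i} = g_{i}(0)\in Y$ because $0\in Y$. So $s_{i} = \sum_{j}A^{-j}s_{i,j}$ with $s_{i,j}\in E_{j}$. For $y\in Y$, the point $g_{i}(y)\in Y$ has a representation in $\prod E_{j}$. Adding digitwise gives a $D-D$-type expansion with digits $s_{i,j} + e_{j-p}$, where we set $e_{k}=0$ for $k\le 0$.

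The difficulty is that these digits need not lie in $D-D$, so uniqueness cannot be applied directly. I would try to handle this by comparing differences: a difference of two elements of $Y$ lies in $T_{A, D-D} - T_{A,D-D}$. The cleanest fix is to subtract $g_{i}(0)$. The points $g_{i}(y) - g_{i}(0) = A^{-p}y$ lie in $Y-Y$, and one can compare coordinates there. Concretely, if $\pi(c_{j}) = \pi(c'_{j})$ with $c_{j} = d_{j}-d'_{j}$ and $c'_{j} = d''_{j}-d'''_{j}$, one can rearrange to compare expansions over $D-D$ built from $D$-digits. So I would work with expansions of points of $X$ themselves, which have digits in $D_{j}$, rather than of $Y$.

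Using this, $f_{i}(x)$ and $x$ are both in $X$, so their difference has a unique $D-D$ expansion. That identifies each digit of $f_{i}(x)$, namely $x'_{j}$, from the data. The conclusion would be that for $j>p$ the digit set $D_{j}$ is exactly $\bigcup_{i}(\{s_{i,j}\} + D_{j-p}) - \text{shift}$. Equivalently $E_{j+p} = E_{j} + V_{j}$ with $V_{j} = \{\text{$j{+}p$-th digits of } s_{i}\}$, and surjectivity comes from $Y = \bigcup_{i} g_{i}(Y)$.

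\textbf{Conclusion.} Finally, apply Lemma~\ref{lem:sepPP} with $q = p$. The cardinalities satisfy $|E_{j}|\le |D|$, which gives the required uniform bound, so $(D_{j}-\beta_{j})_{j=1}^{\infty}$ is SEP.
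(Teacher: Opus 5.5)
Your skeleton matches the paper's. You take $\beta$ to be the fixed point of $f_{1}$ (the paper's $(I-A^{-p})^{-1}r_{1}$), translate so that $f_{1}$ becomes $y\mapsto A^{-p}y$, prove $E_{j}\subseteq E_{j+p}$ and $E_{j}+S_{j}=E_{j+p}$ with $S_{j}$ the $(j+p)$-th digits of the points $s_{i}=g_{i}(0)$, and finish with Lemma~\ref{lem:sepPP}. Your choice of $\beta$, the inclusion step and the final appeal to Lemma~\ref{lem:sepPP} are all fine.

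The gap is the sumset step. You rightly call it the main obstacle, but you never close it. The route you finally commit to, comparing $f_{i}(x)$ with $x$, does not work. Uniqueness tells you only that $f_{i}(x)-x$ has the single expansion $(x'_{j}-x_{j})$. You have no independent second $(A,D-D)$-expansion of $f_{i}(x)-x$ to match it against; expanding it from the known data gives digits in $(D-D)+(D-D)$. So nothing pins down $x'_{j}$. Two other ideas in your proposal also fall short:
\begin{itemize}
\item The rearrangement $d_{j}+d'''_{j}=d''_{j}+d'_{j}$, borrowed from Theorem~\ref{thm:gaussianEquiv}, produces expansions over $D+D$, about which the hypothesis says nothing.
\item The remark that differences of points of $Y$ lie in $T_{A,D-D}-T_{A,D-D}$ is too coarse to let you invoke uniqueness.
\end{itemize}

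The missing observation is that the $\beta_{j}$ cancel: $E_{j}-E_{j}=D_{j}-D_{j}\subseteq D-D$. So the digitwise difference of two points of $Y$ is already an $(A,D-D)$-representation. This is what makes your passing suggestion work (compare $g_{i}(y)$ with $g_{i}(0)$), and it is exactly the paper's argument. Let $y$, $g_{i}(y)$ and $s_{i}$ have digit sequences $(e_{j})$, $(e'_{j})$ and $(s_{i,j})$ in $\prod_{j}E_{j}$. Then
\begin{equation*}
\sum_{j\geq1}A^{-(j+p)}e_{j}=A^{-p}y=g_{i}(y)-g_{i}(0)=\sum_{j\geq1}A^{-j}\bigl(e'_{j}-s_{i,j}\bigr).
\end{equation*}
The left side has digits $0,\ldots,0,e_{1},e_{2},\ldots\in D-D$. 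The right side has digits $e'_{j}-s_{i,j}\in D_{j}-D_{j}\subseteq D-D$. Uniqueness therefore gives $e'_{j+p}=e_{j}+s_{i,j+p}$, that is, $E_{j}+S_{j}\subseteq E_{j+p}$. The reverse inclusion then follows, as you indicate, from $Y=\bigcup_{i}g_{i}(Y)$ together with uniqueness of the $E$-digits of points of $Y$. Your inclusion step is just the case $i=1$, $s_{1}=0$, of this computation.
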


\begin{proof} 
For convenience, let $X = \pi_{A, D}\left(\prod\limits_{j=1}^{\infty}D_{j}\right)$. Let $I$ denote the $n$ by $n$ identity matrix. We claim that $A^{-p} - I$ is invertible. If it was not, then $A^{-p}$ would have the number one as an eigenvalue. This is a contradiction since $A^{-p}$ is a contraction. Let $\beta = (I-A^{-p})^{-1}r_{1}$. By Lemma~\ref{lem:attractorShift}, $X - \beta$ is the attractor of the IFS $\{g_{i}(x) = A^{-p}x+s_{i}\}_{i=1}^{N}$ where $s_{i} = r_{i} - r_{1}$. Therefore $g_{1}(x) = A^{-p}x$. It follows that $X - \beta$ contains the origin. Therefore $\beta$ is an element of $X$ and has an $(A, D)$-representation $(\beta_{j})_{j=1}^{\infty}$. The presence of the origin also means that $g_{i}(0) = s_{i}$ is an element of $X - \beta$ for $i = 1, 2, \ldots, N$. It follows that each $s_{i}$ can be expanded into $\sum\limits_{j=1}^{\infty}A^{-j}s_{i, j}$ with $s_{i, j}\in E_{j} := D_{j} -\beta_{j}$ for each $j$. 

Define $S_{j} := \{s_{i, j+p}: i = 1, 2, \ldots, N\}$. We argue that $E_{j} + S_{j} = E_{j + p}$ for all $j\geq1$. 

Fix a positive integer $k$ and assume $d$ is an element of $E_{k} + S_{k}$. There exists $x_{k}^{'}\in E_{k}$ and $s_{i, k+p}\in S_{k}$, for some $i$, such that $d = x_{k}^{'} + s_{i, k+p}$. Choose $x\in X - \beta$ with $(A, D-D)$ representation $(x_{j})_{j=1}^{\infty}$ such that $x_{k} = x_{k}^{'}$. Since $x$ is an element of $X-\beta$, so is the image $g_{i}(x) = \sum\limits_{j=1}^{p}A^{-j}r_{i, j} + \sum\limits_{j=1}^{\infty}A^{-(p+j)}(x_{j} + r_{i, j+p})$. Therefore it has an expansion $\sum\limits_{j=1}^{\infty}A^{-j}e_{j}$ with $e_{j}\in E_{j}$ for each $j$. It follows that
\begin{equation}
\sum_{j=1}^{\infty}A^{-(p+j)}x_{j} = \sum_{j=1}^{\infty}A^{-j}(e_{j} - s_{i, j}). 
\end{equation}
For each $j$, $x_{j}$ is an element of $E_{j} = D_{j} - \beta_{j}$ and thus $x_{j}$ is an element of $D-D$. Similarly, $e_{j}$ and $s_{i,j}$ are both elements of $D_{j}-\beta_{j}$ and thus $e_{j} - s_{i, j}$ is an element of $D_{j} - D_{j}\subset D-D$. Both sides of the equation are $(A, D-D)$-representations and so by uniqueness we obtain $x_{j} = e_{j+p} - s_{i, j + p}$. In particular, $d = x_{k} + s_{i, k+p} = x_{k}^{'} + s_{i, k+p}$ is equal to $e_{k+p}$. We conclude that $d$ is an element of $E_{k+p}$. 

Observe now that if $E_{k} + S_{k} \subsetneq E_{k+p}$, then 
\begin{align}
\bigcup_{i=1}^{N}g_{i}(X-\beta) &\subset \pi_{A, D-D}(E_{1}\times\cdots\times E_{k+p-1} \times (E_{k} + S_{k}) \times \prod\limits_{j=1}^{\infty}E_{k + p + j}) \\
&\subsetneq \pi\left(\prod\limits_{j=1}^{\infty}E_{j}\right) = X - \beta. 
\end{align}
This contradicts the fact that $X-\beta$ is the attractor of $\{g_{i}\}_{i=1}^{N}$. Therefore $E_{j} + S_{j} = E_{j+p}$ for all $j\geq1$. 

Since $g_{1}$ is among the contractions that generate $X$, it follows that $E_{j} \subset E_{j+p}$ for all $j\geq1$. We also recall that $E_{j} = D_{j} - \beta_{j}$ is a subset of the finite set $D-D$ for all $j\geq1$. By Lemma~\ref{lem:sepPP}, it follows that $(D_{j} - \beta_{j})_{j=1}^{\infty}$ is SEP. 
\end{proof}

We now prove Theorem~\ref{thm:attractSEP}.

\begin{proof} [Proof of Theorem~\ref{thm:attractSEP}]
By Lemma~\ref{lem:seqExpress}, the intersection $T_{A, D}\cap(T_{A, D} + \alpha)$ is equal to $\pi_{A, D}\left(\prod\limits_{j=1}^{\infty}(D\cap(D+\alpha_{j})\right)$. For each $j$, $D\cap(D+\alpha_{j})$ is a subset of $D$. If there exists a sequence of $(\beta_{j})_{j=1}^{\infty}\in D^{\mathbb{N}}$ such that $((D\cap(D+\alpha_{j}))-\beta_{j})_{j=1}^{\infty}$ is SEP, then $\pi_{A, D}\left(\prod\limits_{j=1}^{\infty}(D\cap(D+\alpha_{j})\right)$ is the attractor of an IFS of the desired form by Lemma~\ref{lem:sepGen}. The converse is achieved by a direct application of Lemma~\ref{lem:selfSimGen}. 
\end{proof}

\section{Intersections Involving Self-Similar Sets}

If multiplication by a matrix $L\in M_{n}(\mathbb{R})$ is a similarity on $\mathbb{R}^{n}$ with contraction coefficient $c$, it follows from $L(0) = 0$ that $\norm{L(x)} = c\norm{x}$ for all $x\in \mathbb{R}^{n}$. In the context of Theorem~\ref{thm:attractSEP}, it follows that $T_{A, D} \cap (T_{A, D} + \alpha)$ is self-similar when multiplication by $A^{-1}$ is a similarity. In such cases, the Hausdorff and box-counting dimensions agree and in fact may be equal to similarity dimension of the IFS given in Theorem~\ref{thm:attractSEP}. In particular, we determine when this occurs as a consequence of the strong separation condition (SSC). For the sake of convenience, we provide the definition of the SSC from Chapter~\ref{chp:IFS}. 

\begin{definition} 
Suppose $X$ is the attractor of an IFS given by $\mathcal{F} = \{f_{i}\}_{i=1}^{N}$. We say that $\mathcal{F}$ satisfies the \textit{strong separation condition} (SSC) if the images $f_{i_{1}}(X)$ and $f_{i_{2}}(X)$ are disjoint for every pair $1 \leq i_{1} < i_{2} \leq N$. 
\end{definition}

\begin{lemma}\label{lem:Urep} 
Suppose $A\in M_{n}(\mathbb{R})$ is an expanding matrix and $D\subset\mathbb{R}^{n}$ is finite. If $\alpha\in\mathbb{R}^{n}$ has a unique $(A, D-D)$-representation, then every $x \in T_{A, D} \cap (T_{A, D} + \alpha)$ has a unique $(A, D)$-representation.
\end{lemma}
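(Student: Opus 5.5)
The plan is a direct contradiction argument that reuses the mechanism in the proof of Lemma~\ref{lem:seqExpress}. Fix $x \in T_{A, D} \cap (T_{A, D} + \alpha)$ and suppose $x$ has two $(A, D)$-representations $(x_{j})_{j=1}^{\infty}$ and $(x_{j}^{'})_{j=1}^{\infty}$. We must show they coincide termwise. Since $x \in T_{A, D} + \alpha$, there is $y \in T_{A, D}$ with $x = y + \alpha$. Fix one $(A, D)$-representation $(y_{j})_{j=1}^{\infty}$ of $y$.

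By Lemma~\ref{lem:selfSimForm}, the representations give
\begin{equation*}
\alpha = x - y = \sum_{j=1}^{\infty}A^{-j}(x_{j} - y_{j}) = \sum_{j=1}^{\infty}A^{-j}(x_{j}^{'} - y_{j}),
\end{equation*}
where every coefficient $x_{j} - y_{j}$ and $x_{j}^{'} - y_{j}$ lies in $D - D$. Both sequences are therefore $(A, D-D)$-representations of $\alpha$. By the hypothesis that the $(A, D-D)$-representation of $\alpha$ is unique, we get $x_{j} - y_{j} = \alpha_{j} = x_{j}^{'} - y_{j}$ for every $j$. Hence $x_{j} = x_{j}^{'}$ for all $j$, and the $(A, D)$-representation of $x$ is unique.

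The one point needing care is that $T_{A, D-D}$ should be read as the attractor generated by $(A, D-D)$. Lemma~\ref{lem:selfSimForm} identifies $\pi_{A, D-D}$ with the series $\sum_{j}A^{-j}e_{j}$, so any series with coefficients in $D-D$ that sums to $\alpha$ is genuinely an $(A, D-D)$-representation of $\alpha$. With that identification the argument is immediate. I do not expect any real obstacle; the key is to use the same auxiliary $y$ for both representations of $x$, so that uniqueness for $\alpha$ transfers to $x$.
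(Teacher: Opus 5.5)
Your proposal is correct and is essentially the paper's proof: fix a single representation $(y_{j})_{j=1}^{\infty}$ of $y = x - \alpha$, note that $(x_{j} - y_{j})_{j=1}^{\infty}$ and $(x_{j}^{'} - y_{j})_{j=1}^{\infty}$ are both $(A, D-D)$-representations of $\alpha$, and conclude $x_{j} = x_{j}^{'}$ from uniqueness. Your remark about reusing the same auxiliary $y$ for both representations is exactly the step the argument depends on.
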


\begin{proof} 
Suppose that $x$ is an element of $T_{A, D} \cap (T_{A, D} + \alpha)$ and has two $(A, D)$-representations, $(x_{j})_{j=1}^{\infty}$ and $(x_{j}^{'})_{j=1}^{\infty}$. By definition there exists $y = \pi_{A, D}(y_{j})_{j=1}^{\infty}$ in $T_{A, D}$ such that $x = y + \alpha$. Therefore $\alpha = x - y = \pi_{A, D-D}(x_{j} - y_{j})_{j=1}^{\infty} = \pi_{A, D-D}(x_{j}^{'} - y_{j})_{j=1}^{\infty}$. The uniqueness of the $(A, D-D)$-representation of $\alpha$ implies that $x_{j} - y_{j} = x_{j}^{'} - y_{j}$ for each $j$. It follows that $(x_{j})_{j=1}^{\infty} = (x_{j}^{'})_{j=1}^{\infty}$. 
\end{proof}

\begin{theorem}\label{thm:snSSC} 
Suppose that $A \in M_{n}(\mathbb{R})$ is invertible and multiplication by $A^{-1}$ is a similarity on $(\mathbb{R}^{n}, \norm{\cdot})$ and $D\subset\mathbb{R}^{n}$ is finite. Suppose further that $\alpha\in\mathbb{R}^{n}$ has a unique $(A, D-D)$ representation and there exists $\{U_{\ell}\}_{\ell=1}^{p}$ and $\{V_{\ell}\}_{\ell=1}^{p}$ such that $((D\cap(D+\alpha_{j}) - \beta_{j})_{j=1}^{\infty} = (U_{\ell})_{\ell=1}^{p}\overline{(U_{\ell} + V_{\ell})_{\ell=1}^{p}}$ for some sequence $(\beta_{j})_{j=1}^{\infty}\in D^{\mathbb{N}}$.

The IFS precisely containing the maps
\begin{equation*}
f(x) = A^{-p}\bigg(x + \sum_{\ell=1}^{p}(A^{p-\ell}u_{\ell} + A^{-\ell}v_{\ell})\bigg)
\end{equation*}
with $u_{\ell} \in U_{\ell}$ and $v_{\ell} \in V_{\ell}$ for each $\ell$ satisfies the strong separation condition if and only if $|U_{\ell} + V_{\ell}| = |U_{\ell}||V_{\ell}|$ for each $\ell$. 
\end{theorem}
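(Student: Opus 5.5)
Set $E_j := (D\cap(D+\alpha_j)) - \beta_j$ and $X := T_{A,D}\cap(T_{A,D}+\alpha) - \beta$, where $\beta=\pi_{A,D}(\beta_j)_{j=1}^{\infty}$. By Lemma~\ref{lem:seqExpress} and the proof of Lemma~\ref{lem:sepGen}, $X$ is the attractor of the stated IFS $\{f_w\}$. The maps are indexed by words $w=(u_1,\ldots,u_p,v_1,\ldots,v_p)\in\prod_\ell U_\ell\times\prod_\ell V_\ell$. Every element of $X$ has the form $\sum_{j\ge1}A^{-j}e_j$ with $e_j\in E_j$.

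The key tool is uniqueness. By Lemma~\ref{lem:Urep}, each point of $X+\beta$ has a unique $(A,D)$-representation. Since $\beta$ has a fixed representation $(\beta_j)$, each point of $X$ has a unique expansion $\sum A^{-j}e_j$ with $e_j\in E_j$. Indeed, two such expansions $(e_j)$ and $(e'_j)$ of the same point give two $(A,D)$-representations $(e_j+\beta_j)$ and $(e'_j+\beta_j)$ of the same point of $X+\beta$. The image $f_w(x)$ of $x=\sum_k A^{-(k+1)p}c_k$ (written as in (\ref{eq:selfSimForm})) has digit string $u_1,\ldots,u_p$ in positions $1,\ldots,p$. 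In positions $kp+\ell$ for $k\ge1$ its digit is $u'_{\ell}+v_\ell$, where $u'_\ell\in U_\ell$ comes from $x$, because $x$'s first block lies in $U_\ell$. More precisely, $x$'s block $k$ digits $u_{k,\ell}+v_{k,\ell}$ receive the new $v$ and shift. The cleanest formulation is $f_w(x)=A^{-p}(x+t_w)$ with $t_w=\sum_\ell(A^{p-\ell}u_\ell+A^{-\ell}v_\ell)$. So the digits of $f_w(x)$ in block $0$ are the $u_\ell$, and in block $1$ they are $x_\ell+v_\ell$, where $x_\ell\in U_\ell$ is the block-$0$ digit of $x$. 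Later blocks just shift $x$'s digits.

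For the ``if'' direction, assume $|U_\ell+V_\ell|=|U_\ell||V_\ell|$ for each $\ell$, so the map $(u,v)\mapsto u+v$ from $U_\ell\times V_\ell$ is injective. Suppose $f_w(x)=f_{w'}(y)$ with $x,y\in X$. By uniqueness of digit expansions, the block-$0$ digits agree, so $u_\ell=u'_\ell$. The block-$1$ digits also agree, so $x_\ell+v_\ell=y_\ell+v'_\ell$ with $x_\ell,y_\ell\in U_\ell$ and $v_\ell,v'_\ell\in V_\ell$. Injectivity then gives $v_\ell=v'_\ell$. Hence $w=w'$, and the images $f_w(X)$ are pairwise disjoint, which is the SSC.

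For the ``only if'' direction, suppose some $\ell_0$ has $u+v=u'+v'$ with $(u,v)\neq(u',v')$ in $U_{\ell_0}\times V_{\ell_0}$; then $v\ne v'$. Pick $x\in X$ whose block-$0$ digit at $\ell_0$ is $u$, and $y\in X$ with identical digits except $u'$ at that position. Such $y$ exists because $X$ corresponds to a full product $\prod_j E_j$, so digits can be chosen independently. Take $w,w'$ with the same $u$-part and the same $v$-part, except $v$ versus $v'$ at $\ell_0$. Then $f_w(x)$ and $f_{w'}(y)$ have identical digit strings, so they are equal points, while $w\ne w'$. This violates the SSC.

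The main obstacle is the bookkeeping that identifies the digit string of $f_w(x)$ from $x$'s digits. This requires rewriting $A^{-p}(x+t_w)$ as a single series $\sum_j A^{-j}e_j$ and checking that the resulting digits lie in $E_j$, which is what the SEP structure $E_{kp+\ell}=U_\ell+V_\ell$ guarantees. Only the converse direction uses that equal digit strings give equal points; this is trivial and needs no uniqueness. Uniqueness is used only in the ``if'' direction, and the similarity hypothesis on $A^{-1}$ plays no role in the separation argument itself.
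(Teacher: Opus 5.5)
Your proof is correct and follows essentially the same route as the paper: you read off the digit string $(u_1,\ldots,u_p,\,x_1+v_1,\ldots,x_p+v_p,\,x_{p+1},\ldots)$ of $f_w(x)$, use Lemma~\ref{lem:Urep} to force the two digit strings to agree in the ``if'' direction, and build an explicit collision from a non-injective sum $U_{\ell_0}\times V_{\ell_0}\to U_{\ell_0}+V_{\ell_0}$ in the ``only if'' direction. In the converse you should also note that $f_w\neq f_{w'}$ as maps, which holds because $t_w-t_{w'}=A^{-\ell_0}(v-v')\neq 0$. Separately, your appeal to Lemma~\ref{lem:seqExpress} and Lemma~\ref{lem:sepGen}, which need only uniqueness of $\alpha$'s representation, matches the theorem's hypotheses more faithfully than the paper's citation of Theorem~\ref{thm:attractSEP}.
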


\begin{proof}
Define $\beta := \pi_{A, D}(\beta_{j})_{j=1}^{\infty}$ and let $\mathcal{F}$ denote the IFS of interest. By Theorem~\ref{thm:attractSEP}, the set $X := T(\alpha) - \beta$ is the attractor of the IFS $\mathcal{F}$. We first prove (i) by assuming that the condition $|U_{\ell} + V_{\ell}| = |U_{\ell}||V_{\ell}|$ holds for each $\ell$. 

Assume that $h, g\in\mathcal{F}$ and that there exist $x_{1}$, $x_{2}\in X$ such that $h(x_{1}) = g(x_{2})$. Let us denote the parameters of $h$ and $g$ by $u_{\ell}^{(h)}, v_{\ell}^{(h)}$ and $u_{\ell}^{(g)}, v_{\ell}^{(g)}$ respectively. By assumption, $x_{k} = \pi_{A, D-D}(x_{j}^{(k)})_{j=1}^{\infty}$ for $k=1, 2$ where $x_{j}^{(k)}\in (D\cap(D+\alpha_{j})) - \beta_{j}$. It follows that 
\begin{align}
h(x_{1}) &= \pi_{A, D-D}(u_{1}^{(h)},\ldots u_{p}^{(h)},(v_{1}^{(h)} + x_{1}^{(1)}),\ldots (v_{p}^{(h)} + x_{p}^{(1)}),x_{p+1}^{(1)},x_{p+2}^{(1)},\ldots) \label{eq:ha} \\
g(x_{2}) &= \pi_{A, D-D}(u_{1}^{(g)},\ldots u_{p}^{(g)},(v_{1}^{(g)} + x_{1}^{(2)}),\ldots (v_{p}^{(g)} + x_{p}^{(2)}),x_{p+1}^{(2)},x_{p+2}^{(2)},\ldots) \label{eq:ga}
\end{align}
where we recall that for $k=1, 2$, the coefficient $x_{\ell}^{(k)}$ is an element of $U_{\ell}$ for $\ell = 1, 2, \ldots, p$ and of $U_{\ell} + V_{\ell}$ for $\ell > p$. It follows that both (\ref{eq:ha}) and (\ref{eq:ga}) are images of $(A, D-D)$-representations under $\pi_{A, D-D}$. Adding $\beta$ to both radix expansions converts them to images of $(A, D)$ expansions under $\pi_{A, D}$. By Lemma~\ref{lem:Urep}, it follows that $u_{\ell}^{(h)} = u_{\ell}^{(g)}$ and $v_{\ell}^{(h)} + x_{\ell}^{(1)} = v_{\ell}^{(g)} + x_{\ell}^{(2)}$ for each $\ell = 1, 2, \ldots, p$. The assumption $|U_{\ell} + V_{\ell}| = |U_{\ell}||V_{\ell}|$ implies that $x_{\ell}^{(1)} = x_{\ell}^{(2)}$ and, in particular, $v_{\ell}^{(h)} = v_{\ell}^{(g)}$. We conclude that $h = g$. If $h \neq g$, then $h(X)$ and $g(X)$ are disjoint. 

Conversely, suppose $|U_{\ell} + V_{\ell}| < |U_{\ell}||V_{\ell}|$ for some $\ell$. Without loss of generality, this implies the existence of $u_{1}, u_{1}^{'}\in U_{1}$ and $v_{1}, v_{1}^{'}\in V_{1}$ such that $u_{1} + v_{1} = u_{1}^{'} + v_{1}^{'}$ and $u_{1} \neq u_{1}^{'}$, $v_{1} \neq v_{1}^{'}$. We can choose $t = \pi_{A, D-D}(t_{j})_{j=1}^{\infty}$ and $t^{'} = \pi_{A, D-D}(t_{j}^{'})_{j=1}^{\infty}$ in $X$ such that $t_{1} = u_{1}, t_{1}^{'} = u_{1}^{'}$ and $t_{j} = t_{j}^{'}$ for all $j \geq 2$. Fix a finite sequence $(v_{\ell})_{\ell=1}^{\infty}$ and define function $f(x) = A^{-p}(x + A^{-1}v_{1} +\sum\limits_{\ell=2}^{p}(A^{p-\ell}u_{\ell} + A^{-\ell}v_{\ell}))$ for some fixed pairs $(u_{\ell}, v_{\ell}) \in U_{\ell} \times V_{\ell}$ for $\ell = 2, 3, \ldots, p$. Similarly define $g(x) = A^{-p}(x + v_{1}^{'} + \sum\limits_{\ell=2}^{p}(A^{p-\ell}u_{\ell} + A^{-\ell}v_{\ell}))$. By construction $f(t) = g(t^{'})$ and $f, g$ are elements of $\mathcal{F}$. Therefore $f(X) \cap g(X) \neq \emptyset$. 
\end{proof}

\begin{corollary}\label{cor:simDim}
Suppose that $A \in M_{n}(\mathbb{R})$ is such that multiplication by its inverse is a similarity. Suppose $\alpha\in\mathbb{R}^{n}$ has a unique $(A, D-D)$ representation. Suppose there exists $\{U_{\ell}\}_{\ell=1}^{p}$ and $\{V_{\ell}\}_{\ell=1}^{p}$ such that $((D\cap(D+\alpha_{j}) - \beta_{j})_{j=1}^{\infty} = (U_{\ell})_{\ell=1}^{p}\overline{(U_{\ell} + V_{\ell})_{\ell=1}^{p}}$ for some sequence of $\beta_{j} \in D^{\mathbb{N}}$. 

If $|U_{\ell} + V_{\ell}| = |U_{\ell}||V_{\ell}|$ for each $\ell$, then 
\begin{equation}
\dim_{H}T(\alpha) = \dim_{B}T(\alpha) = \frac{\sum\limits_{\ell=1}^{p}(\log{|U_{\ell}||V_{\ell}|})}{-p\log{c}}
\end{equation}
where $T(\alpha) := T_{A, D} \cap (T_{A, D} + \alpha)$ and $c$ is the contraction coefficient of $A^{-1}$. 
\end{corollary}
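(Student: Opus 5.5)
The plan is to combine Theorem~\ref{thm:snSSC} with Theorem~\ref{thm:SepSimDim}. By Lemma~\ref{lem:seqExpress}, the uniqueness of the $(A, D-D)$-representation of $\alpha$ gives $T(\alpha) = \pi_{A, D}\left(\prod_{j=1}^{\infty} D\cap(D+\alpha_{j})\right)$. The SEP hypothesis then lets us apply Lemma~\ref{lem:sepGen}. Consequently $X := T(\alpha) - \beta$, where $\beta = \pi_{A, D}(\beta_{j})_{j=1}^{\infty}$, is the attractor of the IFS $\mathcal{F}$ of maps
\begin{equation*}
f(x) = A^{-p}\bigg(x + \sum_{\ell=1}^{p}(A^{p-\ell}u_{\ell} + A^{-\ell}v_{\ell})\bigg), \qquad u_{\ell}\in U_{\ell},\ v_{\ell}\in V_{\ell}.
\end{equation*}
Since translation is an isometry, it preserves both Hausdorff and box-counting dimension. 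It therefore suffices to compute $\dim_{H}X$ and $\dim_{B}X$.

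Each map in $\mathcal{F}$ is a similarity with ratio $c^{p}$, because $A^{-1}$ is a similarity with coefficient $c$. Under the assumption $|U_{\ell}+V_{\ell}| = |U_{\ell}||V_{\ell}|$ for every $\ell$, Theorem~\ref{thm:snSSC} shows that $\mathcal{F}$ satisfies the SSC, and hence the OSC. Theorem~\ref{thm:SepSimDim} then gives
\begin{equation*}
\dim_{H}X = \dim_{B}X = s, \qquad \text{where } N c^{ps} = 1,
\end{equation*}
and $N$ is the number of \emph{distinct} maps in $\mathcal{F}$. Solving, $s = \log N / (-p\log c)$.

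The main point needing care is the count $N = \prod_{\ell=1}^{p}|U_{\ell}||V_{\ell}|$. Parameter tuples $(u_{\ell}, v_{\ell})_{\ell}$ could in principle give the same translation vector and hence the same map, which would make $N$ smaller than the product. I would rule this out with the uniqueness argument from the proof of Theorem~\ref{thm:snSSC}.

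Suppose two parameter choices give the same map. Apply both maps to the same point $x \in X$ and compare the resulting $(A, D-D)$-expansions, as in (\ref{eq:ha}) and (\ref{eq:ga}). By Lemma~\ref{lem:Urep}, after adding $\beta$ these become unique $(A, D)$-representations, so the two expansions agree term by term. This gives $u_{\ell} = u_{\ell}'$ and $v_{\ell} + x_{\ell} = v_{\ell}' + x_{\ell}$, hence $v_{\ell} = v_{\ell}'$. So distinct tuples yield distinct maps.

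(Alternatively, the SSC proof already shows that equal images force equal parameters.) With $N = \prod_{\ell=1}^{p}|U_{\ell}||V_{\ell}|$, we get $\log N = \sum_{\ell=1}^{p}\log(|U_{\ell}||V_{\ell}|)$, which yields the stated formula.
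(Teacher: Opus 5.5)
Your proposal is correct and follows the paper's proof. You realize $T(\alpha)-\beta$ as the attractor of the SEP-derived IFS, obtain the SSC from Theorem~\ref{thm:snSSC}, and apply Theorem~\ref{thm:SepSimDim} with $N=\prod_{\ell=1}^{p}|U_{\ell}||V_{\ell}|$ maps of ratio $c^{p}$. You are slightly more careful than the paper in two places: you use Lemma~\ref{lem:seqExpress} and Lemma~\ref{lem:sepGen}, which need only uniqueness of $\alpha$'s representation rather than the stronger hypothesis of Theorem~\ref{thm:attractSEP}, and you verify via Lemma~\ref{lem:Urep} that distinct parameter tuples give distinct maps, which the paper simply asserts.
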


\begin{proof}
Define $\beta := \pi_{A, D}(\beta_{j})_{j=1}^{\infty}$. By Theorem~\ref{thm:attractSEP}, the set $T(\alpha) - \beta$ is the attractor of the IFS $\mathcal{F}$ which precisely contains the maps $f(x) = A^{-p}(x + \sum\limits_{\ell=1}^{p}(A^{p-\ell}u_{\ell} + A^{-\ell}v_{\ell}))$ where $a_{\ell} \in A_{\ell}$ and $u_{\ell} \in U_{\ell}$ for each $\ell$. By Theorem~\ref{thm:snSSC}, the IFS $\mathcal{F}$ satisfies the SSC. It follows from Theorem~\ref{thm:SepSimDim} that the Hausdorff dimension and box-counting dimension are equal to the value $s$ satisfying $\sum\limits_{f\in\mathcal{F}}c_{f}^{s} = 1$, where $c_{f}$ is the contraction coefficient of $f\in\mathcal{F}$. Since $A^{-1}$ has a contraction coefficient $c$, each map in $\mathcal{F}$ is a similarity with contraction coefficient $c^{p}$. Furthermore, the cardinality of $\mathcal{F}$ is $\prod\limits_{\ell=1}^{p}|A_{\ell}||U_{\ell}|$. The equation simplifies to $\prod\limits_{\ell=1}^{p}|U_{\ell}||V_{\ell}|c^{ps} = 1$. Isolating for $s$ completes the proof. 
\end{proof}

We illustrate Theorem~\ref{thm:snSSC} using an example where the strong separation condition does not hold and then apply it to a case where it does hold. 

\begin{example}\label{ex:noSSC} 
Let $A = \begin{bmatrix}
-3 & -1 \\
1  & -3 \\
\end{bmatrix}$.
This matrix encodes the linear transformation $L(z) = (-3+i)z$ for all $z\in\mathbb{C}$. We choose $D = \{0, 4e_{1}, 8e_{1}\}$ where $e_{1}\in\mathbb{R}^{2}$ is the standard basis vector $[1, 0]^{t}$ corresponding to the number $1$ in the complex plane. The inverse of $A$ is the matrix
$\sqrt{10}^{-1}\begin{bmatrix}
-3/\sqrt{10} & 1/\sqrt{10} \\
-1/\sqrt{10}  & -3/\sqrt{10} \\
\end{bmatrix}$. 
It can be verified directly that multiplication on $\mathbb{R}^{2}$ by this matrix is a similarity with contraction coefficient $\sqrt{10}^{-1}$. We choose $\alpha$ to be the element of $\mathbb{R}^{2}$ with $(A, D-D)$-representation $(-4e_{1}, -8e_{1}, \overline{0, 8e_{1}})$. This choice of representation yields
\begin{equation} 
(D\cap (D + \alpha_{j}))_{j=1}^{\infty} = (\{0, 4e_{1}\}, \{0\}, \overline{\{0, 4e_{1}, 8e_{1}\}, \{8e_{1}\}}).
\end{equation}

This sequence is SEP with period $2$ because $\{0, 4e_{1}, 8e_{1}\} = \{0, 4e_{1}\} + \{0, 4e_{1}\}$ and $\{8e_{1}\} = \{0\} + \{8e_{1}\}$. It follows from Theorem~\ref{thm:attractSEP}, that $T(\alpha) = T_{A, D}\cap(T_{A, D}+\alpha)$ is the attractor of the IFS given by the maps

\begin{equation}
 \begin{split}
    &f_{1}(x) = A^{-2}(x + 8A^{-2}e_{1}), \\
    &f_{2}(x) = A^{-2}x + (4A^{-1}e_{1}+8A^{-2}e_{1}), \\
  \end{split}
\quad
  \begin{split}
    &f_{3}(x) = f_{1}(x) + 4A^{-1}e_{1}, \\
    &f_{4}(x) = f_{2}(x) + 4A^{-1}e_{1}\\
  \end{split}
\end{equation}

Consider $y$ and $z$ in $T(\alpha)$ with the $(A, D)$-representations $(4e_{1}, 0, \overline{0, 8e_{1}})$ and $(0, 0, \overline{0, 8e_{1}})$ respectively. Since $f_{1}(y) = f_{2}(z)$, we have that $f_{1}(T(\alpha)) \cap f_{2}(T(\alpha))$ is nonempty for two distinct elements of the IFS. As an aside, notice that we could have chosen any tail for $y$ and $z$ besides the repetition of $(0, 8e_{1})$ (assuming we use the same tail in both representations). So we see that the overlap is arguably large relative to $T(\alpha)$. This a consequence of $|\{0, 4\}||\{0, 4\}| = (2)(2) = 4 \neq 3 = |\{0, 4e_{1}, 8e_{1}\}|$. It is sensible to suggest that the open set condition (see Definition~\ref{def:openSep}) may hold despite the negation of the SSC. If so, then the Hausdorff dimension of $T(\alpha)$ would still be the similarity dimension of this IFS. We eventually show that this is not the case (Corollary~\ref{cor:OSCisSSC}). 

Now we consider $T(\beta)$ where $\beta$ has $(A, D-D)$-representation $(\beta_{j})_{j=1}^{\infty} = (-4, -8, \overline{-4, 8})$. This choice of representation yields
\begin{equation} 
(D\cap (D + \beta_{j}))_{j=1}^{\infty} = (\{0, 4e_{1}\}, \{0\}, \overline{\{0, 4e_{1}\}, \{8_{1}\}}).
\end{equation}
This sequence is SEP with period $2$ because $\{0, 4e_{1}\} = \{0, 4\} + \{0\}$ and $\{8\} = \{0\} + \{8\}$. In this case, both the cardinalities of the sumsets are equal to the product of the cardinalities of their components. It follows that in addition to $T(\beta)$ being the attractor of the IFS containing the two maps $f(x) = A^{-2}(x + 8A^{-2}e_{1})$ and $g(x) = A^{-2}(x + 4Ae_{1} + 8A^{-2}e_{1})$, it has Hausdorff dimension $\log{2}/\log{10}$ by Corollary~\ref{cor:simDim}. 

We remark that we did not verify that the translations had unique $(A, D-D)$-representations in either case. This will be rectified in Chapter~\ref{chp:neighbours} (Theorem~\ref{thm:gaussianEquiv}). 
\end{example}

For the last example in this section, we demonstrate that Theorem~\ref{thm:attractSEP} can be applied to and consequently yield self-affine sets whose Hausdorff and box-counting dimensions do not agree. 

\begin{definition} 
Let $n > m \geq 2$. Let $D$ be a subset of $\{0, \ldots, m-1\} \times \{0, \ldots, n-1\}$. We call the attractor of an IFS on $\mathbb{R}^{2}$ precisely containing the functions $f_{d}(x) = B(x+d)$ where $B = \begin{bmatrix} 1/m& 0 \\ 0 & 1/n \end{bmatrix}$ and $d\in D$, a \textit{Bedford-McMullen Carpet}.
\end{definition}

Bedford-McMullen carpets are a subset of $[0, 1] \times [0, 1]$. Partition the unit interval on the horizontal axis into $1/m$ equal subintervals and similarly partition the vertical unit interval into $1/n$ subintervals. This describes an $n$ by $m$ grid over the unit square. Each vector $d \in \{0, \ldots, m-1\} \times \{0, \ldots n-1\}$ that appears in $D$ is a rectangle in the grid. Let $N$ be the total number of rectangles, let $M$ be the total number of columns with at least one rectangle corresponding to a vector in $D$, and lastly let $N_{i}$ be the number of rectangles in the $i$th column, $i = 1, 2, \ldots, m$, corresponding to vectors in $D$. The Hausdorff and box-counting dimensions of a Bedford-McMullen Carpet $F$ are known \cite{M84} and have values
\begin{align} 
\dim_{H}F &= \frac{\log(\sum\limits_{i=1}^{M}N_{i}^{\log(m)/\log(n)})}{\log(m)} \label{eq:BMH} \\
\dim_{B}F &= \frac{\log(M)}{\log(m)} + \frac{\log(N/M)}{\log(n)}, \label{eq:BMB}
\end{align}
respectively. 

\begin{example}
Let $(A, D)$ be the pair
\begin{equation}
\left(\begin{bmatrix} 7& 0 \\ 0 & 10 \end{bmatrix}, \left\{\begin{bmatrix} 0 \\ 0 \end{bmatrix}, \begin{bmatrix} 0 \\ 6 \end{bmatrix}, \begin{bmatrix} 0 \\ 9 \end{bmatrix}, \begin{bmatrix} 3 \\ 0 \end{bmatrix}, \begin{bmatrix} 6 \\ 0 \end{bmatrix}, \begin{bmatrix} 3 \\ 3 \end{bmatrix}, \begin{bmatrix} 3 \\ 6 \end{bmatrix}, \begin{bmatrix} 6 \\ 3 \end{bmatrix}, \begin{bmatrix} 6 \\ 6 \end{bmatrix}\right\}\right).
\end{equation}
Let $\alpha$ be the element with $(A, D-D)$-representation $(\alpha_{j})_{j=1}^{\infty}= (\overline{[3, 0]^{t}, [0, 3]^{t}})$. This yields
\begin{align}
D\cap(D+\alpha_{2j-1}) &= \left\{\begin{bmatrix} 3 \\ 0 \end{bmatrix}, \begin{bmatrix} 6 \\ 0 \end{bmatrix}, \begin{bmatrix} 3 \\ 6 \end{bmatrix}, \begin{bmatrix} 6 \\ 3 \end{bmatrix}, \begin{bmatrix} 6 \\ 6 \end{bmatrix}\right\}, \\
D\cap(D+\alpha_{2j})&=\left\{\begin{bmatrix} 3 \\ 3 \end{bmatrix}, \begin{bmatrix} 3 \\ 6 \end{bmatrix}, \begin{bmatrix} 6 \\ 3 \end{bmatrix}, \begin{bmatrix} 6 \\ 6 \end{bmatrix}, \begin{bmatrix} 0 \\ 9 \end{bmatrix}\right\},
\end{align}
for all $j\geq1$. By Theorem~\ref{thm:attractSEP}, the resulting IFS that generates $T_{A, D}\cap(T_{A, D} + \alpha)$ is precisely the set of functions $f(x) = A^{-2}(x+b)$ where $b$ is any of the twenty-five vectors of the form $Ad + d^{'}$ where $d\in D\cap(D+\alpha_{1})$ and $d^{'}\in D\cap(D+\alpha_{2})$.

The set $T_{A, D}$ and its intersection with its translation by $\alpha$ are examples of Bedford-McMullen carpets. The direct use of the formulas (\ref{eq:BMH}) and (\ref{eq:BMB}), applied to $T_{A, D} \cap (T_{A, D} + \alpha)$ produces values approximately equal to $1.536$ and $1.540$ respectively. We justify the implicit claim that the $(A, D-D)$-representations are unique in Remark~\ref{rem:realBasePhenom}. 
\end{example}

In general, it may be that the SEP condition only characterizes a subset of the $\alpha$ for which $T_{A, D} \cap (T_{A, D}+\alpha)$ is self-similar. Example~\ref{ex:nonSEP} demonstrates the existence of sets of the form $\pi_{A, D}\left(\prod\limits_{j=1}^{\infty}D_{j}\right)$ that are self-similar despite the sequence $(D_{j}-\beta_{j})_{j=1}^{\infty}$ failing to satisfy the SEP condition for any $(\beta_{j})_{j=1}^{\infty}$. Further research is required to determine if there exists a pair $(A, D)$ and a translation $\alpha$ for which $T_{A, D} \cap (T_{A, D} + \alpha)$ is among the sets described in Example~\ref{ex:nonSEP}. By Lemma~\ref{lem:selfSimGen}, it must be that there exist $(A, D-D)$-representations in the context of Example~\ref{ex:nonSEP} which are not unique. In Section~\ref{sec:nonUniqueReps}, we discuss the situation when $\alpha$ has multiple $(A, D-D)$-representations. Before then, we spend the next chapter examining the functions that map $\alpha$ to its Hausdorff and box-counting dimensions respectively when each $\alpha$ does have a unique representation.

\chapter{Dimensions of Self-Similar Sets and their Translates}\label{chp:limFormula}

\section{A Limit Formula for the Dimension of the Intersection}

Given an expanding matrix $A$ and a finite subset $D\subset\mathbb{R}^{n}$, the intersection $T(\alpha) := T_{A, D}\cap(T_{A, D} + \alpha)$ is not self-similar for every $\alpha$. Even if the intersection is self-similar, the IFS described in Theorem~\ref{thm:attractSEP} may not satisfy the open set condition. In either case, we cannot expect the Hausdorff dimension or box-counting dimension of $T(\alpha)$ to be the similarity dimension of a known IFS. We require a different formulation if we are to make claims about the dimension of $T(\alpha)$ in these cases. 

To achieve this goal, we restrict our study to matrices with integer entries whose inverses are similarities on $\mathbb{R}^{n}$ with respect to Euclidean distance. It is sensible to wonder what kind restriction this places on the entries of the matrix and if it is easy to construct examples. Recall that an orthogonal matrix is a real square matrix whose columns and rows are orthonormal vectors. We can check this property for a given matrix because the definition is equivalent to an invertible matrix with real entries whose inverse is equal to its transpose. 

\begin{theorem}\label{thm:simForm} 
Suppose that $A \in M_{n}(\mathbb{R})$ is invertible. The function $L:\mathbb{R}^{n} \rightarrow \mathbb{R}^{n}$ given by $L(x) = A^{-1}x$ is a similarity with respect to Euclidean distance if and only if $A = rO$ where $|r|>1$ and $O$ is an orthogonal matrix.
\end{theorem}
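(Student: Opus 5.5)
We prove the two directions separately. The argument reduces to the standard fact that a linear map which scales all Euclidean norms by a common factor is a scalar multiple of an orthogonal map. Throughout we read ``similarity'' in the sense of the paper's definition of a contraction, so the similarity ratio $c$ lies in $[0,1)$. Since $A^{-1}$ is invertible, in fact $c\in(0,1)$.

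For the easy direction, suppose $A = rO$ with $|r|>1$ and $O$ orthogonal. Then $A^{-1} = r^{-1}O^{-1} = r^{-1}O^{t}$, and $O^{t}$ is also orthogonal, so it preserves Euclidean norms. Hence
\[
\norm{A^{-1}x - A^{-1}y} = |r|^{-1}\norm{O^{t}(x-y)} = |r|^{-1}\norm{x-y}.
\]
Thus $L$ is a similarity with ratio $|r|^{-1}\in(0,1)$.

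For the converse, suppose $\norm{A^{-1}x - A^{-1}y} = c\norm{x-y}$ for all $x,y$, with $c\in[0,1)$. Setting $y=0$ and using linearity gives $\norm{A^{-1}x} = c\norm{x}$ for all $x$. Invertibility forces $c>0$, since otherwise $A^{-1}=0$. Put $Q := c^{-1}A^{-1}$, so that $\norm{Qx} = \norm{x}$ for every $x$. The key step is to show that $Q$ is orthogonal. By the polarization identity $\langle u,v\rangle = \tfrac14(\norm{u+v}^{2} - \norm{u-v}^{2})$, the map $Q$ preserves inner products, so $\langle Q^{t}Qx, y\rangle = \langle x,y\rangle$ for all $x,y$. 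Hence $Q^{t}Q = I$, and $Q$ is orthogonal.

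It then follows that $A = (cQ)^{-1} = c^{-1}Q^{-1} = c^{-1}Q^{t}$. Set $r := c^{-1}$ and $O := Q^{t}$. Then $O$ is orthogonal, being the transpose of an orthogonal matrix, and $|r| = 1/c > 1$ because $c<1$.

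The only step with real content is polarization, and it is routine. The one point needing care is the bound $|r|>1$: it depends on the similarity ratio being strictly less than one, which is part of the paper's definition of a contraction. Without that convention, the statement would only give $|r|>0$.
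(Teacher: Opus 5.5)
Your proof is correct and follows the paper's argument: you normalize $A^{-1}$ by $c^{-1}$ to get a norm-preserving linear map, identify that map as orthogonal, and invert, with $|r|=1/c>1$ coming from $c<1$. The one difference is that you prove orthogonality directly, via polarization and $Q^{t}Q=I$, whereas the paper cites the classification of Euclidean isometries as maps $x\mapsto Ox+t$ and uses linearity to discard $t$; your route is more self-contained and uses only what a linear map requires.
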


\begin{proof}
Suppose that $A^{-1}$ is a similarity with contraction coefficient $c$. Multiplication by $c^{-1}A^{-1}$ is then an isometry. The isometries of Euclidean space are functions of the form $f(x) = Ox + t$ where $O$ is an orthogonal matrix and $t\in \mathbb{R}^{n}$. Since $c^{-1}A^{-1}$ is a linear transformation, $c^{-1}A^{-1} = O$ for some orthogonal matrix $O$. Therefore $A = c^{-1}O^{-1}$. Since $c \in (0, 1)$ and the inverse of an orthogonal matrix is still orthogonal, the proof of this direction is complete. Conversely, if $A = rO$ where $|r| > 1$ and $O$ is orthogonal, then $A^{-1} = r^{-1}O^{-1}$. Then for all $x\in\mathbb{R}^{n}$, $\norm{A^{-1}x} = |r|^{-1}\norm{x}$. We conclude that multiplication by $A^{-1}$ is a similarity with contraction coefficient $1/|r|$. 
\end{proof}

\begin{corollary}\label{cor:detCoeff}
Suppose that $A \in M_{n}(\mathbb{R})$ is invertible. If the function $L: \mathbb{R}^{n}\rightarrow\mathbb{R}^{n}$ given by $L(x) = A^{-1}x$ is a similarity with respect to Euclidean distance, then its contraction coefficient is $|\det(A)|^{-1/n}$. 
\end{corollary}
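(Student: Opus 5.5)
By Theorem~\ref{thm:simForm}, if $x\mapsto A^{-1}x$ is a similarity with respect to Euclidean distance, then $A = rO$ for some real $r$ with $|r|>1$ and some orthogonal matrix $O$. The proof of that theorem also identifies the contraction coefficient: $\norm{A^{-1}x} = |r|^{-1}\norm{x}$ for all $x$, so the coefficient is $c = 1/|r|$. It therefore suffices to express $|r|$ in terms of $\det(A)$.

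We take determinants of $A = rO$. Since $O$ is an $n\times n$ matrix, multilinearity of the determinant in the rows gives $\det(A) = r^{n}\det(O)$. Orthogonality means $O^{t}O = I$. Hence $\det(O)^{2} = \det(O^{t})\det(O) = \det(I) = 1$, so $|\det(O)| = 1$. Consequently $|\det(A)| = |r|^{n}$, and so $|r| = |\det(A)|^{1/n}$, which is positive since $A$ is invertible.

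Substituting into $c = 1/|r|$ gives $c = |\det(A)|^{-1/n}$, which is the claim. The only point needing care is that the coefficient $c$ is exactly $1/|r|$. This is immediate from the converse computation in the proof of Theorem~\ref{thm:simForm}, because orthogonal matrices preserve the Euclidean norm. The remaining steps are routine determinant identities, so there is no real obstacle.
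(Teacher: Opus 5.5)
Your proposal is correct and follows the paper's proof: write $A = rO$ via Theorem~\ref{thm:simForm}, read off the coefficient $|r|^{-1}$ because $O$ preserves Euclidean norm, and use $|\det(O)| = 1$ to get $|\det(A)| = |r|^{n}$. The only difference is cosmetic: you obtain $|\det(O)|=1$ from $\det(O)^{2} = \det(O^{t}O) = 1$, while the paper writes $\det(O) = \det(O^{-1}) = 1/\det(O)$.
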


\begin{proof} 
By Theorem~\ref{thm:simForm}, $A = rO$ where $|r|>1$ and $O$ is an orthogonal matrix. Since $O$ preserves Euclidean distance, the contraction coefficient of $L$ must be $|r|^{-1}$. Observe that $\det(O) = \det(O^{t}) = \det(O^{-1}) = 1/\det(O)$. It must be that $\det(O) = \pm 1$. Therefore $|\det(A)| = |r|^{n}$. 
\end{proof}

Unfortunately, a similarity with respect to Euclidean distance is not necessarily a similarity with respect to other norms on $\mathbb{R}^{n}$. 

\begin{example} 
Consider multiplication by the matrix
$$B = 3^{-1}\begin{bmatrix}1/\sqrt{2} & -1/\sqrt{2} \\ 1/\sqrt{2} & 1/\sqrt{2}\end{bmatrix}.$$
This action rotates elements of $\mathbb{R}^{2}$ by $45$ degrees counter-clockwise relative to the origin and then scales them down by a third. Let $\norm{\cdot}_{2}$ denote the Euclidean norm. For any $v = [x, y]^{t}\in\mathbb{R}^{2}$, 
\begin{equation}
\norm{Bv}_{2} = 3^{-1}\sqrt{2^{-1}((x-y)^{2} + (x+y)^{2})} = 3^{-1}\sqrt{x^{2} + y^{2}} = 3^{-1}\norm{v}_{2}. 
\end{equation}
Therefore multiplication by $B$ on $(\mathbb{R}^{2}, \norm{\cdot}_{2})$ is a similarity with contraction coefficient $3^{-1}$. Now consider the same function on $\mathbb{R}^{2}$ equipped with the taxi-cab norm. Recall that the taxi-cab norm is defined as
$\norm{v}_{1} := |x| + |y|$ for any $v = [x, y]^{t}\in\mathbb{R}^{2}$. Let $v_{1} = [1/\sqrt{2}, 1/\sqrt{2}]^{t}$ and let $v_{2} = [1, 0]^{t}$. Multiplication by $B$ does not scale the magnitude of these two vectors equally. 
We can verify directly that $\norm{Bv_{1}}_{1} = 3^{-1} = (\sqrt{2}/6)\norm{v_{1}}_{1}$. Meanwhile, $\norm{Bv_{2}}_{1} = 2/(3\sqrt{2}) = (\sqrt{2}/3)\norm{v_{2}}_{1}$. 
\end{example}

Let us proceed with our study of the fractal dimensions of $T_{A, D} \cap (T_{A, D} + \alpha)$ when $A$ has integer entries and its inverse is a similarity with respect to Euclidean distance. We remark that, by the equivalence of all norms in $\mathbb{R}^{n}$, many of the proofs in this chapter could be modified to apply to invertible matrices whose inverse is a similarity with respect to a non-standard norm on $\mathbb{R}^{n}$. The Hausdorff dimension is invariant under a change to an equivalent norm (\cite{F90}, corollary 2.4). 

\begin{definition} 
Let $A$ be an element of $M_{n}(\mathbb{Z})$. A set $\mathcal{D} \subset \mathbb{Z}^{n}$ is called a \textit{complete residue system mod $A$} if the canonical map from $\mathcal{D} \rightarrow \faktor{\mathbb{Z}^{n}}{A\mathbb{Z}^{n}}$ is bijective. 
\end{definition}

The following theorem in \cite{S16} implies that we can use complete residue systems of invertible matrices in our construction of self-affine sets. 

\begin{theorem}[\cite{S16}, theorem 2.3]\label{thm:sashaLOL}
Suppose $A$ is an $m$ by $n$ matrix with entries in a principle ideal domain $R$. Let $v_{j}$ denote the $j$th column of $A$. The quotient module $\faktor{R^{n}}{(v_{1}, \ldots, v_{n})}$ where $(v_{1}, \ldots, v_{n})$ is the free $R$-module generated by $v_{1}, \ldots, v_{n}$ is isomorphic to the direct sum $\faktor{R}{a_{1}R} \bigoplus \cdots \bigoplus \faktor{R}{a_{m}R}$ where $a_{1}, \ldots, a_{m}$ are the diagonal entries of the Smith normal form of $A$. 
\end{theorem}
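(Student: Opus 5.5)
This is the Smith normal form theorem for a finitely generated module over a PID, cited from \cite{S16}. I would prove it by reducing to a diagonal matrix through invertible changes of basis, and then computing the quotient directly. One remark on the statement first: $A$ is $m\times n$, so its columns $v_1,\ldots,v_n$ lie in $R^{m}$, and the quotient should be read as $R^{m}/(v_1,\ldots,v_n)$, where $(v_1,\ldots,v_n)$ is the submodule spanned by the columns (it need not be free of rank $n$). In this chapter it is only used with $m=n$, $R=\mathbb{Z}$ and $A$ invertible over $\mathbb{Q}$.

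The first step is the existence of the Smith normal form. I want $P\in GL_m(R)$ and $Q\in GL_n(R)$ such that $PAQ$ is diagonal with entries $a_1\mid a_2\mid\cdots$. This is where I expect the main effort to be. I would argue by induction on the size of the matrix, using the following operations:
\begin{itemize}
\item swapping rows or columns;
\item adding a multiple of one row or column to another;
\item multiplying a row or column by a unit;
\item for two entries $a,b$ in a common row or column with $\gcd$ $g=xa+yb$, the $2\times 2$ unimodular operation $\begin{bmatrix}x&y\\ -b/g& a/g\end{bmatrix}$, which replaces $(a,b)$ by $(g,0)$.
\end{itemize}
The $2\times2$ operation is what the PID hypothesis buys.

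To run the induction I need a termination measure. Take the ideal generated by the $(1,1)$ entry; each clearing step that fails to finish strictly enlarges this ideal. Since a PID is Noetherian, such a chain stabilizes, so eventually the $(1,1)$ entry divides everything in its row and column, and those entries can be cleared. If the $(1,1)$ entry fails to divide some other entry of the remaining block, add that row to the first row and repeat. This strictly enlarges the ideal again, so divisibility by the whole block is eventually achieved. Induction on the remaining block then finishes. Over $\mathbb{Z}$, induction on $|a_{11}|$ replaces the Noetherian argument.

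The second step transports the quotient along the change of basis. Left multiplication by $P$ is an automorphism of $R^m$. It sends the column span of $A$ to the column span of $PA$. Right multiplication by the invertible $Q$ does not change the column span, since the columns of $AQ$ are $R$-combinations of the columns of $A$ and conversely via $Q^{-1}$. Hence
\[
R^m/\operatorname{col}(A)\;\cong\; R^m/\operatorname{col}(PAQ).
\]

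Finally, $PAQ$ has column span $a_1R\oplus\cdots\oplus a_mR$, where $a_i=0$ for indices beyond the number of columns or the rank. Quotienting coordinatewise gives $\bigoplus_i R/a_iR$.
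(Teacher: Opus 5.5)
The paper does not prove this statement; it quotes it from \cite{S16} and uses only the consequence that $\mathbb{Z}^{n}/A\mathbb{Z}^{n}$ has $|\det(A)|$ elements when $A\in M_{n}(\mathbb{Z})$ is invertible. Your argument is the standard self-contained proof, and it is correct. Your corrections to the statement are also right: the quotient is $R^{m}/\operatorname{col}(A)$, the column span need not be free on $v_{1},\ldots,v_{n}$, and when $m>n$ one pads with $a_{i}=0$, which gives free summands $R$.

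There are two small points.

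First, in the induction you should say why $a_{1}\mid a_{2}$ survives the reduction of the lower block $B$. Every entry of $P'BQ'$ is an $R$-linear combination of entries of $B$, and all of those are divisible by $a_{1}$.

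Second, and more substantively, the divisibility chain plays no role in the isomorphism itself. Your second and third steps work verbatim for any $P\in GL_{m}(R)$ and $Q\in GL_{n}(R)$ with $PAQ$ diagonal. The Noetherian bookkeeping is therefore needed only to match the diagonal entries with those of ``the'' Smith normal form, and the statement already presupposes that this form exists.

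To get the fact the paper actually uses, add one line. With $m=n$ and $R=\mathbb{Z}$, each summand has $|\mathbb{Z}/a_{i}\mathbb{Z}|=|a_{i}|$ elements. Moreover $\prod_{i}|a_{i}|=|\det(PAQ)|=|\det A|$, because $\det P,\det Q\in\{\pm 1\}$.
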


Let $A\in M_{n}(\mathbb{Z})$ be invertible. We apply Theorem~\ref{thm:sashaLOL} to $A$ when $R = \mathbb{Z}$. Since the cardinality of the direct sum is the absolute value of the product of the diagonal entries of the Smith normal form of $A$, which is equal to $|\det(A)|$, we conclude that $\faktor{\mathbb{Z}^{n}}{A\mathbb{Z}^{n}}$ is $|\det(A)|$. In particular, it is finite. 

\begin{remark}
A coincidental aside: Henry Smith (of Smith normal form) wrote the paper ``On the integration of discontinuous functions" in which he describes the construction of the attractor of the similarities $f(x) = (x+d)/m$ where $m > 2$ and $d\in\{0, 1, \ldots, m-2\}$ (example 15 of \cite{S75}) before Georg Cantor popularized what we call the middle third Cantor set.
\end{remark}

\begin{definition} 
Let $\mathcal{F} = \{f_{i}\}_{i=1}^{N}$ be an iterated function system and let $k$ be a positive integer. We call the set $(f_{i_{1}}\circ f_{i_{2}} \circ \cdots \circ f_{i_{k}})(X)$  the \textit{$k$-tile of $\mathcal{F}$ corresponding to $(i_{1}, i_{2}, \ldots, i_{k})$} where $X$ is the attractor of $\mathcal{F}$.  
\end{definition}

Suppose that $A$ is an expanding matrix and $\mathcal{D}$ is a complete residue system mod $A$. The $k$-tiles of $T_{A, \mathcal{D}}$ are the sets $\{\pi_{A, D}(d_{j})_{j=1}^{\infty} : d_{j}=e_{j}, j = 1, \ldots, k\}$ for some finite sequence $e_{1}, e_{2}, \ldots, e_{k} \in \mathcal{D}$. Our intention is to capture the box-counting dimension of subsets of $T_{A, D}$ using $k$-tiles instead of $\delta$-mesh squares or balls of radius $\delta$. 

\begin{lemma} \label{lem:boxtile}
Let $X$ be a bounded subset of $\mathbb{R}^{n}$. Suppose $(\{U_{i}^{(k)}\}_{i=1}^{I_{k}})_{k=1}^{\infty}$ is a sequence of coverings of $X$ that satisfy the following properties:
\begin{itemize}
\item[(i)] For every $i$, $\diam{U_{i}^{(k)}} = r_{k}$ is a strictly decreasing sequence. 

\item[(ii)] The sequence $(r_{k})_{k=1}^{\infty}$ tends to zero as $k$ tends to infinity. 

\item[(iii)] There exists $c\in(0, 1)$ such that $r_{k+1}/r_{k} \geq c$. 

\item[(iv)] There exists a positive integer $M$, for every $k$, such that an $n$-dimensional cube with side length $r_{k}$ intersects at most $M$ elements of $\{U_{i}^{(k)}\}_{i=1}^{I_{k}}$. 
\end{itemize}
Then
\begin{align}
\overline{\dim}_{B}X &= \limsup_{k\rightarrow\infty}\frac{\log U_{k}(X)}{-\log{r_{k}}}, \\
\underline{\dim}_{B}X &= \liminf_{k\rightarrow\infty}\frac{\log U_{k}(X)}{-\log{r_{k}}},
\end{align}
where $U_{k}(X)$ denotes the number of elements of $\{U_{i}^{(k)}\}_{i=1}^{I_{k}}$ that intersect $X$. 
\end{lemma}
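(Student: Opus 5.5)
The plan is to compare $U_{k}(X)$ with $N_{\delta}(X)$, the minimal number of sets of diameter $\delta$ covering $X$, along the sequence $\delta = r_{k}$, and then to pass from the discrete sequence $(r_{k})$ to the continuous parameter $\delta\to0^{+}$ using condition (iii). By Definition~\ref{def:boxcounting} we may equivalently count $\delta$-mesh cubes meeting $X$ (\cite{F90}, equivalent definitions 3.1), so constant factors can be absorbed freely.

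\emph{Upper bound on $N_{r_{k}}$.} The elements of $\{U_{i}^{(k)}\}$ that meet $X$ cover $X$ and each has diameter $r_{k}$. Hence $N_{r_{k}}(X) \leq U_{k}(X)$.

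\emph{Lower bound on $N_{r_{k}}$.} Cover $X$ by the $r_{k}$-mesh cubes meeting it; their number $N'_{r_{k}}(X)$ is comparable to $N_{r_{k}}(X)$ up to a constant depending only on $n$. Each element $U_{i}^{(k)}$ meeting $X$ contains a point of $X$, and that point lies in one of these cubes, so the cube meets $U_{i}^{(k)}$. By (iv), each cube (of side $r_{k}$) meets at most $M$ of the sets $U_{i}^{(k)}$. Therefore $U_{k}(X) \leq M N'_{r_{k}}(X) \leq M C_{n} N_{r_{k}}(X)$. Taking logarithms and dividing by $-\log r_{k}\to\infty$ (by (ii)), the additive constants vanish, so $\log U_{k}(X)/(-\log r_{k})$ and $\log N_{r_{k}}(X)/(-\log r_{k})$ have the same $\limsup$ and $\liminf$ in $k$.

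\emph{From the sequence to all $\delta$.} This is the step requiring care. For $\delta < r_{1}$, conditions (i) and (ii) give a unique $k$ with $r_{k+1} \leq \delta < r_{k}$. Monotonicity of $N_{\delta}$ gives $N_{r_{k}}(X) \leq N_{\delta}(X) \leq N_{r_{k+1}}(X)$, while $-\log r_{k} < -\log\delta \leq -\log r_{k+1}$. Hence
\begin{equation*}
\frac{\log N_{\delta}(X)}{-\log\delta} \leq \frac{\log N_{r_{k+1}}(X)}{-\log r_{k}} = \frac{\log N_{r_{k+1}}(X)}{-\log r_{k+1}}\cdot\frac{-\log r_{k+1}}{-\log r_{k}}.
\end{equation*}
By (iii), $-\log r_{k+1} \leq -\log r_{k} - \log c$, so the last ratio tends to $1$. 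A symmetric lower estimate holds with $r_{k}$ in the numerator and $r_{k+1}$ in the denominator. Consequently the $\limsup$ and $\liminf$ over $\delta\to0^{+}$ coincide with those taken along $r_{k}$. Combining this with the previous step yields both formulas. The main subtlety is exactly this use of (iii); without it the sequence could skip scales.
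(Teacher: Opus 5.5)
Your proposal is correct and follows essentially the same route as the paper: you sandwich $U_k(X)$ between $N_{r_k}(X)$ and $M$ times the number of $r_k$-mesh cubes meeting $X$ using (iv), and then interpolate between consecutive scales $r_{k+1}\le\delta<r_k$ using (iii). The only cosmetic differences are these: the paper keeps the mesh-cube count $S_{r_k}(X)$ directly in the sandwich rather than converting it to $N_{r_k}(X)$ via a dimensional constant, and it writes the interpolation bound as $\log N_{r_{k+1}}(X)/(-\log r_{k+1}+\log c)$ rather than as a product with a ratio tending to $1$.
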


\begin{proof}
Let $N_{\delta}(X)$ be the smallest number of sets of diameter $\delta$ needed to cover $X$. Let $S_{\delta}(X)$ be the number of $\delta$-mesh squares that intersect $X$. For each $k$, we have $N_{r_{k}}(X) \leq U_{k}(X)$ by definition. By property (ii) we have $U_{k}(X) \leq MS_{r_{k}}(X)$. Therefore 
\begin{equation}
\frac{\log N_{r_{k}}(X)}{-\log r_{k}} \leq \frac{\log U_{k}(X)}{-\log r_{k}} \leq \frac{\log(MS_{r_{k}}(X))}{-\log r_{k}}.
\end{equation}
The result follows from taking the limit superior or limit inferior of either side. To see why it is sufficient to go along the subsequence $r_{k}$, consider $r\in[r_{k+1}, r_{k}]$. Observe that
\begin{align}
\frac{\log N_{r}(X)}{-\log r} &\leq \frac{\log N_{r_{k+1}}(X)}{-\log r_{k}}   \\
&= \frac{\log N_{r_{k+1}}(X)}{-\log r_{k+1} + \log (r_{k+1}/r_{k})} \\
&\leq \frac{\log N_{r_{k+1}}(X)}{-\log r_{k+1} + \log c}.
\end{align}

Taking the the limit superior of both sides yields $$\limsup\limits_{k\rightarrow \infty}\frac{\log N_{r}(X)}{\log r} \leq \limsup\limits_{k\rightarrow \infty}\frac{\log N_{r_{k+1}}(X)}{\log r_{k+1}}.$$ On the other hand, $\sup\limits_{\ell\geq k} \frac{\log N_{r_{\ell+1}}(X)}{-\log r_{\ell+1}} \leq \sup\limits_{t\in (0, r_{k}]} \frac{\log N_{t}(X)}{-\log{t}}$. The proof is complete because the upper bound converges to the limit superior as $k$ tends to infinity. Notice that we could replace $N_{\delta}$ with $S_{\delta}$ with no changes to the argument. To demonstrate the property for the limit inferior, a similar argument can be applied after observing that $\frac{\log N_{r}(X)}{-\log r} \geq \frac{\log N_{r_{k}}(X)}{-\log r_{k+1}}$.
\end{proof}

\begin{corollary}\label{lem:boxtile2}
Suppose that $A \in M_{n}(\mathbb{Z})$ is invertible and multiplication by its inverse is a similarity with respect to Euclidean distance. Suppose that $\mathcal{D}$ is a complete residue system mod $A$. If $F$ is a nonempty subset of $T_{A, \mathcal{D}}$, then
\begin{align}
\overline{\dim}_{B}F &= \limsup_{k\rightarrow\infty}\frac{\log N_{k}(F)}{k\log{|\det(A)|^{1/n}}}, \\
\underline{\dim}_{B}F &= \liminf_{k\rightarrow\infty}\frac{\log N_{k}(F)}{k\log{|\det(A)|^{1/n}}},
\end{align}
where $N_{k}(F)$ denotes the number of $k$-tiles of $T_{A, \mathcal{D}}$ that intersect $F$. 
\end{corollary}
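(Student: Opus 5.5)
We will deduce the corollary from Lemma~\ref{lem:boxtile}, taking the $k$-tiles of $T_{A,\mathcal{D}}$ as the coverings $\{U_{i}^{(k)}\}$ of $F$. By Theorem~\ref{thm:simForm} and Corollary~\ref{cor:detCoeff}, multiplication by $A^{-1}$ is a similarity with coefficient $c = |\det(A)|^{-1/n} < 1$. Here we use that $A$ is an integer matrix, so $|\det(A)|$ is a positive integer; if $|\det(A)| = 1$ the statement degenerates. Each $k$-tile is $\sum_{j=1}^{k}A^{-j}e_{j} + A^{-k}T_{A,\mathcal{D}}$, so it is a similar copy of $T := T_{A,\mathcal{D}}$ with diameter $c^{k}\diam T$. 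The $k$-tiles cover $T\supset F$ by Lemma~\ref{lem:selfSimForm}.

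We set $r_{k} = c^{k}\diam T$ and check hypotheses (i)--(iii) of Lemma~\ref{lem:boxtile}. We need $\diam T > 0$. If $|\mathcal{D}|\geq 2$, the two constant sequences in distinct digits give distinct fixed points, so $T$ is not a point. Then $r_{k}$ is strictly decreasing, tends to $0$, and $r_{k+1}/r_{k} = c$. Once hypothesis (iv) is in hand, Lemma~\ref{lem:boxtile} gives the upper and lower box dimensions of $F$ as the limsup and liminf of $\log N_{k}(F)/(-\log r_{k})$. Since $-\log r_{k} = k\log|\det(A)|^{1/n} - \log\diam T$, the additive constant does not affect the limits, and we obtain the stated formulas.

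The main obstacle is (iv): a uniform bound $M$ on the number of $k$-tiles meeting a cube of side $r_{k}$. A volume argument is the natural tool. Because $\mathcal{D}$ is a complete residue system mod $A$, $T$ is a self-affine tile in the sense of Lagarias--Wang~\cite{LW96}: it has positive Lebesgue measure $\lambda$, and distinct $k$-tiles overlap in measure zero. Positive measure follows from $\lambda(T) = |\det A|^{-1}\sum_{d}\lambda(T)$ together with the fact that $\mathbb{Z}^{n}$-translates of $T$ cover $\mathbb{R}^{n}$, so $\lambda(T)>0$. Equal measure of the $|\det A|$ pieces then forces the overlaps to be null.

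With this in hand, each $k$-tile has measure $c^{kn}\lambda = r_{k}^{n}\lambda/(\diam T)^{n}$. Any tile meeting a cube of side $r_{k}$ lies in the concentric cube of side $r_{k}(1+2\sqrt{n}/\ldots)$, in fact in a cube of side $3r_{k}$ after enlarging suitably. Disjointness up to null sets then bounds the count by $M = 3^{n}(\diam T)^{n}/\lambda$, independent of $k$. We expect the main care to go into citing or justifying positive measure of $T$ and the measure-disjointness of tiles, which we would take from~\cite{LW96}.
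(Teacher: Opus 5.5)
Your proof is correct. It differs from the paper only in how hypothesis (iv) of Lemma~\ref{lem:boxtile} is verified.

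\textbf{The paper's route: lattice-point counting.} Multiplying by $A^{k}$ turns a cube of side $r_{k}$ into a set of diameter $\sqrt{n}\diam T$, independent of $k$. It turns each $k$-tile into $\zeta+T$ with $\zeta=\sum_{j=1}^{k}A^{k-j}d_{j}\in\mathbb{Z}^{n}$. The map $(d_{1},\ldots,d_{k})\mapsto\zeta$ is injective because the digits are pairwise incongruent mod $A$. So the count is bounded by the number of integer points in a fixed bounded set.

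\textbf{Your route: comparing volumes.} Tiles meeting the cube lie in the concentric cube of side $3r_{k}$, since $\ell^{\infty}$-distance is at most Euclidean distance. Each tile has measure $c^{kn}\lambda(T)$, and their overlaps are null. This gives $M=3^{n}(\diam T)^{n}/\lambda(T)$.

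\textbf{What each buys.} Your route gives an explicit constant and uses nothing about integrality beyond the tile property. It therefore applies to any self-affine tile with $A^{-1}$ a similarity. However, it rests on two nontrivial facts: $\lambda(T)>0$ and $T+\mathbb{Z}^{n}=\mathbb{R}^{n}$. The paper only quotes the covering fact for the bases $-n+i$ (from Katai and Szabo), so your citation of \cite{LW96} or \cite{B91} is genuinely needed. The paper's route is elementary and survives when the attractor is Lebesgue-null. An example is $k$-tiles of $T_{A,D}$ with $D$ a proper subset of a complete residue system. The same injectivity trick is reused in Lemma~\ref{lem:kbound}.

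\textbf{A correction to your justification of the tile property.} The identity $\lambda(T)=|\det A|^{-1}\sum_{d}\lambda(T)$ holds trivially because $|\mathcal{D}|=|\det A|$, and it says nothing about positivity. Positivity comes solely from the $\mathbb{Z}^{n}$-covering together with countable subadditivity. The identity's actual role is as the equality case of subadditivity for $T=\bigcup_{d}A^{-1}(T+d)$, and likewise for the $|\det A|^{k}$ $k$-tiles. That equality case is what makes the overlaps null.
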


\begin{proof} 
For the sake of convenience, let $T$ denote $T_{A, \mathcal{D}}$. We check the conditions of Lemma~\ref{lem:boxtile} for the collection of covers of the form $\{T_{d_{1}, d_{2}, \dots, d_{k}}: d_{j} \in \mathcal{D}\}, k\geq1$. The fact that they are covers can be verified using the self-similarity of $T$. Specifically, $T = \cup_{d\in\mathcal{D}}T_{d}$ implies that for each fixed $d^{'}\in \mathcal{D}$, $T_{d^{'}} = \cup_{d\in\mathcal{D}}T_{d^{'}, d}$. Therefore $T$ is equal to $\cup_{(d_{1}, d_{2}) \in \mathcal{D}\times\mathcal{D}}T_{d_{1}, d_{2}}$. Iterating this procedure will verify the claim for $k\geq 3$. 

By Corollary~\ref{cor:detCoeff}, we have $\diam{T_{d_{1}, d_{2}, \dots, d_{k}}} = |\det(A)|^{-k/n}\diam{T}$ for each positive integer $k$. Conditions (i) and (ii) hold since $|\det(A)|^{-1/n} \in (0, 1)$. Condition (iii) holds because the ratio of the diameter of a $k+1$ tile over the diameter of a $k$-tile is $|\det(A)|^{-1/n}$. 

To prove that (iv) holds, first consider a ball, which we denote by $V$, of radius $\sqrt{n}\diam{T}/2$. Since the diameter of $\cup_{t\in T} (V - t)$ is bounded by $(\sqrt{n} + 1)\diam{T}$, there is an upper bound $N$ on the cardinality of $\mathbb{Z}^{n} \cap \cup_{t\in T} (V - t)$. We claim that $N$ satisfies the property described in (iv). Notably, $N$ does not depend on $k$. 

Suppose $R_{k}$ is an $n$-dimensional cube with side length $|\det(A)|^{-k/n}\diam{T}$ and that a $k$-tile, $T_{d_{1}, d_{2}, \ldots, d_{k}}$ intersects $R_{k}$ for some fixed $d_{1}, d_{2}, \ldots, d_{k}\in\mathcal{D}$. Let $x$ be the center of $R_{k}$. The set $R_{k}$ is contained in the closed ball of radius $\sqrt{n}|\det(A)|^{-k/n}\diam{T}/2$ centered at $x$. It follows that $A^{k}T_{d_{1}, d_{2}, \ldots, d_{k}}$ intersects the ball of radius $\sqrt{n}|\det(A)|^{-k/n}\diam{T}/2$ centered at $A^{k}x$. This implies the existence of an element $\mathbb{R}^{n}$ of the form $\zeta + t$ in the ball centered at $A^{k}x$ where $\zeta$ is the element of $\mathbb{Z}^{n}$ given by $\sum\limits_{j=1}^{k}A^{k-j}d_{j}$ and $t$ is an element of $T$. Let $V$ denote the ball centered at $A^{k}x$. The vector $\zeta$ is an element of $\bigcup\limits_{t\in T} (V - t)$. The assumption that $\mathcal{D}$ is a complete residue system modulo $A$ implies that $\zeta$ is uniquely determined by $d_{1}, d_{2}, \ldots, d_{k}$ and therefore the number of $k$-tiles that intersect $R_{k}$ is bounded by the number of elements of $\mathbb{Z}^{n}$ that are contained in $\bigcup\limits_{t\in T} (V - t)$. This is the positive integer $N$. 
\end{proof}

The subsets $F \subset T_{A, \mathcal{D}}$ we want to apply \ref{lem:boxtile2} to are those of the form $T_{A, D} \cap (T_{A, D}+\alpha)$ when $\alpha\in T_{A, D-D}$. When the $(A, D-D)$-representation of $\alpha$ is unique, we can derive an expression for the upper and lower box-counting dimension of $T_{A, D} \cap (T_{A, D}+\alpha)$ in terms of that unique representation. We prove a lemma before giving the precise statement. 

\begin{lemma} \label{lem:kbound} 
Suppose that $A \in M_{n}(\mathbb{Z})$ is invertible and multiplication by its inverse is a similarity with respect to Euclidean distance. Suppose that $\mathcal{D}$ is a complete residue system mod $A$. There exists a bound, for every $k$, on the number of $k$-tiles of $T_{A, \mathcal{D}}$ that any given $k$-tile of $T_{A, \mathcal{D}}$ can intersect. 
\end{lemma}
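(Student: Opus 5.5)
The plan is to rescale by $A^{k}$, which turns any pair of intersecting $k$-tiles into a pair of integer translates of $T := T_{A, \mathcal{D}}$ that intersect. First, by Lemma~\ref{lem:selfSimForm}, the $k$-tile corresponding to $(d_{1}, \ldots, d_{k})$ is
\begin{equation*}
T_{d_{1}, \ldots, d_{k}} = \sum_{j=1}^{k}A^{-j}d_{j} + A^{-k}T = A^{-k}(\zeta + T), \qquad \zeta := \sum_{j=1}^{k}A^{k-j}d_{j} \in \mathbb{Z}^{n}.
\end{equation*}
This uses $\mathcal{D}\subset\mathbb{Z}^{n}$ and $A\in M_{n}(\mathbb{Z})$. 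The same identity is used in the proof of Corollary~\ref{lem:boxtile2}.

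Next, fix one $k$-tile $A^{-k}(\zeta + T)$, and suppose it meets another $k$-tile $A^{-k}(\zeta' + T)$. Since $A^{-k}$ is invertible, this happens exactly when $(\zeta + T) \cap (\zeta' + T) \neq \emptyset$. That in turn holds exactly when $\zeta' - \zeta \in T - T$.

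The set $T - T$ is compact, being the image of $T \times T$ under subtraction. Its diameter is at most $2\diam{T}$, and it is contained in the closed ball of radius $\diam{T}$ centred at the origin. Hence $\zeta' - \zeta$ lies in the finite set $\Lambda := \mathbb{Z}^{n} \cap (T - T)$, whose cardinality $L$ depends only on $T$ and not on $k$.

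The step where I expect the most care is the count itself. It is not enough that $\zeta'$ ranges over at most $L$ lattice points, because several digit strings might a priori give the same $\zeta'$. For this I use the hypothesis that $\mathcal{D}$ is a complete residue system mod $A$, as in the proof of Corollary~\ref{lem:boxtile2}.

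Concretely, I will show that the map $(d_{1}, \ldots, d_{k}) \mapsto \sum_{j=1}^{k}A^{k-j}d_{j}$ is injective, by induction on $k$. Suppose two strings give the same value. Reducing modulo $A\mathbb{Z}^{n}$ leaves only the last digit, so $d_{k} \equiv d_{k}'$, and hence $d_{k} = d_{k}'$. Subtracting this term and multiplying by $A^{-1}$, which keeps everything in $\mathbb{Z}^{n}$ because the remaining terms are integer multiples of $A$, reduces the length by one and completes the induction.

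By injectivity, each admissible $\zeta'$ comes from at most one $k$-tile. Therefore any given $k$-tile intersects at most $L$ $k$-tiles, itself included, and $L$ is uniform in $k$.

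Notably, the similarity hypothesis is not really needed for this bound; it only matters for the later diameter estimates. If one prefers to avoid the injectivity argument, the bound still holds without it in the form ``at most $L$ distinct sets'', since a $k$-tile as a set is determined by $\zeta$.
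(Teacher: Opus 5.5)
Your proof is correct and follows essentially the same route as the paper: rescale by $A^{k}$, so that intersecting $k$-tiles correspond to integer differences $g$ with $T\cap(T+g)\neq\emptyset$, of which there are finitely many, and use the complete residue system to make the map from digit strings to lattice points injective. Your inductive proof of that injectivity fills in a step the paper only asserts, and your remark that the similarity hypothesis plays no role here is also accurate.
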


\begin{proof}
Observe that $T = T_{A, \mathcal{D}}$ is a bounded set and thus there are only $M$ elements $g\in\mathbb{Z}^{n}$ for which $T$ and $T + g$ have a nonempty intersection. Fix $d_{1}, d_{2}, \ldots, d_{k}\in \mathcal{D}$ for some $k$. Let $a := A^{k-1}d_{1} + A^{k-2}d_{2} + \cdots +Ad_{k-1} + d_{k}$. The $k$-tiles $T_{d_{1}, d_{2}, \ldots, d_{k}}$ and $T_{d_{1}^{'}, d_{2}^{'}, \ldots, d_{k}^{'}}$ intersect exactly when $a-b + T$ intersects $T$ where $b = A^{k-1}d_{1}^{'} + A^{k-2}d_{2}^{'} + \cdots +Ad_{k-1}d_{k-1}^{'} + d_{k}^{'}$. Recall that $\mathcal{D}$ is a complete set of residues mod $A$. Therefore the function on the set of words formed by the digits in $\mathcal{D}$ defined by $(d_{1}, \ldots, d_{k}) \rightarrow \sum\limits_{j=1}^{\infty}A^{j}d_{k-j}$ is injective. It follows that every $k$-tile that intersects $T_{d_{1}, d_{2}, \ldots, d_{k}}$ yields a unique $g^{'}\in\mathbb{Z}^{n}$ such that the intersection of $T$ and $T+g^{'}$ is nonempty. We conclude that $T_{d_{1}, d_{2}, \ldots, d_{k}}$ intersects at most $M$ $k$-tiles, otherwise we contradict our initial observation. 
\end{proof}

\begin{theorem}\label{thm:unboundedDigitFormula} 
Suppose that $A \in M_{n}(\mathbb{Z})$ is invertible and multiplication by its inverse is a similarity with respect to Euclidean distance. Suppose that $D$ is a subset of a complete residue system mod $A$. If $(D_{j})_{j=1}^{\infty}$ is a sequence of subsets of $D$
\begin{align}
\underline{\dim}_{B}\pi_{A, D}\left(\prod\limits_{j=1}^{\infty}D_{j}\right) &= \liminf_{k\rightarrow\infty} \frac{\log(G_{k})}{k\log(|\det(A)|^{1/n})}, \\
\overline{\dim}_{B}\pi_{A, D}\left(\prod\limits_{j=1}^{\infty}D_{j}\right) &= \limsup_{k\rightarrow\infty} \frac{\log(G_{k})}{k\log(|\det(A)|^{1/n})},
\end{align}
where $G_{k} := \prod\limits_{j=1}^{k}|D_{j}|$. 
\end{theorem}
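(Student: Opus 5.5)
The plan is to apply Corollary~\ref{lem:boxtile2} with $F := \pi_{A, D}\left(\prod_{j=1}^{\infty}D_{j}\right)$. Fix a complete residue system $\mathcal{D} \supset D$ mod $A$. Then $F$ is a nonempty subset of $T_{A, \mathcal{D}}$, and $\pi_{A,D}$ agrees with $\pi_{A,\mathcal{D}}$ on $D^{\mathbb{N}}$. By the corollary it suffices to show that $N_{k}(F)$, the number of $k$-tiles of $T_{A,\mathcal{D}}$ meeting $F$, satisfies $c\,G_{k} \le N_{k}(F) \le M G_{k}$ for constants $c, M>0$ independent of $k$. Then $\log N_{k}(F) = \log G_{k} + O(1)$, and since the denominator $k\log|\det(A)|^{1/n}$ tends to infinity, the $O(1)$ term disappears in both the $\liminf$ and the $\limsup$.

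\textbf{Lower bound.} For each word $(e_{1},\ldots,e_{k}) \in D_{1}\times\cdots\times D_{k}$, choose any tail $e_{j}\in D_{j}$ for $j>k$. The point $\pi_{A,\mathcal{D}}(e_{j})_{j=1}^{\infty}$ lies in $F$ and in the $k$-tile $T_{e_{1},\ldots,e_{k}}$. Distinct words give distinct $k$-tiles as indexed objects, but they might coincide as sets or overlap. To control this, note that $\mathcal{D}$ is a complete residue system, so the map $(e_{1},\ldots,e_{k})\mapsto \sum_{j=1}^{k}A^{k-j}e_{j}\in\mathbb{Z}^{n}$ is injective (this is the argument used in Lemma~\ref{lem:kbound}). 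Hence the $G_{k}$ words index $G_{k}$ distinct tiles in the list of $k$-tiles.

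I expect the main subtlety to be whether $N_{k}$ counts tiles as labelled by words or as sets. If it counts sets, two distinct words could in principle give the same set. In that case I would use Lemma~\ref{lem:kbound}: each $k$-tile meets at most $M$ other $k$-tiles, so each set arises from at most $M$ words. This still gives $N_{k}(F) \ge G_{k}/M$.

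\textbf{Upper bound.} Every point of $F$ lies in some tile $T_{e_{1},\ldots,e_{k}}$ with $e_{j}\in D_{j}$, so these $G_{k}$ tiles cover $F$. Any $k$-tile meeting $F$ must therefore meet one of these $G_{k}$ tiles. By Lemma~\ref{lem:kbound}, each of them meets at most $M$ $k$-tiles (including itself, after enlarging $M$ if needed). Hence $N_{k}(F)\le M G_{k}$. Combining the two bounds and taking the limits from Corollary~\ref{lem:boxtile2} gives both formulas.
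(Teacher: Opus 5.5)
Your proof is correct and follows the paper's argument: reduce to Corollary~\ref{lem:boxtile2}, get the lower bound from the $G_k$ tiles indexed by $D_1\times\cdots\times D_k$ (each of which meets $F$), and get $N_k(F)\le MG_k$ by covering $F$ with those same tiles and applying Lemma~\ref{lem:kbound}. Your concern about tiles coinciding as sets is harmless but also moot: distinct integer vectors $\zeta\neq\zeta'$ give distinct tiles, because $T_{A,\mathcal{D}}+(\zeta-\zeta')=T_{A,\mathcal{D}}$ is impossible for a nonempty compact set, so you may take the lower constant to be $1$ as the paper does.
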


\begin{proof}
Let $X$ denote $\pi_{A, D}\left(\prod\limits_{j=1}^{\infty}D_{j}\right)$. From Lemma~\ref{lem:boxtile2}, it suffices to prove that $\lambda G_{k} \leq N_{k}(X) \leq \rho G_{k}$ for real contants $\lambda$ and $\rho$. Any $k$-tile $T_{d_{1}, d_{2}, \ldots, d_{k}}$ with $d_{j}\in D_{j}$ for $j=1, 2, \ldots, k$ intersects $X$. Therefore $G_{k} \leq N_{k}(X)$. On the other hand, for all $k\geq 1$, the set $X$ is contained in the union of $k$-tiles whose defining digits are in $D\cap(D+\alpha_{j})$ for each $j = 1, 2, \ldots, k$. By Lemma~\ref{lem:kbound}, it follows that $N_{k}(X) \leq M G_{k}$ for some sufficiently large constant $M$. 
\end{proof}

We provide examples of the application of Theorem~\ref{thm:unboundedDigitFormula} to the intersection of a self-similar set $T_{A, D}$ and with one of its translates by $\alpha$. In each example, $\alpha$ has a unique $(A, D-D)$-representation. Lemma~\ref{lem:seqExpress} then implies that $T(\alpha) = \pi\left(\prod\limits_{j=1}^{\infty}D\cap(D+\alpha_{j})\right)$. We do not justify the claim that the choices of $\alpha$ have unique $(A, D-D)$-representations in this chapter because it requires a detailed analysis of the particular choice of $(A, D)$. It is a consequence of Theorem~\ref{thm:gaussianEquiv}. 

\begin{example} \label{ex:noOSC} 
We recall Example~\ref{ex:noSSC}. The matrix $A$ was $\begin{bmatrix}
-3 & -1 \\
1  & -3 \\
\end{bmatrix}$. We chose $D = \{0, 4e_{1}, 8e_{1}\}$ where $e_{1}\in\mathbb{R}^{2}$ is the standard basis vector corresponding to the number $1$ in the complex plane. If $\alpha$ is the vector with $(A, D-D)$-representation $(-4e_{1}, -8e_{1}, \overline{0, 8e_{1}})$, then we have 
\begin{equation} 
(D\cap (D + \alpha_{j}))_{j=1}^{\infty} = (\{0, 4e_{1}\}, \{0\}, \overline{\{0, 4e_{1}, 8e_{1}\}, \{8e_{1}\}}).
\end{equation}
It follows that for $k \geq 2$
\begin{equation}
G_{k} = \begin{cases} 
2(3^{(k/2) - 1}) &\;\text{if}\;k \equiv 0 \mod{2},\\
2(3^{((k+1)/2)-1}) &\;\text{if}\;k \equiv 1 \mod{2}.
\end{cases}
\end{equation}
Therefore $2(3^{(k/2) - 1}) \leq G_{k} \leq 2(3^{((k+1)/2)-1})$ for all $k$. In particular, 
\begin{equation}
\frac{2\log{2}}{k\log{10}} + \frac{(k-2)\log{3}}{k\log{10}} \leq \frac{\log(G_{k})}{k\log{10}}\leq \frac{2\log{2}}{k\log{10}} + \frac{(k-1)\log{3}}{k\log{10}}.
\end{equation}
By Theorem~\ref{thm:unboundedDigitFormula}, we conclude that $\dim_{B}(T_{A, D}\cap(T_{A, D} + \alpha)) = \log{3}/\log{10}$. It is notable that the IFS in Example~\ref{ex:noSSC} that generated the intersection has similarity dimension $\log{4}/\log{10}$. 
\end{example}

\begin{example} 
In this example, we show that there are choices of $A, D$, and $\alpha$ for which the box-counting dimension of $T_{A, D} \cap (T_{A, D} + \alpha)$ does not exist. Let $A$ be the same matrix as in Example~\ref{ex:noOSC} and $D = \{0, 4e_{1}\}$. Let $\alpha = \pi_{A, D-D}(\alpha_{j})_{j=1}^{\infty}$ where $(\alpha_{j})_{j=1}^{\infty}$ is the $(A, D-D)$-representation given by
\begin{equation}
\alpha_{j} = \begin{cases} 4e_{1} &\;\text{if}\; 10^{k} + 1 \leq j \leq 10^{k+1}, \; k\geq0 \;\text{even}, \\
0 &\;\text{if}\; 10^{k}+1 \leq j \leq 10^{k+1}, \; k>0 \;\text{odd or}\; $j = 1$.
\end{cases}
\end{equation}
Observe that $|D\cap (D+4e_{1})| = 1$. It follows that
\begin{equation}
\frac{\log{G_{10^{k}}(x)}}{10^{k}} = \begin{cases} 
\frac{1+ 90 + 9000 + \cdots + 9(10)^{k-1}}{10^{k}}\log{2} &\;\text{if}\; k \;\text{even}\;, \\
\frac{1+ 90 + 9000 + \cdots + 9(10)^{k-2}}{10^{k}}\log{2} &\;\text{if}\; k \;\text{odd}.
\end{cases}
\end{equation}
When $k$ is even, we have $\frac{\log{G_{10^{k}}(x)}}{10^{k}} \geq \frac{9}{10}\log{2}$. Theorem~\ref{thm:unboundedDigitFormula} implies that $\overline{\dim}_{B}(T_{A, D}\cap(T_{A, D}+\alpha)) \geq \frac{18}{10}\log{2}/\log{10}$. When $k$ is odd, $\frac{\log{G_{10^{k}}(x)}}{10^{k}} \leq \frac{1}{10}\log{2}$, and therefore $\underline{\dim}_{B}(T_{A, D}\cap(T_{A, D}+\alpha)) \leq \frac{2}{10}\log{2}/\log{10}$.
\end{example}

Example~\ref{ex:noOSC} demonstrates that the generating IFS given in Theorem \ref{thm:attractSEP} does not satisfy the open set condition. If it did, then the similarity dimension of the IFS and the box-counting dimension of the self-similar set would agree. We generalize this argument in Corollary~\ref{cor:OSCisSSC} using the following theorem. 

\begin{theorem}\label{thm:IntDimSEP} 
Suppose that $A \in M_{n}(\mathbb{Z})$ is invertible and multiplication by its inverse is a similarity with respect to Euclidean distance. Suppose that $D\subset\mathbb{Z}^{n}$ is a subset of a complete residue system mod $A$. If $\alpha\in T_{A, D-D}$ has a unique $(A, D-D)$-representation $(\alpha_{j})_{j=1}^{\infty}$ and there exist $\{U_{\ell}\}_{\ell=1}^{p}$ and $\{V_{\ell}\}_{\ell=1}^{p}$ such that $((D\cap(D+\alpha_{j}) - \beta_{j})_{j=1}^{\infty} = (U_{\ell})_{\ell=1}^{p}\overline{(U_{\ell} + V_{\ell})_{\ell=1}^{p}}$ for some sequence $(\beta_{j})_{j=1}^{\infty}\in D^{\mathbb{N}}$, then 
\begin{equation}
\dim_{H}T_{A, D}\cap(T_{A, D} + \alpha) = \frac{\log\left(\prod\limits_{\ell=1}^{p}|U_{\ell} + V_{\ell}|\right)}{p\log(|\det(A)|^{1/n})}. 
\end{equation}
\end{theorem}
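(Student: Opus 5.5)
Write $T(\alpha)$ for $T_{A,D}\cap(T_{A,D}+\alpha)$, $\lambda := |\det(A)|^{1/n}$, and $E_j := (D\cap(D+\alpha_j))-\beta_j$. The plan is to squeeze $\dim_H T(\alpha)$ between two equal quantities. The upper bound comes from the box-counting dimension via Theorem~\ref{thm:unboundedDigitFormula}. The lower bound comes from a self-similar subset of $T(\alpha)$ that satisfies the strong separation condition, so that Corollary~\ref{cor:simDim} or Theorem~\ref{thm:SepSimDim} applies.

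\emph{Upper bound.} By Lemma~\ref{lem:seqExpress}, $T(\alpha)=\pi_{A,D}\bigl(\prod_j D\cap(D+\alpha_j)\bigr)$. The cardinalities satisfy $|D\cap(D+\alpha_j)|=|E_j|$. For $j>p$ these equal $|U_\ell+V_\ell|$ with $\ell\equiv j \pmod p$, and for $j\le p$ they equal $|U_j|$. Hence
\[
G_k=\prod_{j=1}^{k}|D\cap(D+\alpha_j)|=Q^{k/p+O(1)},\qquad Q:=\prod_{\ell=1}^{p}|U_\ell+V_\ell|,
\]
with the $O(1)$ term uniform in $k$, since all cardinalities are bounded by $|D|$. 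Theorem~\ref{thm:unboundedDigitFormula} then shows that $\dim_B T(\alpha)$ exists and equals $\log Q/(p\log\lambda)$. Since $\dim_H\le\underline{\dim}_B$, this gives the upper bound.

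\emph{Lower bound.} The key step is to replace each $V_\ell$ by a subset $V'_\ell\subset V_\ell$ with $U_\ell+V'_\ell=U_\ell+V_\ell$ and $|U_\ell+V'_\ell|=|U_\ell||V'_\ell|$. This is not possible in general; for example $\{0,4\}+\{0,4\}=\{0,4,8\}$ has size $3$, which is not a multiple of $2$. So I would change the decomposition instead: regroup into blocks of length $p$ and exploit that $\pi_{A,D}$ restricted to $\prod_j E_j$ is injective. Injectivity follows from Lemma~\ref{lem:Urep}, since the digits of each point of $T(\alpha)$ are determined.

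Fix a block length $mp$. Consider the sub-IFS whose maps are $x\mapsto A^{-mp}(x+w)$, where $w$ ranges over all words $\sum_{j=1}^{mp}A^{mp-j}e_j$ with $e_j\in E_{mp+j}$. Equivalently, take the periodic tail $(\prod_{j=mp+1}^{2mp}E_j)^{\mathbb{N}}$ and prefix it with one fixed choice of digits in $E_1,\dots,E_{mp}$. The attractor $K_m$ of this IFS, translated by $\beta$ and the fixed prefix, is a subset of $T(\alpha)$. This uses that $E_{j}=E_{j+p}$ for $j>p$, so every block beyond the first is admissible.

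By uniqueness of representations (Lemma~\ref{lem:Urep} applied to points of $T(\alpha)$), distinct maps have disjoint images, so the SSC holds. There are $Q^m$ maps, each with ratio $\lambda^{-mp}$, so the similarity dimension is $\log Q^m/(mp\log\lambda)=\log Q/(p\log\lambda)$. Theorem~\ref{thm:SepSimDim}, together with the fact that $\dim_H$ is invariant under similarity and translation, gives $\dim_H T(\alpha)\ge\dim_H K_m=\log Q/(p\log\lambda)$.

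The main obstacle is the SSC verification. Two maps with different blocks $w\neq w'$ could have overlapping images through carries between the digit blocks. I would rule this out by adding the translation back so that the images become subsets of $T(\alpha)$ written as $(A,D)$-expansions, and then invoking Lemma~\ref{lem:Urep}, exactly as in the proof of Theorem~\ref{thm:snSSC}. The argument there compares the first $mp$ digits, and here those digits determine $w$ directly, so no sumset-cardinality condition is needed.
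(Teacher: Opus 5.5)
Your proof is correct, and it diverges from the paper's only in how you pass from box-counting to Hausdorff dimension. Your upper bound is the paper's computation via Lemma~\ref{lem:seqExpress} and Theorem~\ref{thm:unboundedDigitFormula}.

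The paper notes that $T(\alpha)$ is self-similar (Lemmas~\ref{lem:seqExpress} and~\ref{lem:sepGen}) and then invokes Falconer's theorem that $\dim_H=\dim_B$ for \emph{every} self-similar set. That citation carries real weight. When some $|U_\ell+V_\ell|<|U_\ell||V_\ell|$, as in Example~\ref{ex:noSSC} (your $\{0,4\}+\{0,4\}$), the IFS of Lemma~\ref{lem:sepGen} fails the SSC by Theorem~\ref{thm:snSSC}, so Theorem~\ref{thm:SepSimDim} cannot be applied to $T(\alpha)$ itself.

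You get around this by passing to the purely periodic tail $\overline{(U_\ell+V_\ell)}$. This tail is SEP with trivial $V'_\ell=\{0\}$, so the product condition holds automatically. Your separation check through Lemma~\ref{lem:Urep} is sound: the digits of $w$ occupy their own positions, with no carries into the tail, and are read off directly after embedding into $T(\alpha)$.

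What each route buys:
\begin{itemize}
\item The paper's route is shorter but leans on a deep theorem for overlapping self-similar sets.
\item Yours uses only the classical OSC theorem and $\dim_H\le\underline{\dim}_B$.
\item Yours never uses the ``strong'' part of SEP. The same argument gives the analogous formula whenever $((D\cap(D+\alpha_j))-\beta_j)_j$ is merely eventually periodic. In that setting Lemma~\ref{lem:sepGen}, and hence the paper's appeal to self-similarity, is unavailable.
\end{itemize}

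Your construction also gives the upper bound for free. $T(\alpha)-\beta$ is a union, over the finitely many prefixes in $\prod_{j\le p}E_j$, of similar copies of your SSC attractor. Finite stability of $\dim_H$ then yields equality without Theorem~\ref{thm:unboundedDigitFormula} and without the complete-residue hypothesis.
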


\begin{proof}
Let $T(\alpha)$ denote the intersection $T_{A, D} \cap (T_{A, D} + \alpha)$. Firstly, Lemma~\ref{lem:seqExpress} implies that $T(\alpha) = \pi\left(\prod\limits_{j=1}^{\infty}D\cap(D+\alpha_{j})\right)$. Theorem~\ref{thm:unboundedDigitFormula} then asserts that $\underline{\dim}_{B}T(\alpha) = \liminf_{k\rightarrow\infty}\frac{\log \left(\prod\limits_{j=1}^{k}|D\cap(D+\alpha_{j})|\right)}{k\log(|\det(A)|^{p/n})}$. Given a positive integer $k$, let $r_{k} \in \{0, 1, \ldots, p-1\}$ be the remainder left by division of $k$ by $p$. For $k$ sufficiently large, the quantity $\frac{\log\prod\limits_{j=1}^{k}|D\cap(D+\alpha_{j})|}{k\log(|\det(A)|^{1/n})}$ can be decomposed into the sum of the following three terms:
\begin{align} \label{eq:sepDim}
 &\frac{\log\left(\prod\limits_{j=1}^{p}|D\cap(D+\alpha_{j})|\right)}{k\log(|\det(A)|^{1/n})} + \frac{\log\left(\prod\limits_{j=p+1}^{k-r_{k}}|D\cap(D+\alpha_{j})|\right)}{k\log(|\det(A)|^{1/n})} + \\ & \frac{\log\left(\prod\limits_{j=k-r_{k}+1}^{k}|D\cap(D+\alpha_{j})|\right)}{k\log(|\det(A)|^{1/n})}.
\end{align}
The first term in (\ref{eq:sepDim}) tends to zero as $k$ tends to infinity. Since $|D\cap(D+\alpha_{j})| = |(D\cap(D+\alpha_{j})) - \beta_{j}|$ for each $j$, we can use the SEP assumption to obtain

\begin{align}
\frac{\log\left(\prod\limits_{j=p+1}^{k-r_{k}}|D\cap(D+\alpha_{j})|\right)}{k\log(|\det(A)|^{1/n})} &= \frac{\log\left[\left(\prod\limits_{\ell=1}^{p}|U_{\ell} + V_{\ell}|\right)^{((k-r_{k})/p) - 1}\right]}{k\log(|\det(A)|^{1/n})} \\
&= \frac{((k-r_{k})/p)-1}{k}\frac{\log\left(\prod\limits_{\ell=1}^{p}|U_{\ell} + V_{\ell}|\right)}{p\log(|\det(A)|^{1/n})}\\
\end{align}
which tends to $\frac{\log\left(\prod\limits_{\ell=1}^{p}|U_{\ell} + V_{\ell}|\right)}{p\log(|\det(A)|^{1/n})}$ as $k$ tends to infinity. Meanwhile, since $D\cap(D+\alpha_{j}) \subset D$ for each $j$ and $r_{k} \leq p-1$, we can bound the third term of (\ref{eq:sepDim}) by $\frac{(p-1)\log{|D|}}{k\log(|\det(A)|^{1/n})}$. This bounding sequence tends to zero as $k$ tends to infinity. The claim now follows because the limit inferior agrees with the limit and the box-counting dimension and Hausdorff dimension agree for self-similar sets. 
\end{proof}

\begin{corollary}\label{cor:OSCisSSC} 
Suppose that $A \in M_{n}(\mathbb{Z})$ is invertible and multiplication by $A^{-1}$ is a similarity with respect to Euclidean distance. Suppose $D\subset\mathbb{Z}^{n}$ is a subset of complete residue system mod $A$. Suppose further that $\alpha\in\mathbb{R}^{n}$ has a unique $(A, D-D)$ representation and there exists $\{U_{\ell}\}_{\ell=1}^{p}$ and $\{V_{\ell}\}_{\ell=1}^{p}$ such that $((D\cap(D+\alpha_{j}) - \beta_{j})_{j=1}^{\infty} = (U_{\ell})_{\ell=1}^{p}\overline{(U_{\ell} + V_{\ell})_{\ell=1}^{p}}$ for some sequence $(\beta_{j})_{j=1}^{\infty}\in D^{\mathbb{N}}$.

If the IFS precisely containing the maps
\begin{equation*}
f(x) = A^{-p}\bigg(x + \sum_{\ell=1}^{p}(A^{p-\ell}u_{\ell} + A^{-\ell}v_{\ell})\bigg)
\end{equation*}
with $u_{\ell} \in U_{\ell}$ and $v_{\ell} \in V_{\ell}$ for each $\ell$ satisfies the open set condition, then it must satisfy the strong separation condition. 
\end{corollary}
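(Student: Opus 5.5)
We argue by comparing two dimension formulas. Let $\mathcal{F}$ denote the IFS in the statement. By Theorem~\ref{thm:attractSEP} (its proof goes through Lemma~\ref{lem:seqExpress} and Lemma~\ref{lem:sepGen}, which only need the uniqueness of the representation of $\alpha$), the set $X := T(\alpha) - \beta$ is the attractor of $\mathcal{F}$, where $\beta = \pi_{A, D}(\beta_{j})_{j=1}^{\infty}$. Dimensions are translation invariant, so Theorem~\ref{thm:IntDimSEP} gives
\begin{equation*}
\dim_{H}X = \frac{\log\left(\prod_{\ell=1}^{p}|U_{\ell} + V_{\ell}|\right)}{p\log(|\det(A)|^{1/n})}.
\end{equation*}

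Next, we compute the similarity dimension of $\mathcal{F}$. By Corollary~\ref{cor:detCoeff}, each map of $\mathcal{F}$ is a similarity with ratio $|\det(A)|^{-p/n}$. The maps are indexed by the tuples $(u_{\ell}, v_{\ell})_{\ell=1}^{p} \in \prod_{\ell}U_{\ell}\times V_{\ell}$, and we take $\mathcal{F}$ to consist of exactly $\prod_{\ell=1}^{p}|U_{\ell}||V_{\ell}|$ maps, one per tuple, as in the count of Corollary~\ref{cor:simDim}. The similarity dimension is therefore
\begin{equation*}
s = \frac{\log\left(\prod_{\ell=1}^{p}|U_{\ell}||V_{\ell}|\right)}{p\log(|\det(A)|^{1/n})}.
\end{equation*}
If $\mathcal{F}$ satisfies the OSC, Theorem~\ref{thm:SepSimDim} gives $\dim_{H}X = s$. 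Hence
\begin{equation*}
\prod_{\ell=1}^{p}|U_{\ell}+V_{\ell}| = \prod_{\ell=1}^{p}|U_{\ell}||V_{\ell}|.
\end{equation*}
Since $|U_{\ell}+V_{\ell}| \leq |U_{\ell}||V_{\ell}|$ holds trivially for each $\ell$ and all factors are positive, equality of the products forces $|U_{\ell}+V_{\ell}| = |U_{\ell}||V_{\ell}|$ for every $\ell$. Theorem~\ref{thm:snSSC}, applicable because $A^{-1}$ is a similarity and $\alpha$ has a unique representation, then yields the SSC.

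The main obstacle is the bookkeeping of $\mathcal{F}$ as a multiset versus a set. If two distinct tuples produced the same translation vector $\sum_{\ell}(A^{p-\ell}u_{\ell}+A^{-\ell}v_{\ell})$, the IFS as a set would have fewer maps, and the similarity dimension count would be off. In that case, though, two distinct indexed maps coincide, so their images overlap. The OSC in the sense of Definition~\ref{def:openSep}, read on the indexed family, then already fails, since $f_{i_{1}}(\mathcal{O}) = f_{i_{2}}(\mathcal{O})$ is nonempty. So under the hypothesis the maps are genuinely distinct and the count $\prod_{\ell}|U_{\ell}||V_{\ell}|$ is correct. I would state this convention explicitly at the start of the proof.
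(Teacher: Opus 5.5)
Your proposal is correct and is essentially the paper's proof run in the direct rather than the contrapositive direction. The paper assumes the SSC fails, obtains from Theorem~\ref{thm:snSSC} some $\ell$ with $|U_{\ell}+V_{\ell}|<|U_{\ell}||V_{\ell}|$, uses Theorem~\ref{thm:IntDimSEP} to put the Hausdorff dimension strictly below the similarity dimension, and concludes by Theorem~\ref{thm:SepSimDim} that the OSC fails.

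Your multiset worry is also moot without adopting a convention. If two tuples give the same map $h=g$, take a single $x\in X$ and compare the two $(A,D)$-representations of $h(x)+\beta=g(x)+\beta$. Lemma~\ref{lem:Urep} makes these representations equal, exactly as in the proof of Theorem~\ref{thm:snSSC}, which forces the tuples $(u_{\ell},v_{\ell})_{\ell}$ to coincide. So the count $\prod_{\ell}|U_{\ell}||V_{\ell}|$ is automatic.
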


\begin{proof}
Suppose that the strong separation condition does not hold. By Theorem~\ref{thm:snSSC}, there exists $\ell \in \{1, 2, \ldots, p\}$ such that $|A_{\ell} + U_{\ell}| < |A_{\ell}||U_{\ell}|$. By Theorem~\ref{thm:IntDimSEP}, the Hausdorff dimension is strictly less than the similarity dimension. The open set condition must not hold because, otherwise, Theorem~\ref{thm:SepSimDim} would imply that the Hausdorff dimension is equal to the similarity dimension. 
\end{proof}

\section{On the Function Mapping Translations to Dimensions} 

Let $C$ be the classical middle thirds Cantor set. In our notation, this is the attractor generated by $T_{3, \{0, 2\}}$. One of the first results to be proven about the intersection $ C(\alpha) := C \cap (C+\alpha)$ concerns the level sets of the function $\Phi : (C-C) \rightarrow [0, \dim_{H}C]$ given by $\Phi(\alpha) = \dim_{H}C(\alpha)$. In \cite{DH95}, it was shown that the level sets of $\Phi$ are dense in the function's domain $(C-C)$. More recently, a condition on $D$ was shown to be sufficient to have an analogous result about the box-counting dimension hold for the two-dimensional case corresponding to the linear transformation $L(z) = (-n+i)z$ in the complex plane where $n$ is a positive integer greater than or equal to $2$ \cite{PS21}. In this section, we generalize the argument to expanding matrices with integer entries. In Section~\ref{sec:caseStudy}, we demonstrate that our generalization can be combined with deeper knowledge of the $-n+i$ case to yield a more widely applicable theorem than the one featured in \cite{PS21}. 

\begin{definition} 
Let $A\in M_{n}(\mathbb{R})$ be an expanding matrix and $D$ be a finite subset of $\mathbb{R}^{n}$. 
We call the set $\{\alpha\in\mathbb{R}^{n} : T_{A, D} \cap (T_{A, D} + \alpha) \neq \emptyset\}$ the \textit{nonempty translations of $T_{A, D}$} and denote it by $F_{A, D}$. 
\end{definition}

For any $S\subset \mathbb{R}^{n}$ and $t\in\mathbb{R}^{n}$, the set $S \cap (S+t)$ is nonempty exactly when $t$ is an element of the difference set $S-S$. Therefore $F_{A, D}$ is equal to $T_{A, D-D}$. Despite this, we introduce the notation $F_{A, D}$ because there is a difference between $T_{A, D-D}$ and the set $\{(\alpha, \beta): T\cap(T+\alpha)\cap(T+\beta) \neq \emptyset\}$. We treat multiple intersections in Section~\ref{sec:multiSec}. 

\begin{definition} 
Let $A \in M_{n}(\mathbb{Z})$ be invertible and let multiplication by its inverse be a similarity on $\mathbb{R}^{n}$ with respect to Euclidean distance. Let $D\subset\mathbb{Z}^{n}$ be a subset of a complete residue system mod $A$. We define the function
\begin{equation}
\begin{split}
&\Phi_{A, D} :\{\alpha\in F_{A, D}: \dim_{B}(T_{A, D}\cap(T_{A, D} + \alpha))\;\;\text{exists}\;\;\} \rightarrow [0, \dim_{B}T_{A, D}],\\
&\Phi_{A, D}(\alpha) = \dim_{B}(T_{A, D}\cap(T_{A, D} + \alpha)).\\
\end{split}
\end{equation} 
\end{definition} 

\begin{corollary}\label{cor:boxDense} 
Suppose $A \in M_{n}(\mathbb{Z})$ is invertible and multiplication by its inverse is a similarity with respect to Euclidean distance. Suppose $D\subset\mathbb{Z}^{n}$ is a subset of a complete residue system mod $A$. If $(A, D-D)$-representations are unique, then the level sets of $\Phi_{A, D}$ are dense subsets of $F_{A, D}$. 
\end{corollary}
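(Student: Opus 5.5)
The plan is to fix a target level $\lambda\in[0,\dim_{B}T_{A,D}]$, a point $\alpha\in F_{A,D}$ and $\varepsilon>0$, and to build $\alpha'$ with $\norm{\alpha-\alpha'}<\varepsilon$ and $\Phi_{A,D}(\alpha')=\lambda$. Since $F_{A,D}=T_{A,D-D}$, $\alpha$ has an $(A,D-D)$-representation $(\alpha_j)_{j=1}^{\infty}$. I will choose $K$ large enough that $\norm{A^{-K}}_{\text{op}}\operatorname{diam}T_{A,D-D}<\varepsilon$, which is possible because $A^{-1}$ is a similarity with coefficient $|\det(A)|^{-1/n}<1$ (Corollary~\ref{cor:detCoeff}). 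Then any $\alpha'=\pi_{A,D-D}(\alpha'_j)$ with $\alpha'_j=\alpha_j$ for $j\le K$ lies within $\varepsilon$ of $\alpha$. So it suffices to find a tail $(\alpha'_j)_{j>K}$ in $D-D$ such that the resulting $\alpha'$ has $\dim_{B}(T_{A,D}\cap(T_{A,D}+\alpha'))=\lambda$.

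Since all $(A,D-D)$-representations are unique, Lemma~\ref{lem:seqExpress} gives $T(\alpha')=\pi_{A,D}\big(\prod_j D\cap(D+\alpha'_j)\big)$. Theorem~\ref{thm:unboundedDigitFormula} then reduces the box dimension to the asymptotics of $\frac{1}{k}\sum_{j\le k}\log|D\cap(D+\alpha'_j)|$, divided by $\log|\det(A)|^{1/n}$. The finite prefix of length $K$ does not affect this limit. Two choices of digit are available: $\alpha'_j=0$ gives $|D\cap D|=|D|$, and $\alpha'_j=d_1-d_0$ for distinct $d_0,d_1\in D$ gives a set of size $1\le|D\cap(D+\alpha'_j)|<|D|$. (If $|D|=1$ the statement is trivial.) Mixing all digits $0$ and a digit $e=\max D-\min D$ (or any $e$ with $|D\cap(D+e)|=1$) should be enough to produce each value. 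Concretely, set $s=\lambda/\dim_{B}T_{A,D}\in[0,1]$, where $\dim_{B}T_{A,D}=\log|D|/\log|\det(A)|^{1/n}$; this equality follows from Theorem~\ref{thm:unboundedDigitFormula} with $D_j=D$. For $j>K$, I will put $\alpha'_j=0$ when $\lfloor sj\rfloor>\lfloor s(j-1)\rfloor$ and $\alpha'_j=e$ otherwise, a Sturmian/Beatty-type choice. This gives $\#\{j\le k:\alpha'_j=0\}=sk+O(K+1)$.

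It remains to check that the single-element digit really has $|D\cap(D+e)|=1$: with $e=d^{*}-d_{*}$ for an extreme pair in a suitable direction, the intersection is exactly $\{d^{*}\}$. Taking $d^{*}$ and $d_{*}$ as the maximizer and minimizer of a generic linear functional on $D$ makes this clean. Then $\log G_k=(sk+O(1))\log|D|$, so the limit exists and equals $s\log|D|$. The box dimension therefore exists and equals $\lambda$, which puts $\alpha'$ in the domain of $\Phi_{A,D}$ with $\Phi_{A,D}(\alpha')=\lambda$.

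The main obstacle is a bookkeeping point: Lemma~\ref{lem:seqExpress} needs the specific representation of $\alpha'$ to be unique, which the global uniqueness hypothesis supplies. The only genuine choice is the digit achieving cardinality one, handled by the extreme-pair argument above. With that in place, the rest is the routine limit computation.
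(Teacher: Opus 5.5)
Your proposal is correct and is essentially the paper's proof. You keep a prefix of $\alpha$'s representation, fill the tail with a Beatty-type pattern of $0$'s and a difference $e$ with $|D\cap(D+e)|=1$, and evaluate the limit via Lemma~\ref{lem:seqExpress} and Theorem~\ref{thm:unboundedDigitFormula}. The differences are only cosmetic:
\begin{itemize}
\item You obtain the cardinality-one digit from the extreme points of a generic linear functional, whereas the paper takes a maximal-norm element of $D-D$ and uses a parallelogram argument.
\item You treat the endpoints $s=0,1$ uniformly, where the paper handles them separately.
\item You read off $\dim_{B}T_{A,D}=\log|D|/\log(|\det(A)|^{1/n})$ directly from Theorem~\ref{thm:unboundedDigitFormula} with $D_j=D$, rather than via the strong separation condition and Theorem~\ref{thm:SepSimDim}.
\end{itemize}
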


\begin{proof}
Let $\alpha \in F_{A, D}$ be given. We construct $\beta$ in the domain of $\Phi_{A, D}$ such that $\Phi_{A, D}(\beta) = \lambda\dim_{B}T_{A, D}$ where $\lambda \in [0, 1]$. Let $\alpha$ have $(A, D-D)$-representation $(\alpha_{j})_{j=1}^{\infty}$. For any radius $r>0$, there exists an index $m$ at which any vector with  $(A, D-D)$-representation $(\alpha_{1}\alpha_{2}\ldots \alpha_{m}\beta_{m+1}\beta_{m+2}\ldots)$ is within distance $r$ of $\alpha$. We now describe how to choose the $\beta_{j}$ such that the image of the $(A, D-D)$-representation satisfies the desired properties. 

Suppose $0 < \lambda < 1$. There exists a sequence of integers $h_{j}$ such that $h_{j} \leq j\lambda < h_{j} + 1$. Since $\lambda < 1$ we see that $(j+1)\lambda < j\lambda + 1 < (h_{j}+1)+1$. It follows that either $h_{j+1} = h_{j}$ or $h_{j+1} = h_{j} + 1$. Fix $v$ to be an element of $D-D$ of maximal norm. For all $j > m$, let 
\begin{equation}
\beta_{j} = \begin{cases}
v &\;\text{if}\; h_{j} = h_{j-1}, \\
0 &\;\text{if}\; h_{j} = h_{j-1} + 1. 
\end{cases}
\end{equation}

We claim that $|D\cap(D+\beta_{j})|$ is equal to either $1$ or $|D|$ for $j>m$. The latter case is clear when $\beta_{j} = 0$. The former happens when $\beta_{j} = v$. If it did not, then there exists four distinct elements in $D$, $u_{1}, u_{2}, u_{3}, u_{4}$ such that $u_{1} + v = u_{2}$ and $u_{3} + v = u_{4}$. We claim that $||u_{4} - u_{1}||$ or $||u_{3} - u_{2}||$ is greater than $||v||$. This is true because either all four points are on the same line, or they form a parallelogram whose side lengths are less than one of its diagonals. 

The key observation is that in either case we obtain $|D\cap(D+\beta_{j})| = |D|^{h_{j}-h_{j-1}}$. 

We can compute the lower box-counting dimension of $T(\beta) := T_{A, D} \cap (T_{A, D} + \beta)$ using Theorem~\ref{thm:unboundedDigitFormula} since, by assumption, the $(A, D-D)$-representation of $\beta$ is unique. Observe that 
\begin{align}
&\lim_{k\rightarrow\infty}\frac{\log{G_{k}}}{k\log(|\det(A)|^{1/n})} \\
&= \lim_{k\rightarrow\infty}\frac{\log(\prod_{j=1}^{m}|D\cap(D+\alpha_{j})|)}{k\log(|\det(A)|^{1/n})} + \lim_{k\rightarrow\infty}\frac{\log(\prod_{j=m+1}^{k}|D\cap(D+\beta_{j})|)}{k\log(|\det(A)|^{1/n})} \\
&= 0 + \lim_{k\rightarrow\infty}\frac{\log(\prod_{j=m+1}^{k}|D|^{h_{j}-h_{j-1}})}{k\log(|\det(A)|^{1/n})} \\
&=  \lim_{k\rightarrow\infty}\frac{(h_{k}-h_{m})\log{|D|}}{k\log(|\det(A)|^{1/n})} \\
&= \lambda\frac{\log{|D|}}{\log(|\det(A)|^{1/n})}.
\end{align}
Since the limit exists, we have $\dim_{B}T(\beta) = \underline{\dim_{B}}T(\beta) = \lambda\frac{\log{|D|}}{\log(|\det(A)|^{1/n})}$. The uniqueness of $(A, D-D)$-representations implies the uniqueness of $(A, D)$-representations. Therefore the IFS containing the similarities $f(x) = A^{-1}(x+d)$ where $d\in D$, which generates $T_{A, D}$, satisfies the strong separation condition. We conclude by Theorem~\ref{thm:SepSimDim} that $\frac{\log{|D|}}{\log(|\det(A)|^{1/n})} = \dim_{B}T_{A, D}$. 
 
For the case $\lambda = 0$, choose $\beta_{j} = v$ for all $j > m$. If $\lambda = 1$, then choose $\beta_{j} = 0$ for all $j > m$.
\end{proof}

\begin{remark}
A consequence of Corollary~\ref{cor:boxDense} is that the function $\Phi_{A, D}$ is discontinuous everywhere on its domain. This is because for any neighbourhood of a point $\alpha$, by the denseness of the level sets of $\Phi_{A, D}$, we can find $\beta$ in that neighbourhood satisfying $|\Phi_{A, D}(\alpha) - \Phi_{A, D}(\beta)| \geq \dim_{B}T_{A, D}/2$. 
\end{remark}

We can establish a version of Corollary~\ref{cor:boxDense} for the Hausdorff dimension when $T_{A, D}$ is ``nice". In the spirit of the strong separation condition and the open set condition,  ``nice'' means a restriction on the overlaps of the $1$-tiles of $T_{A, D}$. 

\begin{definition} 
Let $A$ be an expanding matrix and let $D\subset\mathbb{R}^{n}$ be finite. We call the attractor $T_{A, D}$ is a \textit{self-affine tile} if $T_{A, D}$ is the closure of its interior and $\lambda_{n}((T_{A, D} + d) \cap (T_{A, D} + d^{'})) = 0$ for all distinct pairs $d, d^{'}\in D$ where $\lambda_{n}$ is the $n$-dimensional Lebesgue measure. 
\end{definition}


\begin{example}
If $A$ is a positive integer $m > 1$ and $D = \{0, 1, \ldots, m-1\}$, then the set $T_{A, D}$ is the closed unit interval. The set $[0, 1]$ is a self-affine tile since it is the closure of $(0, 1)$ and $[d, d+1] \cap [d^{'}, d^{'}+1]$ is finite for all distinct pairs in $d, d^{'} \in D$. 
\end{example}

\begin{definition} 
Let $A \in M_{n}(\mathbb{Z})$ be invertible such that multiplication by $A^{-1}$ is a similarity with respect to Euclidean distance. Let $\mathcal{D}$ be a complete residue system modulo $A$ and $D\subset\mathcal{D}$. Let $T(\alpha) := T_{A, D} \cap (T_{A, D} + \alpha)$ for $\alpha \in \mathbb{C}$. We define the function
\begin{equation}
 \begin{split}
    &\Psi_{A, D} : F_{A, D} \rightarrow [0, \dim_{H}T_{A, D}],\\
    &\Psi_{A, D}(\alpha) = \dim_{H}T(\alpha).\\
  \end{split}
\end{equation}
\end{definition}

Under the conditions of Corollary~\ref{cor:boxDense}, we can show that the level sets of these functions are also dense in $F_{A, D}$ when $D\subset\mathcal{D}$ and $T_{A, \mathcal{D}}$ has minimal overlaps. Our tool is the notion of a Moran set. 

\begin{definition} 
Let $(n_{k})_{k=1}^{\infty}$ be a sequence of positive integers, and let $(R_{k})_{k=1}^{\infty}$ be a sequence of vectors $(c_{k, 1}, c_{k, 2}, \ldots, c_{k, n_{k}})$ such that $0 < c_{k, j} < 1$ for all $k\geq1$ and $1 \leq j \leq n_{k}$. Let $J\subset\mathbb{R}^{n}$ be a compact set with nonempty interior. Let $\mathcal{F}$ denote a set of subsets of $\mathbb{R}^{n}$ indexed by $W := \{\emptyset\}\cup(\cup_{k=1}^{\infty}W_{k})$ where $W_{k} := \{(j_{1}, j_{2}, \ldots, j_{k}) : 1\leq j_{\ell} \leq n_{\ell}, 1\leq\ell\leq k\}$. Given two words $\sigma$ and $\tau$ in $W$, we denote their concatenation by $\sigma * \tau$. The set $\mathcal{F}$ is said to \textit{satisfy the Moran structure given by $((n_{k})_{k=1}^{\infty}, (R_{k})_{k=1}^{\infty}, J)$} if it satisfies the following five conditions:

\begin{itemize} 
\item[(i)] $J_{\emptyset} = J$.
\item[(ii)] For any $\sigma\in W\setminus \{\emptyset\}$, there exists a similarity $S_{\sigma}:\mathbb{R}^{n}\rightarrow\mathbb{R}^{n}$ such that $S_{\sigma}(J) = J_{\sigma}$. 
\item[(iii)] For any $k\geq 1$ and $\sigma\in W_{k-1}$, we have $J_{\sigma*j} \subset J_{\sigma}$ for all $j = 1, 2, \ldots, n_{k}$. 
\item[(iv)] For any $k\geq 1$ and $\sigma\in W_{k-1}$, $1 \leq j \leq n_{k}$, $\frac{\diam(J_{\sigma*j})}{\diam(J_{\sigma})} = c_{k, j}$. 
\item[(v)] For any $k\geq 1$ and $\sigma\in W_{k-1}$, the interiors of $J_{\sigma*j}$ and $J_{\sigma*\ell}$ are disjoint for all $1 \leq j < \ell \leq n_{k}$. 
\end{itemize}

For $\mathcal{F}$ satisfying the Moran structure, set $E_{k} = \cup_{\sigma\in W_{k}}J_{\sigma}$ and $E = \cap_{k=1}^{\infty}E_{k}$. We call $E$ the \textit{Moran set associated with the collection $\mathcal{F}$}. 
\end{definition}

\begin{example}
The middle third Cantor set is a Moran set. Recall that it is the attractor of the similarities $f_{1}(x) = x/3$ and $f_{2}(x) = (x+2)/3$. 

For all $k$, we choose $n_{k} = 2$ and $R_{k} = (1/3, 1/3)$. Let $J = [0, 1]$ and let it have the empty word as a label. For each integer $k\geq1$,  let $W_{k} = \{(i_{1}, i_{2}, \ldots, i_{k}) : i_{j}\in\{1, 2\}\}$. We define $W_{0}$ to be the set that contains the empty word. We define the set $\mathcal{F} = \{J_{\sigma}:\sigma\in \cup_{k=10}^{\infty}W_{k}\}$ by assigning each $J_{\sigma}$ the set $f_{i_{1}}\circ f_{i_{2}} \circ \cdots \circ f_{i_{k}}([0, 1])$ when $\sigma = (i_{1}, i_{2}, \ldots, i_{k})$.We claim that $\mathcal{F}$ satisfies the Moran structure given by $((n_{k})_{k=1}^{\infty}, (R_{k})_{k=1}^{\infty}, J)$. 

Condition (i) is satisfied immediately. To see that (ii) holds, choose $S_{\sigma}$ to be the composition of similarities $(f_{i_{1}} \circ \cdots \circ f_{i_{k}})$, for $\sigma = (i_{1}, i_{2}, \ldots, i_{k})$. It follows from our definitions that $S_{\sigma}(J) = J_{\sigma}$. Condition (iii) holds because $f_{j}([0, 1])$ is a subset of $[0, 1]$ for both $j=1$ and $j=2$. For condition (iv), observe that the diameter $J_{\sigma}$ is $1/3^{k}$ where $k$ is the length of the word $\sigma$. If $\sigma$ is a word of length $k$, then $\sigma * j$ is a word of length $k+1$. It follows that $\diam{J_{\sigma*j}} / \diam{J_{\sigma}} = 1/3$. Lastly, if $j \neq \ell$, then without generality $j = 1$ and $\ell=2$. Since $f_{1}([0, 1]) \cap f_{2}([0, 1]) = [0, 1/3] \cap [2/3, 1]$ is empty, condition (v) holds. 
\end{example}

The following theorem from \cite{HRWW00} expresses the Hausdorff dimension of a Moran set in terms of a sequence derived from the ``contraction coefficients" $c_{k, j}$. This is done in a way that is analogous to the similarity dimension of an iterated function system of similarities. 

\begin{theorem}[Hua S., Rao H., Wen Z., Wu J., \cite{HRWW00}, theorem 1.1]\label{thm:moranDim} 
Suppose $E$ is the Moran set associated with a collection $\mathcal{F}$ satisfying the Moran structure given by  $((n_{k})_{k=1}^{\infty}, (R_{k})_{k=1}^{\infty}, J)$. For each $k$, let $s_{k}$ be the solution to the equation $\sum\limits_{\sigma\in W_{k}}c_{\sigma}^{s_{k}} = 1$ where $c_{\sigma} := \prod\limits_{\ell = 1}^{k}c_{\ell, j_{\ell}}$ for $\sigma = (j_{1}, j_{2}, \ldots, j_{k})$. If $\inf_{i, j}c_{i, j} > 0$, then
\begin{equation}
\dim_{H}E = \liminf_{k\rightarrow\infty}s_{k}.
\end{equation}
\end{theorem}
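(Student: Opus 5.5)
The approach is a mass distribution argument for the lower bound and a covering argument for the upper bound, since this is a cited theorem about Moran sets. Write $c_* := \inf_{i,j}c_{i,j} > 0$ and normalize $\diam J = 1$, so that $\diam J_\sigma = c_\sigma$. For the upper bound, fix $s > \liminf_k s_k$ and choose infinitely many $k$ with $s_k < s$. For each such $k$ the cover $\{J_\sigma : \sigma \in W_k\}$ of $E$ satisfies $\sum_{\sigma\in W_k} (\diam J_\sigma)^s = \sum_{\sigma\in W_k} c_\sigma^{s} \le \sum_{\sigma\in W_k} c_\sigma^{s_k} = 1$. Its mesh $\max_\sigma c_\sigma \le (\max_{i,j} c_{i,j})^k$ must tend to zero. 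If some $c_{k,j}$ approach $1$, I would handle this by noting that the sum being at most $1$ with each term positive still forces small diameters once $n_k \ge 2$; degenerate levels with $n_k = 1$ are absorbed by grouping levels. Hence $\mathcal H^s(E) \le 1 < \infty$, so $\dim_H E \le s$.

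For the lower bound, fix $s < \liminf_k s_k$, so $s_k > s$ for all $k \ge K$. I would construct the natural measure. For each $k$, let $\mu_k$ assign mass $c_\sigma^{s_k}$ to $J_\sigma$, $\sigma \in W_k$. These are not consistent across $k$, so instead I would take a weak-* limit point $\mu$ of suitable normalized measures on $E$. A cleaner route is to define $\mu(J_\sigma) := \lim$ along a subsequence of the ratios $\sum_{\tau \text{ extends } \sigma,\, |\tau|=m} c_\tau^{s_m}$, using compactness (a diagonal argument) to obtain a consistent mass distribution. The key estimate to aim for is $\mu(J_\sigma) \le C\, c_\sigma^{s}$ for $|\sigma| \ge K$. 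This should follow because $\sum_{\tau\supset\sigma,|\tau|=m} c_\tau^{s_m} = c_\sigma^{s_m}\sum_{\rho} c_\rho^{s_m}$, where the inner sum runs over tail words. One then needs the inner sum to be comparable to the reciprocal of $\sum_{|\sigma'|=|\sigma|} c_{\sigma'}^{s_m}$, and $c_\sigma^{s_m} \le c_\sigma^{s}$ since $s_m > s$ and $c_\sigma < 1$. The product structure of the $c_\tau$ (they factor level by level) makes the sum over $W_m$ factor as $\prod_{\ell=1}^m \sum_j c_{\ell,j}^{t}$, which I would exploit directly. In fact $s_m$ solves $\prod_{\ell\le m} P_\ell(s_m) = 1$ with $P_\ell(t) = \sum_j c_{\ell,j}^t$. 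This makes the whole computation explicit: I would simply define $\mu(J_\sigma) = \prod_{\ell\le |\sigma|} c_{\ell,j_\ell}^{t}/P_\ell(t)$ for a fixed $t$, which is a consistent Bernoulli-type measure, and compare it with $c_\sigma^{s}$.

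With that measure, $\mu(J_\sigma)/c_\sigma^{s} = c_\sigma^{t-s}/\prod_{\ell \le k} P_\ell(t)$. Choosing $t$ slightly above $s$ but below $\liminf s_k$ gives $\prod_{\ell\le k}P_\ell(t) \ge 1$ for large $k$, since $P_\ell$ is decreasing in $t$ and $\prod P_\ell(s_k)=1$ with $t<s_k$. Hence $\mu(J_\sigma) \le c_\sigma^{s}$, the factor $c_\sigma^{t-s}\le 1$ helping. The final step is to pass from cylinders to arbitrary sets $U$ of small diameter $r$. I would select the cylinders $J_\sigma$ meeting $U$ with $c_\sigma \le r < c_{\sigma^-}$, so that $c_\sigma \ge c_* r$ by the assumption $\inf c_{i,j}>0$. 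Condition (v), that interiors are disjoint, and condition (ii), that each $J_\sigma$ is similar to $J$ which has nonempty interior, give a volume argument: each such $J_\sigma$ contains a ball of radius $\gtrsim c_* r$ inside a fixed dilate of $U$. The number of such cylinders is therefore bounded by a constant $M$ depending only on $n$, $J$ and $c_*$. Then $\mu(U) \le M r^{s}$, and the mass distribution principle yields $\dim_H E \ge s$.

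The main obstacle I expect is this last counting step. Disjointness of interiors is only given among siblings, so I must propagate it to all cylinders of the selected, possibly varying, generations via nestedness (iii). I must also make sure the inscribed-ball radius of $J_\sigma$ is uniformly proportional to $\diam J_\sigma$, which holds because every $J_\sigma$ is a similar copy of the single set $J$.
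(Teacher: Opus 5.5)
The thesis does not prove this theorem; it quotes it from Hua, Rao, Wen and Wu, so your outline has to stand on its own. Its architecture is the standard one, and the lower-bound machinery is sound.

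\begin{itemize}
\item The product measure $\mu([\sigma])=\prod_{\ell\le k}c_{\ell,j_\ell}^{t}/P_\ell(t)$ is consistent across levels.
\item The inequality $\prod_{\ell\le k}P_\ell(t)\ge 1$ for large $k$ gives $\mu([\sigma])\le c_\sigma^{s}$. You can take $t=s$ directly, and the weak-$*$/diagonal detour is unnecessary.
\item The family $\mathcal{Q}_r=\{\sigma: c_\sigma\le r<c_{\sigma^-}\}$ is an antichain. So its members have pairwise disjoint interiors by (iii) and (v), and an inscribed ball in $J$ together with $c_*>0$ bounds how many of them meet a set of diameter $r$.
\end{itemize}

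One technical point in the last step: bound $\mu(U)$ by the sum of the symbolic masses $\mu([\sigma])$ over selected $\sigma$ with $J_\sigma\cap U\neq\emptyset$. Do not use the push-forward mass of the sets $J_\sigma$. That mass can be larger, because non-sibling cylinders may share boundary points, and your estimate was proved only for cylinder masses.

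The genuine gap is the claim that $\max_{\sigma\in W_k}\diam J_\sigma=\max_{\sigma\in W_k}c_\sigma\to 0$. Both halves use it: the covers must be $\delta$-covers, and $\mathcal{Q}_r$ must cover $E$. None of your three justifications works.
\begin{itemize}
\item The bound $(\max_{i,j}c_{i,j})^k$ is useless when $\sup_{i,j}c_{i,j}=1$.
\item The inequality $\sum_{\sigma\in W_k}c_\sigma^{s}\le 1$ does not force the largest term to be small.
\item Grouping levels cannot help when only finitely many $n_k$ exceed $1$.
\end{itemize}

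In fact the statement as quoted is false in that case. Take $J=[0,1]^n$, $n_k=1$ and $c_{k,1}=\exp(-2^{-k})$ for all $k$, and let the length-$k$ word give $J_{(1,\ldots,1)}=\bigl(\prod_{\ell\le k}c_{\ell,1}\bigr)J$. Then (i)--(v) hold and $\inf_{i,j}c_{i,j}=e^{-1/2}>0$. Each $W_k$ is a single word, so $s_k=0$. Yet $E=e^{-1}[0,1]^n$ has Hausdorff dimension $n$.

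You must therefore add the hypothesis $\lim_{k\to\infty}\max_{\sigma\in W_k}\diam J_\sigma=0$, which is standard in the Moran-set literature but dropped in the quotation. The correct form of your remark about levels with $n_k\ge 2$ is a volume argument:
\begin{itemize}
\item By (ii), (iii) and (v), the sets $\interior J_{\sigma*j}$ are pairwise disjoint similar copies of $\interior J$ lying inside $\interior J_\sigma$.
\item Comparing Lebesgue measures gives $\sum_j c_{k,j}^{n}\le 1$, so $\max_j c_{k,j}\le (1-c_*^{n})^{1/n}$ whenever $n_k\ge 2$.
\item Hence the mesh condition holds automatically when $n_k\ge 2$ for infinitely many $k$.
\end{itemize}
With that hypothesis in place, both halves of your argument go through.
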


\begin{remark}
The original theorem also states that $\dim_{p}E$ is equal to the limit superior of the sequence $s_{k}$ where $\dim_{p}$ is the packing dimension. The packing dimension is dual to the Hausdorff dimension insofar that it roughly considers the supremum of powers of diameters of subsets that are contained by the set being measured as opposed to taking an infimum over coverings. 
\end{remark}

We intend to apply this to sets of the form $T_{A, D} \cap (T_{A, D} + \alpha)$. Under additional assumptions, these intersections are Moran sets. 

\begin{lemma}\label{lem:moranCheck} 
Suppose $A \in M_{n}(\mathbb{Z})$ is invertible and multiplication by its inverse is a similarity on $\mathbb{R}^{n}$ with respect to Euclidean distance. If $T_{A, \mathcal{D}}$ is a self-affine tile for $\mathcal{D}\subset\mathbb{Z}^{n}$, then for any sequence $(D_{j})_{j=1}^{\infty}$ of subsets of $\mathcal{D}$, the image $\pi_{A, \mathcal{D}}\left(\prod\limits_{j=1}^{\infty}D_{j}\right)$ is a Moran set. 
\end{lemma}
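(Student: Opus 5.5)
We would prove Lemma~\ref{lem:moranCheck} by building the Moran structure explicitly out of the tiles of $T := T_{A, \mathcal{D}}$. Take $J := T$, which is compact, and which has nonempty interior because it is a self-affine tile (it is the closure of its interior and has positive Lebesgue measure). Enumerate each $D_{k}$ as $\{d_{k,1}, \ldots, d_{k,n_{k}}\}$ with $n_{k} := |D_{k}|$. Set $c_{k,j} := |\det(A)|^{-1/n}$ for all $k, j$, which lies in $(0,1)$ by Corollary~\ref{cor:detCoeff}. For a word $\sigma = (j_{1}, \ldots, j_{k})$ put
\begin{equation*}
S_{\sigma} := \phi_{d_{1,j_{1}}} \circ \cdots \circ \phi_{d_{k,j_{k}}}, \qquad J_{\sigma} := S_{\sigma}(J),
\end{equation*}
where $\phi_{d}(x) = A^{-1}(x+d)$, and set $J_{\emptyset} := J$.

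The conditions are then checked in order. Condition (i) holds by definition. For (ii), $S_{\sigma}$ is a composition of similarities, since $A^{-1}$ is a Euclidean similarity and translations are isometries. For (iii), $\phi_{d}(T) \subset T$ for every $d \in \mathcal{D}$ by the invariance equation (\ref{eq:hutchOp}); applying $S_{\sigma}$ gives $J_{\sigma * j} \subset J_{\sigma}$. Condition (iv) follows because $S_{\sigma*j} = S_{\sigma}\circ\phi_{d_{k,j}}$, so $\diam J_{\sigma*j} = |\det(A)|^{-1/n}\diam J_{\sigma}$, using $\diam T>0$.

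Condition (v) is the only substantive step. For distinct $j \neq \ell$, the interiors of $J_{\sigma*j}$ and $J_{\sigma*\ell}$ are the images under the homeomorphism $S_{\sigma}$ of the interiors of $A^{-1}(T + d)$ and $A^{-1}(T+d')$, with $d \neq d'$ in $\mathcal{D}$. It therefore suffices to show that $\interior(T+d) \cap \interior(T+d') = \emptyset$. This intersection is open, and it is contained in $(T+d)\cap(T+d')$, which has Lebesgue measure zero by the tile definition. An open set of measure zero is empty, so the interiors are disjoint. Moreover, $\inf c_{k,j} > 0$, which we note for later use with Theorem~\ref{thm:moranDim}.

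Finally we identify the Moran set $E = \bigcap_{k} E_{k}$ with $X := \pi_{A,\mathcal{D}}(\prod_{j} D_{j})$, where $E_{k} = \bigcup_{\sigma\in W_{k}}J_{\sigma}$. For the inclusion $X \subset E$: a point $\pi(x_{j})$ with $x_{j}\in D_{j}$ lies in $\phi_{x_{1}}\circ\cdots\circ\phi_{x_{k}}(T)$ for every $k$, by the definition of the coding map. For the reverse inclusion, fix $y\in E$. For each $k$, choose a word $\sigma_{k}\in W_{k}$ with $y\in J_{\sigma_{k}}$; these choices need not be nested. A König-type (diagonal) argument over the finitely branching tree $W$ produces an infinite word $\tau$ all of whose prefixes $\tau|_{k}$ satisfy $y \in J_{\tau|_{k}}$. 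Then $\{y\} = \bigcap_{k} J_{\tau|_{k}}$, because the diameters tend to $0$, and so $y = \pi(d_{1,\tau_{1}}, d_{2,\tau_{2}}, \ldots) \in X$.

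The main obstacle is this compactness step in the reverse inclusion, together with the measure-zero argument for (v). Both are short, and I expect no real difficulty.
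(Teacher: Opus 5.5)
Your proposal is correct and follows essentially the same route as the paper. Both take the Moran structure with $J = T_{A,\mathcal{D}}$, constant ratios $|\det(A)|^{-1/n}$, and $J_\sigma$ the $k$-tiles $\phi_{d_1}\circ\cdots\circ\phi_{d_k}(T_{A,\mathcal{D}})$, and both reduce condition (v) to the measure-zero overlap of $T_{A,\mathcal{D}}+d$ and $T_{A,\mathcal{D}}+d'$ (the paper rescales by $A^k$ and shifts, which is the same as your pull-back by $S_\sigma$). Your K\"{o}nig-type argument for the inclusion of the Moran set in $\pi_{A,\mathcal{D}}(\prod_j D_j)$ fills in a step the paper leaves to the reader.
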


\begin{proof}
For each positive integer $k$, let $\mathcal{F}_{k}$ be the collection of all sets of the form $T_{d_{1}, \ldots, d_{k}} = \pi_{A, \mathcal{D}}(\{d_{1}\} \times \cdots \times \{d_{k}\} \times \mathcal{D}^{\mathbb{N}})$ where $d_{j}\in D_{j}$ for $j = 1, 2, \ldots, k$. We leave the details of seeing that $\pi_{A, \mathcal{D}}\left(\prod\limits_{j=1}^{\infty}D_{j}\right)$ is equal to $$\bigcap\limits_{k=1}^{\infty}(\bigcup\limits_{(d_{1}, d_{2}, \ldots, d_{k}), d_{j}\in D_{j}}T_{d_{1}, d_{2}, \ldots, d_{k}})$$ to the reader. 

For each $k$, let $R_{k}$ be the $|D_{k}|$ length vector where each entry is $|\det(A)|^{-1/n}$. We claim that $\mathcal{F} = \bigcup\limits_{k=1}^{\infty}\mathcal{F}_{k} \cup \{T_{A, \mathcal{D}}\}$ satisfies the Moran structure of $$((|D_{k}|)_{k=1}^{\infty},(R_{k})_{k=1}^{\infty}, T_{A, \mathcal{D}}).$$ We observe that $T_{A, \mathcal{D}}$ is compact because it is the attractor of an IFS and has nonempty interior because a self-affine tile is equal to the closure of its interior.

There exists a bijection between the set of integer labels $1, 2, \ldots, |D_{j}|$ and the set $D \cap (D+\alpha_{j})$. For each $j$, fix one of these bijections. It follows that there is a bijective correspondence between the words $W := \cup_{k=1}^{\infty}\{(j_{1}, j_{2}, \ldots, j_{k}) : 1\leq j_{\ell} \leq |D_{\ell}|, 1\leq\ell\leq k\}$ and the set of finite digit sequences $\bigcup\limits_{k=1}^{\infty}\{(d_{1}, d_{2}, \ldots, d_{k}) : d_{j}\in D_{j}\}$. We can assign an element of $\mathcal{F}$, $T_{d_{1}, d_{2}, \ldots, d_{k}}$, the index $\sigma \in W$ corresponding to the sequence of digits $(d_{1}, d_{2}, \ldots, d_{k})$. Moreover, we define $J_{\emptyset}$ to be $T_{A, \mathcal{D}}$. From here on, we abuse notation and treat $\sigma$ as if it is equal to its corresponding string of digits. 

Property (i) is satisfied by definition. Property (ii) is satisfied because for each $\sigma = (d_{1}, d_{2}, \ldots, d_{k})$, $T_{\sigma} = (f_{d_{1}} \circ f_{d_{2}} \cdots \circ f_{d_{k}})(T_{B, \mathcal{D}})$ where $f_{d}(x) = A^{-1}(x + d)$. Since maps of this form are similarities and compositions of similarities are similarities, assigning $S_{\sigma} = (f_{d{1}} \circ f_{d_{2}}\circ \cdots \circ f_{d_{k}})$ completes the verification of (ii). To verify condition (iii), observe that $T_{d_{1}, \ldots, d_{k-1}, d} \subset T_{d_{1}, \ldots, d_{k-1}}$ for any $d \in D_{k}$. Let $c$ denote $|\det(A)|^{-1/n}$. The diameter of $T_{d_{1}, d_{2}, \ldots, d_{k}}$ is equal to $c^{k}\diam{T_{A, \mathcal{D}}}$ for every sequence $(d_{1}, d_{2}, \ldots, d_{k})$. It then follows that $\frac{\diam(J_{\sigma*j})}{\diam(J_{\sigma})} = c^{k}/c^{k-1} = c$ for any $\sigma\in W_{k-1}$. This shows that property (iv) holds. 

To check property (v), let $a$ and $b$ be distinct elements of $D_{k}$. If the intersection $\interior(T_{d_{1}, \ldots, d_{k-1}, a}) \cap \interior(T_{d_{1}, d_{2}, \ldots, d_{k-1}, b})$ is nonempty then, because it is open, it must contain a ball. This ball is contained in $T_{d_{1}, \ldots, d_{k-1}, a} \cap T_{d_{1}, d_{2}, \ldots, d_{k-1}, b}$. Multiplying both sets in the intersection by $A^{k}$ and then shifting both sets by $-\sum\limits_{j=1}^{k-1}A^{j}d_{k-j}$ implies that $(a + T_{A, \mathcal{D}}) \cap (b  + T_{A, \mathcal{D}})$ contains a ball. This contradicts the assumption that $T_{A, \mathcal{D}}$ is a self-affine tile, because then this intersection would have positive Lebesgue measure. 
\end{proof}

Recall that when $\alpha$ has a unique representation in base $(A, D-D)$, the image of the product set under $\pi$ and the intersection $T(\alpha) := T \cap (T+\alpha)$ are equal. We can leverage Theorem~\ref{thm:moranDim} to extend the result concerning the level sets of $\Phi_{A, D}$ to the level sets of $\Psi_{A, D}$. 

\begin{theorem} \label{thm:extDense} 
Suppose $A \in M_{n}(\mathbb{Z})$ is invertible and multiplication by its inverse is a similarity on $\mathbb{R}^{n}$ with respect to Euclidean distance. Suppose $\mathcal{D}$ is a complete residue system mod $A$ and $T_{A, \mathcal{D}}$ is a self-affine tile. If $D\subset\mathcal{D}$ is such that all $(A, D-D)$-representations are unique, then the level sets of $\Psi_{A, D}$ are dense in $F_{A, D}$.
\end{theorem}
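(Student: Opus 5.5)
We follow the proof of Corollary~\ref{cor:boxDense} step by step and replace the box-counting computation with Theorem~\ref{thm:moranDim}. Fix $\alpha\in F_{A,D}$ with $(A,D-D)$-representation $(\alpha_j)_{j=1}^{\infty}$, fix $\lambda\in[0,1]$, and fix $r>0$. Choose $m$ so that every vector whose representation begins with $\alpha_1,\ldots,\alpha_m$ lies within distance $r$ of $\alpha$. For $j>m$ we define $\beta_j$ exactly as before. Take integers $h_j$ with $h_j\le j\lambda<h_j+1$, and let $v\in D-D$ have maximal norm. Set $\beta_j=v$ when $h_j=h_{j-1}$ and $\beta_j=0$ otherwise, so that $|D\cap(D+\beta_j)|=|D|^{h_j-h_{j-1}}$. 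For $j\le m$ set $\beta_j=\alpha_j$. The endpoint cases $\lambda=0$ and $\lambda=1$ are handled by the constant choices $\beta_j=v$ and $\beta_j=0$ respectively. Let $\beta=\pi_{A,D-D}(\beta_j)_{j=1}^{\infty}$.

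Because all $(A,D-D)$-representations are unique, Lemma~\ref{lem:seqExpress} gives $T(\beta)=\pi_{A,D}\bigl(\prod_{j}D_j\bigr)$ with $D_j:=D\cap(D+\beta_j)\subset D\subset\mathcal{D}$. The map $\pi_{A,D}$ is the restriction of $\pi_{A,\mathcal{D}}$, since both equal $\sum_j A^{-j}d_j$ by Lemma~\ref{lem:selfSimForm}. Hence Lemma~\ref{lem:moranCheck}, which uses the hypothesis that $T_{A,\mathcal{D}}$ is a self-affine tile, shows that $T(\beta)$ is a Moran set. Its structure has $n_k=|D_k|$ and every ratio $c_{k,j}=c:=|\det(A)|^{-1/n}$ (Corollary~\ref{cor:detCoeff}), so $\inf c_{k,j}=c>0$.

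Theorem~\ref{thm:moranDim} now applies. The equation $\sum_{\sigma\in W_k}c_\sigma^{s_k}=1$ reads $G_k c^{k s_k}=1$ with $G_k=\prod_{j=1}^k|D_j|$, so
\begin{equation*}
s_k=\frac{\log G_k}{k\log(|\det(A)|^{1/n})}.
\end{equation*}
The computation in Corollary~\ref{cor:boxDense} shows that $s_k\to\lambda\log|D|/\log(|\det(A)|^{1/n})$, so $\dim_H T(\beta)$ equals this value. It remains to identify $\log|D|/\log(|\det(A)|^{1/n})$ with $\dim_H T_{A,D}$. Uniqueness of $(A,D-D)$-representations implies uniqueness of $(A,D)$-representations, so the IFS for $T_{A,D}$ satisfies the SSC and Theorem~\ref{thm:SepSimDim} gives $\dim_H T_{A,D}=\log|D|/\log(|\det(A)|^{1/n})$. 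Therefore $\Psi_{A,D}(\beta)=\lambda\dim_H T_{A,D}$ with $\beta$ within $r$ of $\alpha$, and this proves density.

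The step that needs the most care is checking that the empty-word structure in Lemma~\ref{lem:moranCheck} matches $T(\beta)$. That lemma uses $J=T_{A,\mathcal{D}}$ and cylinder tiles built with $\mathcal{D}$-tails, while $T(\beta)$ uses the restricted digits $D_j$. One must therefore confirm that the Moran set $\bigcap_k\bigcup_{d_j\in D_j}T_{d_1,\ldots,d_k}$ equals $\pi_{A,\mathcal{D}}(\prod_j D_j)$. This follows from compactness and the diameters $c^k\diam T_{A,\mathcal{D}}\to0$. The only other point to check is that $D_j$ is nonempty for $j>m$, which holds since $v\in D-D$.
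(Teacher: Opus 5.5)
Your proposal is correct and uses the same ingredients as the paper: Lemma~\ref{lem:seqExpress}, Lemma~\ref{lem:moranCheck} and Theorem~\ref{thm:moranDim} give $\dim_H T(\beta)=\liminf_k \log G_k/(k\log|\det(A)|^{1/n})$, which you combine with the density construction of Corollary~\ref{cor:boxDense}. The only difference is organizational: the paper shows $\dim_H T(\alpha)=\underline{\dim}_B T(\alpha)$ for every $\alpha\in F_{A,D}$, deduces $\Phi_{A,D}^{-1}(\lambda)\subset\Psi_{A,D}^{-1}(\lambda)$, and cites Corollary~\ref{cor:boxDense} as a black box, whereas you rerun that construction and apply the Moran formula directly to the constructed $\beta$.
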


\begin{proof}
Since $\alpha$ has a unique $(A, D-D)$-representation $\alpha_{j})_{j=1}^{\infty}$, Lemma~\ref{lem:seqExpress} asserts that $T(\alpha) := T \cap (T + \alpha)$ is equal to $\pi_{A, \mathcal{D}}\left(\prod\limits_{j=1}^{\infty}(D\cap(D+\alpha_{j}))\right)$. By Lemma~\ref{lem:moranCheck}, $T(\alpha)$ is a Moran set.  By Theorem~\ref{thm:moranDim}, we have $\dim_{H}T(\alpha) = \liminf\limits_{k\rightarrow\infty} s_{k}$ where $s_{k}$ is the solution to $\sum\limits_{\sigma\in W_{k}}c_{\sigma}^{s_{k}} = 1$. Explicitly, this is the equation $\left(\prod\limits_{j=1}^{k}|D\cap(D+\alpha_{j})|\right)|\det(A)|^{-ks_{k}/n} = 1$. Solving this equation for $s_{k}$ reveals that $s_{k} = G_{k}$ from Theorem~\ref{thm:unboundedDigitFormula}. Therefore $\dim_{H}T(\alpha) = \underline{\dim_{B}}T(\alpha)$. Recall that $s := \dim_{B}T_{A, D} = \dim_{H}T_{A, D}$ since $T_{A, D}$ is self-similar. For all $\lambda \in [0, s]$, the preimage $\Phi_{A, D}^{-1}(\lambda)$ is a subset of $\Psi_{A, D}^{-1}(\lambda)$. The conclusion now holds by Corollary~\ref{cor:boxDense}. 
\end{proof}

\section{The Dimension of a Level Set}\label{sec:levelSetDim} 

Throughout this section, let $A$ be an invertible $n$ by $n$ matrix with integer entries with the property that multiplication by its inverse is a similarity on $\mathbb{R}^{n}$ with respect to Euclidean distance. We do not provide any theorems or proofs in this section. Instead, we develop a conjecture for the value of the Hausdorff dimension of the level sets of $\Phi_{A, \{0, d\}}$ where $d$ is an element of $\mathbb{Z}^{n}\setminus\{0\}$. 

For $p \in [0, 1]$, we define 
\begin{equation}
X_{p} := \{(\alpha_{1}, \alpha_{2}, \ldots) \in \{-d, 0, d\}^{\mathbb{N}}: \lim_{k\rightarrow\infty}\frac{|\{j : \alpha_{j} = 0, i\leq k\}|}{k} = p\}.
\end{equation}
If $D = \{0, d\}$, then, for each $j$, $D\cap(D+\alpha_{j})$ is one of three sets: $\{0\}, \{d\}$, and $\{0, d\}$. It follows that $\sum\limits_{j=1}^{k}\log(|D\cap(D+\alpha_{j})|) = |\{j : \alpha_{j} = 0, i\leq k\}|\log{|D|}$. Therefore
\begin{equation}
\lim_{k\rightarrow\infty}\frac{|\{j : \alpha_{j} = 0, i\leq k\}|\log{|D|}}{k\log(|\det(A)|^{1/n})} = p\frac{\log{|D|}}{\log(|\det(A)|^{1/n})} = p\dim_{B}T_{A, D}.
\end{equation}
By Theorem~\ref{thm:unboundedDigitFormula}, if $\alpha \in X_{p}$, then the box-counting dimension of $T_{A, D} \cap (T_{A, D} + \alpha)$ is $p\dim_{B}T_{A, D}$. It follows that $\Omega_{p} := \pi_{A, \{0, \pm d\}}(X_{p})$ is a subset of the level set $\Phi_{A, D}^{-1}(p\dim T_{A, D})$. 

Given a homogeneous IFS $\mathcal{F} = \{f_{i}\}_{i=1}^{N}$ with contraction coefficient $c$, we can ascribe to it the set of sequences $\{1, 2, \ldots, N\}^{\mathbb{N}}$. Let $\mathcal{L}_{k}$ be the collection of all words of length $k$ that appear as subwords of sequences in $\{1, 2, \ldots, N\}^{\mathbb{N}}$. We can capture the exponential rate at which the cardinality of $\mathcal{L}_{k}$ increases with $k$ as the limit of $\lim\limits_{k\rightarrow\infty}\log(|\mathcal{L}_{k}|)/\log(k)$. Since $|\mathcal{L}_{k}| = N^{k}$ for each $k$, we obtain $\log(N)$. The ratio $\log(N)$ divided by $-\log(c)$ is the similarity dimension of $\mathcal{F}$, which could potentially be the Hausdorff dimension of its attractor. If we look at Theorem~\ref{thm:unboundedDigitFormula} through this lens, we see that the expression for the lower/upper box-counting dimension concerns this very same ratio where the underlying set of sequences is $\prod\limits_{j=1}^{\infty}|D\cap(D+\alpha_{j})|$. This theme is present for multiplicative invariant sets in both \cite{F67} and \cite{GMR24}. 

In our context, a word in $\mathcal{L}_{k}$ can be associated with the cylinder set of sequences which specifies the first $k$ entries of its elements to be that word. This is then associated with the $k$-tile equal to the image of said cylinder set under the map $\pi_{A, D}$. The set $\Omega_{p}$ is not necessarily generated by an IFS. So instead of using the number of maps to inform us of the size of ``$\mathcal{L}_{k}$", we copy another theme from this Chapter: count the number of $k$-tiles which intersect $\Omega_{p}$. Now, if we proceed with all the $k$'s being equal, we will not obtain anything interesting. This because the defining property of $X_{p}$ only considers the tail of sequences in $\{-d, 0, d\}^{\mathbb{N}}$. A sequence in $X_{p}$ can exhibit any possible behaviour for any finite number of indices. Therefore the number of tiles (resp. cylinders) intersecting $\Omega_{p}$ (resp. $X_{p}$) is always the maximal amount. We need to do something different. Suppose we do not use all the cylinders which specify digits up to some index $k$. Instead we limit ourselves to cylinder sets specifying $\alpha_{j}$ for $j=1, \ldots, k$ for which the proportion of $\alpha_{j}=0$ is equal to $q\in(p-\varepsilon, p+\varepsilon)$. The set $X_{p}$ is covered by the infinite collection of cylinder sets with this property because the proportion of zeros among $(\alpha_{1}, \ldots, \alpha_{k})$ tends to $p$ as $k$ tends to infinity. We proceed by approximating the number of cylinder sets of length $k$ with this property with the aim of finding the rate at which it increases as $k$ increases. 

Approximately, there are $2^{k(1-p)}{k \choose kp}$ elements of the cover which specify $k$ digits since $kp$ of the parameters are zero and then the remaining $k-kp$ parameters are $\pm d$. We can interpret this as a ratio of images of Gamma functions if $kp$ is not an integer. We are interested in the behaviour of this quantity as $k$ tends to infinity. Applying Stirling's approximation yields

\begin{align}
2^{k(1-p)}{k \choose kp} &= 2^{k(1-p)}\frac{\Gamma(k+1)}{\Gamma(kp+1)  \Gamma(k(1-p)+1)} \\
&\approx 2^{k(1-p)}\frac{\sqrt{2\pi k}(k/e)^{k}}{(\sqrt{2\pi kp}(kp/e)^{kp})(\sqrt{2\pi k(1-p)} (k(1-p)/e)^{k(1-p)})} \\
&= 2^{k(1-p)}\frac{1}{\sqrt{2\pi kp(1-p)}p^{kp}(1-p)^{k(1-p)}}. \label{eq:stirling} 
\end{align}

Our next step is to take the natural logarithm of the quantity in (\ref{eq:stirling}) and divide by $k$. Using logarithm rules, we obtain
\begin{align}
&\frac{\log(2^{k(1-p)}(\sqrt{2\pi kp(1-p)}p^{kp}(1-p)^{k(1-p)})^{-1})}{k} \\
&= \log(2^{1-p}p^{-p}(1-p)^{-(1-p)}) - \frac{\log(\sqrt{2\pi p(1-p)})}{k}. 
\end{align} 

This quantity tends to $\log(2^{1-p}p^{-p}(1-p)^{-(1-p)})$ as $k$ tends to infinity. Given the sources of our inspiration, it is suggested that the Hausdorff dimension of $\Omega_{p}$ is the division of this quantity by $\log(|\det(A)|^{1/n})$, 

\begin{conjecture}\label{conj:levelDim} 
Suppose $A\in M_{n}(\mathbb{Z})$ is invertible and that multiplication by $A^{-1}$ is a similarity on $\mathbb{R}^{n}$ with respect to Euclidean distance. For any $d\in\mathbb{Z}^{n}\setminus\{0\}$ and $p\in (0, 1)$, 
\begin{align}
&\dim_{H}[\Phi_{A, \{0, d\}}^{-1}(p\dim_{B}T_{A, \{0, d\}})] = \\ &[(1-p)(1-\log_{2}(1-p)) -p\log_{2}p]\dim_{H} T_{A, \{0, d\}}
\end{align}
\end{conjecture}

\begin{remark}
We admittedly only became aware of Li and Xiao's treatment of the problem for the middle third Cantor set in \cite{LX99} \emph{after} putting together this argument. Their approach using multifractals may show that Conjecture~\ref{conj:levelDim} holds. 
\end{remark}

Notably, this conjecture is similar to an established result in \cite{HK23} regarding middle-$(1-2\lambda)$ Cantor sets. It may suggest some kind of duality. For $\lambda\in(0, 1/2)$, let $C_{\lambda}$ denote the attractor of the IFS $\{g_{0}(x) = \lambda x, g_{1}(x) = \lambda x + (1-\lambda)\}$. For a fixed translation $t\in[-1, 1]\setminus\{0\}$, let $\Lambda = \{\lambda\in(0, 1/3]: C_{\lambda} \cap (C_{\lambda} + t) \neq \emptyset\}$. 

\begin{theorem}
The set $\{\lambda\in\Lambda(t): \dim_{H}(C_{\lambda} \cap (C_{\lambda} + t)) = \dim_{P}(C_{\lambda} \cap (C_{\lambda} + t)) = \beta\dim_{H}C_{\lambda}\}$ has Hausdorff and packing dimension
\begin{equation}
(-\beta\log(\beta) - (1-\beta)\log((1-\beta)/2))/\log(3). 
\end{equation}
\end{theorem}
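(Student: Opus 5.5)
The approach is to reduce the statement to a symbolic computation in the coding space $\{-1,0,1\}^{\mathbb{N}}$ via the $\lambda$-expansion of $t$, and then read off both dimensions from a frequency (entropy) argument.

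\textbf{Coding and the intersection formula.} For $\lambda\in(0,1/3]$ the difference set $C_{\lambda}-C_{\lambda}$ is the attractor of $\{\lambda x + (1-\lambda)e : e\in\{-1,0,1\}\}$. In the interior case $\lambda<1/3$ this system satisfies the strong separation condition, so $(\lambda,\{-1,0,1\})$-representations are unique. Write $t=(1-\lambda)\sum_{j\geq1}t_{j}\lambda^{j-1}$ with $t_{j}=t_{j}(\lambda)\in\{-1,0,1\}$; the condition $\lambda\in\Lambda(t)$ says exactly that such an expansion exists. The one-dimensional analogue of Lemma~\ref{lem:seqExpress} then applies. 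Together with Theorem~\ref{thm:unboundedDigitFormula}, now for a real base $\lambda^{-1}$, and with Theorem~\ref{thm:moranDim}, it gives
\begin{equation*}
\dim_{H}(C_{\lambda}\cap(C_{\lambda}+t))=\liminf_{k\to\infty}\frac{N_{k}(\lambda)\log 2}{-k\log\lambda},
\end{equation*}
where $N_{k}(\lambda)=|\{j\leq k: t_{j}=0\}|$. The packing dimension is the corresponding $\limsup$. Hence the set in question is exactly the set of $\lambda\in\Lambda(t)$ for which the frequency of zeros in $(t_{j}(\lambda))$ exists and equals $\beta$. Call it $\Lambda_{\beta}$.

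\textbf{Parametrising $\Lambda$ by codes.} I would study the map $\lambda\mapsto(t_{j}(\lambda))_{j\geq1}$ on $\Lambda(t)$ and prove two things.
\begin{itemize}
\item[(a)] It is injective and its inverse $\Theta$, from a closed subset $\Sigma_{t}\subset\{-1,0,1\}^{\mathbb{N}}$ onto $\Lambda(t)$, is H\"older in both directions with exponent close to $1$ on the regime $\lambda\approx1/3$.
\item[(b)] The cylinders of $\Sigma_{t}$ behave like a full shift on three symbols at scale $3^{-k}$.
\end{itemize}
Concretely, $\Theta(\omega)$ is the root in $\lambda$ of $t=(1-\lambda)\sum_{j}\omega_{j}\lambda^{j-1}$. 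The implicit function theorem and a transversality estimate on the derivative in $\lambda$ show that the $k$-cylinder $[\omega_{1}\ldots\omega_{k}]$ maps onto an interval of length comparable to $k^{O(1)}\lambda^{k}$.

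\textbf{Counting.} Using (a) and (b), the Hausdorff and packing dimensions of $\Lambda_{\beta}$ equal the dimensions of the symbolic set
\begin{equation*}
\Sigma_{t,\beta}=\{\omega\in\Sigma_{t}:\text{the frequency of }0\text{ in }\omega\text{ is }\beta\},
\end{equation*}
measured in the metric $3^{-k}$. The $\lambda$ near $1/3$ dominate: for smaller $\lambda$ the dimension is smaller, so a limiting argument as $\lambda\uparrow1/3$ works.
\begin{itemize}
\item \emph{Upper bound.} Cover $\Sigma_{t,\beta}$ by the cylinders with zero-proportion in $(\beta-\varepsilon,\beta+\varepsilon)$. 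There are at most $e^{k(H(\beta)+(1-\beta)\log 2+o(1))}$ of them, where $H$ is the binary entropy, by the Stirling computation of Section~\ref{sec:levelSetDim}.
\item \emph{Lower bound.} Construct the Bernoulli-type measure with weights $\beta,(1-\beta)/2,(1-\beta)/2$, push it forward by $\Theta$, and apply the mass distribution principle. This gives local dimension $(-\beta\log\beta-(1-\beta)\log((1-\beta)/2))/\log 3$ at almost every point.
\end{itemize}
Since packing dimension is at least Hausdorff dimension and is bounded above by the upper box-type count, both dimensions coincide with this value.

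\textbf{Main obstacle.} I expect the hardest step to be (a) and (b): the transversality and bi-H\"older control of $\Theta$, together with the fact that $\Sigma_{t}$ really is large, i.e.\ that almost every admissible digit choice is realised by some $\lambda$. This is the real-parameter analogue of the uniqueness issues treated in Chapter~\ref{chp:neighbours}. My first attempt would follow the corresponding argument in \cite{HK23}: bound $|\partial_{\lambda}\sum_{j}\omega_{j}\lambda^{j-1}|$ from below by a $(*)$-function-type estimate on power series with coefficients in $\{-2,\ldots,2\}$.
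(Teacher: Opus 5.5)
The thesis does not prove this statement. It is quoted from \cite{HK23} to motivate Conjecture~\ref{conj:levelDim}, so there is no internal proof to compare against, and I judge your outline on its own. Your reduction is correct: for $\lambda<1/3$ the code of $t$ is unique, and the Moran formula turns the set into the $\lambda\in\Lambda(t)$ whose code has zero-frequency $\beta$. You cannot cite Theorem~\ref{thm:unboundedDigitFormula} for a real base, but the SSC of $C_\lambda$ suffices there. The counting and Bernoulli-measure steps are also the right ingredients. However, claim (a), on which both steps rest, is false, and no $(*)$-function estimate can rescue it, because transversality genuinely fails at some parameters.

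\textbf{Why (a) fails.} Write $\pi_\mu(\omega)=(1-\mu)\sum_j\omega_j\mu^{j-1}$. A direct computation gives
\[
\partial_\mu\pi_\mu(\omega)=-\omega_1+\sum_{j\ge2}a_j(\mu)\,\omega_j,\qquad a_j(\mu)=\mu^{j-2}\bigl((1-\mu)(j-1)-\mu\bigr)>0,\qquad \sum_{j\ge2}a_j(\mu)=1.
\]
So a code with $\omega_1\neq0$, other than $(\pm1)^\infty$, is strictly monotone in $\mu$. After a single leading zero, however, the derivative is only bounded below by $1-4\mu$, which is negative on $(1/4,1/3)$.

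A concrete case: $\pi_\mu(0,1,\overline{-1})=\mu-2\mu^2$.
\begin{itemize}
\item[(1)] For $t=1/8$, the parameter $\lambda_0=1/4\in\Lambda(t)$ is a double root, so near it the inverse of the code map is only $\tfrac12$-H\"older.
\item[(2)] For $t=1/8-\varepsilon$, this same sequence is the code of $t$ at both $\mu=(1\pm\sqrt{8\varepsilon})/4$; it is unique there because $\mu<1/3$. Hence $\lambda\mapsto(t_j(\lambda))$ is not injective.
\item[(3)] Adding a single $-1$ far out in the tail of $(1,-1,0,0,\ldots)$ (behind the leading zero) produces such tangencies, for suitable $t$, at parameters arbitrarily close to $1/3$. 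This is exactly the regime you want to use.
\end{itemize}

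\textbf{What does hold, and suffices.}
\begin{itemize}
\item[(i)] Unconditionally: if the codes of $t$ at $\lambda,\lambda'<1/3$ first differ at position $n$, then $|\lambda-\lambda'|\ge\lambda^{n-1}(1-3\lambda)/2$. This follows from the SSC gap plus $|\partial_\mu\pi_\mu|\le2$, and it is the only estimate the mass distribution step needs.
\item[(ii)] Suppose $\partial_\mu\pi_\mu(\omega)|_{\lambda_0}\neq0$, where $\omega$ is the code of $t$ at $\lambda_0<1/3$. The implicit function theorem then gives a neighbourhood of $\lambda_0$ in which every continuation of a long prefix of $\omega$ is realised by exactly one parameter. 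In it, each $k$-cylinder has parameter diameter $\lesssim\lambda_0^{k}$, which is what the covering step needs.
\item[(iii)] Degenerate parameters in $(0,1/3)$ are isolated. Near an order-$m$ zero at $\lambda_0$ of the analytic, nonconstant function $\pi_\mu(\omega)-t$, take a degenerate $\lambda_k$ whose code agrees with $\omega$ to length $n_k$. It would need both $|\lambda_k-\lambda_0|^m\gtrsim\lambda_0^{n_k(1+\varepsilon)}$ and $|\lambda_k-\lambda_0|^{m-1}\lesssim n_k\lambda_0^{n_k(1-\varepsilon)}$, which is impossible for large $n_k$. So the degenerate parameters form a countable set.
\end{itemize}
Countable stability of $\dim_H$ and $\dim_P$ then lets you run your counting and Bernoulli arguments locally at nondegenerate $\lambda_0$. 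This gives local dimension $h(\beta)/(-\log\lambda_0)$, where $h(\beta)=-\beta\log\beta-(1-\beta)\log((1-\beta)/2)$.

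\textbf{Two remaining gaps.} First, you must still show that nondegenerate points of $\Lambda(t)$ accumulate at $1/3$. Your ``limiting argument as $\lambda\uparrow1/3$'' assumes this without proof.

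Second, the monotonicity and nonconstancy used above require $t\notin\{0,\pm1\}$. This hypothesis is genuinely needed, although the thesis's transcription removes only $t=0$. For $t=\pm1$ every intersection $C_\lambda\cap(C_\lambda\pm1)$ is a single point, so $\Lambda(\pm1)=(0,1/3]$. The $\beta=0$ level set then has dimension $1\neq\log2/\log3$, and the level sets with $\beta>0$ are empty. Your argument must state $t\notin\{0,\pm1\}$ explicitly and use it where (iii) invokes nonconstancy.
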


We remark that we could extend our conjecture to functions involving the Hausdorff dimension on the basis that the intersection is a Moran set. 

\section{Intersections of Multiple Translates}\label{sec:multiSec} 
It is possible to extend our results about a single intersection, to any finite intersection. More precisely, suppose $T:=T_{A, D}$ is the self-affine set generated by an expanding matrix $A$ and finite $D\subset\mathbb{R}^{n}$ and suppose $\vec{\alpha}$ denotes a vector $(\alpha^{(1)}, \alpha^{(2)}, \ldots, \alpha^{(m)})$ where each component is an element of $T_{A, D-D}$. Let $T(\vec{\alpha})$ denote $T \cap (T+\alpha^{(1)}) \cap \cdots \cap (T+\alpha^{(m)})$. We first show under the assumptions that the intersection is not trivial and the $(A, D-D)$-representations of the components of $\vec{\alpha}$ are unique, that $T(\vec{\alpha})$ is self-affine if and only if a sequence of sets involving the components of $\vec{\alpha}$ is SEP. 

As before, we want to establish that $T(\vec{\alpha})$ is of the form $\pi_{A, D}\left(\prod\limits_{j=1}^{\infty}D_{j}\right)$ under a condition on $\vec{\alpha}$. 

\begin{lemma}\label{lem:multiSeqExpress} 
Suppose $A\in M_{n}(\mathbb{R})$ is an expanding matrix and $D\subset\mathbb{R}^{n}$ is finite. If each component of $\vec{\alpha} = (\alpha^{(1)}, \ldots \alpha^{(m)})$ has a unique $D-D$ representation, then $T(\vec{\alpha})$ is equal to $\pi_{A, D}\left(\prod\limits_{j=1}^{\infty}I_{j}\right)$ where $I_{j} = \bigcap\limits_{i=1}^{m}(D\cap(D + \alpha_{j}^{(i)}))$ for each $j$. 
\end{lemma}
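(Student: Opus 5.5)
The plan is to reduce directly to Lemma~\ref{lem:seqExpress}, applied once for each translate, and then intersect the resulting product sets at the level of digit sequences. The one subtlety is that a point of $T(\vec{\alpha})$ may, a priori, have several $(A, D)$-representations. Different components $\alpha^{(i)}$ could then force digit constraints on different representations. Lemma~\ref{lem:Urep} removes this problem.

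For the inclusion $T(\vec{\alpha}) \subset \pi_{A, D}\left(\prod_{j=1}^{\infty}I_{j}\right)$, take $x \in T(\vec{\alpha})$. Then $x \in T_{A, D} \cap (T_{A, D} + \alpha^{(1)})$. Since $\alpha^{(1)}$ has a unique $(A, D-D)$-representation, Lemma~\ref{lem:Urep} gives $x$ a unique $(A, D)$-representation $(x_{j})_{j=1}^{\infty}$. Now fix $i \in \{1, \ldots, m\}$. Since $x \in T_{A, D} + \alpha^{(i)}$, write $x = y + \alpha^{(i)}$ with $y = \pi_{A, D}(y_{j})_{j=1}^{\infty}$. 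Then $\alpha^{(i)} = \pi_{A, D-D}(x_{j} - y_{j})_{j=1}^{\infty}$, and uniqueness of the representation of $\alpha^{(i)}$ gives $x_{j} - y_{j} = \alpha^{(i)}_{j}$ for all $j$, exactly as in the proof of Lemma~\ref{lem:seqExpress}. Hence $x_{j} \in D \cap (D + \alpha^{(i)}_{j})$ for every $i$ and $j$. Because the representation $(x_{j})$ is the same sequence for every $i$, we get $x_{j} \in I_{j}$ for all $j$, so $x \in \pi_{A, D}\left(\prod_{j} I_{j}\right)$. Uniqueness is what makes this step work. Without it, the argument for each $i$ would only produce some representation lying in $\prod_{j} D \cap (D + \alpha^{(i)}_{j})$, and the representations for different $i$ need not agree.

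For the reverse inclusion, take $(x_{j}) \in \prod_{j} I_{j}$ and set $x = \pi_{A, D}(x_{j})$, so $x \in T_{A, D}$. For each $i$, put $y_{j} = x_{j} - \alpha^{(i)}_{j}$. Since $x_{j} \in D + \alpha^{(i)}_{j}$, we have $y_{j} \in D$. By Lemma~\ref{lem:selfSimForm} and linearity of the convergent series, $\pi_{A, D}(y_{j}) = x - \alpha^{(i)}$. Therefore $x \in T_{A, D} + \alpha^{(i)}$ for each $i$, and so $x \in T(\vec{\alpha})$. If some $I_{j}$ is empty, the first inclusion already shows that $T(\vec{\alpha})$ is empty, and both sides are empty.

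The only step needing care is the first inclusion, where a single representation of $x$ must serve for all $m$ translates; Lemma~\ref{lem:Urep} supplies it. Everything else is the routine bookkeeping of Lemma~\ref{lem:seqExpress}, repeated for each $i$.
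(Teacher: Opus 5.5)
Your proof is correct and follows essentially the paper's route: the paper applies Lemma~\ref{lem:seqExpress} to each $\alpha^{(i)}$ to write $T(\vec{\alpha})=\bigcap_{i}\pi_{A, D}\bigl(\prod_{j}(D\cap(D+\alpha^{(i)}_{j}))\bigr)$, and then uses the uniqueness of $(A, D)$-representations of points in the intersection (Lemma~\ref{lem:Urep}) to move the intersection inside $\pi_{A, D}$, which is exactly the point you isolate. (A minor aside: the argument in the proof of Lemma~\ref{lem:seqExpress} already works for any fixed representation of $x$, so in your direct version you could skip Lemma~\ref{lem:Urep}, and your claim that the representations obtained for different $i$ ``need not agree'' without it overstates the difficulty.)
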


\begin{proof}
For a vector $\beta\in\mathbb{R}^{n}$, we use the notation $T(\beta) := T_{A, D} \cap (T_{A, D} + \beta)$. For each $i$, $\pi_{A, D}\left(\prod\limits_{j=1}^{\infty}I_{j}\right)$ is a subset of $\pi_{A, D}\left(\prod\limits_{j=1}^{\infty}(D\cap(D+\alpha_{j}^{(i)}))\right)$. By Lemma~\ref{lem:seqExpress}, $\pi_{A, D}\left(\prod\limits_{j=1}^{\infty}(D\cap(D+\alpha_{j}^{(i)}))\right)$ is equal to $T(\alpha^{(i)})$ for each $i$. Therefore $\pi_{A, D}\left(\prod\limits_{j=1}^{\infty}I_{j}\right)$ is subset of $T(\alpha^{(i)})$ for each $i$. In other words, the image of the product of $I_{j}$ is contained in $T(\vec{\alpha})$. Similarly $T(\vec{\alpha}) \subset T(\alpha^{(i)})$ for each $i$. Therefore $T(\vec{\alpha}) \subset \pi_{A, D}\left(\prod\limits_{j=1}^{\infty}(D\cap(D+\alpha_{j}^{(i)}))\right)$ for each $i$. The intersection of $\pi_{A, D}\left(\prod\limits_{j=1}^{\infty}(D\cap(D+\alpha_{j}^{(i)}))\right)$ is equal to $\pi_{A, D}\left(\prod\limits_{j=1}^{\infty}I_{j}\right)$ because the uniqueness of even one of the $(A, D-D)$-representations of an $\alpha^{(i)}$ is sufficient for elements of the set to have unique $(A, D)$-representations. 
\end{proof}

Logically, there are no relations between the $\alpha_{j}^{(i)}$ that need to be satisfied for Lemma~\ref{lem:multiSeqExpress} to hold. There are a relationships that are required for $T(\alpha_{1}, \ldots, \alpha_{m})$ to be nonempty in the first place. While $\alpha^{(i)} - \alpha^{(j)} \in T_{A, D-D}$ is necessary for pairwise intersections, it is insufficient for the total intersection to be non-empty when there are three or more translates (the original set $T$ can be viewed as the translation by $0$). Instead for any sequence of the parameters $i_{1}, i_{2}, \ldots, i_{m}$ we require that
\begin{equation}
\begin{split}
\alpha^{(i_{1})} &\in T-T ,\\
\alpha^{(i_{2})} &\in T(\alpha^{(i_{1})}) - T, \\
&\vdots \\
\alpha^{(i_{m})} &\in  T(\alpha^{(i_{1})}, \alpha^{(i_{2})}, \ldots, \alpha^{(i_{m-1})}) - T.\\
\end{split}
\end{equation}
where $T$ denotes $T_{A, D}$. 

The ability to express the intersection as the image of a sequence space allows us to extend the results shown for $T(\alpha)$ to $T(\vec{\alpha})$. 

\begin{theorem}\label{thm:multiAttractSEP} 
Suppose an expanding matrix $A\in M_{n}(\mathbb{R})$ and finite set $D\subset\mathbb{Z}^{n}$ are chosen $(A, D-D)$-representations are unique. Suppose further that $\vec{\alpha} = (\alpha^{(1)}, \alpha^{(2)}, \ldots, \alpha^{(m)}) \in T_{A, D-D}^{m}$ is chosen such that $T(\vec{\alpha}) := (\bigcap\limits_{i=1}^{m}(T+\alpha^{(i)}))\cap T$ is non-empty. 

The set $T(\vec{\alpha})$ is the attractor of an IFS of the form $\{A^{-p}x + v_{i}\}_{i=1}^{N}$, where $v_{i}$ is an element of $\mathbb{R}^{n}$ for $i=1, 2,\ldots, N$ and $p$ is a positive integer, if and only if the sequence $(I_{j}- \beta_{j})_{j=1}^{\infty}$ is SEP for some sequence of $(\beta_{j})_{j=1}^{\infty} \in D^{\mathbb{N}}$ where $I_{j} = \bigcap\limits_{i=1}^{m}(D\cap(D + \alpha_{j}^{(m)}))$. 
\end{theorem}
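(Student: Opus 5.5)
The plan is to mirror the proof of Theorem~\ref{thm:attractSEP}, with Lemma~\ref{lem:multiSeqExpress} playing the role of Lemma~\ref{lem:seqExpress}. Since all $(A, D-D)$-representations are unique, each component $\alpha^{(i)}$ of $\vec{\alpha}$ has a unique representation $(\alpha^{(i)}_{j})_{j=1}^{\infty}$. Lemma~\ref{lem:multiSeqExpress} then gives
\begin{equation*}
T(\vec{\alpha}) = \pi_{A, D}\left(\prod_{j=1}^{\infty} I_{j}\right), \qquad I_{j} = \bigcap_{i=1}^{m}\left(D\cap(D+\alpha^{(i)}_{j})\right).
\end{equation*}
Every $I_{j}$ is a subset of $D$. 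Each $I_{j}$ is also nonempty: since $T(\vec{\alpha})\neq\emptyset$, some point of it has an $(A,D)$-representation, and that representation must lie in $\prod_j I_j$. This is the same kind of sequence of digit sets that Lemmas~\ref{lem:sepGen} and~\ref{lem:selfSimGen} handle.

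For the direction ``SEP implies self-affine'', suppose $(I_{j}-\beta_{j})_{j=1}^{\infty} = (U_{\ell})_{\ell=1}^{p}\overline{(U_{\ell}+V_{\ell})_{\ell=1}^{p}}$ for some $(\beta_{j})\in D^{\mathbb{N}}$. Apply Lemma~\ref{lem:sepGen} with $D_{j} = I_{j}$. It says $\pi_{A,D}(\prod_j I_j)$ is the attractor of the maps
\begin{equation*}
x\mapsto A^{-p}\Big(x-\beta+\sum_{\ell=1}^{p}(A^{p-\ell}u_{\ell}+A^{-\ell}v_{\ell})\Big)+\beta,
\end{equation*}
where $\beta=\pi_{A,D}(\beta_{j})_{j=1}^{\infty}$. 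These maps have the form $A^{-p}x + v_{i}$, which is what the statement requires.

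For the converse, suppose $T(\vec{\alpha})$ is the attractor of an IFS $\{A^{-p}x+v_{i}\}_{i=1}^{N}$. Then $\pi_{A,D}(\prod_j I_j)$ is the attractor of such an IFS with $I_{j}\subset D$, so Lemma~\ref{lem:selfSimGen} applies directly and yields $(\beta_{j})\in D^{\mathbb{N}}$ with $(I_{j}-\beta_{j})_{j=1}^{\infty}$ SEP. That lemma assumes global uniqueness of $(A,D-D)$-representations, which we have. Inside its proof, Lemma~\ref{lem:sepPP} needs a uniform cardinality bound, and this holds because $I_{j}-\beta_{j}\subset D-D$, a finite set.

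The only point needing care is the nonemptiness of the $I_{j}$. The SEP definition and Lemma~\ref{lem:selfSimGen} treat sequences of nonempty sets, and the coding-space description would degenerate if some $I_{j}$ were empty. The hypothesis $T(\vec{\alpha})\neq\emptyset$ combined with Lemma~\ref{lem:multiSeqExpress} rules this out, as noted above. I expect this to be the only step beyond quoting the single-translate lemmas, so the proof should be short.
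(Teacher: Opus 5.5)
Your proof is correct and follows the paper's argument: the paper likewise uses Lemma~\ref{lem:multiSeqExpress} to write $T(\vec{\alpha})$ as $\pi_{A,D}\bigl(\prod_{j} I_{j}\bigr)$. It then applies Lemma~\ref{lem:sepGen} for the SEP-implies-self-affine direction and Lemma~\ref{lem:selfSimGen} for the converse. Your explicit check that each $I_{j}$ is nonempty (from $T(\vec{\alpha})\neq\emptyset$) and that the sets $I_{j}-\beta_{j}$ are uniformly bounded in cardinality are details the paper leaves implicit.
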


\begin{proof}
By Lemma~\ref{lem:multiSeqExpress}, the intersection $T(\vec{\alpha})$ has the form $\pi_{A, D}\left(\prod\limits_{j=1}^{\infty}D_{j}\right)$. The forward direction is a consequence of Lemma~\ref{lem:sepGen}. The other direction holds by Lemma~\ref{lem:selfSimGen}. 
\end{proof}

We now restrict ourselves, as we have for the majority of this chapter, to when $A\in M_{n}(\mathbb{Z})$ is invertible and multiplication by its inverse is a similarity with respect to Euclidean distance. For such a matrix $A$, $D$ contained in a complete residue system mod $A$, and positive integer $m$, define $F_{A, D}^{(m)}$ to be the vectors $\vec{\alpha} = (\alpha^{(1)}, \ldots, \alpha^{(m)})$ such that $T(\vec{\alpha})$ is nonempty. When $m=1$, the set $F_{A, D}^{(1)}$ is $T_{A, D-D}$. For a positive integer $m$, we define the function 
\begin{equation}
 \begin{split}
    &\Theta_{A, D}^{(m)} : \{\vec{\alpha} \in F_{A, D}^{(m)} :\dim_{B}T(\vec{\alpha})\;\;\text{exists}\} \rightarrow [0, \dim_{B}T_{A, D}], \\
    &\Theta_{A, D}^{(m)}(\alpha) = \dim_{B}T(\vec{\alpha}).
  \end{split}
\end{equation}

\begin{theorem}\label{thm:multiDense} 
Suppose $A\in M_{n}(\mathbb{Z})$ is invertible and multiplication by its inverse is a similarity with respect to Euclidean distance. Suppose $D$ is a subset of a complete residue system mod $A$ such that all $(A, D-D)$ expansions are unique. The level sets of $\Theta_{A, D}^{(m)}$ are dense in $F_{A, D}^{(m)}$. 
\end{theorem}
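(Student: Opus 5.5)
The plan is to mimic the proof of Corollary~\ref{cor:boxDense}, perturbing only the tails of the representations of the components of $\vec{\alpha}$ so that the sets $I_{j}$ of Lemma~\ref{lem:multiSeqExpress} have cardinality either $1$ or $|D|$ in a prescribed pattern. Fix $\vec{\alpha}\in F_{A, D}^{(m)}$ with unique $(A, D-D)$-representations $(\alpha_{j}^{(i)})_{j=1}^{\infty}$ for $i=1, \ldots, m$, and fix $r>0$ and $\lambda\in[0,1]$. As in Corollary~\ref{cor:boxDense}, choose $m_{0}$ so large that changing the digits of every component beyond index $m_{0}$ moves each $\alpha^{(i)}$ by less than $r/\sqrt{m}$. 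For $0<\lambda<1$, take the same integers $h_{j}$ with $h_{j}\le j\lambda<h_{j}+1$. Let $v$ be an element of $D-D$ of maximal norm, and define $\beta^{(i)}$ by keeping $\alpha^{(i)}_{1}, \ldots, \alpha^{(i)}_{m_{0}}$. For $j>m_{0}$, put $\beta^{(i)}_{j}=0$ for all $i$ when $h_{j}=h_{j-1}+1$, and $\beta^{(i)}_{j}=v$ for all $i$ when $h_{j}=h_{j-1}$. The boundary cases are handled as before: use all $v$ for $\lambda=0$ and all $0$ for $\lambda=1$.

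The first step is to check that $\vec{\beta}=(\beta^{(1)}, \ldots, \beta^{(m)})$ actually lies in $F_{A, D}^{(m)}$. This is the main obstacle, since for $m\ge 2$ nonemptiness is not automatic. Choosing the same tail digit for every component is exactly what guarantees it. By Lemma~\ref{lem:multiSeqExpress}, applied to $\vec{\beta}$ (whose components have unique representations because all $(A,D-D)$-representations are unique), we have $T(\vec{\beta})=\pi_{A, D}(\prod_{j} I_{j})$ with $I_{j}=\bigcap_{i}(D\cap(D+\beta^{(i)}_{j}))$. For $j\le m_{0}$, the set $I_{j}$ coincides with the one for $\vec{\alpha}$. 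It is nonempty because $T(\vec{\alpha})\neq\emptyset$ and, by Lemma~\ref{lem:multiSeqExpress} applied to $\vec{\alpha}$, $T(\vec{\alpha})=\pi_{A, D}(\prod_{j} I_{j}(\vec{\alpha}))$, which forces every $I_{j}(\vec{\alpha})$ to be nonempty. For $j>m_{0}$, either $I_{j}=D$ or $I_{j}=D\cap(D+v)$. The latter is nonempty since $v\in D-D$, and by the parallelogram argument of Corollary~\ref{cor:boxDense} it has exactly one element. Hence every $I_{j}$ is nonempty, so the product is nonempty and $\vec{\beta}\in F_{A, D}^{(m)}$. Also $\|\beta^{(i)}-\alpha^{(i)}\|<r/\sqrt{m}$ for each $i$, so $\vec{\beta}$ is within $r$ of $\vec{\alpha}$.

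Next, compute the box dimension. Theorem~\ref{thm:unboundedDigitFormula} applies with $D_{j}=I_{j}\subset D$, giving both $\underline{\dim}_{B}$ and $\overline{\dim}_{B}$ of $T(\vec{\beta})$ as the $\liminf$ and $\limsup$ of $\log(\prod_{j\le k}|I_{j}|)/(k\log|\det(A)|^{1/n})$. The first $m_{0}$ factors are bounded by $|D|^{m_{0}}$, so they contribute nothing in the limit. For $j>m_{0}$ we have $|I_{j}|=|D|^{h_{j}-h_{j-1}}$, and the product telescopes to $|D|^{h_{k}-h_{m_{0}}}$. Since $h_{k}/k\to\lambda$, the limit exists and equals $\lambda\log|D|/\log|\det(A)|^{1/n}$. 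So $\vec{\beta}$ lies in the domain of $\Theta_{A, D}^{(m)}$.

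Finally, as in Corollary~\ref{cor:boxDense}, uniqueness of $(A, D-D)$-representations gives uniqueness of $(A, D)$-representations. Therefore the defining IFS of $T_{A, D}$ satisfies the SSC, and Theorem~\ref{thm:SepSimDim} identifies $\log|D|/\log|\det(A)|^{1/n}$ with $\dim_{B}T_{A, D}$. Hence $\Theta^{(m)}_{A, D}(\vec{\beta})=\lambda\dim_{B}T_{A, D}$. Since $\vec{\alpha}$, $r$ and $\lambda$ were arbitrary, every level set is dense in $F_{A, D}^{(m)}$.
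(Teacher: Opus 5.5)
Your proposal is correct and takes the same route as the paper. You truncate each component's representation at a common index and give every component the same tail, namely $0$ or a maximal-norm $v\in D-D$ according to $h_j=\lfloor j\lambda\rfloor$. You then apply Lemma~\ref{lem:multiSeqExpress} and Theorem~\ref{thm:unboundedDigitFormula} exactly as in Corollary~\ref{cor:boxDense}. Your explicit check that $T(\vec{\beta})\neq\emptyset$, so that $\vec{\beta}\in F_{A,D}^{(m)}$, supplies a detail the paper's proof leaves implicit.
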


\begin{proof}
First suppose that $\lambda$ is an element of $(0, 1)$ and that $\varepsilon$ is a real number greater than zero. Suppose that $\vec{\alpha}$ is an element of $F_{A, D}^{(m)}$. There exists a positive integer $p$, such that $\norm{\sum\limits_{j=p+1}^{\infty}A^{-j}u_{j}} < \varepsilon/\sqrt{m}$ for any vectors $u_{j}$ in the difference of differences sets $(D-D)-(D-D)$. Therefore any $\vec{\beta}$ with the property its $i$th component has the $(A, D-D)$-representation of the form $(\alpha_{1}^{(i)}, \ldots \alpha_{p}^{(i)}, \beta_{p+1}^{(i)}, \beta_{p+2}^{(i)}, \ldots)$ for each $i$, is within $\varepsilon$ distance of $\vec{\alpha}$. Choose $d$ to be an element of maximal norm in $D-D$ and choose $(h_{j})_{j=1}^{\infty}$ to be a sequence of integers for which $h_{j} \leq j\lambda < h_{j} + 1$ . For each $i = 1, 2, \ldots, m$, choose $\beta_{j}^{(i)} = v$ if $h_{j} = h_{j-1}$ and $0$ otherwise. When $\lambda = 0$ or $1$ choose $\beta_{j} = v$ for all $j$ or $\beta_{j} = 0$ for all $j$ respectively. 

By Lemma~\ref{lem:multiSeqExpress}, the intersection $T(\vec{\beta})$ is of the form $\pi_{A, D}\left(\prod\limits_{j=1}^{\infty}D_{j}\right)$. We can apply Theorem~\ref{thm:unboundedDigitFormula} to directly compute the box-counting dimension. The remaining steps are the same as in the proof of Corollary~\ref{cor:boxDense} where we treated a single intersection. We omit the details. 
\end{proof}

\begin{remark} 
Let the notation $(x_{1}, \ldots, x_{t}) * (y_{1}, \ldots, y_{s})$ denote the $(t+s)$-length tuple $(x_{1}, \ldots, x_{t}, y_{1}, \ldots, y_{s})$. For any fixed $\vec{\beta} \in F_{A, D}^{(k)}$, we can define $F_{A, D}^{(m, \vec{\beta})}$ to be the set of $\vec{\gamma}\in F_{A, D}^{(m)}$ for which $T(\vec{\beta} * \vec{\gamma})$ is nonempty whenever $m > k$. Then, let $\Theta_{A, D}^{(m, \vec{\beta})}:  \{\vec{\gamma} \in F_{B, D-D}^{(m-k)} :\dim_{B}T((\vec{\beta} * \vec{\gamma})\;\;\text{exists}\} \rightarrow [0, \dim_{B}T(\vec{\beta})]$ be the slice of $\Theta_{A, D}^{(m)}$ given by $\Theta_{B, D}^{(m, \vec{\beta})}(\vec{\gamma}) = \dim_{B}T(\vec{\beta} * \vec{\gamma})$. The argument used in the proof of Theorem~\ref{thm:multiDense} can be used to show that the level sets of the $m$th slice are dense in $F_{A, D}^{(m-k)}$. 
\end{remark}

The claim still holds if $\Theta_{A, D}$ is swapped for the function (and its slices) that maps $\vec{\alpha}$ to the Hausdorff dimension of $T(\vec{\alpha})$ when $T_{A, D}$ is a self-affine tile. Since Lemma~\ref{lem:multiSeqExpress} implies that $T(\vec{\alpha})$ has the form $\pi_{A, D}\left(\prod\limits_{j=1}^{\infty}D_{j}\right)$ for $D_{j}\in D$ for each $j$, it is a Moran set. The conclusion now follows from the argument used in Theorem~\ref{thm:extDense} for the case of a single intersection. 

We end this section by observing that a multiple intersection can be captured by a single intersection in special cases. The middle third Cantor set is equal to $T_{3, \{0, 2\}}$ in our notation. Let $\alpha$ and $\beta$ be elements of $T_{B, D-D}$. If $(\alpha_{j})_{j=1}^{\infty}$ is the unique representation of $\alpha$, we can express $T(\alpha)$ as the set $\sum\limits_{j=1}^{\infty}x_{j}3^{-j}$ where $x_{j} = 2$ if $\alpha_{j} = 2$, $0$ if $\alpha_{j} = -2$, and either $0$ or $2$ when $\alpha_{j} = 0$. Suppose that both $\alpha$ and $\beta$ have unique expansions. In order for $T(\alpha, \beta)$ to be nonempty, it must be that for each $j$ either $\alpha_{j} = \beta_{j}$ or one of $\alpha_{j}$ and $\beta_{j}$ is $0$. 

Under the assumption of nonemptiness, $D \cap (D+\alpha_{j}) \cap (D+\beta_{j}) = D \cap (D + \gamma_{j})$ where $\gamma_{j} = \alpha_{j}$ if $\alpha_{j} \neq 0$ and $\alpha_{j} = \beta_{j}$ otherwise. Therefore $T(\alpha) \cap T(\beta) = \pi_{A, D-D}\left(\prod\limits_{j=1}^{\infty}(D\cap(D+\gamma_{j}))\right)$. If $(\gamma_{j})_{j=1}^{\infty}$ is unique then $T(\alpha) \cap T(\beta) = T(\gamma)$ where $\gamma = \pi_{A, D-D}(\gamma_{j})_{j=1}^{\infty}$. The number $\gamma$ may not have a unique representation. For example, choose $\alpha = \pi_{3, \{0, 1, 2\}}(0,\overline{0,2})$ and $\beta = \pi_{3, \{0, 1, 2\}}(0,\overline{2,0})$ in base three. The approach described above yields the sequence $\gamma = \pi_{3, \{0, 1, 2\}}(0,\overline{2})$. However, we also have $\gamma = \pi_{3, \{0, 1, 2\}}(2,\overline{-2})$. It follows that $T(\alpha)\cap T(\beta) = \{\pi_{3, \{0, 1, 2\}}(\overline{2}), \pi_{3, \{0, 1, 2\}}(0,\overline{2}\}) = \{1, 1/3\}$, yet $T(\gamma)$ also contains the element $\pi_{3, \{0, 1, 2\}}(2,\overline{0}) = 2/3$. 

The collapse of a double intersection into a single intersection cannot always be done. The reason it was possible to reduce the intersection of $T_{3, \{0, 2\}}$ with two of its translates to an intersection with only one of its translates is a consequence of $\{0, 2\}$ containing only two elements. This property is independent of the base and is not true in general for sets of integers containing more than two elements. Consider $D = \{0, 3, 6, 9\}$. Observe that $D \cap (D - 3) \cap (D + 6) = \{6\}$. There is no element $m\in\{0, \pm3, \pm6, \pm9\}$ for which $D \cap (D + m) = \{6\}$.

\section{When Translations have Multiple Representations}\label{sec:nonUniqueReps} 

Given an expanding matrix $A$ and $D\subset \mathbb{R}^{n}$, it is possible for $\alpha \in T_{A, D-D}$ to have multiple representations. Given such an $\alpha$, the claim $T \cap (T+\alpha) = \pi\left(\prod\limits_{j=1}^{\infty}D\cap(D+\alpha_{j})\right)$ may not hold. What we can deduce is that $T \cap (T + \alpha)$ is a union of sets of the form $\pi_{A, D}\left(\prod\limits_{j=1}^{\infty}D_{j}\right)$ where $D_{j}\subset D$ for each $j$. 

\begin{lemma}\label{lem:unionExpress} 
Suppose $A\in M_{n}(\mathbb{R})$ is an expanding matrix and $D\subset\mathbb{R}^{n}$ is finite. If $\alpha\in T_{A, D-D}$, then 
\begin{equation}
T_{A, D} + (T_{A, D} + \alpha) = \bigcup\limits_{\pi_{A, D-D}^{-1}(\alpha)}\pi_{A, D}\left(\prod\limits_{j=1}^{\infty}(D\cap(D+\alpha_{j}))\right). 
\end{equation}
\end{lemma}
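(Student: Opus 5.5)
Although the statement writes $T_{A, D} + (T_{A, D} + \alpha)$, the surrounding discussion makes clear that the intended left-hand side is the intersection $T_{A, D} \cap (T_{A, D} + \alpha)$. I prove that version. The union runs over all $(A, D-D)$-representations $(\alpha_{j})_{j=1}^{\infty} \in \pi_{A, D-D}^{-1}(\alpha)$. The plan is a double inclusion that mirrors the proof of Lemma~\ref{lem:seqExpress}, with the uniqueness step removed. Throughout I use Lemma~\ref{lem:selfSimForm}, which identifies $\pi_{A, D}(x_{j})_{j=1}^{\infty}$ with $\sum_{j\geq1}A^{-j}x_{j}$, and likewise for $\pi_{A, D-D}$.

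For the inclusion ``$\subset$'', take $x \in T_{A, D} \cap (T_{A, D} + \alpha)$. Write $x = \pi_{A, D}(x_{j})_{j=1}^{\infty}$ with $x_{j} \in D$. Write $x = y + \alpha$ with $y = \pi_{A, D}(y_{j})_{j=1}^{\infty}$ and $y_{j}\in D$. Linearity of the series gives
\[
\alpha = \sum_{j\geq1}A^{-j}(x_{j}-y_{j}) = \pi_{A, D-D}(x_{j}-y_{j})_{j=1}^{\infty}.
\]
Hence the sequence $\alpha_{j} := x_{j} - y_{j} \in D-D$ is one particular representation lying in $\pi_{A, D-D}^{-1}(\alpha)$. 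For this representation we have $x_{j} = y_{j} + \alpha_{j} \in D \cap (D+\alpha_{j})$ for every $j$. So $x$ lies in the member of the union indexed by $(\alpha_{j})_{j=1}^{\infty}$. The key difference from Lemma~\ref{lem:seqExpress} is that we never compare against a fixed representation: we simply choose the representation that $x$ itself produces.

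For the inclusion ``$\supset$'', fix a representation $(\alpha_{j})_{j=1}^{\infty}$ of $\alpha$. Take $x = \pi_{A, D}(x_{j})_{j=1}^{\infty}$ with $x_{j} \in D\cap(D+\alpha_{j})$ for each $j$. Since $x_{j}\in D$, we get $x \in T_{A, D}$. Next set $y_{j} := x_{j} - \alpha_{j}$. Because $x_{j} \in D+\alpha_{j}$, each $y_{j}$ lies in $D$. Let $y = \pi_{A, D}(y_{j})_{j=1}^{\infty} \in T_{A, D}$. Convergence of the three series lets us split sums termwise, so
\[
x = \sum_{j\geq1} A^{-j}y_{j} + \sum_{j\geq1}A^{-j}\alpha_{j} = y + \alpha \in T_{A,D}+\alpha.
\]
This gives the reverse inclusion.

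Nothing here is a genuine obstacle. The only points needing care are, first, justifying termwise addition of the convergent series, which follows from Lemma~\ref{lem:selfSimForm} expressing each as a limit of partial sums. Second, one must make sure the index set of the union is exactly $\pi_{A, D-D}^{-1}(\alpha)$, so that every representation, and only representations, of $\alpha$ appear. The fact that $\alpha \in T_{A, D-D}$ guarantees this preimage is nonempty, so the union is well defined; both sides are empty or not together regardless.
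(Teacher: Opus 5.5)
Your proof is correct and follows the paper's argument essentially step for step. For $\subset$, you take the representation $(x_j - y_j)_{j=1}^{\infty}$ produced by $x$ itself; for $\supset$, you set $y_j := x_j - \alpha_j$ and split the series termwise. You are also right to read the left-hand side as the intersection $T_{A,D}\cap(T_{A,D}+\alpha)$, which is how the paper's own proof treats it despite the typo.
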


\begin{proof}
Let $x \in T(\alpha) := T_{A, D} + (T_{A, D} + \alpha)$. By definition, $x = y + \alpha$ where $x = \pi_{A, D}(x_{j})_{j=1}^{\infty}$ and $y=\pi_{A, D}(y_{j})_{j=1}^{\infty}$ for some choice of $x_{j}, y_{j} \in D$ for each $j$. It follows that $\alpha = \pi_{A, D-D}(x_{j}-y_{j})_{j=1}^{\infty}$ is a valid $(A, D-D)$-representation of $\alpha$. Therefore, for each $j$, $x_{j}$ is an element of $D\cap(D+(x_{j} - y_{j})$. Therefore $x \in \pi\left(\prod\limits_{j=1}^{\infty}D\cap(D+\alpha_{j})\right)$ for some representation $(\alpha_{j})_{j=1}^{\infty}$ of $\alpha$. Conversely, suppose $x = \pi(x_{j})$ is such that $x_{j} \in D\cap(D+\alpha_{j})$ for some choice of representation of $\alpha$. Let $y_{j}$ denote $x_{j} - \alpha_{j}$ for each $j$. By assumption, this defines a sequence of digits in $D$.  Since $x = \pi_{A, D}(x_{j}) = \pi_{A. D}(y_{j} + \alpha_{j}) = \pi_{A, D}(y_{j}) + \pi_{A, D}(\alpha_{j}) = y + \alpha$, we obtain $x\in T(\alpha)$. 
\end{proof}

If the union is finite, then it follows that the Hausdorff dimension of $T_{A, D}\cap (T_{A, D} + \alpha)$ is equal to the maximum of the Hausdorff dimension of the components. We are not guaranteed that the union is countable, let alone finite. The following example is related to decimal expansions. 

\begin{example}\label{ex:uncountAlphaRe} 
The $(10, \{0, 1, 2, 5\})$-representation of $0.15$ is $(1, 5, \overline{0})$. The set of $(10, \{0, 1, 2, \pm 5)$-representations of $0.15$ include both the one just stated and $(2, -5, \overline{0})$. Suppose $\alpha$ has $(10, \{0, 1, 2, \pm 5)$-representation $(1, 5, \overline{1, 5})$. It follows that every representation of the form $(W_{j})_{j=1}^{\infty}$ where $W_{j}$ is either $(1, 5)$ or $(2, -5)$ for each $j$, is an equivalent representation of $\alpha$. 
\end{example}

Consequently, a version of Corollary~\ref{cor:boxDense} when $(A, D-D)$-representations are not unique is far less descriptive. 

\begin{theorem}\label{thm:fulldimDense} 
Suppose $A\in M_{n}(\mathbb{Z})$ is invertible and multiplication by its inverse is a similarity with respect to Euclidean distance. Suppose $D$ is a subset of a complete residue system mod $A$. The set of $\alpha \in T_{A, D-D}$ such that the box-counting dimension of $T_{A, D}\cap (T_{A, D} + \alpha)$ exists and is equal to $\dim_{B}T_{A, D}$ is dense in $T_{A, D-D}$. 
\end{theorem}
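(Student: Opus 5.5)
Given $\alpha\in T_{A,D-D}$ and $r>0$, fix any $(A,D-D)$-representation $(\alpha_j)_{j=1}^\infty$ of $\alpha$. As in the proof of Corollary~\ref{cor:boxDense}, choose $m$ so that every vector with an $(A,D-D)$-representation beginning $\alpha_1,\ldots,\alpha_m$ lies within $r$ of $\alpha$. This works because the tails $\sum_{j>m}A^{-j}e_j$ with $e_j\in D-D$ are bounded uniformly by $|\det(A)|^{-m/n}$ times a constant, since $A^{-1}$ is a similarity. Define $\beta:=\pi_{A,D-D}(\alpha_1,\ldots,\alpha_m,0,0,\ldots)$. The claim is that $\dim_B T(\beta)$ exists and equals $\dim_B T_{A,D}$. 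Since $\beta$ need not have a unique representation, the argument cannot use Lemma~\ref{lem:seqExpress}. Instead it bounds the upper and lower box-counting dimensions of $T(\beta)$ from both sides.

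\emph{Lower bound.} By Lemma~\ref{lem:unionExpress}, $T(\beta)$ contains $X:=\pi_{A,D}\left(\prod_{j=1}^\infty D_j\right)$ with $D_j=D\cap(D+\alpha_j)$ for $j\le m$ and $D_j=D$ for $j>m$. Each $D_j$ is nonempty, because the intersection for the chosen representation of $\alpha$ is nonempty. Theorem~\ref{thm:unboundedDigitFormula} applies since $D$ lies in a complete residue system. With $G_k=\prod_{j\le m}|D_j|\cdot|D|^{k-m}$, it gives
$$\underline{\dim}_B X=\lim_{k\to\infty}\frac{\log G_k}{k\log|\det(A)|^{1/n}}=\frac{\log|D|}{\log|\det(A)|^{1/n}}.$$
Box dimensions are monotone, so $\underline{\dim}_B T(\beta)\ge \log|D|/\log|\det(A)|^{1/n}$.

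\emph{Upper bound.} Here $T(\beta)\subset T_{A,D}$. Applying Theorem~\ref{thm:unboundedDigitFormula} with $D_j=D$ for all $j$ gives $\overline{\dim}_B T_{A,D}=\log|D|/\log|\det(A)|^{1/n}$. The box dimension of $T_{A,D}$ therefore exists and equals this value. This identification is needed because no separation condition is assumed here, so Theorem~\ref{thm:SepSimDim} is unavailable; the digit-counting formula supplies it instead. Combining the two bounds, $\dim_B T(\beta)$ exists and equals $\dim_B T_{A,D}$, and $\beta$ lies within $r$ of $\alpha$, which proves density.

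The main obstacle is that representations are not unique, so $T(\beta)$ could in principle be a large union, as in Example~\ref{ex:uncountAlphaRe}. The sandwich argument avoids describing that union: the upper bound comes from the ambient set $T_{A,D}$, and the lower bound comes from the single product set attached to one chosen representation. The only care needed is to check that the lower-bound product set is nonempty and actually lies in $T(\beta)$, and Lemma~\ref{lem:unionExpress} guarantees this for any representation of $\beta$.
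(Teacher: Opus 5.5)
Your proof is correct and follows the same route as the paper. Both truncate a representation of $\alpha$ to $(\alpha_1,\ldots,\alpha_m,0,0,\ldots)$, use Lemma~\ref{lem:unionExpress} to place the corresponding product set inside $T(\beta)$, and sandwich $\dim_B T(\beta)$ between the dimension of that product set and that of $T_{A,D}$. The only difference is how the common value is identified. You get both bounds as $\log|D|/\log(|\det(A)|^{1/n})$ from Theorem~\ref{thm:unboundedDigitFormula}, while the paper uses the self-similarity of $T_{A,D}$ to obtain existence of $\dim_B T_{A,D}$ and asserts the equality for the truncated product set directly, so your version is slightly more self-contained.
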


\begin{proof} 
Let $\varepsilon >0$ and let $(\alpha_{j})_{j=1}^{\infty}$ denote one of the $(A, D-D)$-representations of $\alpha$. There exists $m$ such that $\norm{\sum\limits_{j=m+1}^{\infty}A^{-j}\alpha_{j}} < \varepsilon$. Let $(\beta_{j})_{j=1}^{\infty}$ be a sequence of elements in $D-D$ such that $\beta_{j} = \alpha_{j}$ for $j < m$. If $\beta_{j} = 0$ for $j \geq m$, then $\dim_{B}\pi_{A, D}\left(\prod\limits_{j=1}^{\infty}(D\cap(D+\beta_{j}))\right) = \dim_{B}T_{A, D}$. The box-counting dimension of $T_{A, D}$ exists without the need for any kind of separation condition on $D$ because it is self-similar. 

By Lemma~\ref{lem:unionExpress}, the image $\pi_{A, D}\left(\prod\limits_{j=1}^{\infty}(D\cap(D+\beta_{j}))\right)$  is a subset of $T(\beta) := T_{A, D} \cap (T_{A, D} + \beta)$ where $\beta$ has $(A, D-D)$-representation $(\beta_{j})_{j=1}^{\infty}$. By the monotonicity of the box-counting dimension, $\dim_{B}T(\beta)$ is bounded below by $\dim_{B}\pi_{A, D}\left(\prod\limits_{j=1}^{\infty}(D\cap(D+\beta_{j}))\right) = \dim_{B}T_{A, D}$. On the other hand, $T(\beta)$ is a subset of $T_{A, D}$ and thus $\dim_{B}T_{A, D}$ is the largest $\dim_{B}T(\beta)$ can be. 
\end{proof}

\begin{remark}The assumption that there exists at least one $\alpha$ with a unique $(A, D-D)$-representations can places a restriction on the choices $D\subset\mathcal{D}$. Allowing for the non-uniqueness of $(A, D-D)$ representations of $\alpha$ includes cases where no elements of $T_{A, D-D}$ have unique $(A, D)$ representations. This includes cases where $T_{A, D}$ has non-empty interior. The Hausdorff dimension of $T_{A, D} \cap (T_{A, D} + \alpha)$ is $n$ unless $T + \alpha$ and $T$ only intersect at their boundaries. The claim of Theorem~\ref{thm:fulldimDense} is then immediate because the entire interior of $T_{A, D-D}$ is mapped to $n$ under $\Phi_{A, D}$. Some restriction on $D$ is required to allow for more precise claims about the behaviour of $\Phi_{A, D}$. 
\end{remark}

Example~\ref{ex:uncountAlphaRe} may inject some pessimism in the goal of understanding the structure of intersections when $\alpha$ does not have a unique $(A, D-D)$-representation. The following example demonstrates that the family of lower box-counting dimensions of $\pi_{A, D}\left(\prod\limits_{j=1}^{\infty}(D\cap(D+\alpha_{j}))\right)$, where $(\alpha_{j})_{j=1}^{\infty}$ is some $(A, D-D)$-representation of $\alpha$, can be understood. 

\begin{example}\label{ex:uncountAlphaIm} 
The phenomenon demonstrated in Example~\ref{ex:uncountAlphaRe} is present for other pairs $(A, D)$. Observe that the Gaussian integer $-3+i$ is the root of the polynomial $x^{2} + 6x + 10$. Therefore $2(-3+i)^{-3}$ has the $(-3+i, \{0, \pm 1, \ldots, \pm n^{2})$-representation $(-1, -6, -8, \overline{0})$. The multiplication of a complex number $z = x + iy$ by $-3+i$ corresponds to the multiplication of $[x, y]^{t}\in\mathbb{R}^{2}$ by the matrix $A = \begin{bmatrix} -3 & -1 \\ 1 & -3\end{bmatrix}$. Suppose $D = \{0, e_{1}, 6e_{1}, 8e_{1}\}$ where $e_{1}$ is standard basis vector $[1, 0]^{t}$ and $\alpha$ has $(A, D-D)$-representation $(\overline{0, 0, 2e_{1}})$. By Lemma~\ref{lem:unionExpress}, $T(\alpha) = T_{A, D} \cap (T_{A, D} + \alpha)$ contains the union of $\pi_{A, D}\left(\prod\limits_{j=1}^{\infty}(D \cap (D+\alpha_{j}))\right)$ over the sequences $(\alpha_{j})$ of the form $(W_{j})_{j=1}^{\infty}$ where $W_{j}$ is either $(0, 0, 2e_{1})$ or $(-e_{1}, -6e_{1}, -8e_{1})$. 

Observe that the segment $(0, 0, 2e_{1})$ yields $(D \cap (D+0), D \cap (D+0), D \cap (D+2e_{1})) = (D, D, \{8\})$. The block $(-e_{1}, -6e_{1}, -e_{1})$ yields the string of singletons $(\{0\}, \{0\}, \{0\})$. We omit the details, but it follows from Theorem~\ref{thm:rules} that the only equivalent $(A, D)$-representations are those contained in the set of representations whose tails are $(\overline{1, 6})$. It follows that the sets $\pi_{A, D}\left(\prod\limits_{j=1}^{\infty}(D \cap (D+\alpha_{j}))\right)$ are disjoint. In the extreme case when $W_{j} = (0, 0, 2)$ for all $j$, then by Theorem~\ref{thm:unboundedDigitFormula}, the dimension of the corresponding subset of $T(\alpha)$ is $(2/3)\log(4)/\log(\sqrt{10})$. On the other hand, if $W_{j} = (-e_{1}, -6e_{1}, -8e_{1})$, then the image only contains the origin and the box-counting dimension is consequently zero. All other choices of $(W_{j})$ yield lower box-counting dimensions that are between these two values. Roughly, some fixed proportion of occurrences of $(-e_{1}, -6e_{1}, -8e_{1})$ in the tail of the representation of $\alpha$ decreases the dimension of the corresponding component of the union. The dimension of the component rises toward the maximal value as that proportion decreases. 
\end{example}

We know that the relationship between equivalent decimal expansions is highly restrictive. The ways in which two equivalent representations can differ may be more varied for one choice of $(A, D)$ over another, but as we will see in Chapter~\ref{chp:neighbours}, equivalent $(A, D)$-representatons have a relationship between them (Theorem~\ref{thm:neighSeqEquiv}) common for all choices $(A, D)$. Leveraging that information may give way to interesting theorems when $(A, D-D)$-representations are not unique.

\chapter{Equivalence and Uniqueness of $(A, D)$-representations}\label{chp:neighbours}

Many of the results in Chapter~\ref{chp:highDimSEP} and Chapter~\ref{chp:limFormula} rely on the uniqueness of the $(A, D-D)$-representations. In this chapter we discuss how to ensure that uniqueness. Given an expanding matrix $A$ and finite set $D\subset\mathbb{R}^{n}$, the relationship between equivalent $(A, D)$-representations, let alone $(A, D-D)$, depend intimately on the choice of $A$ and $D$. Let us see some examples of the different patterns that can arise in the tails of equivalent $(A, D)$-representations.

\begin{example} 
The sequences $(1, \overline{0})$ and $(0, \overline{9})$ are equivalent representations of $1/10$. In fact, we know that the only equivalent pairs of representations $(x_{j})_{j=1}^{\infty}$ and $(y_{j})_{j=1}^{\infty}$ such that for some index $J$, $x_{J-1}\neq0$, for all $j > J$, $x_{j} = 0$ and $y_{j} =9$, and $y_{J-1} = x_{J-1}-1$. 
\end{example}

\begin{example}
It can be verified directly that the sequences 
$$(e_{1}, 13e_{1}, \overline{36e_{1}, 0, 49e_{1}}),$$ 
$$(e_{1}, 14e_{1}, \overline{49e_{1}, 36e_{1}, 0}),$$ 
$$(0, 0, \overline{0, 49e_{1}, 36e_{1}}),$$ 
where $e_{1} := [1, 0]^{t}$, are equivalent $(A, D)$-representations when 
\begin{equation}
 (A, D) := \left(\begin{bmatrix}-7&-1\\1&-7\end{bmatrix}, \left\{\begin{bmatrix}0\\0\end{bmatrix}, \begin{bmatrix}1\\0\end{bmatrix},\ldots, \begin{bmatrix}49\\0\end{bmatrix}\right\}\right).
\end{equation}
\end{example}

\begin{example}
In Example~\ref{ex:uncountAlphaIm}, we saw that $(A, D)$-representation $(\overline{0, 0, 2})$ has uncountably many distinct $(A, D-D)$-representations of the form $$(W_{1}, W_{2}, \ldots)$$ where $W_{j}$ is either $(0, 0, 2e_{1})$ or $(-e_{1}, -6e_{1}, -8e_{1})$ for each $j$. 
\end{example} 

Despite these differences, we can express the equivalence between $(A, D)$-representations as a relationship between the set $D$ and the self-affine set $T_{A, D}$. This will allow us to formulate a strategy for choosing particular pairs $(A, D)$ that produce unique $(A, D)$-representations. We introduce the following terminology. 

\begin{definition} 
Let $A\in M_{n}(\mathbb{R})$ be an expanding matrix and $D\subset\mathbb{R}^{n}$ be finite. We call $v\in\mathbb{R}^{n}\setminus\{0\}$ a \textit{neighbour of $T_{A, D}$} if $T_{A, D}\cap(v+T_{A, D})$ is nonempty. 
\end{definition}

\begin{lemma}\label{lem:neighUnique} 
Suppose $A\in M_{n}(\mathbb{R})$ is an expanding matrix and $D\subset\mathbb{R}^{n}$ is finite. The $(A, D)$-representations are unique if and only if $D-D$ does not contain a neighbour of $T_{A, D}$. 
\end{lemma}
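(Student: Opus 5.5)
We prove both directions by contraposition. Throughout we use Lemma~\ref{lem:selfSimForm}, which says $\pi_{A, D}(x_{j})_{j=1}^{\infty} = \sum_{j=1}^{\infty}A^{-j}x_{j}$. This formula is linear in the digit sequence. In particular, for any index $k$ and any tails,
\begin{equation*}
A^{k}\sum_{j=k+1}^{\infty}A^{-j}x_{j} = \pi_{A, D}(x_{k+j})_{j=1}^{\infty} \in T_{A, D}.
\end{equation*}

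\emph{Non-uniqueness produces a neighbour.} Suppose $(x_{j})_{j=1}^{\infty} \neq (y_{j})_{j=1}^{\infty}$ are equivalent $(A, D)$-representations. Let $k$ be the first index with $x_{k} \neq y_{k}$. Since the first $k-1$ terms cancel, equivalence gives
\begin{equation*}
\sum_{j=k}^{\infty}A^{-j}x_{j} = \sum_{j=k}^{\infty}A^{-j}y_{j}.
\end{equation*}
Multiply by $A^{k}$, which is invertible because $A$ is expanding, and isolate the first terms:
\begin{equation*}
(x_{k} - y_{k}) + \pi_{A, D}(x_{k+j})_{j=1}^{\infty} = \pi_{A, D}(y_{k+j})_{j=1}^{\infty}.
\end{equation*}
Set $v := x_{k} - y_{k}$. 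Then $v$ lies in $D-D$ and $v \neq 0$. The displayed identity exhibits a point of $T_{A, D} \cap (v + T_{A, D})$, so $v$ is a neighbour of $T_{A, D}$ contained in $D-D$.

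\emph{A neighbour produces non-uniqueness.} Suppose $v = d - d^{'}$ with $d, d^{'} \in D$ and $v \neq 0$, so that $d \neq d^{'}$, and suppose $v$ is a neighbour. Then there are $s, t \in T_{A, D}$ with $s = v + t$. Write $s = \pi_{A, D}(s_{j})_{j=1}^{\infty}$ and $t = \pi_{A, D}(t_{j})_{j=1}^{\infty}$. Form the two sequences $(d^{'}, s_{1}, s_{2}, \ldots)$ and $(d, t_{1}, t_{2}, \ldots)$. Their images under $\pi_{A, D}$ are $A^{-1}(d^{'} + s)$ and $A^{-1}(d + t)$. These agree exactly because $s - t = d - d^{'}$. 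The two sequences differ in their first entry, so they are distinct equivalent $(A, D)$-representations, and uniqueness fails.

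The argument has no real obstacle. The main care needed is in the shifting step, where one must check that the rescaled tails are again images of $D$-sequences under $\pi_{A, D}$. One must also note that the definition of neighbour excludes $v = 0$, which matches the requirement $x_{k} \neq y_{k}$ and $d \neq d^{'}$. Finally, the lemma speaks of all $(A, D)$-representations. The natural reading is that every point of $T_{A, D}$ has a unique representation, and the argument above covers that reading.
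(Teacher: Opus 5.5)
Your proof is correct and follows the same route as the paper: shift and rescale to isolate a nonzero leading digit difference in one direction, and prepend the digits $d^{'}$ and $d$ to representations of $s$ and $t$ in the other. Your choice of the first index $k$ with $x_{k} \neq y_{k}$ is slightly more careful than the paper, which works with $x_{1} - y_{1}$ directly and so tacitly assumes the two representations already differ in the first digit (otherwise $x_{1} - y_{1} = 0$, which is not a neighbour).
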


\begin{proof}
Suppose that $(x_{j})_{j=1}^{\infty}$ and $(y_{j})_{j=1}^{\infty}$ are equivalent $(A, D)$-representations. By definition, we have 
$\sum\limits_{j=1}^{\infty}A^{-j}x_{j} = \sum\limits_{j=1}^{\infty}A^{-j}y_{j}$. Multiplying by $A$ and subtracting $y_{1}$ from both sides yields $(x_{1} - y_{1}) + \pi_{A, D}(x_{j})_{j=2}^{\infty} = \pi_{A, D}(y_{j})_{j=2}^{\infty}$. This means that $x_{1}-y_{1}\in D-D$ is a neighbour of $T_{A, D}$. Conversely, suppose $e\in D-D$ is a neighbour of $T_{A, D}$. By definition, there exists $d, d^{'}\in D$ and $t_{1}, t_{2}\in T_{A, D}$ such that $(d-d^{'}) + t_{1} = t_{2}$. Multiplying by $A^{-1}$ and adding $d^{'}$ on both sides implies that $(d, t_{1, 1}, t_{1, 2}, \ldots)$ and $(d^{'}, t_{2, 1}, t_{2, 2}, \ldots)$ are equivalent $(A, D)$-representations where $(t_{i, j})_{j=1}^{\infty}$ is the $(A, D)$-representation of $t_{i}$. 
\end{proof}

It follows that $(A, D-D)$-representations are unique if and only if the difference of differences $(D-D)-(D-D)$ does not contain a neighbour of $T_{A, D-D}$. 

Determining if the elements of $D-D$ are neighbours of a given self-affine set generated by an expanding matrix and a finite subset of $\mathbb{R}^{n}$ can be difficult. There exists an algorithm that determines the neighbours contained in $\mathbb{Z}^{n}$ when $A$ has integer entries and $D$ contains a complete residue system mod $A$ \cite{ST03}. Suppose $A$ is an expanding matrix with integer entries and $\mathcal{D}\subset{Z}^{n}$ is a complete residue system modulo $A$. Applying the algorithm yields the neighbours of $T_{A, \mathcal{D}}$ contained in $\mathbb{Z}^{n}$. If we choose $D\subset\mathcal{D}$ such that it excludes the neighbours of $T_{A, \mathcal{D}}$ from $D-D$, then the $(A, D)$-representations are unique by virtue of $T_{A, \mathcal{D}}$ containing $T_{A, D}$. The knowledge of neighbours of $T_{A, \mathcal{D}}$ provides us with a sufficient condition for $(A, D)$-representations to be unique whenever $D\subset\mathcal{D}$. 

The limitation here is that the algorithm treats particular instances of $(A, D)$ and does not formulate the set of neighbours for an entire family of attractors. We demonstrate an alternative derivation of sufficient conditions for the exclusion of neighbours from the set $D-D$ where the pair $(A, D)$ is a member of the family of pairs 
\begin{equation}
\left(A_{n} := \begin{bmatrix}-n & -1 \\ 1 & -n\end{bmatrix}, D\subset \mathcal{D}_{n} := \left\{\begin{bmatrix}0\\0\end{bmatrix}, \begin{bmatrix}1\\0\end{bmatrix}, \begin{bmatrix}2\\0\end{bmatrix}, \ldots, \begin{bmatrix}n^{2}\\0\end{bmatrix}\right\}\right)
\end{equation}
where $n$ is a positive integer greater than $2$. The matrix $A_{n}$ is expanding because its eigenvalues are $-n\pm i$. Our approach does not rely on the fact that $D$ is contained in the complete residue system $\mathcal{D}_{n}$ and the derivation of the sufficient condition for unique $(A_{n}, D)$-representations translates immediately to a sufficient condition for unique $(A_{n}, D-D)$-representations. 

When applicable, it is convenient to perform calculations with complex numbers instead of vectors. We introduce a slight abuse of notation to represent $T_{A_{n}, D}$ as a subset of the complex plane. First, let $\iota:\mathbb{R}^{2}\rightarrow\mathbb{C}$ be the vector space isomorphism and isometry given by $[x, y]^{t} \mapsto x+iy$. For any complex number $b$ with modulus greater than $1$ and finite subset $D\subset\mathbb{C}$, we denote by $T_{b, D}$ the image of $T_{A_{b}, \iota(D)}$ under $\iota$ where $A_{b}$ is the matrix representation of the linear transformation given by $[x, y]^{t} \mapsto \iota^{-1}(b(x+iy))$ with respect to the standard basis on $\mathbb{R}^{2}$. 

For $(b, \mathcal{D}) =(-n+i, \{0, 1, \ldots, n^{2}\})$, the attractor $T_{b, \mathcal{D}}$ is the set
\begin{equation}
\bigg\{\sum_{j=1}^{\infty}d_{j}b^{-j}:d_{j}\in\{0, 1, \ldots, n^{2}\}\bigg\}
\end{equation}
We initiate our study of $T_{-n+i, \{0, 1, \ldots, n^{2}\}}$ in the following section.

\section{A Case Study of {$-n+i$}}\label{sec:caseStudy} 

Let $b := -n+i$ for some positive integer $n$ and let $\mathcal{D} = \{0, 1, \ldots, n^{2}\}$. It is natural to ask why anyone would be interested in $T_{b, \mathcal{D}}$. For one, the set is a generalization of the popularized twin-dragon curve featured in \cite{K81}. Although arguably more interesting, is that the pair $(b, \mathcal{D})$ is, in some sense, canonical. 

Consider that for any positive integer $m > 1$, the phrase ``base $m$" is often unambiguous. In accordance with the standard base-$10$ and binary number systems, we interpret the phrase to refer to the expression of real numbers in the form 
\begin{equation}
d_{\ell}m^{\ell} + d_{\ell-1}m^{\ell-1} +\cdots + d_{0} + \sum_{j=1}^{\infty}d_{-j}m^{-j}
\end{equation}
where for $d_{j}\in\{0, 1, \ldots, m-1\}$ for all $j$. In a similar fashion, I. Katai and J. Szabo defined what they deemed to be the canonical meaning of ``base-$\zeta$" when $\zeta$ is a Gaussian integer in \cite{KS75}. The base-$\zeta$ expansion of a complex number $z$, if it exists, has the form 
\begin{equation}
d_{\ell}\zeta^{\ell} + d_{\ell-1}\zeta^{\ell-1} +\cdots + d_{0} + \sum_{j=1}^{\infty}d_{-j}\zeta^{-j}
\end{equation}
where $d_{j}\in\{0, 1, \ldots, |\zeta|^{2}-1\}$ for each $j$. In the real case when $m$ is a positive integer greater than one, there are two properties that make base-$m$ a sensible number system for applications. The first is that all natural numbers have the form $d_{\ell}\zeta^{\ell} + d_{\ell-1}\zeta^{\ell-1} +\cdots + d_{0}$ where $d_{j}\in\{0, 1, \ldots, |\zeta|^{2}-1\}$. The second is that those expansions are unique. It is shown in \cite{KS75} properties are not satisfied by all pairs $(\zeta, \{0, 1, \ldots, |\zeta|^{2}-1\})$ when examining the base-$b$ expansions of Gaussian integers. The existence and uniqueness of expansions of all Gaussian integers places a restriction on the form of $\zeta$. This is captured in the following result from \cite{KS75}. 

\begin{theorem}[I. Katai, J. Szabo, \cite{KS75}, theorem 1]  
Given a Gaussian integer $b$, every Gaussian integer $g$ can be uniquely written as
\begin{equation}\label{eq:integerRep} 
g = \lambda_{0} + \lambda_{1}b + \ldots + \lambda_{k}b^{k},
\end{equation}
with $\lambda_{j} \in \{0, 1, 2, \ldots, |b|^{2}-1\}$ if and only if $\Re(b) < 0$ and $\Im(b) = \pm 1$. 
\end{theorem}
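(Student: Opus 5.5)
The statement is the Kátai–Szabó theorem: every Gaussian integer $g$ has a unique finite expansion $g=\sum_{k}\lambda_k b^k$ with $\lambda_k\in\{0,\dots,|b|^2-1\}$ if and only if $\Re(b)<0$ and $\Im(b)=\pm1$. We prove necessity by two structural obstructions and sufficiency by a division algorithm with a norm-decrease argument. Throughout, write $N=|b|^2$ and $\mathcal D=\{0,1,\dots,N-1\}$.

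\emph{Necessity.} We first show that $\mathcal D$ must be a complete residue system modulo $b$. By the Smith normal form remark after Theorem~\ref{thm:sashaLOL}, $\mathbb{Z}[i]/b\mathbb{Z}[i]$ has exactly $N$ elements. Existence of expansions forces every class to contain a digit, since $g\equiv\lambda_0 \pmod b$. Uniqueness forces distinct digits to lie in distinct classes: if $d\equiv d'\pmod b$ with $d\ne d'$, write $d-d'=bq$ and compare an expansion of $d'+bq$ with the one-digit expansion $d$. So the rational integers $0,\dots,N-1$ must be pairwise incongruent mod $b$. Write $b=x+iy$; then $N=x^2+y^2$. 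If $\gcd(x,y)=m>1$, then $m\mid b$ and $m<N$, so $0\equiv m$, a contradiction. With $\gcd(x,y)=1$ the map $\mathbb{Z}\to\mathbb{Z}[i]/(b)$ is onto, so that case is fine, but we now use $b\mid b\bar b=N$ more finely.

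The key constraint is a sign argument. Consider $g=-1$, or more generally a negative rational integer. If $b$ were real positive or had $\Re(b)\ge0$, we would show that some Gaussian integer has no expansion. For instance, with $\Re b>0$ the candidate expansions of $-1$ lead to a periodic cycle rather than termination, because $-1 = (N-1)+b(\cdot)$ gives the quotient $-N/b=-\bar b$, and iterating never reaches $0$. Next we show $|\Im b|=1$. If $|y|\ge2$, we locate a nonzero periodic point of the division map $g\mapsto (g-\lambda_0(g))/b$. Such a point is a nonzero Gaussian integer whose expansion never terminates, or one admitting two representations. The concrete candidate comes from elements $g$ with $|g|$ small, such as $g=i$ or $g=-1$, tracked under the map. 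This part I expect to be the main obstacle: ruling out all $b$ with $|y|\ge2$ uniformly needs a careful choice of witness. I would try the element $i$: since $i\equiv \lambda\pmod b$ forces a relation on $x,y$, iterating should produce a cycle once $|y|\ge2$.

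\emph{Sufficiency.} Let $b=-n\pm i$, so $N=n^2+1$ and $\gcd(n,1)=1$. Hence $\mathcal D$ is a complete residue system mod $b$, which gives uniqueness of each digit step and therefore uniqueness of finite expansions by induction on length (leading zeros aside). For existence, define $\lambda_0(g)$ as the digit congruent to $g$ and set $T(g)=(g-\lambda_0(g))/b$. We must show that iterating $T$ reaches $0$. The estimate $|T(g)|\le (|g|+N-1)/\sqrt N$ shows orbits eventually enter a bounded region $|g|\le R$ with $R\approx(N-1)/(\sqrt N-1)$. The remaining work is to check that this finite region contains no cycle other than $\{0\}$. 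For this I would use explicit coordinates: writing $g=a+ci$, one has $\lambda_0(g)\equiv a+nc\pmod N$ because $i\equiv n \pmod b$. A potential function such as $\Re$ and $\Im$ of $g$ should be checked to decrease strictly. The negativity of $\Re b$ is exactly what prevents the cycle at $-1$ that occurs for $n+i$.
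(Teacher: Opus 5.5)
The paper gives no proof of this statement; it is quoted from K\'atai--Szab\'o. So your proposal has to stand on its own. Its necessity half breaks down exactly where you expected, and it has two smaller defects.

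\textbf{Necessity.} The missing idea for $|\Im b|=1$ is a lattice obstruction, not a cycle search. Every finite expansion lies in $\mathbb{Z}[b]$, and because $b^2=2\Re(b)\,b-|b|^2$ one has $\mathbb{Z}[b]=\{u+vb: u,v\in\mathbb{Z}\}$. All imaginary parts in this set are multiples of $\Im b$, so $i$ has no expansion unless $|\Im b|=1$. In your dynamical language: if $T(g)\in\mathbb{Z}[b]$ then $g=\lambda_0(g)+bT(g)\in\mathbb{Z}[b]$, so the orbit of $i$ under your division map never reaches $0$.

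This also makes your $\gcd$ step superfluous, which is just as well, since it reverses a divisibility. From $m\mid b$ you cannot conclude $b\mid m$; for $b=2+2i$ one has $2\not\equiv 0$. The correct pair of congruent digits is $0$ and $N/m$, because $(N/m)/b=\bar b/m\in\mathbb{Z}[i]$.

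Once $|\Im b|=1$ is known, $\Re b=0$ gives a unit with $N=1$. For $\Re b\ge 1$ you only assert that the orbit of $-1$ ``never reaches $0$''. That is true, but it needs the computation: the orbit is $-1\mapsto-\bar b\mapsto 1-\bar b$, and $1-\bar b$ is a nonzero fixed point. Indeed $1-\bar b=|1-b|^2+b(1-\bar b)$, with $|1-b|^2=N+1-2\Re b\le N-1$. Since digits are forced by congruence mod $b$, a nonzero fixed point has no finite expansion. Note also that the orbit of $-1$ is preperiodic, not a cycle at $-1$ as you wrote.

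\textbf{Sufficiency.} The serious gap is existence, which is the real content of the theorem. Uniqueness and the reduction to a disc of radius about $\sqrt N+1$ are fine. But that disc contains on the order of $n^2$ lattice points for arbitrary $n$, so ``check that it contains no nonzero cycle'' is not a finite verification. The potential you hope for is never specified, and the obvious candidates fail. For $b=-1+i$ the orbit $-1\mapsto 1+i\mapsto -i\mapsto i\mapsto 1\mapsto 0$ shows that none of $|g|$, $|\Re g|$, $|\Im g|$ is monotone. You need a uniform argument; two routes are available.
\begin{enumerate}
\item Since $\Im b=\pm1$, write $g=u+vb$. Using $b^2=-2nb-N$, the division map becomes $(u,v)\mapsto(v-2nq,\,-q)$ with $q=\lfloor u/N\rfloor$. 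The successive quotients then satisfy $Nq_{k+1}+2nq_k+q_{k-1}\in\{-(N-1),\dots,0\}$, and you must show by a sign-and-size analysis that every such sequence reaches two consecutive zeros. This is the quadratic-base argument behind Theorem~\ref{thm:quadReps}, here with $A=2n\le B=n^2+1$.
\item Use the paper's own tools.
\begin{itemize}
\item A nonzero cycle $g_0\mapsto\cdots\mapsto g_p=g_0$ gives $-g_0=\sum_{j\ge1}\lambda_j b^{-j}$ with periodic digits. This is a nonzero Gaussian integer in $T_{b,\mathcal D}$, which is the criterion of Theorem~\ref{thm:radixExist}.
\item Such a point is a neighbour of $T_{b,\mathcal D}$. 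For $n\ge3$ it therefore lies in the list of Theorem~\ref{thm:neighbourset}, whose proof does not use the number-system property.
\item The map $\zeta\mapsto b\zeta-d_1$ must keep a nonzero $\zeta\in T_{b,\mathcal D}\cap\mathbb{Z}[i]$ inside that list forever. A short check of the possible transitions rules this out.
\item The cases $n=1,2$ would then need separate handling.
\end{itemize}
\end{enumerate}
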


In other words, the base must be of the form $b = -n\pm i$ where $n$ is a positive integer and the set of digits is $\{0, 1, \ldots, n^{2}\}$. This result can be used to prove the following result about complex radix expansions, also featured in \cite{KS75}. 

\begin{corollary}[I. Katai, J. Szabo, \cite{KS75}, theorem 2] \label{thm:radixExistCanon} 
Suppose $n$ is a positive integer and set $b = -n+i$. The set $\bigcup\limits_{\zeta\in\mathbb{Z}[i]}(T_{-n+i, \{0, 1, \ldots, n^{2}\}} + \zeta)$ is equal to the complex plane. 
\end{corollary}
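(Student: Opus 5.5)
The plan is to deduce the covering statement from the integer-representation theorem of Katai and Szabo stated just above, using the self-similarity of $T := T_{b,\mathcal{D}}$ and a density-plus-compactness argument. Write $\mathcal{D} = \{0,1,\ldots,n^{2}\}$. By Lemma~\ref{lem:selfSimForm}, $T = \{\sum_{j\geq1} d_{j}b^{-j} : d_{j}\in\mathcal{D}\}$. Let $U := \bigcup_{\zeta\in\mathbb{Z}[i]}(T+\zeta)$.

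\textbf{Step 1: $U$ is invariant under multiplication by $b^{-1}$.} Take $u = \zeta + t$ with $\zeta \in \mathbb{Z}[i]$ and $t = \sum_{j\geq 1} d_{j}b^{-j} \in T$. By the Katai--Szabo theorem, $\zeta$ has an expansion $\zeta = \lambda_{0} + \lambda_{1}b + \cdots + \lambda_{k}b^{k}$ with $\lambda_{j}\in\mathcal{D}$. Then
\begin{equation*}
b^{-1}u = (\lambda_{1} + \lambda_{2}b + \cdots + \lambda_{k}b^{k-1}) + \Big(\lambda_{0}b^{-1} + \sum_{j\geq1} d_{j}b^{-j-1}\Big).
\end{equation*}
The first summand lies in $\mathbb{Z}[i]$, and the second lies in $T$ because its digits $(\lambda_{0}, d_{1}, d_{2},\ldots)$ are all in $\mathcal{D}$. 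Hence $b^{-1}U \subseteq U$, and iterating gives $b^{-k}U \subseteq U$, equivalently $U \supseteq b^{-k}\mathbb{Z}[i]$ since $0\in T$. Only the existence half of the theorem is needed here; uniqueness plays no role.

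\textbf{Step 2: $U$ is dense.} The lattice $b^{-k}\mathbb{Z}[i]$ has covolume $|b|^{-2k} = (n^{2}+1)^{-k}$, and its fundamental domain has diameter on the order of $|b|^{-k}$. So every $z\in\mathbb{C}$ lies within distance $C|b|^{-k}$ of $b^{-k}\mathbb{Z}[i] \subseteq U$, and this tends to $0$.

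\textbf{Step 3: $U$ is closed.} Since $T$ is compact, any bounded region meets only finitely many of the translates $T+\zeta$. A locally finite union of closed sets is closed, so $U$ is closed. Combined with Step 2, $U = \mathbb{C}$.

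The main obstacle is Step 1, specifically carrying the digit $\lambda_{0}$ across the radix point correctly. Once that shift identity is written down, the rest is routine.
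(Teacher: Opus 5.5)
Your proof is correct: the shift identity in Step 1 needs only the existence half of the Katai--Szabo integer-representation theorem, every point of $\mathbb{C}$ lies within $\tfrac{\sqrt{2}}{2}|b|^{-k}$ of $b^{-k}\mathbb{Z}[i]\subseteq U$, and local finiteness of the compact translates makes $U$ closed, so $U=\mathbb{C}$. The paper does not prove this corollary; it cites \cite{KS75} and remarks that the result follows from the integer-representation theorem, and your argument supplies exactly that derivation.
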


\begin{remark}
In other words, every complex number $z\in\mathbb{C}$ has the form $\zeta + t$ where $\zeta$ is a Gaussian integer and $t\in T_{-n+i, \{0, 1, \ldots, n^{2}\}}$. When $z$ is of the form $p+iq$ where $p$ and $q$ are rational numbers, there exists an algorithm for finding the $(-n+i, \{0, 1, \ldots, n^{2}\})$-representation of $t$ \cite{G96}. 
\end{remark}

Suppose $n$ is a positive integer greater than one. By Lemma~\ref{lem:neighUnique}, the $(-n+i, D)$-representations are unique if none of the neighbours of $T_{-n+i, \{0, 1, \ldots, n^{2}\}}$ are contained in $D-D$. Since $D$ is a subset of $\{0, 1, \ldots, n^{2}\}$, the set $D-D$ is a subset of the integers. It follows that in order to ensure the uniqueness of  $(-n+i, D)$-representations, we need only actively avoid the integer neighbours of $T_{-n+i, \{0, 1, \ldots, n^{2}\}}$. Likewise, we need only actively avoid the integer neighbours of $T_{-n+i, \{0, \pm 1, \ldots, \pm n^{2}\}}$ to ensure the uniqueness of $(-n+i, D-D)$-representations. To this end, we prove the following lemma.

\begin{lemma}\label{lem:reimbound} 
Fix an integer $n \geq 3$. If $s$ is a neighbour of $T_{-n+i, \{0, 1, \ldots, n^{2}\}}$, then 
\begin{equation}
|\Re(s)-n\Im(s)| < 2.
\end{equation}
Moreover $|\Re(s)-n\Im(s)| < 3/2$ for $n\geq5$. 
\end{lemma}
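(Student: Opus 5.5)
Write $T := T_{b,\mathcal{D}}$ with $b=-n+i$ and $\mathcal{D}=\{0,1,\ldots,n^{2}\}$. A neighbour $s$ satisfies $s = t_{2}-t_{1}$ with $t_{1},t_{2}\in T$, so $s\in T_{b,\mathcal{D}-\mathcal{D}}$, i.e. $s=\sum_{j\ge1} e_{j}b^{-j}$ with $e_{j}\in\{-n^{2},\ldots,n^{2}\}$. The quantity $\Re(s)-n\Im(s)$ is the real linear functional $L(z)=\Re(z)-n\Im(z)=\Re((1+ni)z)$. The plan is to bound $|L(s)|$ by estimating $L$ on each term $e_{j}b^{-j}$ and summing. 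We use $|L(\sum e_j b^{-j})|\le \sum_j n^{2}\,|L(b^{-j})|$, and more sharply we exploit signs when the sum over a block of consecutive terms can be controlled jointly.

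The key computation is $L(b^{-j})$. Since $b^{-1}=\bar b/|b|^{2}=(-n-i)/(n^{2}+1)$, we have $(1+ni)b^{-1}=(1+ni)(-n-i)/(n^{2}+1)=(-n-i-n^{2}i+n)/(n^{2}+1)=-i$. So $L(b^{-1})=\Re(-i)=0$. More generally $(1+ni)b^{-j}=-i\,b^{-(j-1)}$, which gives $L(b^{-j})=\Re(-i b^{-(j-1)})=\Im(b^{-(j-1)})$. The first digit therefore contributes nothing, and
\begin{equation*}
L(s)=\sum_{j\ge 2} e_{j}\,\Im(b^{-(j-1)}) = \Im\Big(\sum_{k\ge1} e_{k+1} b^{-k}\Big).
\end{equation*}
Thus $|L(s)|\le \sup\{|\Im w| : w\in T_{b,\mathcal{D}-\mathcal{D}}\}$, and the lemma reduces to bounding the imaginary part of elements of $T-T$. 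Note that this also equals $|\Im(t_2'-t_1')|$ for the shifted tails.

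Now we need $\sup_{t\in T}\Im t - \inf_{t\in T}\Im t<2$ (and $<3/2$ for $n\ge5$). Since $\Im(b^{-1})=-1/(n^{2}+1)$, the first term contributes width at most $n^{2}/(n^{2}+1)<1$. For later terms we use $|b^{-k}|=(n^{2}+1)^{-k/2}$, giving total width at most
\begin{equation*}
\frac{n^{2}}{n^{2}+1}+n^{2}\sum_{k\ge2}|\Im b^{-k}|.
\end{equation*}
We compute $\Im(b^{-2})=\Im(\bar b^{2})/(n^{2}+1)^{2}=2n/(n^{2}+1)^{2}$, which contributes about $2/n$. The remaining tail is bounded by $n^{2}\sum_{k\ge3}(n^{2}+1)^{-k/2}\approx 1/n$. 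Altogether the width is about $1+3/n$: for $n\ge5$ this is $1.6$, not below $1.5$ unless the estimate is tightened, while for $n\ge3$ it is under $2$. This is the main obstacle. The crude triangle inequality on the tail may be too lossy for the $3/2$ claim (and possibly for $n=3$). To fix it, we would compute $\Im(b^{-3})$ exactly, $\Im(\bar b^{3})/(n^{2}+1)^{3}=(1-3n^{2})(-1)/(n^{2}+1)^{3}$, i.e. about $3/n^{4}$ in size. This shows the third term contributes only about $3/n^{2}$, so the true tail beyond $k=2$ is $O(1/n^{2})$ rather than $1/n$. With exact values for $k\le3$ and the geometric bound only for $k\ge4$ (which gives $O(n^{-2})$), the width becomes $1+2/n+O(n^{-2})$. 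That is below $2$ for $n\ge3$ (checking $n=3$ numerically) and below $3/2$ for $n\ge5$. The width bound yields the strict inequality: the supremum over sums with interior terms bounded strictly is attained, and the bound computed is strictly less than the threshold.
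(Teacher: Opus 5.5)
Your argument is correct and is essentially the paper's proof. Both expand $s=\sum_{j}\delta_j b^{-j}$ with $\delta_j\in\{0,\pm1,\ldots,\pm n^2\}$, note that $\delta_1$ drops out of $\Re(s)-n\Im(s)$, treat the $\delta_2,\delta_3,\delta_4$ terms exactly, bound the tail geometrically, and finish numerically.

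Your identity $1+ni=-ib$, i.e.\ $\Re(z)-n\Im(z)=\Im(bz)$, is a cleaner way to organize this. It explains why $\delta_1$ cancels, and it produces the paper's coefficients $-1/(n^2+1)$, $2n/(n^2+1)^2$, $-(3n^2-1)/(n^2+1)^3$ as $\Im(b^{-1}),\Im(b^{-2}),\Im(b^{-3})$. Your displayed value of $\Im(b^{-3})$ carries a stray factor $-1$, which is harmless because the digit range is symmetric. The identity also shows the lemma is really a bound on the vertical extent of $T_{-n+i,\{0,1,\ldots,n^2\}}$. Finally, it gives a slightly smaller tail, $(|b|+1)/|b|^{3}$ instead of the paper's $(n+1)(|b|+1)/|b|^{4}$.

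What you have not actually done is the final numerical comparison, and the asymptotic $1+2/n+O(n^{-2})$ cannot supply it at $n=5$. Your bound expands as $1+2/n+3/n^2+O(n^{-3})$, and $1+2/5+3/25=1.52>3/2$. To close the proof you need three things:
\begin{itemize}
\item[(i)] the exact value at $n=5$, $25/26+250/676+1850/17576+(\sqrt{26}+1)/26^{3/2}\approx 1.483$;
\item[(ii)] the exact value at $n=3$, which is $\approx 1.806$;
\item[(iii)] an explicit termwise majorant for the other $n$, for instance $1+2/n+3/n^2+(n+2)/n^3$, which is decreasing, below $2$ at $n=4$, and below $3/2$ at $n=6$.
\end{itemize}
The paper closes its proof the same way: exact values for $n=3,4,5,6$, and termwise bounds $1,\,2/n,\,5/n^2,\,5/n^2$ for $n\geq 7$.
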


\begin{proof}
Let $s$ be a neighbour of $T = T_{-n+i, \{0, 1, \ldots, n^{2}\}}$. For convenience, we set $\alpha := \Re(s)$ and $\beta := \Im(s)$. Likewise, let $\pi := \pi_{-n+i, \{0, 1, \ldots, n^{2}\}}$. By definition there exist sequences $(d_{j})_{j=1}^{\infty}$ and $(d_{j}^{'})_{j=1}^{\infty}$ with entries in $\{0, 1, \ldots, n^{2}\}$ such that 
\begin{equation}
s + \pi(d_{j})_{j=1}^{\infty} = \pi(d_{j}^{'})_{j=1}^{\infty}.
\end{equation}
Isolating for $s$ yields 
\begin{equation}
s = \pi(\delta_{j})_{j=1}^{\infty}
\end{equation}
where $\delta_{j}\in\{0, \pm 1, \ldots, \pm n^{2}\}$ for each $j$. 

We wish to estimate the difference between the real part of $s$ and $n$ times its imaginary part. We explicitly compute the first few terms of $s$ in terms of $n$. Observe that 
\begin{equation}
s = \frac{\delta_{1}}{b} +  \frac{\delta_{2}}{b^{2}} + \frac{\delta_{3}}{b^{3}} + \frac{\delta_{4}}{b^{4}} + \epsilon
\end{equation}
where $\epsilon$ denotes the tail $\sum\limits_{j=5}^{\infty}\delta_{j}b^{-j}$ and $b=-n+i$. 

Explicit computation yields
\begin{equation} \label{eq:neighbourreal} 
\alpha = \frac{-n}{n^{2}+1}\delta_{1} +  \frac{n^{2}-1}{(n^{2}+1)^{2}}\delta_{2} + \frac{-n^{3}+3n}{(n^{2}+1)^{3}}\delta_{3} + \frac{n^{4}-6n^{2}+1}{(n^{2}+1)^{4}}\delta_{4} + \Re(\epsilon) \\ 
\end{equation}
and 
\begin{equation} \label{eq:neighbourimag} 
\beta = \frac{-1}{n^{2}+1}\delta_{1} + \frac{2n}{(n^{2}+1)^{2}}\delta_{2} + \frac{-3n^{2}+1}{(n^{2}+1)^{3}}\delta_{3} + \frac{4n^{3}-4n}{(n^{2}+1)^{4}}\delta_{4} + \Im(\epsilon). \\
\end{equation}
Subtracting $n$ times (\ref{eq:neighbourimag}) from (\ref{eq:neighbourreal}) yields
\begin{equation} \label{eq:realimagdiff} 
\alpha - n\beta = \frac{-1}{n^{2}+1}\delta_{2} + \frac{2n}{(n^{2}+1)^{2}}\delta_{3} + \frac{-3n^{4}-2n^{2}+1}{(n^{2}+1)^{4}}\delta_{4} + \Re(\epsilon) - n\Im(\epsilon). 
\end{equation}
Recall that $\delta_{j}$ range from $-n^{2}$ to $n^{2}$. In order to bound $|\alpha - n\beta|$ with an expression that is only in terms of $n$, we maximize the sum of the first three terms of (\ref{eq:realimagdiff}) by choosing $\delta_{2} = \delta_{4} = -n^{2}$ and $\delta_{3} = n^{2}$ and estimate the absolute value of the last two terms using the bound $|\Re(\epsilon) - n\Im(\epsilon)| \leq (n+1)\sum\limits_{j=5}^{\infty}n^{2}|b|^{-j} = (n+1)(|b|+1)/|b|^{4}$. This results in the inequality
\begin{equation} \label{eq:boundnonly} 
|\alpha - n\beta| \leq \frac{n^{2}}{n^{2}+1} + \frac{2n^{3}}{(n^{2}+1)^{2}} + \frac{3n^{6}+2n^{4}-n^{2}}{(n^{2}+1)^{4}} + \frac{(n+1)(\sqrt{n^{2}+1}+1)}{(n^{2}+1)^{2}}
\end{equation}
Direct computation with $n = 3, 4, 5, 6$ yields bounds less than $1.85, 1.63, 1.5,$ and $1.41$.
For $n \geq 7$, observe that we can respectively bound each term of (\ref{eq:boundnonly}) by the following sequences. 
\begin{align}
\frac{n^{2}}{n^{2}+1} &< 1, \label{eq:firstbound}\\ 
\frac{2n^{3}}{(n^{2}+1)^{2}} &< \frac{2}{n}, \\
\frac{3n^{6}+2n^{4}-n^{2}}{(n^{2}+1)^{4}} &< \frac{5}{n^{2}}, \\
 \frac{(n+1)(\sqrt{n^{2}+1}+1)}{(n^{2}+1)^{2}} &< \frac{5}{n^{2}}. \label{eq:lastbound}
\end{align}
Consider the sum of all the sequences on the right hand side of a ``$<$" sign from (\ref{eq:firstbound}) to (\ref{eq:lastbound}). The sum is a strictly decreasing sequence and at $n = 7$ is less than $1.49$. This completes the proof. 
\end{proof}

\begin{remark}
When $n=1$ and $n=2$, (\ref{eq:boundnonly}) is insufficient. Rather than complicate the statement of Lemma~\ref{lem:reimbound}, we use the neighbour finding algorithm from \cite{ST03} to determine the real neighbours. The case $n=1$ is particularly degenerate. The only proper non-empty subsets of $\mathcal{D} = \{0, 1\}$ are $\{0\}$ and $\{1\}$. In this case the $T_{-1+i, D}$ is a singleton and thus has Hausdorff dimension zero. Furthermore, it can be shown that $T_{-1+i, \mathcal{D}}$ has Hausdorff dimension $2$.  
\end{remark}

It is reasonable to wonder why we do not bound the modulus of $s$ directly. This is a viable strategy for deriving a sufficient condition for $(m, \{0, 1, \ldots, m-1\})$-representations where $m$ is a positive integer greater than one. If $s$ is a neighbour of $T_{m}$, it has the form $s = \sum\limits_{j=1}^{m}e_{j}m^{-j}$ where $e_{j}\in\{0, \pm 1, \ldots, \pm (m-1)\}$ for each $j$. Therefore
\begin{equation}\label{eq:easyBound}
|s| \leq \sum_{j=1}^{\infty}(m-1)m^{-j} = (m-1)\frac{1/m}{1-1/m} = 1.
\end{equation}
This holds regardless of the choice of $m$ and therefore we can be assured that if $D\subset\{0, 1, \ldots, m-1\}$ satisfies the condition that $|d-d^{'}| \neq 1$ for all $d, d^{'}\in D$, then $(m, D-D)$-representations are unique. This does not work out so cleanly for $(-n+i, \{0, 1, \ldots, n^{2}\})$-representations. Observe that in this case, a neighbour $s$ satisfies the inequality
\begin{equation}
|s| \leq \sum_{j=1}^{\infty}n^{2}\sqrt{n^{2}+1}^{-j} = \frac{n^{2}}{\sqrt{n^{2}+1} - 1} = \sqrt{n^{2} + 1} + 1,
\end{equation}
which grows with $n$. The corresponding sufficient condition would be the $n$-dependent condition that $|d-d^{'}| > \sqrt{n^{2}+1} + 1$ for all $d, d^{'}\in D$. The advantage of bounding $|\Re(s)-n\Im(s)|$ is that we gain more precise information about the neighbours which we can use to derive $n$-independent conditions. 

\begin{remark}\label{rem:realBasePhenom}
The consequences of the upper bound in (\ref{eq:easyBound}) extend to matrices of the form $A = \begin{bmatrix} m & 0 \\ 0 & n \end{bmatrix}$ and $D \subset \{0, 1, \ldots, m-1\} \times \{0, 1, \ldots, n-1\}$. It follows that all $(A, D-D)$-expansions are unique if for any pair of vectors in the set $D-D$, the distance between at least one of their components is greater than $2$. 
\end{remark}

\begin{theorem}\label{thm:realNeighbours} 
Fix an integer $n\geq 2$. Let $\mathcal{D} := \{0, 1, \ldots, n^{2}\}$. The set of real neighbours of $T_{-n+i, \mathcal{D}}$ is
\begin{itemize}
\item[(i)] $\{0, \pm1, \pm2\}$ when $n=1$ or $n\geq5$ .
\item[(ii)] $\{0, \pm1, \pm2, \pm3\}$ when $n=2, 3,$ or $4$.
\end{itemize}
Additionally, the set of real neighbours of $T_{-n+i, \{0, 1, \ldots, n^{2}\}}$ is $\{0, \pm 1\}$ for all $n\geq1$. 
\end{theorem}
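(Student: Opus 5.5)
Throughout, I read ``real neighbour'' as a neighbour lying in $\mathbb{Z}$, since these are the only ones that can appear in $D-D$ or $(D-D)-(D-D)$ when $D\subset\{0,1,\ldots,n^{2}\}$. I also read items (i) and (ii) as referring to the difference tile $T_{-n+i,\{0,\pm1,\ldots,\pm n^{2}\}}$, which is the set relevant to uniqueness of $(-n+i,D-D)$-representations. The final sentence then concerns $T_{-n+i,\mathcal{D}}$ itself. I include $0$ in the listed sets as the statement does, even though the definition of a neighbour excludes it. Under this reading the proof has three parts: an upper bound on $|s|$ for real neighbours $s$, explicit constructions showing the listed integers really are neighbours, and a separate treatment of the small cases $n=1,2$.

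\textbf{Upper bounds.} Let $s$ be a real neighbour of $T_{-n+i,\mathcal{D}}$, so $\Im(s)=0$. Lemma~\ref{lem:reimbound} gives $|s|=|\Re(s)-n\Im(s)|<2$ for $n\geq3$, so the only possible integer neighbours are $\pm1$. For the difference tile, a neighbour has the form $\pi(\delta_{j})$ with $\delta_{j}\in\{0,\pm1,\ldots,\pm2n^{2}\}$. Rerunning the proof of Lemma~\ref{lem:reimbound} with $n^{2}$ replaced by $2n^{2}$ exactly doubles the bound (\ref{eq:boundnonly}). This yields $|s|<3$ for $n\geq5$, since the bound in the lemma was below $3/2$ there. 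For $n=3,4$ it yields $|s|<3.7$ and $|s|<3.26$ respectively. So the integer candidates are $\pm1,\pm2$ when $n\geq5$ and $\pm1,\pm2,\pm3$ when $n=3,4$.

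\textbf{Lower bounds.} Here I would exhibit equivalent representations. First, show that $1$ is a neighbour of $T_{-n+i,\mathcal{D}}$. I would look for a pair of eventually periodic digit sequences $x,y$ with $1+\pi(x)=\pi(y)$. To find them, use the relation $b^{2}+2nb+(n^{2}+1)=0$, which lets a carry of $n^{2}+1$ at one position be rewritten as digits $2n$ and $1$ at the next two positions. This carry rule generates a finite automaton of admissible carries, as in Gilbert's rules \cite{G82}. A cycle in that automaton gives the periodic pair $(x,y)$, uniformly in $n$. Second, adding the representation of $1$ to itself digitwise, $2=\pi(x-y)+\pi(x-y)$ gives digits in $\{0,\pm1,\ldots,\pm2n^{2}\}$. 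Hence $2$ is a neighbour of the difference tile. Third, for $n=3,4$ I would show $3$ is a neighbour of the difference tile by the same carry-cycle search, now with digit bounds $\pm2n^{2}$. Concretely, I would run the neighbour algorithm of \cite{ST03} on the pair $(-n+i,\{-n^{2},\ldots,n^{2}\})$ for these two values of $n$. Symmetry $s\mapsto -s$ handles the negative values in all cases.

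\textbf{Small cases and the main obstacle.} For $n=1,2$ the estimate (\ref{eq:boundnonly}) is too weak, as noted in the remark after Lemma~\ref{lem:reimbound}. There I would rely directly on the finite neighbour graph from \cite{ST03}, computed for $n=1$ and $n=2$. The main obstacle is the lower-bound constructions: producing explicit equivalent representations for $1$ that work for every $n$, and for $3$ when $n=3,4$. I would first try to guess a period-$2$ or period-$3$ pattern from the relation $b^{2}=-2nb-(n^{2}+1)$ and check it symbolically in $n$. I would fall back on the \cite{ST03} computation only for the finitely many exceptional $n$.
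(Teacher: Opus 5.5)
Your reading of the statement matches how the paper's proof treats it:
\begin{itemize}
\item items (i)--(ii) concern $E_n:=T_{-n+i,\{0,\pm1,\ldots,\pm n^2\}}$;
\item the last sentence concerns $T_n:=T_{-n+i,\{0,1,\ldots,n^2\}}$;
\item ``real'' has to mean ``in $\mathbb{Z}$''.
\end{itemize}
Your upper-bound half is the paper's argument: Lemma~\ref{lem:reimbound} for $T_n$, and doubling (\ref{eq:boundnonly}) for $E_n$. Your constants $3.7$, $3.26$ and $<3$ are correct. Your appeal to the neighbour-finding algorithm for $n=1,2$ also matches the paper.

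\textbf{The missing step.} The part that is not yet a proof is the one you flag yourself. You never exhibit equivalent representations showing that $1$ is a neighbour of $T_n$, and all your lower bounds, including the doubling step, depend on it. The search is only two steps long. Put $b=-n+i$ and $s_k=bs_{k-1}-\delta_k$, and start at $s_0=1$.
\begin{itemize}
\item Take $\delta_1=-(2n-1)$; this gives $s_1=n-1+i$.
\item Then take $\delta_2=-((n-1)^2+1)$, $\delta_3=(n-1)^2+1$, and so on; this makes $s_k$ alternate between $\pm(n-1+i)$.
\end{itemize}
Since $(s_k)$ is bounded, $1=\pi(-(2n-1),\overline{-((n-1)^2+1),(n-1)^2+1})$. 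This is the paper's identity
\[
1+\pi(2n-1,\overline{(n-1)^2+1,0})=\pi(0,\overline{0,(n-1)^2+1}),
\]
which is valid for every $n\geq1$. You should also record that $1$ is then a neighbour of $E_n$, since $T_n\subset E_n$.

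\textbf{Where your route differs: the value $2$.} The paper verifies a separate identity,
\[
\pi(0,\overline{-n^2,(n-2)^2})=2+\pi(4n-2,\overline{(n-2)^2,-n^2}).
\]
Its first digit needs $4n-2\leq n^2$, so $n\geq4$, and the paper hands all of $n\leq4$ to the computer.

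Your doubling argument works for every $n$, once you add one fact. Each sequence with digits in $\{-2n^2,\ldots,2n^2\}$ is a difference of two sequences with digits in $\{-n^2,\ldots,n^2\}$, so $E_n-E_n=T_{-n+i,\{-2n^2,\ldots,2n^2\}}$. In fact the paper's identity is exactly your doubled witness, with the first digit $-(4n-2)$ split as $0-(4n-2)$.

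Mind the sign: $1+\pi(x)=\pi(y)$ gives $\pi(y-x)=1$, so $2\pi(x-y)=-2$. This is harmless by symmetry.

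\textbf{A further gain.} The same scaling removes the computer from the value $3$ when $n=3,4$. Tripling the witness gives digits of size $6n-3$ and $3(n-1)^2+3$. Both are at most $2n^2$ exactly when $n\in\{3,4\}$.
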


\begin{proof}
The set of neighbours of $T_{n} := T_{-n+i, \{0, 1, \ldots, n^{2}\}}$ when $n = 1, 2$ and the set of neighbours of $E_{n} := T_{-n+i, \{0, \pm1, \ldots, \pm n^{2}\}}$ when $n = 1, 2, 3$ or $4$ can be explicitly computed using the neighbour finding algorithm found in \cite{ST03}. We prove the remaining cases. We abuse notation and use $\pi$ to be either $\pi_{A, D}$ and $\pi_{A, D-D}$ depending on the context. If it is preferred, we can view $\pi_{A, D}$ as a restriction of a common evaluation map with repsect to $A$ defined on the set of bounded sequences of vectors. 

It follows immediately from Lemma~\ref{lem:reimbound} that the real neighbours of $T_{n}$ must be in $\{0, \pm 1\}$. To see the converse, it can be verified by direct computation that 
\begin{align}
1 + \pi((2n-1), \overline{((n-1)^{2}+1), 0}) &= \pi(0,\overline{0, ((n-1)^{2}+1)}), \label{eq:examplecomputation}\\
-1 + \pi(0, \overline{0, ((n-1)^{2}+1)}) &= \pi((2n-1),\overline{((n-1)^{2}+1),0}),
\end{align}
where the bar indicates infinite repetition of those digits in the order presented. We include the verification of (\ref{eq:examplecomputation}).

Let 
\begin{align}
z_{1} := \pi(0, \overline{((n-1)^{2}+1), 0}), \\
z_{2} := \pi(0, \overline{0, ((n-1)^{2}+1)}). 
\end{align}
Let $b := -n+i$. Since the sequences of digits are periodic with period two,
\begin{align}
b^{2}z_{1} - z_{1} &= ((n-1)^{2}+1)b, \\
b^{2}z_{2} - z_{2} &=  (n-1)^{2} + 1. 
\end{align}
From these equations it is possible to solve for $z_{1}$ and $z_{2}$ explicitly in terms of $n$. We obtain
\begin{align}
z_{1} &=  \frac{((n-1)^{2}+1)(-n+i)}{(n^{2}-2-2ni)} \\
z_{2} &=  \frac{((n-1)^{2}+1)}{(n^{2}-2-2ni)}.
\end{align}
We wish to show that 
\begin{equation}\label{eq:goal}
\frac{((n-1)^{2}+1)}{(n^{2}-2-2ni)} + \frac{2n-1}{-n+i} + 1 =  \frac{((n-1)^{2}+1)}{(n^{2}-2-2ni)(-n+i)}
\end{equation}
Observe that on the left hand side of (\ref{eq:goal}), after bringing it under a common denominator, the numerator is 
\begin{equation}
((n-1)^{2}+1)(-n+i) + (2n-1)(n^{2}-2-2ni)+(-n+i)(n^{2}-2-2ni). 
\end{equation}
It now can be seen from the right hand side of (\ref{eq:goal}) that it is sufficient to show that 
\begin{equation}
(2n-1)(n^{2}-2-2ni)+(-n+i)(n^{2}-2-2ni) = (1 + n - i)((n-1)^{2}+1). 
\end{equation}
We conclude with
\begin{align}
&(2n-1)(n^{2}-2-2ni)+(-n+i)(n^{2}-2-2ni) \\
&= (n - 1 + i)(n^{2}-2-2ni) \\
&= ((n-1)^{2}+1)(n^{2}-2-2ni)/(n - 1 - i) \\
&= ((n-1)^{2}+1)(n + 1 - i).
\end{align}

Now we consider the $E_{n}$ . If $p$ is a neighbour of $E_{n}$, then $p$ has representation $(p_{j})_{j=1}^{\infty}$ where $p_{j}\in\{0, \pm 1, \ldots, \pm (2n^{2}-1), \pm (2n^{2})\}$. It follows that the function bounding $|\Re(s)-n\Im(s)|$ in (\ref{eq:boundnonly}) merely needs to be doubled to bound $|\Re(p)-n\Im(p)|$. Therefore $|\Re(p)-n\Im(p)| < 3$ and thus the real neighbours of $E_{n}$ are contained in $\{0, \pm 1, \pm 2\}$.  To see that $2$ and $-2$ are neighbours of $E_{n}$ for $n\geq5$, it can be verified directly that $\pi(0,\overline{(-n^{2})(n-2)^{2}}) = 2 + \pi((4n-2),\overline{(n-2)^{2}(-n^{2})})$ where $\pi$ is the coding map corresponding to sequences of differences of differences. 
\end{proof}

\begin{theorem}\label{thm:gaussianEquiv} 
Suppose that $n$ is a positive integer greater than $1$ and $D$ is a subset of $\{0, 1, \ldots, n^{2}\}$. 
The $(-n+i, D-D)$-representations are unique if 
\begin{itemize}
\item[(i)] $n$ is greater than or equal to $5$ and $D\subset\{0, 1, \ldots, n^{2}\}$ satisfies $|\delta - \delta^{'}| > 2$ for all $\delta \neq \delta^{'}\in D-D$. 
\item[(ii)] $n$ is one of $2, 3$ or $4$ and $D\subset\{0, 1, \ldots, n^{2}\}$ satisfies $|\delta - \delta^{'}| > 3$ for all $\delta \neq \delta^{'}\in D-D$. 
\item[(iii)] $n$ is greater than or equal to $2$ and $D\subset\{0, 1, \ldots, \lfloor n^{2}/2 \rfloor\}$ satisfies $|\delta - \delta^{'}| > 1$ for all $\delta \neq \delta^{'}\in D - D$. 
\end{itemize}
\end{theorem}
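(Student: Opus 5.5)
The plan is to reduce uniqueness of $(-n+i, D-D)$-representations to the absence of neighbours in the set of differences of differences, and then read off which integers can be neighbours from Theorem~\ref{thm:realNeighbours}. By Lemma~\ref{lem:neighUnique} applied to the digit set $D-D$, the $(-n+i, D-D)$-representations are unique if and only if $(D-D)-(D-D)$ contains no neighbour of $T_{-n+i, D-D}$. Since $D\subset\{0,1,\ldots,n^{2}\}$, we have $D-D\subset\{0,\pm1,\ldots,\pm n^{2}\}$. Hence $T_{-n+i,D-D}\subset E_{n}:=T_{-n+i,\{0,\pm1,\ldots,\pm n^{2}\}}$, and every neighbour of $T_{-n+i,D-D}$ is a neighbour of $E_{n}$.

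The elements of $(D-D)-(D-D)$ are real integers of the form $\delta-\delta'$ with $\delta,\delta'\in D-D$. The difference $0$ is never a neighbour by definition. It therefore suffices to show that no nonzero $\delta-\delta'$ is a real neighbour of the relevant ambient set.

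For (i), Theorem~\ref{thm:realNeighbours} lists the real neighbours of $E_{n}$ for $n\ge5$ as lying in $\{0,\pm1,\pm2\}$. The proof of that theorem derives exactly this from the doubled Lemma~\ref{lem:reimbound} bound $|\Re(p)-n\Im(p)|<3$, restricted to real $p$. The hypothesis $|\delta-\delta'|>2$ excludes these values, so we are done.

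For (ii), with $n\in\{2,3,4\}$, the same theorem (via the \cite{ST03} computation) gives real neighbours of $E_{n}$ in $\{0,\pm1,\pm2,\pm3\}$. The hypothesis $|\delta-\delta'|>3$ again excludes them. I will need to read the statement of Theorem~\ref{thm:realNeighbours} carefully, since its items appear to conflate the neighbour sets of $T_{n}$ and $E_{n}$. I intend to use: $E_{n}$ has real neighbours in $\{0,\pm1,\pm2\}$ for $n\ge5$ and in $\{0,\pm1,\pm2,\pm3\}$ for $n=2,3,4$; and $T_{n}=T_{-n+i,\{0,\ldots,n^{2}\}}$ has real neighbours $\{0,\pm1\}$.

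For (iii), I use a different embedding, following the argument sketched in Chapter~\ref{chp:litReview}. Suppose $\pi(\delta_j)=\pi(\delta'_j)$, where $\delta_j=d_j-d'_j$ and $\delta'_j=d''_j-d'''_j$ with all digits in $D\subset\{0,\ldots,\lfloor n^{2}/2\rfloor\}$. Rearranging gives
\[
\pi(d_j+d'''_j)=\pi(d''_j+d'_j).
\]
Both sides are $(-n+i,\{0,\ldots,n^{2}\})$-representations, since each sum is at most $2\lfloor n^2/2\rfloor\le n^{2}$. If these two sequences differ at some index, then the argument in the proof of Lemma~\ref{lem:neighUnique}, applied at the first differing index after multiplying by the appropriate power of $b$, shows that $(d_j+d'''_j)-(d''_j+d'_j)=\delta_j-\delta'_j$ is a nonzero integer neighbour of $T_n$. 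By Theorem~\ref{thm:realNeighbours} it must then equal $\pm1$, which the hypothesis $|\delta-\delta'|>1$ forbids. Hence $d_j+d'''_j=d''_j+d'_j$, that is $\delta_j=\delta'_j$, for every $j$. The first differing index is handled by a routine shift, since the tails after it are again representations of the same kind.

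The main obstacle is bookkeeping rather than analysis. I must make sure the correct neighbour set (of $T_n$ versus $E_n$) is invoked in each case, and justify that neighbours of a subattractor are neighbours of the larger one. The latter holds because $T_{A,D'}\subset T_{A,D''}$ whenever $D'\subset D''$, by Lemma~\ref{lem:selfSimForm}.
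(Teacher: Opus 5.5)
Your proposal is correct and is essentially the paper's proof. Parts (i) and (ii) follow from Lemma~\ref{lem:neighUnique} applied to the digit set $D-D$, together with the integer neighbours of $T_{-n+i,\{0,\pm1,\ldots,\pm n^{2}\}}$ from Theorem~\ref{thm:realNeighbours}; part (iii) follows by rewriting the equivalence as one between $(-n+i,\{0,1,\ldots,n^{2}\})$-representations with digit sums, since the only nonzero real integer neighbours of $T_{-n+i,\{0,1,\ldots,n^{2}\}}$ are $\pm1$. Your explicit handling of $T_n$ versus $E_n$, the restriction to integer neighbours, and the shift to the first differing index tidy up points that the paper's write-up glosses over.
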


\begin{proof}
The claims (i) and (ii) are immediate consequences of Lemma~\ref{lem:neighUnique} applied to (i) and (ii) of Theorem~\ref{thm:realNeighbours} respectively. We prove (i) using a different argument. Suppose that $(x_{j})_{j=1}^{\infty}$ and $(y_{j})_{j=1}^{\infty}$ are equivalent $(-n+i, D-D)$-representations. Each $x_{j} = d_{j} - d_{j}^{'}$ and $y_{j} = d_{j}^{''} - d_{j}^{'''}$ for some $d_{j}, d_{j}^{'}, d_{j}^{''}, d_{j}^{'''}\in D$. Therefore
\begin{equation}\label{eq:digitDiffs}
\sum_{j=1}^{\infty}(d_{j} - d_{j}^{'})(-n+i)^{-j} = \sum_{j=1}^{\infty}(d_{j}^{''} - d_{j}^{'''})(-n+i)^{-j}. 
\end{equation}
We can manipulate (\ref{eq:digitDiffs}) to yield
\begin{equation}\label{eq:digitSums}
\sum_{j=1}^{\infty}(d_{j} + d_{j}^{'''})(-n+i)^{-j} = \sum_{j=1}^{\infty}(d_{j}^{''} + d_{j}^{'})(-n+i)^{-j}. 
\end{equation}
Since $d\leq n^{2}/2$ for all $d\in D$, and $d_{j}, d_{j}^{'}, d_{j}^{''}$, and $d_{j}^{'''}$ are all elements of $D$ for all $j$, their pairwise sums are in $\{0, 1, \ldots, n^{2}\}$. Furthermore, any separation condition on the elements of $\Delta$ holds if and only the same condition holds for elements of $D+D$. This is because $(a+b) - (c+d) = (a-c) - (d-b)$ for any collection of integers $a, b, c, d$. We conclude that (\ref{eq:digitSums}) is the equivalence of two $(-n+i, D)$-representations where $D-D$ does not contain any neighbours of $T_{-n+i, D}$ (apply Lemma~\ref{thm:realNeighbours}). By Lemma~\ref{lem:neighUnique}, we must have $d_{j} + d_{j}^{'''} = d_{j}^{''} + d_{j}^{'}$ for each $j$. In particular, $x_{j} = y_{j}$ for each $j$.
\end{proof}

There are other statements that can be derived from Lemma~\ref{lem:reimbound} that reveal choices of $D$ yielding unique $(-n+i, D)$-representations. We give the following illustrative example. 

\begin{example} 
Suppose that $n\geq3$ is a positive integer and $D$ is a subset of $\{0, 1, \ldots, n^{2}\}$. Lemma~\ref{lem:reimbound} asserts that the neighbours of $T_{-n+i, D}$ must satisfy $|\Re(s) - n\Im(s)| < 2$. This inequality was derived with no restrictions on the set $D$. In fact, $D$ is allowed to be equal to $\{0, 1, \ldots, n^{2}\}$. Suppose we assumed that every $d\in D$ is bounded above by $n^{2}/2$. It can be seen from the proof of Lemma~\ref{lem:reimbound} that $|Re(s) - n\Im(s)| < 1$. This means that, for example, $(-3+i, \{0, 1, 2, 3, 4\})$-representations are unique even though the elements of $D$ are consecutive. 
\end{example}

We end this section by demonstrating another application of Lemma~\ref{lem:reimbound}. Consider the following result from \cite{G82}. 
\begin{theorem}[W. Gilbert, \cite{G82}, proposition 1]\label{thm:neighbourset}
A Gaussian integer $s$ is a neighbour of $T_{-n+i, \{0, 1, \ldots, n^{2}\}}$ if and only if
\begin{itemize}
\item[(i)] $s\in\{0, \pm1, \pm (n-1+i), \pm (n+i)\}$ and $n\geq3$.
\item[(ii)] $s\in\{0, \pm1, \pm (1+i), \pm (2+i), \pm i, \pm (2+2i)\}$ and $n=2$. 
\end{itemize}
\end{theorem}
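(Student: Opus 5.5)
The plan is to classify Gaussian integer neighbours in three steps. First, Lemma~\ref{lem:reimbound} and a modulus bound cut the candidates down to a short finite list. Second, a recursive relation between neighbours discards the spurious candidates. Third, explicit equivalent representations show the survivors really are neighbours. The case $n=2$ is finite and concrete, so I would handle it separately with the algorithm of \cite{ST03}, as in the proof of Theorem~\ref{thm:realNeighbours}. From now on assume $n\geq 3$, write $b=-n+i$ and $T=T_{b,\{0,1,\ldots,n^{2}\}}$.

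\textbf{Step 1: a finite candidate list.} Let $s=a+ci$ be a Gaussian integer neighbour. Since $a-nc$ is an integer with $|a-nc|<2$ by Lemma~\ref{lem:reimbound}, we get $a=nc+r$ with $r\in\{-1,0,1\}$. From the representation $s=\pi(\delta_j)$ with $\delta_j\in\{0,\pm1,\ldots,\pm n^{2}\}$ we also have the modulus bound $|s|\leq\sqrt{n^{2}+1}+1<n+2$. If $|c|\geq 2$ then $|s|\geq |a|\geq 2n-1$, which is at least $n+2$ for $n\geq 3$; for $n=3$ I would check the worst case $5+2i$ directly. Hence $c\in\{-1,0,1\}$, and the candidates are $0,\pm1$ together with $\pm(n-1+i)$, $\pm(n+i)$, $\pm(n+1+i)$. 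Note that $\pm(n+1+i)$ is not excluded by the modulus bound, since $(n+1)^{2}+1<(\sqrt{n^{2}+1}+1)^{2}$.

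\textbf{Step 2: the recursion that removes $\pm(n+1+i)$.} Suppose $s+\pi(d_j)=\pi(d'_j)$. Multiplying by $b$ gives
\[
bs=(d'_1-d_1)+s' \quad\text{with}\quad s'=\pi(d'_{j+1})-\pi(d_{j+1}).
\]
So $s'$ is a Gaussian integer lying in $T-T$, meaning $s'=0$ or $s'$ is again a neighbour. Therefore every neighbour $s$ admits some $e\in\{-n^{2},\ldots,n^{2}\}$ such that $bs-e$ is again on the candidate list.

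For $s=n+1+i$ we compute $bs=-(n^{2}+n+1)+i$. Any admissible $bs-e$ has imaginary part $1$, so it would have to be one of $n-1+i$, $n+i$, $n+1+i$. That forces $e\leq -(n^{2}+2n)<-n^{2}$, which is impossible. The case $-(n+1+i)$ follows by symmetry $s\mapsto -s$.

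I would also run the same check on the surviving candidates to record the transition graph. The useful fact here is that $s$ is a neighbour if and only if there is an infinite walk from $s$ in this graph on candidates, since such a walk assembles the two digit sequences.

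\textbf{Step 3: the surviving candidates are neighbours.} For the converse I would exhibit infinite walks explicitly, ideally periodic ones, in the style of the verification of (\ref{eq:examplecomputation}). The neighbour $\pm1$ is already done in Theorem~\ref{thm:realNeighbours}. For $n+i$, note that $b(n+i)=-(n^{2}+1)$, so the next step $b(n+i)-e$ can land on $-1$ or on $-(n-1+i)$ with an admissible digit difference $e$. Continuing from there should close up into a cycle among $\{\pm1,\pm(n+i),\pm(n-1+i)\}$. This cycle gives periodic digit sequences, and I would verify them by solving the period equations exactly as in that proof.

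\textbf{Main obstacle.} I expect the main difficulty to be finding explicit cycles with all digit differences in $[-n^{2},n^{2}]$, valid uniformly in $n$. I would search the graph on the candidates $\{0,\pm1,\pm(n-1+i),\pm(n+i)\}$ first and then check the periodic identities algebraically.
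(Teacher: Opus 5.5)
Your Steps 1 and 2 are essentially the paper's argument. Lemma~\ref{lem:reimbound} together with $|s|\le\sqrt{n^{2}+1}+1<n+2\le 2n-1$ leaves the candidates $0,\pm1,\pm(n-1+i),\pm(n+i),\pm(n+1+i)$. The recursion of Lemma~\ref{lem:infinitewalk} then removes $\pm(n+1+i)$, because $b(n+1+i)=-(n^{2}+n+1)+i$ would require a digit difference below $-n^{2}$. That chain of inequalities already covers $n=3$, so the separate check of $5+2i$ is unnecessary.

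The converse is where your proposal is unfinished, and it contains a false step. Since $b(n+i)=-(n^{2}+1)$ is real, $b(n+i)-e$ is real for every admissible $e$ and can never equal $-(n-1+i)$. The only available move is $e=-n^{2}$, which lands on $-1$.

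The cycle you single out as the main obstacle is short. We have $b(n-1+i)=-(n^{2}-n+1)-i$, so $e=-((n-1)^{2}+1)$ sends $n-1+i$ to $-(n-1+i)$. Symmetrically, $e=(n-1)^{2}+1$ sends it back, and both differences are admissible for all $n\ge1$. Together with $b(-1)-(2n-1)=-(n-1+i)$, this gives the walk $n+i\to -1\to -(n-1+i)\to n-1+i\to -(n-1+i)\to\cdots$.

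This walk encodes exactly the paper's witnesses:
$(n-1+i)+\pi(\overline{(n-1)^{2}+1,0})=\pi(\overline{0,(n-1)^{2}+1})$ and $(n+i)+\pi(n^{2},0,\overline{0,(n-1)^{2}+1})=\pi(0,2n-1,\overline{(n-1)^{2}+1,0})$.
The negatives follow from $s\mapsto -s$.

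Your own walk principle already proves these are neighbours: write $s=\sum_{j\le k}e_jb^{-j}+b^{-k}s_k$ with $s_k$ bounded. So no separate algebraic verification of the periodic identities is needed; the paper instead checks the identities directly. Until such a cycle is written down, though, the \emph{if} direction for $\pm(n+i)$ and $\pm(n-1+i)$ is asserted rather than proved.
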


Gilbert used this result to derive the rules governing radix expansions in base $(-n+i, \{0, 1, \ldots, n^{2}\})$ (see theorem 5 and theorem 8 of \cite{G82}). Our proof, by way of Lemma~\ref{lem:reimbound}, uses a different approach than that of Gilbert. We first make the following observation. 

\begin{lemma}\label{lem:infinitewalk} 
Fix an integer $n\geq 2$. Let $b:= -n+i$. If $s$ is a neighbour of $T_{-n+i, \{0, 1, \ldots, n^{2}\}}$, then $bs + \delta$ is a neighbour of $T_{-n+i, \{0, 1, \ldots, n^{2}\}}$ for some $\delta\in\{0, \pm 1, \ldots, \pm n^{2}\}$. 
\end{lemma}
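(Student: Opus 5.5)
The plan is to use the representation of neighbours from the proof of Lemma~\ref{lem:reimbound} and shift indices. Let $T := T_{-n+i,\{0,1,\ldots,n^{2}\}}$, $b := -n+i$, and $\pi := \pi_{-n+i,\{0,1,\ldots,n^{2}\}}$. If $s$ is a neighbour of $T$, then by definition there are digit sequences $(d_{j})_{j=1}^{\infty}$ and $(d'_{j})_{j=1}^{\infty}$ with entries in $\{0,1,\ldots,n^{2}\}$ such that $s + \pi(d_{j})_{j=1}^{\infty} = \pi(d'_{j})_{j=1}^{\infty}$. Using Lemma~\ref{lem:selfSimForm}, I write both sides as series $\sum_{j\geq1} b^{-j}d_{j}$ and $\sum_{j\geq1} b^{-j}d'_{j}$.

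Next I multiply the identity by $b$. Since $b\,\pi(d_{j})_{j=1}^{\infty} = d_{1} + \pi(d_{j})_{j=2}^{\infty}$, and likewise for the primed sequence, this gives
\begin{equation*}
bs + d_{1} + \pi(d_{j})_{j=2}^{\infty} = d'_{1} + \pi(d'_{j})_{j=2}^{\infty}.
\end{equation*}
Setting $\delta := d_{1} - d'_{1} \in \{0,\pm1,\ldots,\pm n^{2}\}$, this rearranges to $(bs + \delta) + \pi(d_{j})_{j=2}^{\infty} = \pi(d'_{j})_{j=2}^{\infty}$. Both shifted sequences still take values in $\{0,1,\ldots,n^{2}\}$, so $T \cap (T + bs + \delta)$ is nonempty. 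This is exactly the intersection condition in the definition of a neighbour.

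The only delicate point is that the definition of neighbour requires $bs + \delta \neq 0$. If the first digits happen to make $bs + \delta = 0$, I will pass to the next index instead. Since $s \neq 0$, the two sequences $(d_{j})$ and $(d'_{j})$ cannot be equal. Moreover, $bs + \delta = 0$ means precisely that the shifted sequences represent the same point. If this degeneracy persisted at every step, all subsequent shifts would be equivalent representations of a common point, which is compatible with the hypotheses. So the argument must be arranged to stop at a nonzero value. Concretely, I will argue that $bs+\delta = 0$ would force $s = -\delta/b$, and then examine whether such an $s$ is still acceptable.

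The honest resolution, which I expect the later argument uses, is to read the lemma as producing an element $bs+\delta$ of the set of $t$ with $T \cap (T+t) \neq \emptyset$, which includes $0$. Theorem~\ref{thm:neighbourset} itself lists $0$ among its ``neighbours,'' so this reading is consistent with how the lemma is applied. I would state this convention explicitly and note that the computation above is the entire content of the proof. The main obstacle is this bookkeeping about whether $0$ counts as a neighbour, not any estimate.
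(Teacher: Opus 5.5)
Your argument is the paper's proof: multiply the witness identity $s+\pi(d_j)_{j\ge1}=\pi(d'_j)_{j\ge1}$ by $b$ and absorb the first digits into $\delta=d_1-d'_1$. The paper also ignores the case $bs+\delta=0$, and your convention of allowing $0$ is how the lemma is used in the proof of Theorem~\ref{thm:neighbourset}, where $0$ is among the candidates ruled out. If you want the literal (nonzero) statement — the degenerate case does occur, e.g.\ $s=1/b$ with equal tails — drop the ``next index'' idea: $bs+\delta=0$ makes $bs=-\delta$ an integer, so replacing $\delta$ by whichever of $\delta\pm1$ lies in $\{0,\pm1,\ldots,\pm n^{2}\}$ gives $bs+\delta'=\pm1$, which is a neighbour by the explicit identity in the proof of Theorem~\ref{thm:realNeighbours}.
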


\begin{proof}
There exist sequences $(d_{j})_{j=1}^{\infty}$ and $(d_{j}^{'})_{j=1}^{\infty}$ with entries in $\{0, 1, \ldots, n^{2}\}$ such that 
\begin{equation}
s + \pi_{A, D}(d_{j})_{j=1}^{\infty} = \pi_{A, D}(d_{j}^{'})_{j=1}^{\infty}. 
\end{equation}
This equation holds if and only if
\begin{equation}
bs + (d_{1}-d_{1}^{'}) + \pi_{A, D}(d_{j+1})_{j=1}^{\infty} = \pi_{A, D}(d_{j+1}^{'})_{j=1}^{\infty}. 
\end{equation}
This completes the proof. 
\end{proof}

We now prove Theorem~\ref{thm:neighbourset}. 

\begin{proof}[Proof of Theorem \ref{thm:neighbourset}] 
The case $n = 2$ can be computed explicitly using the neighbour finding algorithm in \cite{ST03}. We proceed under the assumption that $n\geq 3$. Let $T_{n}$ denote $T_{-n+i, \{0, 1, \ldots, n^{2}\}}$. Suppose $s$ is a neighbour of $T_{n}$ and is a Gaussian integer. 

By Lemma \ref{lem:reimbound}, if $s$ is real, then $s\in\{0, \pm 1\}$. That lemma also implies that $s$ cannot be purely imaginary. If that were the case, $|n\Im(s)| < 2$ where $n\geq3$. This is impossible. 

We now claim that $|\Im(s)|$ cannot be larger than $1$. Let us denote $\Im(s)$ by $\beta$. Lemma $\ref{lem:reimbound}$ implies that $Re(s)$ is one of $n\beta - 1$, $n\beta$, or $n\beta+1$. Let us assume $\beta$ is greater than or equal to $2$. The magnitude of $s$ is therefore bounded below by $n\beta-1$.
On the other hand, $|s| \leq n^{2}\sum\limits_{j=1}^{\infty}|b|^{-j} = \sqrt{n^{2}+1} + 1$. We will argue that this quantity is strictly less $n\beta-1$ for $n\geq3$. 
Observe that for $n\geq3$
\begin{align}
\sqrt{n^{2}+1} + 1 &< n+2 \\
&\leq 2n-1 \\
&\leq n\beta-1.  
\end{align}
The second inequality holds since $n - 3 \geq 0$. This shows that when $\beta\geq2$, the number $s$ is not a neighbour of $T_{n}$. The case when $\beta$ is less than or equal to $-2$ is similar.

The only remaining cases to consider are $\beta = 1$ or $-1$. Lemma \ref{lem:reimbound} implies that $s$ is one of $\pm(n-1+i), \pm(n+i)$, and $\pm(n+1+i)$. 

By Lemma~\ref{lem:infinitewalk}, if $n+1+i$ is a neighbour of $T_{n}$, there must exist $\delta\in\{0, \pm 1, \ldots, \pm n^{2}\}$ such that $b(n+1+i) + \delta$ is also a neighbour of $T_{n}$. 
The set of neighbours of $T_{-n+i}$ is a subset of $\{0, \pm1, \pm(n-1+i), \pm(n+i), \pm(n+1+i)\}$. We observe that $b(n+1+i) = -(n^{2}+n+1)+i$. We see that adding an element of $\{0, \pm 1, \ldots, \pm n^{2}\}$ (a real number) to this expression cannot result in any of $0$, $1$, and $-1$. To see that not a single neighbour is obtainable, we compute $\delta$ using the remaining potential neighbours of $T_{n}$. 
\begin{align}
(n+i) + n^{2}+n+1 - i &= n^{2} + 2n + 1, \\
(-n-i) + n^{2}+n+1 - i &= n^{2}+1-2i, \\
(n-1+i) + n^{2}+n+1 - i &= n^{2}+2n, \\
(-n+1-i) + n^{2}+n+1 - i &= n^{2} +2 -2i, \\
(n+1+i) + n^{2}+n+1 - i &= n^{2}+2n+2, \\
(-n-1-i) + n^{2}+n+1 - i &= n^{2} -2i.
\end{align}
All of these are either larger than $n^{2}$ or are not real and therefore are not in $\{0, \pm 1, \ldots, \pm n^{2}\}$. We conclude that $n+1+i$ is not a neighbour of $T_{n}$. It also follows that $-n-1-i$ is not a neighbour of $T_{n}$ because $\zeta$ is a neighbour if and only if $-\zeta$ is as well.  

We have shown that a neighbour of $T$ is an element of the set $\{0, \pm1, \pm (n-1+i), \pm (n+i)\}$. To see that the converse also holds, we show that there exists $t\in T\cap(T+s)$ for each $s$. Let $\pi$ denote the $(-n+i, \{0, 1, \ldots, n^{2}\})$- coding map. It can be verified explicitly that

\begin{align}
1 + \pi((2n-1),\overline{((n-1)^{2}+1), 0}) &= \pi(0,\overline{0,((n-1)^{2}+1)}), \\
-1 + \pi(0,\overline{0, ((n-1)^{2}+1)}) &= \pi((2n-1), \overline{((n-1)^{2}+1), 0}), \\
(n+i) + \pi(n^{2}, 0, \overline{0, ((n-1)^{2}+1)}) &= \pi(0,(2n-1)\overline{((n-1)^{2}+1), 0}), \\
(-n-i) + \pi(0, (2n-1), \overline{((n-1)^{2}+1)0}) &= \pi(n^{2}, 0, \overline{0, ((n-1)^{2}+1)}),\\
(n-1+i) + \pi(\overline{((n-1)^{2}+1)0}) &= \pi(\overline{0, ((n-1)^{2}+1)}),\\
(-n+1-i) + \pi(\overline{0((n-1)^{2}+1)}) &= \pi(\overline{((n-1)^{2}+1), 0}).
\end{align}
The verification can be performed in the same way as in the proof of Theorem~\ref{thm:realNeighbours}.
\end{proof}

\section{Neighbour Graphs}\label{sec:neighGraphsss} 

The sufficient condition for the uniqueness of $(-n+i, D-D)$-expansions given in Theorem~\ref{thm:gaussianEquiv} are both easy to understand and apply. However, it does not capture the whole picture. To fill the gap, knowledge of the precise relationship between a pair of equivalent $(-n+i, D)$-representations can be used to refine the sufficient condition for uniqueness into a sufficient and necessary condition. The cost of strengthening the result, aside from the time and energy required to prove it, is the burden of a complicated condition in place of the previous simple one. 

 We consider the following familiar example to make clear what we mean by ``the relationship between equivalent representations". 

\begin{example}\label{ex:basemRules} 
Let $m\geq 2$ be a positive integer. A pair of $(m, \{0, 1, \ldots, m-1\})$-representations $(x_{j})_{j=1}^{\infty}$ and $(y_{j})_{j=1}^{\infty}$ is equivalent if and only if, without loss of generality, there exists a positive integer $J$ such that $x_{j} = y_{j}$ for all $j < J$, $x_{J} = y_{J} + 1$, $x_{j} = 0$ for all $j > J$, and $y_{j} = m-1$ for all $j > J$. 
\end{example}

In the same way, we can grasp the rules governing equivalent $(-n+i, D)$-representations. These were originally derived in \cite{G82}. The rules are considerably more complicated than those for base-$10$ and expressing them in the manner used in Example~\ref{ex:basemRules} is tedious and inefficient. Instead, we visualize representations as infinite walks along a directed graph as they are presented in \cite{G82}. 

\begin{definition} 
Let $A\in M_{n}(\mathbb{Z})$ be an expanding matrix and let $D\subset\mathbb{Z}^{n}$ be finite. Let $x := (x_{j})_{j=1}^{\infty}$ and $y := (y_{j})_{j=1}^{\infty}$ be $(A, D)$-representations. We call the sequence $(\zeta_{k})_{k=0}^{\infty}$ given by
\begin{equation}
\zeta_{k} = \begin{cases} 0 \;\text{if}\; k = 0,\\
\sum\limits_{j=1}^{k}A^{k-j}(x_{j}-y_{j}) \; \text{if}\; k \geq 1. 
\end{cases}
\end{equation}
the \textit{integer sequence of $(x, y)$}. We call an integer sequence a \textit{neighbour sequence} if the elements of the sequence are contained in the set of neighbours of $T_{A, D}$ unioned with $\{0\}$. 
\end{definition}

Our particular concern is with the integer sequences of equivalent $(A, D)$-representations. 

\begin{theorem}\label{thm:neighSeqEquiv}
Suppose that $A\in M_{n}(\mathbb{Z})$ is an expanding matrix and suppose that $D\subset\mathbb{Z}^{n}$ is finite. A pair of $(A, D)$-representations, $x$ and $y$, are equivalent if and only if the integer sequence of $(x, y)$ is a neighbour sequence.
\end{theorem}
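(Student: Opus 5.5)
The plan is to express the tail of the difference of the two representations in terms of $\zeta_k$. Write $x = (x_j)_{j=1}^{\infty}$ and $y = (y_j)_{j=1}^{\infty}$, let $T := T_{A,D}$, $\pi := \pi_{A,D}$, and for each $k\geq 0$ let $\sigma^{k}x := (x_{j+k})_{j=1}^{\infty}$ denote the shifted representation. Lemma~\ref{lem:selfSimForm} gives $\pi(x) = \sum_{j\ge1}A^{-j}x_j$. Splitting the sum at index $k$ and multiplying by $A^{k}$ yields the identity
\begin{equation*}
A^{k}\big(\pi(x) - \pi(y)\big) = \zeta_k + \pi(\sigma^{k}x) - \pi(\sigma^{k}y),
\end{equation*}
valid for every $k\ge 0$, with $\zeta_0 = 0$ consistent with the definition. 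Everything else follows from this identity.

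\emph{Forward direction.} Suppose $\pi(x) = \pi(y)$. Then the left side vanishes for every $k$, so $\pi(\sigma^{k}y) = \zeta_k + \pi(\sigma^{k}x)$. Since $\pi(\sigma^{k}x)$ and $\pi(\sigma^{k}y)$ both lie in $T$, the set $T\cap(T+\zeta_k)$ is nonempty. Hence $\zeta_k$ is either $0$ or a neighbour of $T$. Each $\zeta_k$ lies in $\mathbb{Z}^{n}$ because $A$ and $D$ are integral. This is exactly the statement that the integer sequence of $(x,y)$ is a neighbour sequence.

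\emph{Converse.} Suppose every $\zeta_k$ lies in the set $\mathcal{N}$ of neighbours of $T$ together with $0$. The key observation is that $\mathcal{N}$ is bounded: if $v\in\mathcal{N}$ then $v\in T-T$, so $\norm{v}\le \diam T$. The identity then gives
\begin{equation*}
\norm{A^{k}(\pi(x)-\pi(y))} \le \norm{\zeta_k} + \diam T \le 2\diam T
\end{equation*}
for all $k$. Set $w := \pi(x)-\pi(y)$. Because $A$ is expanding, the only vector whose forward orbit $A^{k}w$ is bounded is $0$. To see this, note $\norm{w} = \norm{A^{-k}A^{k}w} \le \norm{A^{-k}}_{\text{op}}\cdot 2\diam T$. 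By the spectral radius argument already used in Lemma~\ref{lem:contractingNorm}, $\norm{A^{-k}}_{\text{op}}\to 0$, so $w = 0$ and the two representations are equivalent.

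I expect no serious obstacle. The only points needing care are verifying the algebraic identity with the correct index bookkeeping, and justifying in the converse that $\norm{A^{-k}}_{\text{op}}\to0$. For the latter I will cite the spectral radius formula exactly as in the proof of Lemma~\ref{lem:contractingNorm}. Integrality of $\zeta_k$ is used only to make sense of the phrase "integer sequence"; the argument itself would work for real $D$ as well.
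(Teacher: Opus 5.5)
Your proof is correct and follows essentially the same route as the paper: the forward direction uses the same tail identity $A^{k}(\pi(x)-\pi(y)) = \zeta_k + \pi(\sigma^{k}x) - \pi(\sigma^{k}y)$, and the converse is the same argument that $(\zeta_k)$ is bounded and $A^{-k}\to 0$, which forces $\pi(x)=\pi(y)$. The only difference is cosmetic. You bound $(\zeta_k)$ via $\mathcal{N}\subset T-T$, whereas the paper appeals to there being only finitely many integer neighbours; as you note, your version therefore does not need integrality.
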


\begin{proof}
Suppose $(x_{j})_{j=1}^{\infty}$ and $(y_{j})_{j=1}^{\infty}$ are equivalent $(A, D)$-representations. Since $\pi_{A, D}(x_{j})_{j=1}^{\infty} = \pi_{A, D}(y_{j})_{j=1}^{\infty}$, the equation $\sum\limits_{j=1}^{k}A^{j-1}(x_{j}-y_{j}) + \pi_{A, D}(x_{j})_{j=k+1}^{\infty} = \pi_{A, D}(y_{j})_{j=k+1}^{\infty}$ holds for all positive integers $k\geq1$. This equation implies that $\sum\limits_{j=1}^{k}A^{j-1}(x_{j}-y_{j})$ is a neighbour of $T_{A, D}$ for each $k$. 

Conversely, if the integer sequence of $x$ and $y$ is a neighbour sequence, it means that the integer sequence is bounded. This is because only a finite number of neighbours of $T_{A, D}$ are elements of $\mathbb{Z}^{n}$. Observe that for every $k$, 
\begin{equation}\label{eq:boundedEq}
\pi_{A, D-D}(x_{j} - y_{j})_{j=1}^{\infty} = A^{-k}\zeta_{k} + \sum_{j=k+1}^{\infty}A^{-j}(x_{j} - y_{j})
\end{equation}
where $(\zeta_{k})_{k=1}^{\infty}$ is the integer sequence of $(x, y)$. Since the integer sequence is bounded and multiplication by a sufficiently large power of $A^{-1}$ is a contraction with respect to Euclidean distance, the first term of the addition in (\ref{eq:boundedEq}) tends to zero as $k$ tends to infinity. Meanwhile the infinite series $\sum\limits_{j=1}^{\infty}A^{-j}(x_{j}-y_{j})$ is a convergent. Therefore the second term of (\ref{eq:boundedEq}) tends to zero as $k$ tends to infinity. It must be that $\pi_{A, D-D}(x_{j}-y_{j})_{j=1}^{\infty} = 0$. 
\end{proof}

We can imagine that a neighbour sequence is an infinite path in a graph which treats the neighbours of $T_{A, D}$ in $\mathbb{Z}^{n}$ along with zero as vertices. We describe how to draw edges in the next paragraph. 

Observe that if $(\zeta_{k})_{k=0}^{\infty}$ is any integer sequence, then $\zeta_{k+1} = A\zeta_{k} + (x_{k+1}-y_{k+1})$. Suppose we do not know the value of $\zeta_{k+1}$ or that of $x_{k+1}$ and $y_{k+1}$. If $(\zeta_{k})_{k=0}^{\infty}$ is a neighbour sequence, then we know $\zeta_{k+1}\in\mathbb{Z}^{n}$ is an element of the set of neighbours of $T_{A, D}$ or zero. If we know the set of neighbours of $T_{A, D}$ in $\mathbb{Z}^{n}$ and the value of $\zeta_{k}$, then we can choose elements of $D-D$ for $x_{k+1}-y_{k+1}$ to yield a potential $\zeta_{k+1}$. Not every element of $D-D$ might yield zero or a neighbour for a given $\zeta_{k}$. If it does, then we can draw an edge from $\zeta_{k}$ to that value and label the edge $x_{k+1}-y_{k+1}$. Furthermore, we can choose $\zeta_{0} = 0$ to be the initial vertex of the directed graph. 

\begin{definition} 
Let $A\in M_{n}(\mathbb{Z})$ be an expanding matrix and let $D\subset\mathbb{Z}^{n}$ be finite. We call a directed graph $G = (V, E)$ a \textit{neighbour graph} if 
\begin{itemize}
\item[(i)] The set of vertices $V$ is the set $\{0\}$ unioned with the set of neighbours of $T_{A, D}$ which occur as elements in at least one neighbour sequence.  
\item[(ii)] The set of edges $E$ is the set of $e\in D-D$, for which there exists a neighbour sequence $(\zeta_{k})_{k=0}^{\infty}$ and nonnegative integer $k$ such that $\zeta_{k+1} = A\zeta_{k} + e$. 
\item[(iii)] There is a directed edge from $v_{1}$ to $v_{2}$ if there exists a neighbour sequence $(\zeta_{k})_{k=0}^{\infty}$, a nonnegative integer $k$, and $e\in E$, such that $v_{1} = \zeta_{k}$, $v_{2} = \zeta_{k+1}$ and $v_{2} = Av_{1} + e$.
\end{itemize} 
\end{definition}

By construction, a sequence of vertices of a neighbour graph corresponds to an infinite directed path through the graph if and only if the sequence of vertices is a neighbour sequence of some pair of $(A, D)$-representations. The graphs are an alternative formulation of those given in \cite{ST03}. We provide an example illustrating these concepts for base-$10$. 

\begin{example} 
We examine $(10, \{0, 1, \ldots, 9\})$-representations. The self-affine set generated by $(10, \{0, 1, \ldots, 9)$ is the closed unit interval. It is clear that its integer neighbours are $\pm 1$. Let $(\zeta_{k})$ be an arbitrary neighbour sequence. Since $\zeta_{0} = 0$, we have that $\zeta_{1} = x_{1} - y_{1}$. 
Since $\zeta_{1}$ is either $-1$, $1$, or $0$, we see that $x_{1}$ and $y_{1}$ must either be one apart or equal. If they are equal, we have $\zeta_{1} = 0$ and we are back to original case. If $\zeta_{1} = 1$, then know that $\zeta_{2} = 10 + (x_{2} - y_{2})$. The only way for this to be in $\{-1, 0, 1\}$ is for $x_{2} = 0$ and $y_{2} = 9$. This yields $\zeta_{2} = \zeta_{1} = 1$, causing cycle with period one. If $\zeta_{1} = -1$, a similar argument shows that $\zeta_{2} = -1$. 

This matches the rules we recalled in Example~\ref{ex:basemRules}. The case that $\zeta_{k} = 0$ for all $k$ corresponds to when $(x_{j})$ and $(y_{j})$ are the same representation. The case $\zeta_{k} = 1$ for some $k$ corresponds to $x_{k} = y_{k} + 1$ and $x_{j} = 0$, $y_{j} = 9$ for all $j > k$. From our analysis, this can only happen when $\zeta_{j} = 0$ for $j < k$ and so $x_{j} = y_{j}$ for $j < k$. The case $\zeta_{k} = -1$ corresponds to the same kind of equivalence of representations known for base-$10$, only that the labels have been swapped. 

We summarize this information in the neighbour graph shown in Figure~\ref{fig:decimal}. The set of vertices are the neighbours $\pm 1$ with zero, and the set of edges are the possible values of $x_{j} - y_{j}$ when the representations are equivalent : $0, \pm 1$, and $\pm 9$. Rather than draw multiple edges between vertices, we have placed a $+$-symbol to indicate that when choices of vectors $\begin{matrix}x_{k} \\ y_{k} \end{matrix}$ are viable. For example, the pair $(x = (1, \overline{0}), y = (0, \overline{9})$ and $(x = (2, \overline{0}), y = (1, \overline{9}))$ both take the path right from vertex $0$ to $1$ and then loop back to $1$ forever. There is no $+$-symbol in either of the terminal loops because no other pairs of digits except for $0$ and $9$ have a difference of $\pm9$. The path that never leaves the $0$ vertex corresponds to an indistinct pair of representations. 
\end{example}

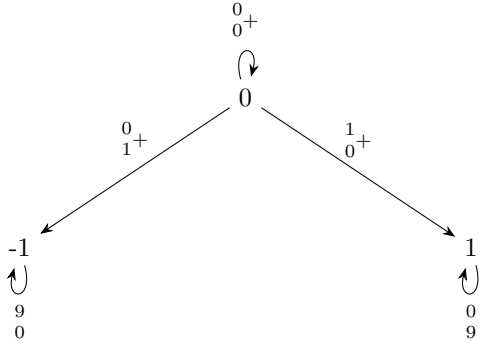
\begin{figure}
\centering
\vspace*{-3em}
\hspace*{-5.5em}
\begin{tikzpicture}
\begin{scope}[every node/.style]
     \node (A) at (0, 7) {0}; 
    \node (B) at (-3,5) {-1}; 
    \node (C) at (3,5) {1}; 
\end{scope}
\begin{scope}[>={Stealth[black]},
              every node/.style={fill=white},
              every edge/.style={draw=black}]
    \path [->] (A) edge[loop above]  node[above, fill=none]{\scriptsize$\begin{matrix}
0 \\
0 \\
\end{matrix}$+} (A);
    \path [->] (A) edge node[above, fill=none]{\scriptsize$\begin{matrix}
0 \\
1\\
\end{matrix}$+}(B);
    \path [->] (A) edge node[above, fill=none]{\scriptsize$\begin{matrix}
1 \\
0 \\
\end{matrix}$+}(C);
    \path [->] (B) edge[loop below] node[below, fill=none]{\scriptsize$\begin{matrix}
9 \\
0 \\
\end{matrix}$}(B);
    \path [->] (C) edge[loop below] node[below, fill=none]{\scriptsize$\begin{matrix}
0 \\
9 \\
\end{matrix}$}(C);
\end{scope}
\end{tikzpicture}
\caption{The neighbour graph for $(10, \{0, 1, \ldots, 9\})$-representations.}
\label{fig:decimal}
\end{figure}

The neighbour graph that governs the equivalence of $(-n+i, \{0, 1, \ldots, n^{2}\})$-representations is given in \cite{G82}. It carries additional information by keeping track of three states at each vertex that are associated with a triple of representations, instead of only a single state associated with a pair of representations. This kind of modification is possible when there is an upper bound on the number of representations an element of $T_{A, D}$ can have. In Example~\ref{ex:uncountAlphaRe}, we saw that some $(A, D-D)$-representations are equivalent to uncountably many other representations. Suppose $p := (p_{k})_{k=1}^{\infty}, q := (q_{k})_{k=1}^{\infty},$ and $r := (r_{k})_{k=1}^{\infty},$ are $(-n+i, \{0, 1, \ldots, n^{2}\})$-representations of the same complex number. They are not necessarily distinct. We define the $k$th state of $p, q$ and $r$ to be the triple 
\begin{equation} \label{eq:stateRel}
S(k) := (\zeta_{k}, \xi_{k}, -(\zeta_{k} + \xi_{k})),
\end{equation}
where $\zeta_{k}$ is the neighbour sequence of $(p, q)$ and $\xi_{k}$ is the neighbour sequence of $(q, r)$. It follows that $-(\zeta_{k} + \xi_{k})$ is the neighbour sequence of $(r, p)$. Notably, since the sum of these components is zero, one of the components is redundant. Nonetheless, it is useful to express all the differences explicitly in order to determine the digits at the $k$th place of the expansions $p, q,$ and $r$ (as demonstrated in Appendix~\ref{app:a}). It follows from the relationship $\zeta_{k+1} = A\zeta_{k} + (p_{k+1} - q_{k+1})$ that 
\begin{equation}\label{eq:states}
S(k+1) = AS(k) + (p_{k+1}-q_{k+1}, q_{k+1}-r_{k+1}, r_{k+1}-p_{k+1})
\end{equation}
where $A$ is applied component-wise to the tuple $S(k)$. Therefore the knowledge of the value of $S(k)$ can be used with Theorem~\ref{thm:neighbourset} to determine the possible values for the digits $p_{k+1}, q_{k+1}$, and $r_{k+1}$ and the state $S(k+1)$. 

If we treat allowable states as vertices, we can construct the graph. The directed edges indicate what states $S(k)$ can be achieved from a given state $S(k+1)$ (the node you are currently at). The graph in Figure~\ref{fig:radix} corresponds to the cases $n\geq3$ where $b=-n+i$. The case $n=2$ is more complicated and is presented in Appendix~\ref{app:b}. Both graphs feature a system of diagrams that communicate the value of a state. We describe the system for the case $n\geq3$ here. The additional states present in the case $n=2$ can be found in Appendix~\ref{app:b}. 

Let $(\zeta)_{k=1}^{\infty}$ be the neighbour sequence of $(p, q)$. We begin with a system of diagrams that communicate the value of $\zeta_{k}$. The system is as follows:

\begin{enumerate}

\item[(i)]  $\zeta_{k} = 0$ corresponds to \begin{tikzpicture}\draw (0,0) rectangle node{pq} (.75,.75); \end{tikzpicture}.
\item[(ii)] $\zeta_{k} = 1$ corresponds to  \begin{tikzpicture}\draw (0,0) rectangle node{q} (0.75,0.75); \draw (0.75, 0.75) rectangle  node{p} (1.5, 0); \end{tikzpicture}.

\item[(iii)] $\zeta_{k} = n-1+i$ corresponds to  \begin{tikzpicture}\draw (0,0) rectangle node{p} (0.75,0.75); \draw (0, 0) rectangle  node{q} (.75, -.75); \end{tikzpicture}.
\item[(iv)] $\zeta_{k} = n+i$ corresponds to  \begin{tikzpicture}\draw (0,0) rectangle node{q} (0.75,0.75); \draw (0.75, 0.75) rectangle  node{p} (1.5, 1.5); \end{tikzpicture}.
\end{enumerate}

\begin{figure} [p!]
\centering
\includegraphics[scale=0.63]{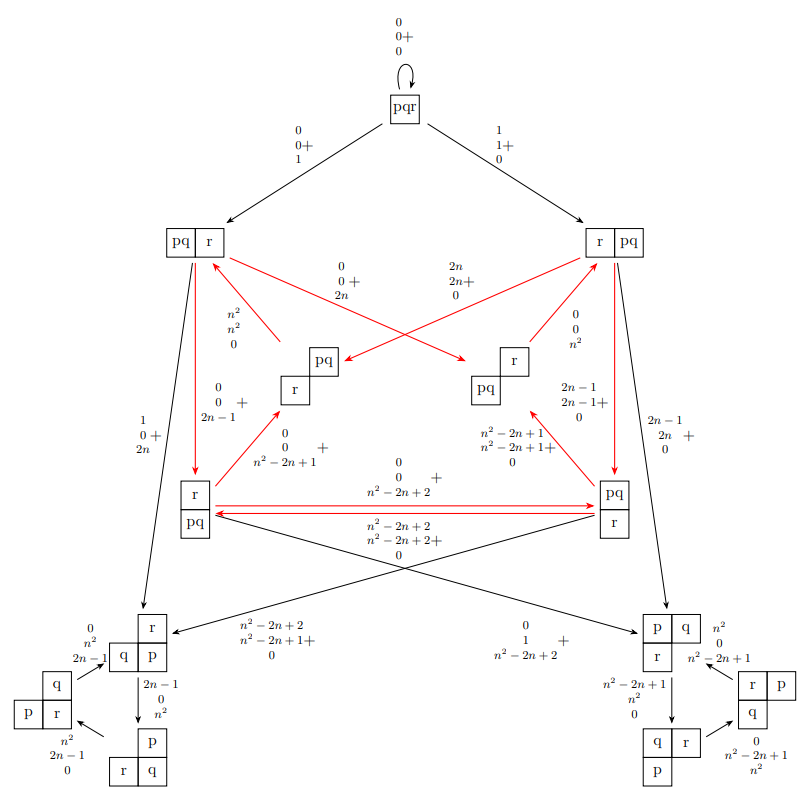}
\caption{The neighbour graph for $(-n+i, \{0, 1, \ldots, n^{2}\})$-representations, $n\geq3$.}
\label{fig:radix}
\end{figure}

Swapping the positions of $p$ and $q$ in any of these arrangements flips the sign on the value of $\zeta_{k}$. We can use this system to represent the values of $\zeta_{k}$, $\xi_{k}$, and $-(\zeta_{k} + \xi_{k})$ simultaneously. For example, the state $(1, -n-i, n-1+i)$ is communicated by 
$$\begin{tikzpicture}\draw (0,0) rectangle node{p} (0.75,0.75); \draw (0.75, 0.75) rectangle node{r}(0, 1.5);\draw (0, 0) rectangle node{q} (-0.75, 0.75); \end{tikzpicture}.$$
Each edge of the state graph is labelled with a triple of integers. These indicate a combination of digits, read from top to bottom, that communicate the allowable values of $(p_{k} - q_{k}, q_{k} - r_{k}, r_{k} - q_{k})$ in order for (\ref{eq:states}) to hold. This is a tuple of edges of the neighbour graph. The indication of a ``$+$" symbol means there are multiple tuples $(p_{k}, q_{k}, r_{k})$ which satisfy (\ref{eq:states}). Therefore the integers listed along the edges in the state graph communicate the distances between the digits at that index. 

\begin{theorem}\label{thm:rules} [W. J. Gilbert, \cite{G82}, theorem 5] 
Fix a positive integer $n\geq3$. The $(-n+i, \{0, 1, \ldots n^{2}\})$-representations $p, q$ and $r$ are pairwise equivalent if and only if they can be obtained from a single infinite path through the graph in Figure~\ref{fig:radix} starting at state $(0, 0, 0)$, if necessary, relabeling $p, q$ and $r$. 
\end{theorem}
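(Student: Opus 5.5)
The plan is to reduce Theorem~\ref{thm:rules} to Theorem~\ref{thm:neighSeqEquiv} applied to each of the three pairs $(p,q)$, $(q,r)$, $(r,p)$, combined with the explicit list of Gaussian integer neighbours in Theorem~\ref{thm:neighbourset}(i). By Theorem~\ref{thm:neighSeqEquiv}, the representations $p,q,r$ are pairwise equivalent if and only if the integer sequences $(\zeta_k)$ of $(p,q)$ and $(\xi_k)$ of $(q,r)$ are neighbour sequences. In that case $-(\zeta_k+\xi_k)$, the integer sequence of $(r,p)$, is automatically a neighbour sequence, because equivalence is transitive. Equivalently, the state sequence $S(k)$ defined in (\ref{eq:stateRel}) takes values only in triples whose three components lie in
\[
N:=\{0,\pm1,\pm(n-1+i),\pm(n+i)\}
\]
and sum to zero. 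So the theorem becomes a purely combinatorial statement: infinite sequences of admissible triples obeying the recursion (\ref{eq:states}) with digits in $\{0,1,\ldots,n^2\}$, started from $S(0)=(0,0,0)$, are exactly the infinite paths in Figure~\ref{fig:radix} up to relabeling.

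Next I would enumerate the admissible states. A triple $(a,b,c)$ with $a,b,c\in N$ and $a+b+c=0$ is forced to be of only a few shapes: all zero, one zero with the other two opposite, or three nonzero elements summing to zero. For the last shape, inspection of $N$ leaves essentially $\pm(1,\,n-1+i,\,-(n+i))$ and its permutations. Up to permuting $p,q,r$, these are the box diagrams listed before the theorem.

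For each state $S(k)$, I would compute $AS(k)$ componentwise, that is, multiply by $b=-n+i$. Using $b\cdot1=-n+i$, $b(n-1+i)=-(n^2-n+1)-i$, and $b(n+i)=-(n^2+1)$, I would find all difference triples $(p_{k+1}-q_{k+1},\,q_{k+1}-r_{k+1},\,r_{k+1}-p_{k+1})$ such that each component of $AS(k)+(\text{differences})$ again lies in $N$. Since each difference is a real integer in $[-n^2,n^2]$, the imaginary part of each new component is already fixed by $AS(k)$. This leaves at most one or two real choices per component, and these choices are exactly the edges of the graph. The count of digit triples realizing a given difference triple tells whether to attach a ``$+$'' to the edge. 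This mirrors the computation in the proof of Theorem~\ref{thm:neighbourset}, where $n+1+i$ was excluded.

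The two directions then follow. An infinite path from $(0,0,0)$, read as a sequence of digit triples, gives integer sequences that stay in $N$, so the representations are equivalent by Theorem~\ref{thm:neighSeqEquiv}. Conversely, any three equivalent representations generate a state sequence that, by the case analysis, must follow the edges. The main obstacle I expect is the exhaustive transition table: I have to check every state and every permutation, and verify that no unlisted difference lands back in $N$. I would organize this by the imaginary part of each component, which takes values in $\{0,\pm1\}$, since that part alone determines which element of $N$ the successor component can be. I would also lean on the inequality $|\Re(s)-n\Im(s)|<2$ from Lemma~\ref{lem:reimbound} to prune candidates quickly.
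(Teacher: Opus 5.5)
Your proposal is correct and follows essentially the same route as the paper's derivation in Appendix~\ref{app:a}: you use Theorem~\ref{thm:neighSeqEquiv} to turn pairwise equivalence into the requirement that every component of $S(k)$ stays in Gilbert's neighbour set from Theorem~\ref{thm:neighbourset}, then generate the graph edge by edge from the recursion (\ref{eq:states}), leaving the exhaustive transition check to be carried out, just as the paper does. One harmless slip: a zero component of $S(k)$ has three possible successors, $0$ and $\pm1$, not ``at most one or two,'' and your transition table would reveal this anyway.
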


\begin{remark}
In the approximation of $T_{3}$ in Figure~\ref{fig:3t}, it can be seen that at most three sets among $T_{3}$ and its translates by its neighbours intersect at a given point. The proof lies in directly checking that no quadruple / quintuple of distinct representations could satisfy the equivalent version of (\ref{eq:stateRel}). 
\end{remark}. 

\begin{corollary}
All $(-n+i, D)$-representations are unique if and only if $D$ is chosen such that no distinct pair of representations produces a neighbour sequence that corresponds to a path in Figure~\ref{fig:radix}. 
\end{corollary}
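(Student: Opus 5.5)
The plan is to reduce the corollary to Theorem~\ref{thm:neighSeqEquiv} and Theorem~\ref{thm:rules}. Write $T_{n}$ for $T_{-n+i,\{0,1,\ldots,n^{2}\}}$ and assume $n\geq 3$, so that Figure~\ref{fig:radix} applies. The first step is to note that for $D\subset\{0,1,\ldots,n^{2}\}$ every $(-n+i, D)$-representation is also a $(-n+i,\{0,1,\ldots,n^{2}\})$-representation. Equivalence of two such sequences means only that they have equal images, and the coding maps agree on $D^{\mathbb{N}}$ by Lemma~\ref{lem:selfSimForm}, since both are given by the series $\sum_{j} b^{-j}d_{j}$. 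Consequently $(-n+i,D)$-representations are unique if and only if there is no pair of distinct sequences $p\neq q$ in $D^{\mathbb{N}}$ that are equivalent as $(-n+i,\{0,\ldots,n^{2}\})$-representations.

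For the forward direction, suppose such a distinct equivalent pair $p,q\in D^{\mathbb{N}}$ exists. By Theorem~\ref{thm:neighSeqEquiv}, taken with respect to the full digit set, the integer sequence of $(p,q)$ is a neighbour sequence of $T_{n}$. Now apply Theorem~\ref{thm:rules} to the triple $(p,q,q)$, which is trivially pairwise equivalent. It gives an infinite path from state $(0,0,0)$ in Figure~\ref{fig:radix}, possibly after relabelling, whose first state component is exactly the neighbour sequence $\zeta_{k}$ of $(p,q)$. Its edge labels record the digit differences $p_{k}-q_{k}$. Hence the distinct pair produces a neighbour sequence corresponding to a path in the graph.

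For the converse, suppose some distinct pair $p,q$ of representations with digits in $D$ produces a neighbour sequence that is read off from an infinite path in Figure~\ref{fig:radix}. Theorem~\ref{thm:rules}, or directly Theorem~\ref{thm:neighSeqEquiv} since the states consist of neighbours of $T_{n}$ or $0$, then shows that $p$ and $q$ are equivalent. So uniqueness fails.

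The main obstacle is interpretive rather than technical: making precise what ``produces a neighbour sequence that corresponds to a path'' means for a pair rather than a triple. The graph records triples, so I would fix the convention of embedding a pair as the degenerate triple $(p,q,q)$, or equivalently projecting onto the $\zeta$-coordinate. I would also check that the states reachable by such degenerate triples, namely those with $q=r$ and hence $\xi_{k}=0$, are indeed present in Figure~\ref{fig:radix}. Finally, I would note that the digit restriction to $D$ enters only through which edge labels $p_{k}-q_{k}$ are realizable by pairs of digits from $D$.
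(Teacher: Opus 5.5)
Your proposal is correct and follows the paper's implicit argument: the paper states this corollary without proof, as an immediate consequence of Theorem~\ref{thm:rules}, with Theorem~\ref{thm:neighSeqEquiv} showing that a pair whose integer sequence stays among the states of Figure~\ref{fig:radix} is equivalent. Your two further points are exactly the details the paper leaves unstated: $D$-representations are full-digit representations with the same coding map, and a pair embeds as a degenerate triple; after relabelling it lies in the red subgraph where two of the three representations coincide, so the pair's sequence shows up as the $\xi$-component or third component rather than the first.
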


We include the derivation of figure~\ref{fig:radix} in the Appendix~\ref{app:a}. The descriptions that follow pertain to Figure~\ref{fig:radix}. 

If a complex number has a unique radix expansion in base $-n+i$, with $n\geq3$, then $p=q=r$ and this triple is perpetually in the state $(0, 0, 0)$. Complex numbers with precisely two distinct radix expansions correspond to paths that eventually exit the initial state $(0, 0, 0)$ but remain in the bold red subgraph that does not distinguish between $p$ and $q$. Complex numbers with three distinct radix expansions eventually exit the initial state $(0, 0, 0)$ and are ultimately trapped in one of the two loops of period three at the bottom of the diagram.

We provide an example to illustrate how to read the graph. 
\begin{example}
It can be verified by direct computation that the following $(-3+i, \{0, 1, \ldots, 9\})$-representations are equivalent:
\begin{equation*}
\begin{split}
p &=(0, 0, 0, \overline{4,0,9,}) , \\
q &= (0, 0, 1, \overline{9,4,0,}), \\
r & = (1,5,5, \overline{0,9,4,}). \\
\end{split}
\end{equation*}
The bar over the digits to the right of the radix point indicates a repetition of those digits with period three. The path that this number corresponds to in Figure~\ref{fig:radix} is the path that moves along the states

\hspace{-0.7cm}\begin{tikzpicture}
\begin{scope}[every node/.style]
    \node (A) at (0, 0) {\begin{tikzpicture}\draw (0,0) rectangle node{pqr} (.75,.75); \end{tikzpicture}};
    \node (B) at (2, 0) {\begin{tikzpicture}\draw (0,0) rectangle node{pq} (0.75,0.75); \draw (0.75, 0.75) rectangle  node{r} (1.5, 0); \end{tikzpicture}};
    \node (C) at (4, 0) {\begin{tikzpicture}\draw (0,0) rectangle node{pq} (0.75,0.75); \draw (0.75, 0.75) rectangle node{r} (0, 1.5); \end{tikzpicture}};
    \node (K) at (6, 0) {\begin{tikzpicture}\draw (0,0) rectangle node{r} (0.75,0.75);\draw (0.75, 0.75) rectangle node{p} (0, 1.5); \draw (0.75, 0.75) rectangle node{q} (1.5, 1.5); \end{tikzpicture}};
    \node (L) at (8.5, 0) {\begin{tikzpicture}\draw (0,0) rectangle node{p} (0.75,0.75); \draw (0.75, 0.75) rectangle node{q}(0, 1.5);  \draw (0.75, 0.75) rectangle node{r} (1.5, 1.5); \end{tikzpicture}};
    \node (M) at (11, 0) {\begin{tikzpicture}\draw (0,0) rectangle node{q} (0.75,0.75); \draw (0.75, 0.75) rectangle node{r} (0, 1.5);  \draw (0.75, 0.75) rectangle node{p} (1.5, 1.5); \end{tikzpicture}};
   
\end{scope}

\begin{scope}[>={Stealth[black]},
              every node/.style={fill=white},
              every edge/.style={draw=black}]
    \path [->] (A) edge (B);
    \path [->] (B) edge (C);
    \path [->] (C) edge (K);
    \path [->] (K)  edge (L);
    \path [->] (L)  edge (M);
    \path [->] (M) edge[bend right] (K); 
\end{scope}
\end{tikzpicture}.

The equivalent $(-3+i, \{0, 1, \ldots, 9\})$-representations 
$$(0, 2, 1,\overline{4, 0, 9,}),$$ $$(0, 2, 2, \overline{9, 0, 4,}),$$ $$(1, 7, 6, \overline{0, 9, 4,})$$ are also captured by this path. The distances between pairs of digits in the same coordinate are the same as those in the previous triple of expansions. 
\end{example}

\section{Bases Derived from the Quadratic Equation} 

We recall Lemma~\ref{lem:reimbound}: For positive integers $n \geq 3$, if a complex number $s$ is a neighbour of $T_{-n+i, \{0, 1, \ldots, n^{2}\}}$, then it satisfies the inequality $|\Re(s) - n\Im(s)| < 2$. We begin this section by establishing a version of this statement for a subset of complex numbers containing the family $-n+i$ for $n\geq 3$. 

\begin{lemma} \label{lem:reimbound2.0} 
Let $\rho$ be the root of a polynomial $x^{2} + Ax + B \in \mathbb{Z}[x]$ that satisfies $1 \leq A$, $2 \leq B$ and $A^{2} < 4B$. If $s\in\mathbb{C}$ is a neighbour of $T_{\rho, \{0, 1, \ldots, B-1\}}$, then 
\begin{equation}
\bigg|\Re(s) - \frac{A}{\sqrt{4B-A^{2}}}\Im(s)\bigg| < 5. 
\end{equation}
\end{lemma}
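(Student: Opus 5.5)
The plan is to follow the strategy of Lemma~\ref{lem:reimbound}, but to replace the explicit four-term expansion with a linear functional that kills the first digit automatically. Since $A^{2}<4B$, the root $\rho$ is non-real, so (taking the root with positive imaginary part; the other case is the complex conjugate and is handled symmetrically) I write $\rho=-a+ic$ with $a=A/2$ and $c=\sqrt{4B-A^{2}}/2>0$. Then $A/\sqrt{4B-A^{2}}=a/c$. A one-line computation gives, for every $z\in\mathbb{C}$,
\begin{equation*}
\Re(z)-\frac{a}{c}\Im(z)=\frac{1}{c}\Im(\rho z),
\end{equation*}
because $\Im(\rho z)=-a\Im(z)+c\Re(z)$. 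This functional plays the role that $\Re(s)-n\Im(s)$ played in Lemma~\ref{lem:reimbound}.

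As in the proof of Lemma~\ref{lem:reimbound}, a neighbour $s$ satisfies $s+\pi(d_{j})=\pi(d_{j}^{'})$, so $s=\sum_{j\geq1}\delta_{j}\rho^{-j}$ with $\delta_{j}\in\{0,\pm1,\ldots,\pm(B-1)\}$ (here $|\rho|^{2}=B$, so $|\rho|>1$ and the coding map makes sense). Then
\begin{equation*}
\rho s=\delta_{1}+\sum_{j\geq1}\delta_{j+1}\rho^{-j}.
\end{equation*}
Since $\delta_{1}$ is real it disappears under $\Im$, leaving
\begin{equation*}
\Im(\rho s)=\sum_{j\geq1}\delta_{j+1}\Im(\rho^{-j}).
\end{equation*}

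Next I bound the terms. Because $\rho^{-1}=\bar\rho/B$, we have $|\Im(\rho^{-1})|=c/B$, so the $j=1$ term contributes at most $(B-1)/B<1$ to $|\Im(\rho s)|/c$. For $j\geq2$ I use $|\Im(\rho^{-j})|\leq B^{-j/2}$, which gives a geometric tail
\begin{equation*}
\sum_{j\geq2}(B-1)B^{-j/2}=\frac{B-1}{B-\sqrt{B}}=1+\frac{1}{\sqrt{B}}\leq1+\frac{1}{\sqrt{2}}.
\end{equation*}
The only remaining point, and the one I expect to need care, is a uniform lower bound on $c$. Since $4B-A^{2}\equiv -A^{2}\pmod 4$ is congruent to $0$ or $3$ and is positive, we get $4B-A^{2}\geq3$, hence $c\geq\sqrt{3}/2$. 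Therefore
\begin{equation*}
\Bigl|\Re(s)-\tfrac{A}{\sqrt{4B-A^{2}}}\Im(s)\Bigr|<1+\frac{2}{\sqrt3}\Bigl(1+\frac{1}{\sqrt2}\Bigr)<3<5.
\end{equation*}
This proves the claim with room to spare. If the other root is chosen, conjugating everything gives the same bound, since the functional and the digit set are compatible with conjugation up to sign.
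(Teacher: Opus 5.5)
For the root $\rho=-a+ic$ with $c>0$, your argument is correct, and it takes a different route from the paper. The paper expands $\rho^{-1},\rho^{-2},\rho^{-3}$ explicitly in terms of $A$ and $B$. It checks by direct computation that the coefficient of the first digit cancels in $\Re(s)-\frac{A}{\sqrt{4B-A^{2}}}\Im(s)$, and that the coefficients of $e_{2}$ and $e_{3}$ become $-1/B$ and $A/B^{2}$. It then bounds these two terms and the tail separately, using a case check that $A/B\le 1$, and reaches a constant of about $4.56$.

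Your identity $\Re(z)-\frac{a}{c}\Im(z)=\frac{1}{c}\Im(\rho z)$ improves on this in three ways:
\begin{itemize}
\item It explains the cancellation: multiplying by $\rho$ turns the first digit into a real constant, which $\Im$ kills.
\item It gives every coefficient at once as $\frac{1}{c}\Im(\rho^{-j})$, which reproduces the paper's $-1/B$ and $A/B^{2}$.
\item Combined with the arithmetic fact $4B-A^{2}\ge 3$, it yields the sharper constant $3$ with no casework.
\end{itemize}
The paper's explicit expansion mainly buys continuity with the hand computation in Lemma~\ref{lem:reimbound}.

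Your final sentence, however, is false. Conjugation does not preserve the functional up to sign, since $\Re(\bar z)-k\Im(\bar z)=\Re(z)+k\Im(z)$. Because the digits are real, $T_{\bar\rho,D}=\overline{T_{\rho,D}}$, so the neighbours of $T_{\bar\rho,D}$ are the conjugates of those of $T_{\rho,D}$. What you actually obtain for the other root is therefore $\bigl|\Re(s)+\frac{A}{\sqrt{4B-A^{2}}}\Im(s)\bigr|<3$, not the stated inequality.

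The stated inequality genuinely fails for that root. Take $\bar\rho=-3-i$, a root of $x^{2}+6x+10$, so that $A/\sqrt{4B-A^{2}}=3$. By Theorem~\ref{thm:neighbourset}, $3+i$ is a neighbour of $T_{-3+i,\{0,\ldots,9\}}$, so $3-i$ is a neighbour of $T_{-3-i,\{0,\ldots,9\}}$. But $\Re(3-i)-3\Im(3-i)=6>5$.

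The paper's proof makes the same unjustified reduction (``the claim for the other root will hold because it is the complex conjugate''). The lemma should therefore be read as a statement about the root with positive imaginary part, or with the sign in front of $\Im(s)$ flipped for the conjugate root. Replace your last sentence with that corrected version rather than asserting the original bound for both roots.
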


The Gaussian integer $-n+i$ is a root of the polynomial $x^{2} + 2nx + n^{2} + 1$. 

\begin{proof}
The complex number $\rho$ has the form $\frac{-A \pm i\sqrt{4B - A^{2}}}{2}$. Assume that $\rho$ is the number obtained from ``$+$". The claim for the other root will hold because it is the complex conjugate. 

As in Lemma~\ref{lem:reimbound}, we explicitly compute the first few terms of $s$
in terms of $A$ and $B$. Recall that 
\begin{equation}
s = \frac{e_{1}}{\rho} +  \frac{e_{2}}{\rho^{2}} + \frac{e_{3}}{\rho^{3}} + \delta_{4}
\end{equation}
where $\delta_{4}$ denotes the tail $\sum\limits_{j=4}^{\infty}e_{j}\rho^{-j}$ and $e_{j} \in \{0, \pm 1, \ldots, \pm (B-1)\}$. 
Let $\alpha := \Re(s)$ and $\beta := \Im(s)$.

Explicit computation yields
\begin{equation} \label{eq:neighbourreal2} 
\alpha = \frac{-A}{2B}e_{1} +  \frac{2A^{2}-4B}{4B^{2}}e_{2} + \frac{-4A^{3}+12AB}{8B^{3}}e_{3} + \Re(\delta_{4}) \\ 
\end{equation}
and 
\begin{equation} \label{eq:neighbourimag2} 
\beta = \frac{-\sqrt{4B-A^{2}}}{2B}e_{1} + \frac{2A\sqrt{4B-A^{2}}}{4B^{2}}e_{2} + \frac{(4B-4A^{2})\sqrt{4B-A^{2}}}{8B^{3}}e_{3} + \Im(\delta_{4}). \\
\end{equation}
The quantity $\alpha - \frac{A}{\sqrt{4B-A^{2}}}\beta$ is then 
\begin{equation} \label{eq:realimagdiff2} 
 \frac{-1}{B}e_{2} + \frac{A}{B^{2}}e_{3} + \Re(\delta_{4}) - \frac{A}{\sqrt{4B-A^{2}}}\Im(\delta_{4}). 
\end{equation}

Recall that $e_{j}$ range from $-B+1$ to $B-1$. In order to bound $|\alpha - \frac{A}{\sqrt{4B-A^{2}}}\beta|$ with an expression that is only in terms of $A$ and $B$, we maximize the sum of the first three terms of (\ref{eq:realimagdiff2}) by choosing $e_{2} = -B + 1$ and $e_{3} = B-1$ and estimate the absolute value of the last two terms using the bound $|Re(\delta_{4}) - \frac{A}{\sqrt{4B-A^{2}}}\Im(\delta_{4})| \leq (1+\frac{A}{\sqrt{4B-A^{2}}})\sum\limits_{j=4}^{\infty}(B-1)\rho^{-j}$. This results in the inequality
\begin{equation} \label{eq:boundnonly2} 
\bigg|\alpha - \frac{A}{\sqrt{4B-A^{2}}}\beta\bigg| \leq \frac{B-1}{B} + \frac{A(B-1)}{B^{2}} + \frac{(1+\frac{A}{\sqrt{4B-A^{2}}})(\sqrt{B}+1)}{B\sqrt{B}}. 
\end{equation}

We can respectively bound each term of (\ref{eq:boundnonly2}) in the following way:
\begin{align}
\frac{B-1}{B} &< 1, \label{eq:firstbound2}\\ 
\frac{A(B-1)}{B^{2}} &\leq 1, \label{eq:midbound2}\\
 \frac{(1+\frac{A}{\sqrt{4B-A^{2}}})(\sqrt{B}+1)}{B\sqrt{B}} &\leq \bigg(\frac{1}{B} + 1\bigg)\bigg(\frac{1}{\sqrt{B}}+1\bigg). \label{eq:lastbound2}
\end{align}
Consider the sum of all the sequences on the right hand side of a ``$<$" sign from (\ref{eq:firstbound2}) to (\ref{eq:lastbound2}). It is assumed that $B \geq 2$. Then the sum in question is bounded by $2 + (1.5)(1 + \frac{1}{\sqrt{2}}) = 3.5 + \frac{3}{2\sqrt{2}} < 5$. For (\ref{eq:midbound2}) and (\ref{eq:lastbound2}), observe that when $A = 1, 2$ or $3$, it can be checked directly that $A/B \leq 1$. After which, the assumption $4B > A^{2}$ is equivalent to $A/B < 4/A$. 
\end{proof}

Better bounds can be obtained for families of pairs $(A, B)$, as was demonstrated for $(2n, n^{2}+1)$, $n \geq3$, in Lemma~\ref{lem:reimbound}. 

\begin{example} 
Let $\rho = \frac{-9}{2} + i\frac{\sqrt{3}}{2}$. This is a root of the polynomial $x^{2} + 9x + 21$. Let $D = \{0, 10, 20\}$. Let $\alpha\in\mathbb{C}$ be the number that $(\rho, D)$-representation $(10, 0, \overline{10, 0})$. Since Lemma~\ref{lem:reimbound2.0} implies that all the real neighbours of $T_{\rho, D}$ have modulus less than $5$, we can see that all the real neighbours of $T_{\rho, D-D}$ have modulus less than $10$. By Lemma~\ref{lem:neighUnique}, since $D-D$ excludes all those candidates, the  $(\rho, D)$-representation of $\alpha$ is unique. The sequence $(D \cap (D+\alpha_{j}))_{j=1}^{\infty}$ is the SEP sequence $(\overline{\{10, 20\}, D})$. 

Lastly, the number $\rho$ corresponds to the $2$ by $2$ matrix $\begin{bmatrix} -9/2 & -\sqrt{3}/2 \\ \sqrt{3}/2 & -9/2 \end{bmatrix}$. Multiplication by the inverse of this matrix is a similarity with respect to Euclidean distance with contraction coefficient $1/|\rho|$. By Theorem~\ref{thm:attractSEP}, the set $T_{\rho, D} \cap (T_{\rho, D} + \alpha)$ is self-similar and by Theorem~\ref{thm:SepSimDim} has dimension $\log(4)/\log(21)$. 
\end{example}

The task of deriving a sufficient condition for the uniqueness of representations by excluding neighbours from the difference set $D-D$ is significantly easier when $D$ is the subset of the span of a single vector. It both limits our search and simplifies the arithmetic. In the most extreme case, however, choosing $D$ to have that property can cause the attractor $T_{A, D}$ to be one dimensional in the sense of vector spaces. For example, if $A = 3I$ where $I$ is the identity matrix and $D = \{0, 2v\}$ where $v$ is any of the standard basis vectors, then $T_{A, D}$ is essentially the middle third Cantor set in the unit interval. This is not the case for the pair $(-n+i, \{0, 1, \ldots, n^{2}\})$ because, by Theorem~\ref{thm:radixExistCanon}, complex numbers have $(-n+i, \{0, 1, \ldots, n^{2}\})$-representations. The set of pairs $(\rho, \{0, 1, \ldots, B-1\})$ where $\rho$ is a root of the polynomial $x^{2} + Ax + B \in \mathbb{Z}[x]$ satisfying $1 \leq A$, $2 \leq B$ and $A^{2} < 4B$ share this property. 

\begin{theorem}[W. Gilbert, \cite{G81}, theorem 1] \label{thm:quadReps} 
Suppose $A$ and $B$ be integers satisfying $1\leq A$, $2 \leq B$. If $\rho$ is a root of the polynomial $x^{2} + Ax + B$, then for every $y\in\mathbb{Z}[\rho]$ there exist $d_{0}, d_{1}, \ldots, d_{m}$ for some nonnegative integer $m$ such that
\begin{equation}
y = d_{0} + d_{1}\rho + \cdots + d_{m}\rho^{m}.
\end{equation}
\end{theorem}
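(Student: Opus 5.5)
The plan is to run the usual division algorithm in $\mathbb{Z}[\rho]$ and then prove that it terminates. Since $\rho^{2} = -A\rho - B$, every element of $\mathbb{Z}[\rho]$ can be written as $y = a + b\rho$ with $a, b\in\mathbb{Z}$. Multiplying out gives
\begin{equation*}
\rho(\rho + A) = -B .
\end{equation*}
So $B$ is divisible by $\rho$ in $\mathbb{Z}[\rho]$.

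The single digit-extraction step goes as follows. Given $y = a + b\rho$, let $d_{0}\in\{0, 1, \ldots, B-1\}$ be the residue of $a$ modulo $B$, and write $a - d_{0} = kB$ with $k = \lfloor a/B\rfloor$. Then
\begin{equation*}
\frac{y - d_{0}}{\rho} = \frac{-k\rho(\rho+A) + b\rho}{\rho} = (b - Ak) - k\rho .
\end{equation*}
This quotient lies in $\mathbb{Z}[\rho]$. The step therefore defines a map on $\mathbb{Z}^{2}$,
\begin{equation*}
\Phi(a, b) = \bigl(b - A\lfloor a/B\rfloor,\; -\lfloor a/B\rfloor\bigr),
\end{equation*}
and $y = d_{0} + \rho\, y'$ where $y'$ corresponds to $\Phi(a,b)$. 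Iterating, $y$ has a finite expansion exactly when some iterate of $\Phi$ reaches $(0,0)$. Once it does, every later digit is $0$, so the expansion is finite.

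It is convenient to record only the quotients $k_{j}$. The second coordinate at step $j$ is $-k_{j-1}$, so the quotients satisfy the second-order integer recurrence
\begin{equation*}
k_{j+1} = \Bigl\lfloor \frac{-k_{j-1} - A k_{j}}{B} \Bigr\rfloor .
\end{equation*}
The goal becomes showing that every such sequence is eventually identically $0$. Once $k_{j} = k_{j+1} = 0$, the recurrence keeps it at $0$.

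I would split this into two steps.

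\begin{itemize}
\item[(i)] A contraction estimate: the quantity $\max(|k_{j}|, |k_{j+1}|)$ is eventually bounded by a constant $C(A,B)$. The direct estimate $|k_{j+1}| \leq (|k_{j-1}| + A|k_{j}|)/B + 1$ already gives a decrease when $A + 1 < B$ and the $k$'s are large. In general, I would instead bound the norm $a^{2} - Aab + Bb^{2}$ when $A^{2} < 4B$, which decreases roughly by a factor $1/B$ per step up to an additive constant.
\item[(ii)] A finite check: among the states with $|k|\leq C$, the only cycle of $\Phi$ is the fixed point $(0,0)$. I would do this by exploiting the sign pattern of the floor. A nonnegative numerator gives $k_{j+1}\geq 0$. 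A negative numerator gives $k_{j+1}\leq -1$, which then feeds $+|k_{j+1}|$ into the numerator two steps later. From this I would rule out periodic orbits such as $(1,-1,\ldots)$ by hand.
\end{itemize}

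The main obstacle is the case $A \geq B - 1$, and especially $A^{2} \geq 4B$, where $\rho$ is real. There the norm is indefinite and the crude bound in (i) does not contract. An example is $x^{2}+3x+2$, with roots $-1$ and $-2$, where the coefficient $A/B$ exceeds $1$. For this case I would try an asymmetric potential $|k_{j}| + \lambda |k_{j-1}|$, with $\lambda$ chosen so that the two signs of the floor rounding compensate. Failing that, I would argue directly with the sign alternation of $k_{j}$ forced by $-A k_{j}$ dominating the numerator. I would first test the candidate potential on the boundary family $A = B - 1$ before attempting the general argument.
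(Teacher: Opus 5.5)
\textbf{There are two genuine gaps: the statement as literally transcribed is false for part of its range, and even where it is true your step (ii) is not supplied.}

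\textbf{The statement fails for $A\ge B+2$.} With digits in $\{0,\dots,B-1\}$, as you assume, the bad range is exactly the real-root regime you call ``the main obstacle''. If $A\ge B+2$, then $p(-1)=1-A+B<0<p(0)$, so $x^2+Ax+B$ has a root $\rho'\in(-1,0)$. The polynomial is irreducible, because that root is not an integer. So for a finite expansion $y=\sum_j d_j\rho^j$, the conjugate of $y$ is $\sum_j d_j\rho'^j$, of absolute value less than $(B-1)/(1-|\rho'|)$. Hence large rational integers have no expansion. Concretely, take $x^2+4x+2$, $\rho=-2-\sqrt{2}$ and $\rho'=-2+\sqrt{2}$. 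An expansion $2=\sum_j d_j\rho^j$ with $d_j\in\{0,1\}$ would give $2=\sum_j d_j\rho'^j\le 1/(1-\rho'^2)=1/(4\sqrt{2}-5)<1.53$, which is impossible.

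So no potential function can rescue your plan there. The missing hypothesis is $A\le B$ (K\'atai--Kov\'acs, Gilbert). It does follow from the assumption $A^2<4B$ made in the sentence just before the theorem, since $A\ge B+1$ would force $(B-1)^2<0$. The paper gives no proof of its own, only the citation \cite{G81}.

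\textbf{Your reduction already breaks at $A=B+1$.} Your own example $x^2+3x+2$ is of this kind. Since $\lfloor (A-1)/B\rfloor=1$ and $\lfloor (1-A)/B\rfloor=-1$, the sequence $k=(1,-1,1,-1,\dots)$ is a nonzero cycle of your recurrence. This is exactly the orbit you planned to exclude ``by hand''. The statement survives there only because $\mathbb{Z}[\rho]=\mathbb{Z}$ and your coordinates $(a,b)$ are not unique.

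\textbf{Step (ii) is missing even for $1\le A\le B$.} It is the whole content of the theorem, and the proposal does not supply it. A finite check by hand is one check for each of infinitely many pairs $(A,B)$, and the bounded region from (i) grows with $B$. Nothing in your sketch uses $A\le B$, yet it has to enter because of the cycle at $A=B+1$.

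\textbf{The missing idea (Kov\'acs).} It rests on the identity $(x-1)(x^2+Ax+B)=x^3+(A-1)x^2+(B-A)x-B$, whose middle coefficients are nonnegative exactly when $1\le A\le B$. The argument runs as follows.
\begin{enumerate}
\item Using $-1=\rho^2+A\rho+(B-1)$, write $y$ as a nonnegative integer combination of powers of $\rho$.
\item Normalise from the lowest position upward with the carry $B=\rho^3+(A-1)\rho^2+(B-A)\rho$. This carry preserves the sum of the coefficients.
\item The finalised digits therefore have sum bounded by the initial weight, so from some point on they are all $0$.
\item From then on, the unprocessed part sits on three consecutive positions with fixed total weight. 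It therefore passes through finitely many states and is eventually periodic.
\item Its rescaled value $z$ then satisfies $z=\rho^{-p}z$. Since $|\rho|>1$ when $A\le B$, we have $\rho^p\ne 1$, so $z=0$ and the expansion is finite.
\end{enumerate}
This replaces both your contraction estimate and your cycle check, and it treats real and complex roots alike.
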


It is natural to wonder if there are conditions on higher degree polynomials in $\mathbb{Z}[x]$ which ensure that all elements of $\mathbb{Z}[\rho]$ have expansions $\rho$ where $D$ is a finite subset of $\mathbb{Z}_{\geq0}$. While such questions are treated in \cite{ABP03} and \cite{BHP06}, an alternative approach may have to be employed to find the neighbours. Notably, there are no algebraic solutions to polynomials of degree greater than or equal to $5$ with which to perform our computations. A different direction, suggested by Theorem~\ref{thm:quadReps}, is that of extending the results in Chapter~\ref{chp:highDimSEP} and Chapter~\ref{chp:limFormula} that are stated over $\mathbb{Z}^{n}$ to other lattices.

\section{A Special Case of ``Self-similarity if and only if SEP"}\label{sec:sepSimSp} 

Suppose $n$ is an integer greater than or equal to $2$ and $m$ is an integer satisfying $2 \leq m \leq n^{2}$. We end this chapter by demonstrating that it is possible to prove that $T_{-n+i, \{0, m\}} \cap T_{-n+i, \{0, m\}} + \alpha$ is self-similar if and only if a certain sequence of \emph{integers} is SEP. This is a simpler property than the SEP property for sequences of sets. We are motivated to include the details of the proof because the argument does not assume that the linear factors of the IFS that generates the intersection is a power of $(-n+i)^{-1}$ in order to obtain the SEP condition. This is a fundamental difference from Theorem~\ref{thm:attractSEP}. Furthermore, we do not assume that uniqueness of all $(-n+i, \{0, \pm m\})$-representations, only the uniqueness of the representation of $\alpha$. Although we admit that all but finitely many $m$ satisfy the sufficient condition that all $(-n+i, \{0, \pm m\})$-representations, $n > \sqrt{m}$, are unique. 

\begin{definition} 
A sequence $(a_{j})_{j=1}^{\infty}$ of integers is \textit{strongly eventually periodic} (SEP) if there exists a finite sequence $(b_{\ell})_{\ell = 1}^{p}$ and a nonnegative sequence $(c_{\ell})_{\ell = 1}^{p}$, where $p$ is a positive integer, such that
\begin{equation}
(a_{j})_{j=1}^{\infty} = (b_{\ell})\overline{(b_{\ell} + c_{\ell})_{\ell = 1}^{p}}, 
\end{equation}
where $\overline{(d_{\ell})_{\ell = 1}^{p}}$ denotes the infinite repetition of the finite sequence $(d_{\ell})_{\ell = 1}^{p}$.   
\end{definition}

The following is a convenient sufficient condition for the SEP property. 

\begin{lemma}\label{lem:monoSEP}
Let $(a_{j})_{j = 1}^{\infty}$ be a bounded sequence of nonnegative integers. If there exists a positive integer $q$ such that $a_{j} \leq a_{j + q}$ for all $j\geq1$, then $(a_{j})_{j=1}^{\infty}$ is SEP. 
\end{lemma}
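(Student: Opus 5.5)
The plan is to mirror the set-valued argument in Lemma~\ref{lem:sepPP}, splitting the sequence into its $q$ residue classes. For each $r\in\{1,2,\ldots,q\}$, consider the subsequence $(a_{r+kq})_{k\geq0}$. By hypothesis it is nondecreasing in $k$. Since $(a_{j})$ is a bounded sequence of integers, each subsequence takes values in a finite set. A nondecreasing integer sequence bounded above must therefore be eventually constant. Hence for each $r$ there is $K_{r}$ with $a_{r+kq}=a_{r+K_{r}q}$ for all $k\geq K_{r}$. Set $K:=\max_{r}K_{r}$, which is finite because there are only $q$ residue classes. Then for every $j\geq 1$, $a_{j+Kq}=a_{j+Kq+kq}$ for all $k\geq 0$. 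In other words, the tail starting at index $Kq+1$ is periodic with period $q$.

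Next take $p:=(K+1)q$, which is a multiple of $q$. Define the prefix $b_{\ell}:=a_{\ell}$ for $\ell=1,\ldots,p$, and set $c_{\ell}:=a_{\ell+p}-a_{\ell}$. Since $p$ is a multiple of $q$, iterating the hypothesis $a_{j}\leq a_{j+q}$ gives $a_{\ell}\leq a_{\ell+p}$, so each $c_{\ell}$ is nonnegative. It remains to check that for every $m\geq1$ and every $\ell\leq p$ we have $a_{\ell+mp}=a_{\ell+p}=b_{\ell}+c_{\ell}$. This holds because $\ell+p>Kq$, so the index $\ell+p$ already lies in the stabilized region. Adding multiples of $p$, which are multiples of $q$, then does not change the value. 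This yields $(a_{j})=(b_{\ell})_{\ell=1}^{p}\overline{(b_{\ell}+c_{\ell})_{\ell=1}^{p}}$, as the definition of SEP requires.

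The only genuine point is the stabilization step. It needs both boundedness and integrality, or at least values in a finite set, so that monotone subsequences become eventually constant rather than merely convergent. A second subtlety is choosing the period as a multiple of $q$ large enough that the first repeated block starts after stabilization. Taking $p=(K+1)q$ rather than $Kq$ handles the case $K=0$, and it guarantees the repeated blocks lie entirely in the stable region. Nonnegativity of the $a_{j}$ is not actually needed beyond boundedness.
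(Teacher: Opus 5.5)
Your proposal is correct and takes essentially the same route as the paper. Both arguments use boundedness and integrality to make each residue class mod $q$ eventually constant, then take the period to be a multiple of $q$ that reaches past the stabilization point, with the initial segment as the prefix. The only difference is cosmetic: the paper writes the increment $a_{\ell+mq}-a_{\ell}$ as a telescoping sum of nonnegative steps, whereas you get its nonnegativity directly by iterating $a_{j}\leq a_{j+q}$.
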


\begin{proof}
Since $a_{j}$ is bounded for all $j$, there exists a positive integer $q$ such that $a_{j + kp}$ is constant for all $k\geq q$ and $j = 1, 2, \ldots, q$. It follows that there exists a positive integer $m$ such that $(a_{j})_{j=1}^{\infty} = a_{1}a_{2}\ldots a_{mq}\overline{a_{1+mq}a_{2+mq}\ldots a_{q + mq}}$. 

For each $j = 1, 2, \ldots, q$, there exist non-negative integers $v_{j + (m-\ell)q}$ where $\ell = 1, 2, \ldots m$ such that
\begin{align}
a_{j + mq} &= a_{j+ (m-1)q} +v_{j + (m-1)q} \\
&= a_{j + (m-2)q} + v_{j + (m-1)q} + v_{j+(m-2)q} \\
&\;\;\vdots \\
&= a_{j} + \sum_{\ell = 1}^{m} v_{j + (m-\ell)q}. 
\end{align}
By choosing $u_{j + qk} = \sum\limits_{\ell = 1}^{m-k}v_{j + (m-\ell)q}$ for $j = 1, 2, \ldots, q$ and $k = 0, 1, \ldots, m - 1$, we obtain $(a_{j})_{j\geq1} = (a_{1}a_{2}\ldots a_{m})\overline{(a_{1} + u_{1})(a_{2}+u_{2})\ldots (a_{mq}+u_{mq})}$. 
\end{proof}

\begin{definition}
Fix an integer $n\geq2$, $D = \{0, m\}$, $2\leq m \leq n^{2}$ and let $\alpha$ have a unique $(-n+i, D-D)$-representation $(\alpha_{j})_{j=1}^{\infty}$. We call $\gamma := \pi_{-n+i, D}(\gamma_{j})_{j=1}^{\infty}$, where $\gamma_{j} := \min(D \cap (D + \alpha_{j}))$ for each $j$, the \textit{minimal element of $T_{-n+i, D} \cap (T_{-n+i, D} + \alpha)$}. 
\end{definition}

\begin{lemma}\label{lem:shiftFormSp} 
Suppose $n$ is a positive integer and that $D = \{0, m\}$ where $2\leq m \leq n^{2}$. If $\alpha$ has a unique $(-n+i, D-D)$-representation $(\alpha_{j})_{j=1}^{\infty}$ and $\gamma$ is the minimal element of $T(\alpha) := T_{-n+i, D} \cap (T_{-n+i, D} + \alpha)$. Then $T(\alpha) - \gamma = \{\pi_{-n+i, D}(z_{j})_{j=1}^{\infty} : z_{j}\in D, z_{j} \leq m - |\alpha_{j}|\}$ and $T(\alpha) - \gamma$ is a subset of $T_{-n+i, D}$. 
\end{lemma}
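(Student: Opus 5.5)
The plan is to reduce everything to a digitwise computation, using Lemma~\ref{lem:seqExpress}. Since $\alpha$ has a unique $(-n+i, D-D)$-representation, that lemma gives
\begin{equation*}
T(\alpha) = \pi_{-n+i, D}\Bigl(\prod_{j=1}^{\infty} D\cap(D+\alpha_{j})\Bigr).
\end{equation*}
Here each $\alpha_{j}\in D-D=\{0,\pm m\}$. I will tabulate the three cases:
\begin{itemize}
\item[(a)] if $\alpha_{j}=0$, then $D\cap(D+\alpha_{j})=\{0,m\}$ and $\gamma_{j}=0$;
\item[(b)] if $\alpha_{j}=m$, then $D\cap(D+m)=\{m\}$ and $\gamma_{j}=m$;
\item[(c)] if $\alpha_{j}=-m$, then $D\cap(D-m)=\{0\}$ and $\gamma_{j}=0$.
\end{itemize}
Since $m\ge 2$, no further coincidences occur. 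In every case, $(D\cap(D+\alpha_{j}))-\gamma_{j}$ equals $\{0,m\}$ when $\alpha_{j}=0$ and equals $\{0\}$ otherwise. This is exactly $\{z\in D: z\le m-|\alpha_{j}|\}$: when $|\alpha_{j}|=m$ the bound forces $z=0$, and when $\alpha_{j}=0$ both digits are allowed.

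Next, I will use linearity of the coding map on bounded digit sequences, $\pi(x_{j})-\pi(y_{j})=\pi(x_{j}-y_{j})$, which holds because the defining series converge absolutely. Every $x\in T(\alpha)$ is $\pi_{-n+i,D}(x_{j})$ with $x_{j}\in D\cap(D+\alpha_{j})$. Hence $x-\gamma=\pi(x_{j}-\gamma_{j})$, and by the table $z_{j}:=x_{j}-\gamma_{j}$ lies in $D$ and satisfies $z_{j}\le m-|\alpha_{j}|$. Conversely, given such a sequence $(z_{j})$, I set $x_{j}:=z_{j}+\gamma_{j}$. Again by the table, $x_{j}\in D\cap(D+\alpha_{j})$, so $\pi(x_{j})\in T(\alpha)$ and $\pi(z_{j})=\pi(x_{j})-\gamma\in T(\alpha)-\gamma$. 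This gives the set equality.

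The inclusion $T(\alpha)-\gamma\subset T_{-n+i,D}$ is then immediate from Lemma~\ref{lem:selfSimForm}, since every $z_{j}$ lies in $D$.

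The only point needing care is the per-digit claim that $(D\cap(D+\alpha_{j}))-\gamma_{j}\subset D$, which depends on $D$ having exactly two elements. I expect no real obstacle beyond checking the three cases. The hypothesis $m\le n^{2}$ is not needed for this argument; it only places $D$ inside the canonical digit set. Uniqueness of the representation of $\alpha$ is used solely through Lemma~\ref{lem:seqExpress}.
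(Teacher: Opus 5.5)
Your proposal is correct and follows essentially the same route as the paper: invoke Lemma~\ref{lem:seqExpress}, shift digitwise by $\gamma_j$, and check the three cases $\alpha_j\in\{0,\pm m\}$ to identify $(D\cap(D+\alpha_j))-\gamma_j$ with $\{z\in D: z\le m-|\alpha_j|\}\subset D$. The only difference is that you write out the linearity of the coding map and both set inclusions explicitly, which the paper leaves implicit.
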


\begin{proof}
By Lemma~\ref{lem:seqExpress}, we can equate $T(\alpha)$ and $\pi_{-n+i, D}\left(\prod\limits_{j=1}^{\infty}D\cap(D + \alpha_{j})\right)$. It follows that $T(\alpha) - \gamma$ is equal to $\pi_{-n+i, D}\left(\prod\limits_{j=1}^{\infty}(D\cap(D + \alpha_{j})) - \gamma_{j}\right)$. 

It is sufficient to show that, for any $j\geq1$, the condition $z_{j} \leq m - |\alpha_{j}|$ and $z_{j}\in D$ holds if and only if $z_{j}$ is an element of $(D \cap (D + \alpha_{j})) - \gamma_{j}$. There are three possible values for $\alpha_{j}$ for each $j$. If $\alpha_{j} = m$, $(D \cap (D + \alpha_{j})) - \gamma_{j} = \{0\}$ and $m - |\alpha_{j}| = 0$. If $\alpha_{j} = -m$, we obtain the same equations. When $\alpha_{j} = 0$, $(D \cap (D + \alpha_{j})) - \gamma_{j} = \{0, m\}$ and $m - |\alpha_{j}| = m$. Since $\{0, m\} \cap \{z_{j} : z_{j} \leq m\}$ is equal to $\{0, m\}$, the condition holds in this case as well.  

Lastly, from these calculations we see that $(D\cap(D+\alpha_{j})) - \gamma_{j}$ is a subset of $\{0, m\} = D$ for all $j$, no matter the value of $\alpha_{j}$. 
\end{proof}

\begin{theorem}\label{thm:selfSimSEPSp} 
Fix an integer $n\geq 2$ and let $b:= -n+i$. Suppose $D = \{0, m\}$ where $2 \leq m \leq n^{2}$ and that $\alpha$ chosen such that $\alpha$ has a unique $(-n+i, D)$-representation. Let $\gamma$ be the minimal element of $T(\alpha) := T_{-n+i, D} \cap (T_{-n+i, D} + \alpha)$. 

If $T(\alpha)$ is the attractor of a collection of similarities containing a function of the form $f(x) = rx + (1 - r)\gamma$ for some $r\in\mathbb{C}$, then $(m -|\alpha_{j}|)_{j=1}^{\infty}$ is SEP. 

Conversely, if $(m -|\alpha_{j}|)_{j=1}^{\infty}$ is SEP and so by definition can be written as $(a_{\ell})_{\ell=1}^{p}\overline{(a_{\ell} + u_{\ell})_{\ell=1}^{p}}$, then $C(\alpha)$ is self-similar and is the attractor of the IFS containing precisely the functions of the form 
\begin{equation*}
f(x) = b^{-p}(x + \sum_{\ell=1}^{p}(y_{\ell}b^{p-\ell} + z_{\ell}b^{-\ell})-\gamma)+\gamma
\end{equation*}
where for each $\ell$, $y_{\ell}, z_{\ell}\in\{0, m\}$ such that $y_{\ell} \leq a_{\ell}$ and $z_{\ell} \leq u_{\ell}$.
\end{theorem}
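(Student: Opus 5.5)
The plan is to translate the intersection so that it contains the origin, read off its digit structure, and reduce both directions to statements about the integer sequence $a_j := m - |\alpha_j|$. Note that $a_j\in\{0,m\}$ for every $j$. Set $X := T(\alpha)-\gamma$. By Lemma~\ref{lem:shiftFormSp},
$$X=\pi_{b,D}\Bigl(\prod_{j\ge1} E_j\Bigr),\qquad E_j:=\{z\in D: z\le a_j\},$$
where $E_j=\{0\}$ if $a_j=0$ and $E_j=D$ if $a_j=m$. In particular $0\in X$, and $m b^{-j}\in X$ whenever $a_j=m$. Every element of $X$ has a unique $(b,D)$-representation: translating back by $\gamma$ digitwise and applying Lemma~\ref{lem:Urep} to $T(\alpha)$ gives this, using the uniqueness of the representation of $\alpha$.

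\emph{Converse direction.} Suppose $(a_j)=(a_\ell)_{\ell=1}^p\overline{(a_\ell+u_\ell)_{\ell=1}^p}$ with $u_\ell\ge 0$. Put $U_\ell:=\{z\in D: z\le a_\ell\}$ and $V_\ell:=\{z\in D: z\le u_\ell\}$. Since $a_\ell+u_\ell\le m$ and each summand lies in $\{0,m\}$, at least one of $a_\ell,u_\ell$ is $0$. Hence $U_\ell+V_\ell=\{z\in D: z\le a_\ell+u_\ell\}$. So the sequence $(E_j)$ is SEP in the set sense with these $U_\ell,V_\ell$, and $E_j=(D\cap(D+\alpha_j))-\gamma_j$. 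Lemma~\ref{lem:sepGen}, with $\beta_j=\gamma_j$ and $\beta=\gamma$, then produces exactly the stated IFS.

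\emph{Forward direction.} First, by Lemma~\ref{lem:attractorShift}, $X$ is the attractor of the conjugated IFS, and this IFS contains $g(x)=rx$. Hence $rX\subset X$.

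Degenerate case: if $a_j=m$ for only finitely many $j$, then $X$ is finite. A finite self-similar set is a singleton (the argument in Chapter~\ref{chp:IFS}), so $X=\{0\}$ and $a_j\equiv 0$, which is SEP trivially.

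Main case: infinitely many $j$ have $a_j=m$. The key step is to show that $r=b^{-q}$ for some positive integer $q$.
\begin{itemize}
\item[(1)] Let $k$ be the least index with $a_k=m$. Then $r m b^{-k}\in X$, so $r\,m b^{-k}=\pi_{b,D}(z_j)_{j=1}^{\infty}$ with $z_j\in E_j$. Since $r\neq 0$, the sequence $(z_j)$ has a first nonzero digit; call its index $k+q$.
\item[(2)] For every $j$ with $a_j=m$, the point $r m b^{-j}=b^{k-j}(r m b^{-k})$ also lies in $X$. I want to use this, together with linearity applied to sums $m b^{-j}+m b^{-j'}\in X$ (distinct admissible $j,j'$), and uniqueness of representations, to force $(z_j)$ to have a single nonzero digit equal to $m$. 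The idea is that sums of elements of $X$ with disjoint digit supports have no carries. Then $r m b^{-k}=m b^{-(k+q)}$, so $r=b^{-q}$.
\item[(3)] The contraction ratio gives $|r|<1$, so $q\ge1$.
\end{itemize}
Step (2) is the step I expect to be the main obstacle. Exploiting multiplicativity of $r$ against the "carry-free" uniqueness is delicate. If this fails, my fallback is a diameter and cylinder comparison: $rX$ must be a union of cylinders of $X$, which forces $|r|=|b|^{-q}$, and then the argument must pin down the argument (angle) of $r$.

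Once $r=b^{-q}$, the rest is short. For $x=\pi(x_j)\in X$ we have $b^{-q}x=\pi(0,\dots,0,x_1,x_2,\dots)\in X$. By uniqueness of representations, $x_j\in E_{j+q}$ for all $j$, so $E_j\subset E_{j+q}$, i.e.\ $a_j\le a_{j+q}$. Since $(a_j)$ is a bounded sequence of nonnegative integers, Lemma~\ref{lem:monoSEP} gives that $(a_j)=(m-|\alpha_j|)$ is SEP.
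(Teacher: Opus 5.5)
Your converse direction is correct and is the paper's argument; the paper simply cites Lemma~\ref{lem:sepGen}. Your check that $a_\ell,u_\ell\in\{0,m\}$ cannot both equal $m$, so that $U_\ell+V_\ell=\{z\in D: z\le a_\ell+u_\ell\}$, supplies exactly the detail the paper leaves out.

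The forward direction has a genuine gap. Step (2), which forces $rmb^{-k}$ to have a single nonzero digit so that $r=b^{-q}$, is only stated as an intention. Both step (3) and your final deduction $E_j\subset E_{j+q}$ depend on it. The carry-free idea is also not close to a proof, for two reasons:
\begin{itemize}
\item To make two contributions land on the same index, you need $j,j'\in S:=\{j:a_j=m\}$ whose difference is a difference of digit positions of $r$. You can only guarantee this once you know $S$ is stable under those shifts.
\item You would still need to rule out rewriting a formal digit $2m$ with digits in $\{0,m\}$. After dividing by $m$, this is a uniqueness statement comparing $(b,\{0,1,2\})$- and $(b,\{0,1\})$-representations, which you have not established.
\end{itemize}

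The missing observation, and what the paper actually uses, is that stability of $S$ under one positive shift already finishes the proof, so $r=b^{-q}$ is never needed.
\begin{itemize}
\item Since $z_j\in E_j=\{0\}$ for $j<k$, suppose also $z_j=0$ for all $j>k$. Then $rmb^{-k}=z_kb^{-k}$ with $z_k=m$ (as $r\neq 0$), so $r=1$, contradicting $|r|<1$. Hence $z_{k+p}=m$ for some $p\ge 1$.
\item For any $s\in S$ we have $s\ge k$. The point $rmb^{-s}=b^{-(s-k)}\pi_{b,D}(z_j)_{j=1}^{\infty}$ is the image of $(0,\ldots,0,z_1,z_2,\ldots)$ with $s-k$ leading zeros.
\item This point lies in $X$ because $mb^{-s}\in X$, so it also has a representation in $\prod_j E_j$. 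Uniqueness makes the two representations agree.
\item Hence $m=z_{k+p}\in E_{s+p}$, i.e.\ $a_{s+p}=m$. So $a_s\le a_{s+p}$ for all $s$, and Lemma~\ref{lem:monoSEP} applies.
\end{itemize}

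Two remarks. First, the uniqueness invoked here must be uniqueness of \emph{all} $(b,\{0,m\})$-representations. The shifted sequence is not a priori in $\prod_j E_j$, and adding $\gamma_j$ digitwise can produce the digit $2m\notin D$, so Lemma~\ref{lem:Urep} does not suffice. Cite Lemma~\ref{lem:neighUnique} with Theorem~\ref{thm:realNeighbours} instead: $\pm m$ is not a neighbour for $m\ge2$. Second, choosing $k$ minimal, as you did, is cleaner than the paper's choice of an arbitrary $q$ with $mb^{-q}$ in the set. That choice forces the paper to treat separately the indices $s<q$, where nonnegative powers of $b$ appear.
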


We follow the proof strategy for theorem 1.2 in \cite{LYZ11}.

\begin{proof} 
Assume that $(m-|\alpha_{j}|)_{j=1}^{\infty}$ is an SEP sequence of integers. By definition there exist integers $a_{1}, a_{2}, \ldots, a_{p}$ and $u_{1}, u_{2}, \ldots, u_{p}$ such that $(m-|\alpha_{j}|)_{j\geq1} = a_{1}\ldots a_{p}\overline{(a_{\ell}+u_{\ell})_{\ell = 1}^{p}}$. By Lemma~\ref{lem:shiftFormSp}, $C(\alpha, \gamma) := C(\alpha) - \gamma$ is equal to $$\{\pi(z_{j})_{j=1}^{\infty} : z_{j}\in D, z_{j} \leq m - |\alpha_{j}|\}.$$ The ``SEP implies self-similarity" claim is a consequence of Lemma~\ref{lem:sepGen}.

We assume $T(\alpha)$ is self-similar and is generated by an IFS containing the map $f_{1}(x) = r_{1}x + (1-r_{1})\gamma$. It follows from Lemma~\ref{lem:attractorShift} that $T(\alpha)-\gamma$ is self- similar and is generated by an IFS containing the map $g_{1}(x) = r_{1}x$. Observe that the sequence $\overline{0}$ is trivially SEP. Let us then assume that at least one of the entries of $(m-|\alpha_{j}|)_{j=1}^{\infty}$ is nonzero. 

It follows from Lemma~\ref{lem:shiftFormSp} that $T(\alpha)-\gamma$ contains $mb^{-q}$ for some positive integer $q$. Therefore $mb^{-q}r_{1} = g_{1}(mb^{-q})$ is an element of $T(\alpha)-\gamma$ and, in particular, can be expressed as $\pi_{-n+i, D}(mr_{1, j})_{j=1}^{\infty}$ where $r_{1, j}\in\{0, 1\}$. Isolating for $r_{1}$ yields
\begin{equation}
r_{1} = \sum_{j=1}^{\infty}r_{1, j}b^{-j+q}. 
\end{equation}

For every $s\geq1$, we have that
\begin{align}
\tau &:= g_{1}((m-|\alpha_{s}|)b^{-s}) \\
 &= (m-|\alpha_{s}|)r_{1, 1}b^{q-1-s} + \cdots+(m-|\alpha_{s}|)r_{1, q-s} +\!\!\! \sum_{j=q - s +1}^{\infty}\!\!(m-|\alpha_{s}|)r_{1, j}b^{-(j+s)} \label{eq:gImage}
\end{align}
is an element of $T(\alpha)-\gamma$. By Lemma~\ref{lem:shiftFormSp}, it is also an element of $T_{-n+i, D}$. Therefore $\tau = \pi(d_{j})_{j=1}^{\infty}$ where $d_{j}\in\{0, m\}$. We now argue that the expansion in (\ref{eq:gImage}) is $\pi(d_{j})_{j=1}^{\infty}$. If $s > q - 1$, the desired result follows from the fact that $(-n+i, \{0, m\})$-expansions are unique for all $m > 1$ (Combine Lemma~\ref{lem:neighUnique} with Theorem~\ref{thm:realNeighbours}). Suppose $s \leq q - 1$.  The number $\tau/b^{q-1-s}$ is equal to $\sum\limits_{j=1}^{\infty}d_{j}b^{-j-(q-1-s)}$ and is also an element of $T_{-n+i, D}$. Again, by the uniqueness of the $(-n+i, \{0, m\})$-representations, it must be that $r_{1, 1} = r_{1, 2} = \cdots = r_{1, q-s} = 0$ and $(m-|\alpha_{s}|)r_{1, q-s+j} = d_{j}$ for all $j\geq1$. We obtain the desired result by multiplying back by $b^{q-1-s}$. 

Observe that $r_{1} \neq 0$ implies that $r_{1, t} = 1$ for some $t\geq1$. In particular, since $|r_{1}| < 1$, it must be that we can choose $t > q$. Otherwise, $r_{1}$ is a nonzero Gaussian integer and thus has magnitude greater than one. Fix such a $t$. It follows from the discussion above that $m - |\alpha_{s}| = d_{s-q+t}$ for all $s\geq 1$. Since $\tau$ is an element of $T(\alpha)-\gamma$, it follows from Lemma~\ref{lem:shiftFormSp} that $d_{s-q+t} \leq m - |\alpha_{s + (t - q)}|$ for all $s\geq1$. Therefore $m - |\alpha_{s}|  \leq m - |\alpha_{s + p}|$ for all $s\geq 1$ where $p:= t - q$ is a positive integer. We conclude that the sequence $(m - |\alpha_{j}|)_{j = 1}^{\infty}$ is SEP by Lemma~\ref{lem:monoSEP}. 
\end{proof}

\chapter{Multiplicative Invariance in $\mathbb{Z}^{n}$}\label{chp:discMultInv}

In this chapter, we present the multiplicative invariance in the non-negative integers introduced in \cite{GMR24} and highlight a result we replicate with respect to our definition of multiplicative invariance in $\mathbb{Z}^{n}$. We denote the nonnegative integers by $\mathbb{N}_{0}$ .

\section{Basic Definitions}

Let $r$ be an integer greater than or equal to $2$. For any integer for $n\geq1$, let $k:= \lfloor \log_{r}n \rfloor$. This is the greatest integer power of $r$ which is less than or equal to $n$. . Let $\phi_{r}: \mathbb{N}_{0} \rightarrow  \mathbb{N}_{0}$ and $\psi_{r}: \mathbb{N}_{0} \rightarrow \mathbb{N}_{0}$ be the functions given by
\begin{align}
&\phi_{r}(n) = \lfloor n/r \rfloor, \\
&\psi_{r}(n) =  n - r^{k}\lfloor n/r^{k} \rfloor.
\end{align}

The image of the functions is clearer if we replace $n$ by its base-$r$ representation. If $n = a_{k}r^{k} + \cdots a_{1}r + a_{0}$, $a_{k} \neq 0$, then 
\begin{align}
&\phi(n) = a_{k}r^{k-1} + \cdots + a_{2}r + a_{1}, \\
&\psi(n) = a_{k-1}r^{k-1} + \cdots + a_{1}r + a_{0}. 
\end{align}

\begin{definition}
A set $A \subset \mathbb{N}_{0}$ is called $\times r$ invariant if $\phi(A) \subset A$ and $\psi(A) \subset A$. We call $A\subset\mathbb{N}_{0}$ multiplicatively invariant if it is $\times r$ invariant for some $r\geq2$. 
\end{definition}

A $\times r$ invariant subset of the nonnegative integers can be mapped to a times-$r$ invariant subset of $[0, 1)$ using a limit process. We recall the definition of multiplicative invariance in $[0, 1)$. 

\begin{definition} 
Let $r$ be a positive integer. Define the map
\begin{equation}
\begin{split}&T_{r} : \mathbb{R} \rightarrow [0, 1) \\
&x \mapsto rx\mod{1}
\end{split}
\end{equation}
A nonempty closed subset $Y \subset [0, 1]$ is called $\times r-$\textit{invariant} if $T_{r}(Y) \subset Y$. A subset $Y$ is called \textit{multiplicatively invariant} if it is $\times r$-invariant for some $r\geq2$.
\end{definition}

\begin{proposition}[D. Glasscock, J. Moreira, F. Richter,\cite{GMR24}, proposition 3.15]\label{thm:discreteToContin} 
Let $A\subset\mathbb{N}_{0}$ be a $\times r$ invariant subset of the nonnegative integers. Define a sequence of sets $X_{k} := A\cap [0, r^{k}) / r^{k}$ where $k\geq1$. The sequence $(X_{k})_{k=1}^{\infty}$ converges in Hausdorff distance to a $\times r$ invariant set $X\subset [0, 1)$.  
\end{proposition}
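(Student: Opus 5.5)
Throughout, $\dist(x,S)=\inf_{s\in S}|x-s|$ and $T_r$ is the map $x\mapsto rx \bmod 1$ on $\mathbb{R}$.

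The approach is to encode $A$ by base-$r$ digit strings and identify the limit explicitly. Let $\mathcal{L}$ be the set of finite words over $\{0,\ldots,r-1\}$ obtained as follows. For each $a\in A$ and each $k$ with $a<r^{k}$, write the base-$r$ expansion of $a$ padded on the left with zeros to length $k$. Then $X_{k}$ is exactly the set of points $0.w_{1}\ldots w_{k}$ (base $r$) with $w\in\mathcal{L}_{k}$, the length-$k$ words in $\mathcal{L}$.

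The two maps translate into closure properties of $\mathcal{L}$. The map $\psi_{r}$ deletes the leading nonzero digit, so iterating it (which, with padding, also strips leading zeros) shows that every suffix of a word in $\mathcal{L}$ lies in $\mathcal{L}$. The map $\phi_{r}$ deletes the last digit, so every prefix of a word in $\mathcal{L}_{k}$, zero-padded on the left back to length $k$, lies in $\mathcal{L}_{k}$. Some care is needed here: $\phi_{r}(a)$ padded to length $k$ is $0w_{1}\ldots w_{k-1}$, and combining this with suffix-closure gives that $w_{1}\ldots w_{k-1}\in\mathcal{L}_{k-1}$. Hence $\mathcal{L}$ is factorial (closed under subwords). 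The one exception is when $A=\emptyset$, which I would exclude, or treat by noting that $0\in A$ whenever $A\neq\emptyset$.

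\textbf{Candidate limit.} Let $\Sigma$ be the set of infinite sequences all of whose finite prefixes (equivalently, all finite subwords) lie in $\mathcal{L}$, and set
\[
X=\Bigl\{\sum_{j\ge1} w_{j}r^{-j} : w\in\Sigma\Bigr\}.
\]
The set $\Sigma$ is closed in the product topology, so $X$ is compact. It is nonempty by König's lemma whenever $\mathcal{L}_{k}\neq\emptyset$ for all $k$, which holds since $0\in\mathcal{L}$.

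\textbf{Hausdorff convergence.} The main step is to show that $\sup_{x\in X}\dist(x,X_{k})$ and $\sup_{y\in X_{k}}\dist(y,X)$ both tend to $0$.
\begin{itemize}
\item For the first, take $x\in X$. Its length-$k$ prefix $w$ lies in $\mathcal{L}_{k}$, so $0.w\in X_{k}$ is within $r^{-k}$ of $x$.
\item The second direction is the main obstacle. A word $w\in\mathcal{L}_{k}$ need not extend to an infinite sequence in $\Sigma$. I would use a compactness/pigeonhole argument instead of extension. Suppose that for some $\varepsilon>0$ and infinitely many $k$ there is $y_{k}\in X_{k}$ with $\dist(y_{k},X)\ge\varepsilon$. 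Fix $m$ with $r^{-m}<\varepsilon/2$. The length-$m$ prefixes of the words for $y_{k}$ lie in $\mathcal{L}_{m}$ by prefix-closure. A diagonal argument over $m=1,2,\ldots$ produces a sequence each of whose prefixes equals the prefix of $w^{(k)}$ for infinitely many $k$. Such a sequence lies in $\Sigma$, and its image is within $r^{-m}$ of some $y_{k}$, a contradiction.
\end{itemize}
This shows the limit exists and equals $X$. It also shows $X$ is the topological limsup of the $X_{k}$, so uniqueness of the limit is automatic.

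\textbf{Invariance.} If $x=0.w_{1}w_{2}\ldots$ with $w\in\Sigma$, then $T_{r}x=0.w_{2}w_{3}\ldots$. Its prefixes are subwords of words in $\mathcal{L}$ and hence lie in $\mathcal{L}$, so $T_{r}x\in X$. There is one ambiguity: the point $1$, arising from the sequence $\overline{r-1}$, maps to $0$, while the statement asks for $X\subset[0,1)$. I would treat the set modulo $1$, i.e.\ as a subset of $\mathbb{T}$, as in the paper's remark that the limit is invariant when viewed in $\mathbb{T}$. Dual expansions are harmless because $T_{r}$ is well defined on reals.
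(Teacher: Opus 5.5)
Your proof is correct. It follows a genuinely different route from the one the paper relies on.

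\textbf{The paper's route.} The paper only cites this proposition. Its proof of the $\mathbb{Z}^{n}$ analogue, Theorem~\ref{thm:invTransfer}, follows the strategy of \cite{GMR24}, and that argument never identifies the limit.
\begin{itemize}
\item It passes to the $\phi$-stable core $E'=\bigcap_{j}\phi_r^{j}(A)$.
\item It shows $(X_k')$ is Cauchy in Hausdorff distance, using the estimate $|r^{-\ell}a-r^{-k}\phi_r^{\ell-k}(a)|\le r^{-k}$ (Lemmas~\ref{lem:phiBound} and~\ref{lem:phiNeighbourhood}).
\item It shows $d_H(X_k,X_k')\to0$ because the finite sets $\phi_r^{j}(A)\cap[0,r^{m})$ decrease and stabilise (Lemma~\ref{lem:cauchyTrick}).
\item It concludes by completeness of the hyperspace of nonempty compact sets.
\item Invariance comes only afterwards: $\psi_r(A)\subset A$ gives $T_r(X_k)\subset X_{k-1}$, and one passes to the limit using continuity of multiplication on the torus.
\end{itemize}

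\textbf{Your route.} You name the limit explicitly as the coding image of the subshift $\Sigma$. You replace hyperspace completeness with compactness of $\{0,\ldots,r-1\}^{\mathbb{N}}$. Invariance then comes for free from shift-invariance of $\Sigma$.

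\textbf{How they compare.} Underneath, this is the same compactness. By K\"onig's lemma, $E'\cap[0,r^{k})$ corresponds exactly to the length-$k$ words of $\mathcal{L}$ that are prefixes of elements of $\Sigma$, and the stabilisation step of Lemma~\ref{lem:cauchyTrick} plays the role of your diagonal step. The paper's metric route gives an explicit rate. It also transfers verbatim to matrix bases through the contracting norm of Lemma~\ref{lem:contractingNorm}. Your route gives a concrete description of $X$, which is what one wants when relating $\dim_B X$ to the entropy of $\Sigma$.

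\textbf{Two small points.} First, the detour through suffix-closure in your prefix step is unnecessary. Since $\phi_r(a)<r^{k-1}$, you get $w_1\cdots w_{k-1}\in\mathcal{L}_{k-1}$ directly.

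Second, you are right that $1$ can lie in $X$: for $A=\mathbb{N}_0$ the limit is $[0,1]$, so the stated $X\subset[0,1)$ is too strong. However, you do not need to pass to $\mathbb{T}$. Since $0\in X$, the exceptional case $w_2w_3\cdots=\overline{r-1}$ gives $T_r x=0\in X$. Hence $T_r(X)\subset X$ holds literally for $X\subset[0,1]$, which matches the paper's definition of $\times r$-invariance.
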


\begin{remark}
In \cite{GMR24}, the fractal properties of $\times r$-invariant subsets of $\mathbb{N}_{0}$ using a discrete analogue to box-counting called the mass dimension. Theorem~\ref{thm:discreteToContin} as stated in \cite{GMR24} asserts that $\dim_{M}A = \dim_{B}X$. 
\end{remark}

\section{Multiplicative Invariance on $\mathbb{Z}^{n}$}

We present an analogous version of multiplicative invariance in $\mathbb{Z}^{n}$ using matrices as our base. We ultimately establish a version of Theorem~\ref{thm:discreteToContin} in this context. 

We first require a definition for multiplicative invariance in $\faktor{\mathbb{R}^{n}}{\mathbb{Z}^{n}}$. We directly generalize the definition used for $[0, 1)$ by viewing it as a subset of $\faktor{\mathbb{R}}{\mathbb{Z}}$. We require a topology on $\faktor{\mathbb{R}^{n}}{\mathbb{Z}^{n}}$ in order to state the definition. The natural metric is to map two cosets to the minimum distance realized by two of their respective representatives. Throughout this chapter we use $\mathbb{T}^{n}$ to denote $\faktor{\mathbb{R}^{n}}{\mathbb{Z}^{n}}$.

\begin{lemma}\label{lem:natMet} 
The function $d:\mathbb{T}^{n} \rightarrow [0, \infty)$ given by 
\begin{equation} 
d(x + \mathbb{Z}^{n}, y + \mathbb{Z}^{n}) := \min_{\zeta\in\mathbb{Z}^{n}}\norm{x - y + \zeta}
\end{equation}
is a metric. 
\end{lemma}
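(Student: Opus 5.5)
The plan is to check the metric axioms directly, after first confirming that the function is well defined and that the minimum is attained. For well-definedness, suppose $x + \mathbb{Z}^{n} = x' + \mathbb{Z}^{n}$ and $y + \mathbb{Z}^{n} = y' + \mathbb{Z}^{n}$. Then $x'-y' = x - y + \eta$ for some $\eta \in \mathbb{Z}^{n}$. As $\zeta$ runs over $\mathbb{Z}^{n}$, so does $\zeta + \eta$, so the set $\{\norm{x-y+\zeta} : \zeta \in \mathbb{Z}^{n}\}$ is unchanged when we pass to the other representatives.

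To see that the minimum exists, fix $v = x - y$. Only finitely many $\zeta \in \mathbb{Z}^{n}$ satisfy $\norm{v + \zeta} \le \norm{v}$, because these $\zeta$ are lattice points in the closed ball of radius $\norm{v}$ about $-v$. The value $\zeta = 0$ is among them, so the infimum over $\mathbb{Z}^{n}$ equals a minimum over a nonempty finite set and is attained.

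Nonnegativity is clear. For symmetry, note that $\norm{y - x + \zeta} = \norm{x - y - \zeta}$, and $\zeta \mapsto -\zeta$ is a bijection of $\mathbb{Z}^{n}$.

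For definiteness, suppose $d = 0$. Then some $\zeta$ attains the minimum, so $x - y + \zeta = 0$, which gives $x - y \in \mathbb{Z}^{n}$ and hence equal cosets. Conversely, equal cosets allow the choice $\zeta = y - x$, which gives $d = 0$. This step uses the attainment of the minimum and not merely the infimum, which is why the finiteness argument above matters.

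For the triangle inequality, choose $\zeta_{1}$ with $\norm{x - y + \zeta_{1}} = d(x,y)$ and $\zeta_{2}$ with $\norm{y - z + \zeta_{2}} = d(y,z)$. Then
\begin{equation*}
d(x,z) \le \norm{x - z + \zeta_{1} + \zeta_{2}} \le \norm{x - y + \zeta_{1}} + \norm{y - z + \zeta_{2}} = d(x,y) + d(y,z),
\end{equation*}
by the Euclidean triangle inequality.

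The only step that needs real care is showing the minimum is attained. The rest reduces to routine manipulations of the Euclidean norm.
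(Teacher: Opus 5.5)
Your proposal is correct and follows essentially the same route as the paper: attainment of the minimum via finiteness of lattice points in a ball, independence of coset representatives by reindexing $\zeta$, symmetry via $\zeta \mapsto -\zeta$, and the triangle inequality by combining minimizers $\zeta_{1}+\zeta_{2}$ with the Euclidean triangle inequality. You also state explicitly that definiteness needs the minimum to be attained rather than just the infimum, a point the paper uses only implicitly.
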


\begin{proof}
First, we show that the minimum exists. For any $x, y\in\mathbb{R}^{n}$, we can consider the closed ball of radius $\norm{x-y}$ centered at $x$. This set contains finitely many elements of the form $y + \zeta$ where $\zeta\in\mathbb{Z}^{n}$ since it is bounded. It also contains $y = y+0$. Therefore there must be a choice of $\zeta\in\mathbb{Z}^{n}$ that minimizes the norm of $x-y+\zeta$.  

Next we show that the functions does not depend on the choice of the representative of any coset. If $z$ and $w$ are equivalent to $x$ and $y$ respectively, then there exists $\xi\in\mathbb{Z}^{n}$ such that $z-w = x - y + \xi$. We then have 
\begin{equation}
\min_{\zeta\in\mathbb{Z}^{n}}\norm{z-w+\zeta} = \min_{\zeta\in\mathbb{Z}^{n}}\norm{x-y+\xi + \zeta} =  \min_{\zeta\in\mathbb{Z}^{n}}\norm{x-y+\xi + \zeta}
\end{equation}

Now we consider the norm conditions. 

The function $d$ inherits nonnegativity from the Euclidean norm. The function is symmetric because, if $\zeta$ that minimizes $\norm{x-y+\tau}$, the vector $-\zeta$ must minimize $\norm{y-x+\tau}$. Two elements of $\mathbb{R}^{n}$, $x$ and $y$, are representatives of the same coset if and only if $x = y + \zeta$ for some $\zeta\in\mathbb{Z}^{n}$. It follows that $d(x+\mathbb{Z}^{n}, y+\mathbb{Z}^{n}) = 0$ if and only if $x+\mathbb{Z}^{n} = y+\mathbb{Z}^{n}$. 

Lastly, we verify that the triangle inequality holds. Let $x, y, z\in \mathbb{R}^{n}$. There exist $\zeta_{1}, \zeta_{2}, \zeta_{3},$ such that
\begin{align}
d(x, y) &= \norm{x-y+\zeta_{1}}, \\
d(y, z) &= \norm{y-z+\zeta_{2}}, \\
d(x, z) &=  \norm{x-y+\zeta_{3}}.
\end{align}
Since $\zeta_{1} + \zeta_{2}\in\mathbb{Z}^{n}$, we have $\norm{x - z + \zeta_{3}|}\leq\norm{x-z+\zeta_{1}+\zeta_{2}}$. Then, since the Euclidean norm satisfies the triangle inequality we obtain $\norm{z - v + \zeta_{3}}\leq \norm{x-y+\zeta_{1}}+\norm{y-z+\zeta_{2}}$.
\end{proof}

For any $U\subset \mathbb{T}^{n}$, we use the notation $\dim_{H}U$ and $\dim_{B}U$ to respectively refer to the Hausdorff dimension and box-counting dimension derived using the metric in Lemma~\ref{lem:natMet}. 

\begin{definition} 
Let $A$ be an invertible $n$ by $n$ matrix with integer entries. A nonempty closed subset $X \subset \mathbb{T}^{n}$ is said to be \textit{$\times A$-invariant} if $AX :=\{Ax:x\in X\} \subset X$. We say that $X$ is \textit{multiplicatively invariant} if it is $\times A-invariant$ for some matrix $A\in M_{n}(\mathbb{Z})$. 
\end{definition}

The operation $Ax$ where $A$ is an $n$ by $n$ matrix with integer entries and $x\in\mathbb{T}^{n}$ is a shorthand for the function composition $x = y + \mathbb{Z}^{n} \mapsto Ay \mapsto \varphi(Ay)$. This procedure does not depend on the choice of coset representative because if $y$ and $z$ are elements of $\mathbb{R}^{n}$ that differ by an element of $\mathbb{Z}^{n}$, then $Ay$ and $Az$ also differ by an element of $\mathbb{Z}^{n}$. 

\begin{remark} 
Furstenberg and Berend give a more general definition called $\Sigma$-invariance. That is, invariance under a semigroup of endomorphisms on the torus. 
\end{remark}

We include some examples. 

\begin{example}
Let $A\in M_{n}(\mathbb{Z})$ be an expanding matrix and let $D\subset\mathbb{Z}^{n}$. Let $\varphi:\mathbb{R}^{n} \rightarrow \mathbb{T}^{n}$ be the canonical homomorphism given by $\varphi(v) = v + \mathbb{Z}^{n}$. The image $\varphi(T_{A, D})$ is $\times A$-invariant. For any $v\in T_{A, D}$, 
\begin{align}
Av+\mathbb{Z}^{n} &= A( \pi_{A, D}(v_{j})_{j=1}^{\infty}) + \mathbb{Z}^{n} \\
&= v_{1} + \pi_{A, D}(v_{j+1})_{j=1}^{\infty} + \mathbb{Z}^{n} \\
&= \pi_{A, D}(v_{j+1})_{j=1}^{\infty} + \mathbb{Z}^{n}. 
\end{align}
\end{example}

\begin{example}
Let $A\in M_{n}(\mathbb{Z})$ be an expanding matrix,  let $D = \{u, v\}$ where $u$ and $v$ are distinct elements of $\mathbb{R}^{n}$, and let $\varphi$ be the canonical homomorphism from $\mathbb{R}^{n} \rightarrow \mathbb{T}^{n}$. The set $\pi_{A ,D}(\{(d_{j})_{j=1}^{\infty}\in D^{\mathbb{N}}: d_{j} = v \;\;\text{implies}\;\; d_{j+1} = u\})$ under the image of $\varphi$ is $\times A$-invariant.
\end{example}

We now turn our attention to $\mathbb{Z}^{n}$. When working in the nonnegative integers, the maps $\phi_{r}$ and $\psi_{r}$ are defined for all nonnegative integers for any choice of nonnegative integer $r$ greater than or equal to $2$. This is because every nonnegative integer has a base-$r$ expansion. To replicate this with respect to an $n$ by $n$ matrix with integer entries, we need to choose a set of coefficients $D\subset\mathbb{Z}^{n}$. 

\begin{definition} 
Let $A\in M_{n}(\mathbb{Z})$ and let $D\subset\mathbb{Z}^{n}$ be finite. We call an expansion of the form 
\begin{equation}
 d_{0} + Ad_{1} + \cdots + A^{m-1}d_{m-1}, \;m\in\mathbb{Z}_{\geq1}
\end{equation}
a \textit{discrete $(A, D)$-expansion}. We call the elements of $D$ \textit{digits}. 
\end{definition}

We assume that $D$ is finite because we want to leverage the boundedness of $\mathcal{D}$ in a way that is similar to when we were examining $T_{A, D}$ when $A$ was an expanding matrix. However, for certain pairs $(A, D)$, it may not be true that every element of $\mathbb{Z}^{n}$ has a discrete $(A ,D)$-expansion. Considering the previous chapters, it is not unreasonable to guess that if $A$ is expanding and $D$ is a complete residue system mod $A$, then all elements of $\mathbb{Z}^{n}$ have discrete $(A, D)$-expansions. This is not the case. 

Let $(A, \mathcal{D})$ be the pair
\begin{equation}
\Bigg(\begin{bmatrix} 2 & 0 \\ 0 & 2 \end{bmatrix}, \Bigg\{\begin{bmatrix} 0 \\ 0 \end{bmatrix}, \begin{bmatrix}1 \\0 \end{bmatrix}, \begin{bmatrix}0\\1 \end{bmatrix}, \begin{bmatrix} 1 \\ 1 \end{bmatrix}\Bigg\}\Bigg).
\end{equation}
The vector $[-1, 1]^{t}$ does not have a discrete $(A, \mathcal{D})$-expansion because the image of any element of $D$ after multiplication by a power of $A$ has nonnegative entries. The component $-1$ is not reachable. Meanwhile, $A$ has eigenvalues $\pm 2$ and every element $v\in\mathbb{Z}^{n}$ is equal to $Au + d$ for some $u\in\mathbb{Z}^{n}$ and $d\in \mathcal{D}$. For example, 
\begin{equation}
\begin{bmatrix} -1 \\ 1 \end{bmatrix} = \begin{bmatrix} 2 & 0 \\ 0 & 2 \end{bmatrix} \begin{bmatrix} -1 \\ 0 \end{bmatrix} + \begin{bmatrix} 1 \\ 1 \end{bmatrix}.
\end{equation}

\begin{definition} 
Let $A\in\mathbb{Z}^{n}$ be an expanding matrix and let $\mathcal{D}$ be a complete residue system mod $A$ containing $0$. We call the pair $(A, \mathcal{D})$ a \textit{number system for $\mathbb{Z}^{n}$} if every element of $\mathbb{Z}^{n}$ has a discrete $(A, \mathcal{D})$-expansion. 
\end{definition}

We provide conditions for when an expanding matrix $A\in M_{n}(\mathbb{Z})$ is a complete residue system mod $A$ form a number system of $\mathbb{Z}^{n}$. 

\begin{theorem}\label{thm:radixExist} 
Suppose $A \in M_{n}(\mathbb{Z})$ is an expanding matrix and that $\mathcal{D} \subset \mathbb{Z}^{n}$ is a complete set of remainders mod $A$ containing the origin. The pair $(A, \mathcal{D})$ is a number system for $\mathbb{Z}^{n}$ if and only if the set $T_{A, \mathcal{D}} \cap \mathbb{Z}^{n}$ only contains $0$. 
\end{theorem}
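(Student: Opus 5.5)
We prove both directions through one division algorithm. Since $\mathcal{D}$ is a complete residue system mod $A$, every $v\in\mathbb{Z}^{n}$ can be written uniquely as $v = d + Au$ with $d\in\mathcal{D}$ and $u\in\mathbb{Z}^{n}$. Define $\Phi(v) := A^{-1}(v-d)$, with $d$ the unique digit congruent to $v$. A discrete $(A,\mathcal{D})$-expansion of $v$ exists exactly when some iterate $\Phi^{k}(v)$ equals $0$. Indeed, if $v=d_{0}+Ad_{1}+\cdots+A^{m-1}d_{m-1}$, then by uniqueness of residues $d_{0}$ is the digit of $v$, so $\Phi(v) = d_{1}+\cdots+A^{m-2}d_{m-1}$, and induction gives $\Phi^{m}(v)=0$. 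The converse is immediate by unrolling. Since $0\in\mathcal{D}$, we have $\Phi(0)=0$.

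For ($\Leftarrow$), assume $T_{A,\mathcal{D}}\cap\mathbb{Z}^{n}=\{0\}$ and fix $v$. We use the norm $\norm{\cdot}_{A}$ from Lemma~\ref{lem:contractingNorm}, for which $\norm{A^{-1}x}_{A}\le\rho^{-1}\norm{x}_{A}$. Then $\norm{\Phi(v)}_{A}\le\rho^{-1}(\norm{v}_{A}+M)$, where $M=\max_{d\in\mathcal{D}}\norm{d}_{A}$. Hence the orbit $\Phi^{k}(v)$ eventually enters the bounded set $\{\norm{x}_{A}\le 2M/(\rho-1)\}$, which contains only finitely many lattice points. The orbit is therefore eventually periodic: there are $w$ and $p\ge1$ with $\Phi^{p}(w)=w$. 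Writing out the digits $e_{0},\dots,e_{p-1}$ produced along the cycle gives $w=e_{0}+Ae_{1}+\cdots+A^{p-1}e_{p-1}+A^{p}w$, so
\[
-w = A^{-p}(I-A^{-p})^{-1}\cdots,
\]
or more cleanly, $w = -\sum_{j\ge1}A^{-j}e'_{j}$, where $(e'_{j})$ is the periodic sequence $\overline{e_{p-1},\dots,e_{0}}$. This shows $-w\in T_{A,\mathcal{D}}$. Since $-w$ is an integer vector, the hypothesis forces $w=0$. The orbit of $v$ therefore reaches $0$, and $v$ has an expansion. The delicate step is this bookkeeping: getting the sign and the digit order right, so that the cycle point lands in $T_{A,\mathcal{D}}$ itself rather than in $-T_{A,\mathcal{D}}$. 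We verify it from $w=\sum_{j=0}^{p-1}A^{j}e_{j}+A^{p}w$ by iterating, which gives $w=\sum_{k\ge0}A^{kp}(\cdots)$. That series diverges, so we instead solve $(I-A^{p})w=\sum_{j}A^{j}e_{j}$, which yields $-w=(A^{p}-I)^{-1}\sum_{j<p}A^{j}e_{j}=\sum_{k\ge1}A^{-kp}\sum_{j<p}A^{j}e_{j}$. This is an $(A,\mathcal{D})$-series with periodic digits, so $-w\in T_{A,\mathcal{D}}$ by Lemma~\ref{lem:selfSimForm}.

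For ($\Rightarrow$), suppose some nonzero $z\in\mathbb{Z}^{n}$ lies in $T_{A,\mathcal{D}}$, and write $z=\sum_{j\ge1}A^{-j}x_{j}$ with $x_{j}\in\mathcal{D}$. Then $Az-x_{1}=\sum_{j\ge1}A^{-j}x_{j+1}$ is again an integer point of $T_{A,\mathcal{D}}$. We aim to show that $-z$ has no discrete expansion. Assume instead that $-z=\sum_{j=0}^{m-1}A^{j}d_{j}$. Then $\Phi(-z)$ must be computed from the residue of $-z$. Since $-z = -x_{1}A^{-1}-\cdots$ is not directly an integer relation, we instead use $Az=x_{1}+z_{1}$ with $z_{1}:=Az-x_{1}\in T_{A,\mathcal{D}}\cap\mathbb{Z}^{n}$. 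This gives $-z_{1}=x_{1}+A(-z)$, so $\Phi(-z_{1})=-z$ and $-z_{1}\equiv x_{1}$. Iterating, the points $-z_{k}$ satisfy $\Phi(-z_{k+1})=-z_{k}$. Thus all the $-z_{k}$ lie in the finite set $-(T_{A,\mathcal{D}}\cap\mathbb{Z}^{n})$, and pigeonhole yields $k<l$ with $z_{k}=z_{l}$. Then $-z_{k}=\Phi^{l-k}(-z_{k})$ lies on a $\Phi$-cycle. If $z_{k}\neq0$, this cycle avoids $0$ (as $\Phi(0)=0$), so $-z_{k}$ has no expansion, contradicting that $(A,\mathcal{D})$ is a number system. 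If instead $z_{k}=0$, then $-z=\Phi^{k}(-z_{k})=\Phi^{k}(0)=0$, contradicting $z\neq0$. Either way we get a contradiction, which completes the proof.
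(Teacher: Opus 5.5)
Your proof is correct and follows essentially the same route as the paper. For ($\Leftarrow$) it is the same eventual-periodicity argument for the division map using the contracting norm of Lemma~\ref{lem:contractingNorm}, and your direct solve $(I-A^{p})w=\sum_{j<p}A^{j}e_{j}$ is a cleaner way to put $-w$ in $T_{A,\mathcal{D}}$ than the paper's limiting argument (one point to state explicitly: eventual periodicity needs the orbit to stay in the finite set, which holds because $\{x:\norm{x}_{A}\le 2M/(\rho-1)\}$ is forward-invariant under $\Phi$). For ($\Rightarrow$) your points $-z_{k}$ are exactly the paper's $A^{k}(-\zeta)+\sum_{j=1}^{k}A^{k-j}t_{j}$; the only difference is that you close by pigeonhole to a nonzero $\Phi$-cycle, whereas the paper uses the expansion of $-\zeta$ to make these integer points pairwise distinct, contradicting the finiteness of the integer neighbours of $T_{A,\mathcal{D}}$.
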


To prove Theorem~\ref{thm:radixExist} we require two lemmas. The first is that a vector has a discrete $(A, D)$-expansion if and only if a version of Euclidean division terminates. 

\begin{definition} 
Let $A$ be an $n$ by $n$ matrix with integer entries and let $\mathcal{D}$ be a complete residue system mod $A$. For $v\in\mathbb{Z}^{n}$ we call the sequence $(v_{k})_{k=1}^{\infty}$ given by $v_{k-1} = Av_{k} + e_{k-1}$ where $e_{k-1} \in \mathcal{D}$ and $v_{0} = v$, the \textit{$(A, D)$-remainder sequence of $v$}. 
\end{definition}

The remainder sequence is well-defined. If $\mathcal{D}$ is a complete residue system mod $A$, then the canonical function from $\mathcal{D}$ to $\faktor{\mathbb{Z}^{n}}{A\mathbb{Z}^{n}}$ is bijective. This implies that the choice $e_{k-1}$ such that $v_{k-1}$ is an element of $e_{k-1} + A\mathbb{Z}^{n}$ is unique, and thus $v_{k}$ is uniquely determined.

\begin{lemma}\label{lem:euclideanAlgo} 
Suppose that $A \in M_{n}(\mathbb{Z})$ is an expanding matrix and that $\mathcal{D} \subset \mathbb{Z}^{n}$ is a complete set of residues mod $A$. An element $v\in\mathbb{Z}^{n}$ has a discrete $(A, D)$-expansion if and only if its $(A, D)$-remainder sequence is eventually zero. 
\end{lemma}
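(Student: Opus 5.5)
We prove both directions directly from the defining recursion $v_{k-1} = Av_{k} + e_{k-1}$ of the $(A, \mathcal{D})$-remainder sequence, using only that $\mathcal{D}$ is a complete residue system mod $A$ and that $0\in\mathcal{D}$ (which we take to be implicit, as in the definition of a number system). Expanding matrices will only enter through invertibility; the substance is the uniqueness of the residue digit at each step.

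\emph{Eventually zero implies an expansion.} Suppose $v_{m} = 0$ for some $m$. Unwinding the recursion gives
\begin{equation*}
v = v_{0} = e_{0} + Av_{1} = e_{0} + Ae_{1} + A^{2}v_{2} = \cdots = e_{0} + Ae_{1} + \cdots + A^{m-1}e_{m-1} + A^{m}v_{m},
\end{equation*}
and the last term vanishes. Since each $e_{k}\in\mathcal{D}$, this is a discrete $(A, \mathcal{D})$-expansion of $v$. If $m = 0$, that is $v = 0$, we instead use the expansion $d_{0} = 0\in\mathcal{D}$.

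\emph{An expansion implies eventually zero.} Suppose $v = d_{0} + Ad_{1} + \cdots + A^{m-1}d_{m-1}$ with $d_{j}\in\mathcal{D}$. We show by induction on $k$ that
\begin{equation*}
e_{k} = d_{k} \quad\text{and}\quad v_{k+1} = d_{k+1} + Ad_{k+2} + \cdots + A^{m-k-2}d_{m-1} \quad\text{for } k \le m-1,
\end{equation*}
where the sum for $v_{k+1}$ is empty (hence $0$) when $k = m-1$.

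For the base step, $v_{0} \equiv d_{0} \pmod{A\mathbb{Z}^{n}}$. Because the canonical map $\mathcal{D}\to\mathbb{Z}^{n}/A\mathbb{Z}^{n}$ is bijective, $e_{0}$ is the unique digit in the class of $v_{0}$, so $e_{0} = d_{0}$. Then $Av_{1} = v_{0} - d_{0} = A(d_{1} + \cdots + A^{m-2}d_{m-1})$, and invertibility of $A$ (it has no zero eigenvalue) gives the formula for $v_{1}$. The inductive step repeats this argument with $v_{k}$ in place of $v_{0}$.

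At $k = m-1$ we get $v_{m} = 0$. Then $v_{m+1}$ satisfies $0 = Av_{m+1} + e_{m}$, and uniqueness of the residue gives $e_{m} = 0$ (since $0\in\mathcal{D}$), so $v_{m+1} = 0$. Induction then shows $v_{k} = 0$ for all $k\ge m$, i.e. the remainder sequence is eventually zero.

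The key point in this direction is the uniqueness of the residue digit, which forces the remainder sequence to reproduce the given expansion digit by digit. The only subtle requirement is $0\in\mathcal{D}$: it is needed so that zero stays fixed under the recursion, and without it the sequence could leave $0$ even after reaching it.
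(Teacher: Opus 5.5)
Your proof is correct and follows the same route as the paper's: unwind the recursion $v_{k-1}=Av_{k}+e_{k-1}$ for one direction, and for the other use uniqueness of the residue digit (together with invertibility of $A$) to show that the remainder sequence reproduces the given digits and reaches zero. You are also right that $0\in\mathcal{D}$ is needed for the sequence to stay at zero. The paper's statement omits this hypothesis, but its one-line proof (``$v_{k}=0$ for all $k\geq m+1$'') tacitly uses it, and it holds in the number-system setting where the lemma is applied.
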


\begin{proof}
Suppose there exists some positive integer $K$ such that $v_{k} = 0$ for all $k\geq K$. Therefore $v_{K-1} = e_{K-1}\in\mathcal{D}$ and $v = v_{0} = A^{K-1}e_{K-1} + A^{K-2}e_{K-2} + \cdots + Ae_{1} + e_{0}$. Conversely, if $v =  d_{0} + Ad_{1} + \cdots + A^{m}d_{m}$, where $d_{j}\in\mathcal{D}$ for each $j$, then we have $v_{k} = 0$ for all $k\geq m+1$. 
\end{proof}

The next lemma is established as a means to show that the $(A, D)$-remainder sequence is eventually periodic. 

\begin{lemma}\label{lem:periodicAlgo} 
Suppose that $A \in M_{n}(\mathbb{Z})$ is an expanding matrix and that $\mathcal{D}$ is a complete set of remainders mod $A$. Let $g:\mathbb{Z}^{n}\rightarrow\mathbb{Z}^{n}$ be the function given by
\begin{equation}
g(x) = A^{-1}(x - d(x))
\end{equation}
where $d(x)$ is the unique element of $\mathcal{D}$ such that $x\in d(x)+\mathbb{Z}^{n}$. For every $v\in\mathbb{Z}^{n}$, the sequence $(g^{j}(v))_{j=1}^{\infty}$ is eventually periodic. 
\end{lemma}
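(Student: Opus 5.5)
The plan is to show that the orbit $(g^{j}(v))_{j\geq1}$ eventually enters a bounded region of $\mathbb{Z}^{n}$ that is independent of $v$. Since that region contains only finitely many lattice points and $g$ is a deterministic map on $\mathbb{Z}^{n}$, the pigeonhole principle then forces a repetition $g^{j}(v) = g^{j+p}(v)$, and eventual periodicity follows. First note that $g$ is well defined as a map $\mathbb{Z}^{n}\to\mathbb{Z}^{n}$. Because $\mathcal{D}$ is a complete residue system mod $A$, the digit $d(x)$ is the unique element with $x - d(x)\in A\mathbb{Z}^{n}$; this is the intended reading of the condition $x \in d(x) + \mathbb{Z}^{n}$. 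Hence $A^{-1}(x-d(x))\in\mathbb{Z}^{n}$, and $g^{j}(v)$ is exactly the remainder sequence $v_{j}$ of the preceding definition.

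For the bound, I would work in the norm $\norm{\cdot}_{A}$ of Lemma~\ref{lem:contractingNorm}, in which $\norm{A^{-1}x}_{A}\leq\rho^{-1}\norm{x}_{A}$ with $\rho>1$. Set $c:=\max_{d\in\mathcal{D}}\norm{d}_{A}$. Then
\begin{equation*}
\norm{g(x)}_{A}\leq \rho^{-1}\left(\norm{x}_{A}+c\right).
\end{equation*}
Iterating this inequality gives
\begin{equation*}
\norm{g^{j}(v)}_{A}\leq \rho^{-j}\norm{v}_{A} + c\sum_{i=1}^{j}\rho^{-i}\leq \rho^{-j}\norm{v}_{A} + \frac{c}{\rho-1}.
\end{equation*}
So for all sufficiently large $j$ we have $\norm{g^{j}(v)}_{A}\leq R:=\frac{c}{\rho-1}+1$. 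The set $B:=\{x\in\mathbb{Z}^{n}:\norm{x}_{A}\leq R\}$ is finite, because all norms on $\mathbb{R}^{n}$ are equivalent and a Euclidean ball contains only finitely many lattice points. Once the orbit enters $B$ it remains there: if $\norm{x}_{A}\leq R$ then $\norm{g(x)}_{A}\leq \rho^{-1}(R+c)\leq R$, which holds since $R\geq c/(\rho-1)$.

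Choose $J$ such that $g^{j}(v)\in B$ for all $j\geq J$. The finitely many values $g^{J}(v),\dots,g^{J+|B|}(v)$ must contain a repeat, say $g^{a}(v)=g^{a+p}(v)$ with $p\geq1$. Applying $g$ repeatedly to both sides gives $g^{j}(v)=g^{j+p}(v)$ for all $j\geq a$, which is the claimed eventual periodicity.

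The main obstacle is the contraction estimate itself. The Euclidean norm does not work directly, since $A^{-1}$ need not contract in that norm; this is exactly the situation in the remark after Lemma~\ref{lem:contractingNorm}. Lemma~\ref{lem:contractingNorm} resolves it. As a fallback, one could apply the Euclidean norm to a power $A^{-m}$ that is a contraction and bound $g^{m}$ in place of $g$, but the adapted norm is cleaner.
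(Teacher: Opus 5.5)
Your proof is correct and follows essentially the same route as the paper. Both use the adapted norm $\norm{\cdot}_{A}$ from Lemma~\ref{lem:contractingNorm} and the estimate $\norm{g(x)}_{A}\leq\rho^{-1}(\norm{x}_{A}+c)$ to force the orbit into a fixed ball, then conclude from the finiteness of the lattice points in that ball. Your version is slightly tighter than the paper's: you measure the digits in $\norm{\cdot}_{A}$ rather than the Euclidean norm, and your iterated bound makes the entry into the ball explicit instead of inferring it from $\norm{g(v)}_{A}<\norm{v}_{A}$ outside the ball.
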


\begin{proof}
Let $1< \rho < \min_{\lambda\in\sigma(A)}|\lambda|$ and $R := (\max\{\norm{d} : d\in\mathcal{D}\})/(\rho - 1)$. Recall that the function $\norm{x}_{A} := \sum\limits_{j=1}^{\infty} \rho^{j}\norm{A^{-j}x}$ on $\mathbb{R}^{n}$ is a norm where $\norm{\cdot}$ is the Euclidean norm. By Lemma~\ref{lem:contractingNorm}, multiplication by $A^{-1}$ is a contraction with coefficient $1/\rho$ with respect to $\norm{\cdot}_{A}$.  Observe that if $v\mathbb{Z}^{n}$ is in the complement of the closed ball $\overline{B_{R}(0)}$ (relative to $\norm{\cdot}_{A}$), then
\begin{align}
\norm{g(v)}_{A} &= \norm{A^{-1}(v-d(v))}_{A} \\
&\leq \rho^{-1}(\norm{v}_{A} + \norm{d(v)}_{A}) \\
&\leq \rho^{-1}(\norm{v}_{A} + (\rho - 1)R) \\
&< \norm{v}_{A}
\end{align}

Therefore the sequence $(g^{j}(v))_{j=1}^{\infty}$, visits $\overline{B_{R}(0)}$ infinitely often. Since there are only finitely many elements of $\mathbb{Z}^{n}$ in $\overline{B_{R}(0)}$, we conclude that the sequence $(g^{j}(v))_{j=1}^{\infty}$ is eventually periodic. 
\end{proof}

We are now in a position to prove Theorem~\ref{thm:radixExist}. 

\begin{proof}[Proof of Theorem~\ref{thm:radixExist}]
By Lemma~\ref{lem:euclideanAlgo}, it suffices to prove that every sequence of remainders is eventually zero if and only if $0$ is the only element of $T_{A, \mathcal{D}} \cap \mathbb{Z}^{n}$. 

Suppose for some $v_{0}\in\mathbb{Z}^{n}$, the sequence $(v_{k})_{k=1}^{\infty}$ takes on non-zero values for infinitely many indices $k$. This is the same as $v_{k} \neq 0$ for all $k$, because if a sequence of remainders is zero for some index $K$, it is zero for all $k>K$.   For each $d\in\mathcal{D}$, let $f_{d}:\mathbb{R}^{n}\rightarrow\mathbb{R}^{n}$ be given by $f_{d}(x) = B^{-1}(x-d)$. We can express $v_{k}$ as the image of $v_{0}$ under a composition of these functions in the following way
\begin{equation}
v_{k} = (f_{d_{k-1}}\circ  f_{d_{k-1}} \circ \cdots \circ f_{d_{0}})(v_{0}). 
\end{equation}
Therefore $v_{k} = g^{k}(v_{0})$ where $g$ is the function in Lemma~\ref{lem:periodicAlgo}. Therefore $(v_{k})_{k=0}^{\infty}$ is eventually periodic. Let $(e_{1}, e_{2}, \ldots, e_{p})$ be the sequence of digits in $\mathcal{D}$ that repeat (in the order of repetition). Let $(v_{k_{i}})_{i=1}^{\infty}$ be a subsequence with the property that for all sufficiently large $i$, $v_{k_{i+1}} = (f_{e_{p}} \circ f_{e_{p-2}} \circ \cdots \circ f_{e_{1}})(v_{k_{i}})$. 

Let $w$ denote the value of  $-\sum\limits_{j=1}^{\infty}A^{-jp}(\sum\limits_{i=1}^{p}A^{i-1}e_{i})$. Let $(g_{i})_{i=1}^{\infty}$ be a subsequence of the sequence of function compositions $(f_{d_{k-1}} \circ \cdots \circ f_{d_{0}})_{k=1}^{\infty}$ that corresponds to the functions used to produce $(v_{k_{i}})_{i=1}^{\infty}$. The sequence $(g_{i}(0))_{i=1}^{\infty}$ converges to $w$.There exists a constant $C > 0$ such that
\begin{equation}
\norm{g^{k}(v_{0}) - (f_{d_{k-1}}\circ  f_{d_{k-1}} \circ \cdots \circ f_{d_{0}})(v_{0})} \leq C\rho^{-k}\norm{v_{0}}
\end{equation} 
for \emph{any} $k$ because $A^{-1}$ is a contraction with respect to an equivalent norm. This implies that $(v_{k_{i}})_{i=1}^{\infty}$ converges to $w$. 

Therefore $w = v_{k_{i}}$ for sufficiently large $i$ and is an element of $\mathbb{Z}^{n}$. The argument is complete since $(-w)\in T_{A, \mathcal{D}}$. 

Conversely, suppose every element of $\mathbb{Z}^{n}$ has a discrete $(A, \mathcal{D})$-expansion. By way contradiction, suppose $\zeta$ is an element of $T_{A, \mathcal{D}} \cap \mathbb{Z}^{n}$ and is not equal to zero. By definition $\zeta$ has the $(A, \mathcal{D})$-representation (not to be confused with discrete $(A, \mathcal{D})$-expansions) $(t_{j})_{j=1}^{\infty}$. The vector $-\zeta$ has a discrete $(A, \mathcal{D})$-expansion, which we denote by
$-\zeta = A^{m}d_{m} + A^{m-1}d_{m-1} \ldots + d_{0}$. By multiplying $-\zeta + \pi_{A, \mathcal{D}}(t_{j})_{j=1}^{\infty}$ by successive powers of $A$, we have that $0$ is an element of 
\begin{equation}
(A^{m+k}d_{m} +\cdots A^{k}d_{0} A^{k-1}t_{1} + \cdots + t_{k}) + T_{A, \mathcal{D}}.
\end{equation}
Since $\mathcal{D}$ is a complete residue system mod $A$, each $A^{m+k}d_{m} +\cdots A^{k}d_{0} A^{k-1}t_{1} + \cdots + t_{k}$ is a distinct element of $\mathbb{Z}^{n}$. This is a contradiction because $T_{A, \mathcal{D}}$ only has finitely many neighbours. 
\end{proof}

We now define $\times (A, \mathcal{D})$-invariance when $(A, \mathcal{D})$ is a number system for $\mathbb{Z}^{n}$. 

\begin{definition} 
Let $(A, \mathcal{D})$ be a number system for $\mathbb{Z}^{n}$. Let $\phi_{A, \mathcal{D}}$ denote the function from $\mathbb{Z}^{n}$ to $\mathbb{Z}^{n}$ given by 
\begin{equation}
g = d_{0} + Ad_{1} + \cdots + A^{\ell}d_{\ell} \mapsto d_{1} + Ad_{2} + \cdots + A^{\ell-1}d_{\ell}.
\end{equation}
Similarly, let $\psi_{A, \mathcal{D}}$ denote the function from $\mathbb{Z}^{n}$ to $\mathbb{Z}^{n}$ given by
\begin{equation}
g = d_{0} + Ad_{1} + \cdots + A^{\ell}d_{\ell} \mapsto d_{0} + Ad_{1} + \cdots + A^{\ell-1}d_{\ell-1}.
\end{equation}
A nonempty subset $E\subset\mathbb{Z}^{n}$ is \textit{$\times (A, \mathcal{D})$-invariant} if both $\phi_{A, \mathcal{D}}(A)$ and $\psi_{A, \mathcal{D}}(E)$ are subsets of $E$. 
\end{definition}

This notion of invariance is dependent on the choice of $\mathcal{D}$. Let us consider $\times r$ invariance in $\mathbb{N}\cup\{0\}$. The defining maps are given in \cite{GMR24} as $\phi_{r}(n) = \lfloor n/r \rfloor$ and $\psi_{r}(n) = n - r^{k}\lfloor n/r^{k} \rfloor$. These look like maps that do not make a choice of digits but implicitly they do. The properties of these two maps that are leveraged in the proofs of GMR are precisely the way they transform a base-$(r, \{0, 1, \ldots, r-1\})$ expansion into another base-$(r, \{0, 1, \ldots, r-1\})$ expansion. We suspect the ``right" perspective is that they \emph{happen} to be expressible using the floor function. We now bridge multiplicative invariance on $\mathbb{Z}^{n}$ to multiplicative invariance on $\faktor{\mathbb{R}^{n} }{\mathbb{Z}^{n}}$ in the same way.

\begin{theorem}\label{thm:invTransfer} 
Assume that $(A, \mathcal{D})$ is a number system. If $E$ is $\times (A, \mathcal{D})$-invariant, then the sequence of sets $(A^{-k}(E \cap A^{k}T_{A, \mathcal{D}}))_{k=1}^{\infty}$ converges to $X \in \mathbb{R}^{n}$ in Hausdorff distance. Moreover, the image $\varphi(X)$ is $\times A$-invariant where $\varphi$ is the canonical homomorphism from $\mathbb{R}^{n}$ to $\mathbb{T}^{n}$. 
\end{theorem}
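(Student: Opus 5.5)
Set $T := T_{A,\mathcal{D}}$ and $X_k := A^{-k}(E\cap A^kT)$. The plan is to describe $X_k$ through digits. Since $0 \in \mathcal{D}$, every finite sum $\sum_{j=1}^{k}A^{-j}d_j$ with $d_j \in \mathcal{D}$ lies in $T$. Hence the integer points $\zeta = d_0 + Ad_1 + \cdots + A^{k-1}d_{k-1}$, i.e. those with discrete $(A,\mathcal{D})$-expansions of length at most $k$, lie in $A^kT$. Conversely, I will argue that an integer point of $A^kT$ must have an expansion of length at most $k$. Write $\zeta = \zeta' + A^k w$ with $\zeta'$ of length at most $k$ (possible by the number system property). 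Then $A^{-k}\zeta' \in T$, and $A^{-k}\zeta \in T$ as well. If $w \neq 0$, then $w$ would be a neighbour relation between $T$ and $T+w$, which is not a contradiction in itself. So I will instead define the approximating sets via length-$k$ expansions:
\[
X_k' := \{\textstyle\sum_{j=1}^{k} A^{-j} d_{k-j} : d_0 + \cdots + A^{k-1}d_{k-1} \in E\},
\]
and show that $X_k$ and $X_k'$ differ by at most finitely many boundary points near $\partial T$. By Lemma~\ref{lem:kbound}-type reasoning, $E \cap A^kT$ is contained in $\bigcup_{g}(\Lambda_k + g)$, with $g$ ranging over the finitely many integer neighbours of $T$ and $\Lambda_k$ the set of length-$k$ points. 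This reduction is the first technical step. If it proves messy, I would fall back on working with $X_k'$ and Hausdorff distance estimates of size $O(\norm{A^{-k}})$ with respect to the norm $\norm{\cdot}_A$ from Lemma~\ref{lem:contractingNorm}.

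Next I will prove that $(X_k')$ is Cauchy in the Hausdorff metric. Invariance under $\psi_{A,\mathcal{D}}$ (truncation of the top digit) is the key. Suppose $x = \sum_{j=1}^{k+1}A^{-j}d_{k+1-j} \in X'_{k+1}$. Then $\psi^{m}$ applied to the corresponding integer still lies in $E$ and gives a point of $X'_{k}$ (after accounting for shifts by leading zeros) within distance $\le C\rho^{-k}$ of $x$. In the other direction, $\phi_{A,\mathcal{D}}$-invariance will be used to realize each point of $X_k'$ approximately inside $X_{k+1}'$. Concretely, if $\zeta \in E$ has length $k$, I need some element of $E$ of length $k+1$ whose top $k$ digits nearly agree with those of $\zeta$. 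This direction is where I expect the main obstacle, because the definitions do not obviously let one lengthen elements of $E$. Instead I will argue via subsequences: the space of nonempty compact subsets of the compact set $T$ is compact, so a limit point $X$ exists, and I will show that $\limsup X_k' \subseteq \liminf X_k'$ using $\psi$-truncation. Concretely, any point $x$ that is a limit of points of $X_{k_i}'$ is also a limit of points of $X_k'$ for every large $k$, obtained by deleting top digits of the integers and rescaling. Deleting $k_i - k$ top digits leaves $A^{-k}$-scaled integers whose first $k$ digits after the radix agree with those of $x$ up to $O(\rho^{-k})$. This yields convergence in Hausdorff distance.

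Finally, I will prove that $\varphi(X)$ is $\times A$-invariant. The set is closed as a Hausdorff limit. Take $x \in X$ as a limit of $x_k = \sum_{j=1}^k A^{-j}d_{k-j}$ with the corresponding integers in $E$. Then $Ax_k = d_{k-1} + \sum_{j=1}^{k-1}A^{-j}d_{k-1-j}$, which is congruent mod $\mathbb{Z}^n$ to the point of $X_{k-1}'$ coming from $\psi_{A,\mathcal{D}}$ of that integer, and this integer lies in $E$. Letting $k \to \infty$ and using continuity of $\varphi$ and of multiplication by $A$, we get $\varphi(Ax) \in \varphi(X)$, mirroring the one-dimensional argument of \cite{GMR24}.
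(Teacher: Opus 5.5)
Your overall architecture (compactness of the hyperspace, $\limsup\subseteq\liminf$, then $\psi_{A,\mathcal{D}}$ for the invariance) is sound, but the central approximation step uses the wrong map, so the key estimate is false as written. $\psi_{A,\mathcal{D}}$ deletes the most significant digit $d_{\ell-1}$ of $\zeta=d_0+Ad_1+\cdots+A^{\ell-1}d_{\ell-1}$, and that digit is the \emph{first} radix digit of $A^{-\ell}\zeta=\sum_{j=1}^{\ell}A^{-j}d_{\ell-j}$. Consequently $A^{-k}\psi_{A,\mathcal{D}}^{\ell-k}(\zeta)$ has leading digits $d_{k-1},d_{k-2},\ldots$ rather than $d_{\ell-1},d_{\ell-2},\ldots$, and it is in general far from $A^{-\ell}\zeta$. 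Think of $A=10$: $\zeta=53$ gives $0.53$, while deleting the top digit gives $0.3$. Both of your claims therefore fail: that $\psi$ produces a point of $X'_k$ within $C\rho^{-k}$ of each point of $X'_{k+1}$, and that deleting $k_i-k$ top digits preserves the first $k$ radix digits of $x$.

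You have the roles reversed. The inclusion $X'_\ell\subset[X'_k]_{C\rho^{-k}/(\rho-1)}$ for $\ell\ge k$ comes from $\phi_{A,\mathcal{D}}$. Indeed, $A^{-k}\phi_{A,\mathcal{D}}^{\ell-k}(\zeta)=\sum_{j=1}^{k}A^{-j}d_{\ell-j}$ shares its first $k$ radix digits with $A^{-\ell}\zeta$ (Lemma~\ref{lem:phiBound}), and it stays in $E$ because $\phi_{A,\mathcal{D}}(E)\subset E$. The reverse inclusion would need $E\subset\phi_{A,\mathcal{D}}(E)$.

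Once you make this swap, your compactness route is correct and genuinely simpler than the paper's. The one-sided inclusion (Lemma~\ref{lem:phiNeighbourhood}(i)) together with compactness of the nonempty compact subsets of $T$ already gives $\limsup\subseteq\liminf$, hence Hausdorff convergence. Nonemptiness holds because $0\in X_k$: iterating $\phi_{A,\mathcal{D}}$ drives every element of $E$ to $0$. The paper instead proves a Cauchy estimate, which needs both inclusions. For that reason it passes to $E'=\bigcap_j\phi_{A,\mathcal{D}}^j(E)$, on which $\phi_{A,\mathcal{D}}$ is onto, and transfers back via Lemma~\ref{lem:cauchyTrick}. Your argument makes the ``lengthening'' obstacle you flagged irrelevant. $\psi_{A,\mathcal{D}}$ belongs only in the final $\times A$-invariance step, which you carry out correctly and exactly as the paper does.

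Second, the boundary worry in your first paragraph is real for general digit systems but disappears under the number-system hypothesis; your plan leaves it unresolved. Suppose $\zeta\in\mathbb{Z}^n$ and $A^{-k}\zeta=\pi_{A,\mathcal{D}}(e_j)_{j=1}^{\infty}\in T$. Then $\pi_{A,\mathcal{D}}(e_{k+j})_{j=1}^{\infty}=\zeta-\sum_{j=1}^{k}A^{k-j}e_j$ lies in $T\cap\mathbb{Z}^n=\{0\}$ by Theorem~\ref{thm:radixExist}, so $\zeta=\sum_{j=1}^{k}A^{k-j}e_j$. Discrete expansions over a complete residue system are unique (compare digits mod $A$ and induct), so $\zeta$ has no expansion of length exceeding $k$. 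Thus $\mathbb{Z}^n\cap A^kT$ is exactly the set of integers of expansion length at most $k$, and $X_k$ equals your $X'_k$. The paper uses this identification without comment, e.g.\ in the proof of Lemma~\ref{lem:phiNeighbourhood}.

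Without this identification, your intended reduction would not have sufficed. A bounded number of extra points can keep $d_H(X_k,X'_k)$ bounded away from $0$: for $A=10$ and $E=\{0\}\cup\{10^j\}$, the point $1$ lies in every $X_k$ while $X'_k\subset[0,1/10]$. So ``differing by finitely many boundary points'' transfers nothing. Your fallback of working only with $X'_k$ would then prove convergence of a different sequence.
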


Our goal for the remainder of this section is to prove this theorem. We first introduce some terminology and establish a sequence of technical lemmas. We follow the strategy employed in \cite{GMR24} for the nonnegative integer case. 

\begin{definition} 
Let $(X, d)$ be a metric space. For any $C\subset X$ and $\delta > 0$, we call the set $\{x\in X : d(x, y) < \delta\;\;\textit{for some}\;\;y\in C\}$ the \textit{$\delta$-neighbourhood of $C$} and denote it by $[C]_{\delta}$. 
\end{definition}

\begin{definition} \label{def:hausDist}
Let $(X, d)$ be a metric space. For $A, B\subset X$, we call the quantity
\begin{equation}
d_{H}(A, B) := \inf\{\delta: A\subset [B]_{\delta}, B\subset [A]_{\delta}\}
\end{equation}
the \textit{Hausdorff distance between $A$ and $B$}.
\end{definition}

\begin{remark}
Typically, the Hausdorff distance between $A$ and $B$ is defined as $\max(\sup\limits_{a\in A} \dist(a, B), \sup\limits_{b\in B} \dist(A, b))$. This definition and the one given in Definition~\ref{def:hausDist} are equivalent. 
\end{remark}

\begin{lemma}\label{lem:phiBound} 
Suppose that $(A, \mathcal{D})$ is a number system. For any $\zeta\in\mathbb{Z}^{n}$, positive integers $\ell, k$ with $\ell \geq k$, we have
\begin{equation}
\norm{A^{-\ell}\zeta - A^{-k}\phi_{A, \mathcal{D}}^{\ell-k}(\zeta)} \leq C\rho^{-k}/(\rho-1),
\end{equation}
where $\rho$ is a fixed number in $(1, \min_{\lambda\in\sigma(A)}|\lambda|)$ and $C$ is a constant that depends on $(A, \mathcal{D})$. 
\end{lemma}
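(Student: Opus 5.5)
We prove the bound by writing $\zeta$ in its discrete $(A,\mathcal{D})$-expansion and comparing the two sides term by term. Since $(A,\mathcal{D})$ is a number system, $\zeta = d_{0} + Ad_{1} + \cdots + A^{m}d_{m}$ with $d_{j}\in\mathcal{D}$. The map $\phi_{A,\mathcal{D}}$ deletes the lowest digit and shifts the rest down. Hence $\phi_{A,\mathcal{D}}^{\ell-k}(\zeta) = \sum_{j\geq \ell-k} A^{j-(\ell-k)}d_{j}$, which is $0$ if $\ell-k > m$. This relies on the expansion being unique, which holds because $\mathcal{D}$ is a complete residue system containing $0$: the remainder sequence of Lemma~\ref{lem:euclideanAlgo} determines the digits, modulo trailing zeros. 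Therefore
\begin{equation*}
A^{-\ell}\zeta - A^{-k}\phi_{A,\mathcal{D}}^{\ell-k}(\zeta) = \sum_{j=0}^{\ell-k-1} A^{j-\ell}d_{j},
\end{equation*}
with $d_{j}=0$ for $j>m$. The difference is a finite sum of vectors $A^{-i}d$ with $i = \ell - j$ ranging over $k+1,\ldots,\ell$ and $d\in\mathcal{D}$.

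Next we bound each term using the norm $\norm{\cdot}_{A}$ of Lemma~\ref{lem:contractingNorm}, for the fixed $\rho\in(1,\min_{\lambda\in\sigma(A)}|\lambda|)$. In that norm multiplication by $A^{-1}$ contracts by $\rho^{-1}$, so $\norm{A^{-i}d}_{A}\leq \rho^{-i}M_{A}$, where $M_{A}:=\max_{d\in\mathcal{D}}\norm{d}_{A}$. Summing the geometric series over $i\geq k+1$ gives
\begin{equation*}
\norm{A^{-\ell}\zeta - A^{-k}\phi_{A,\mathcal{D}}^{\ell-k}(\zeta)}_{A} \leq M_{A}\sum_{i=k+1}^{\infty}\rho^{-i} = \frac{M_{A}\rho^{-k}}{\rho-1}.
\end{equation*}

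Finally we return to the Euclidean norm. All norms on $\mathbb{R}^{n}$ are equivalent, so $\norm{x}\leq c_{0}\norm{x}_{A}$ for some constant $c_{0}$. In fact $\norm{x}_{A}\geq \rho\norm{A^{-1}x}\geq \rho\norm{A}_{\text{op}}^{-1}\norm{x}$, so one may take $c_{0} = \norm{A}_{\text{op}}/\rho$. Setting $C := c_{0}M_{A}$, which depends only on $(A,\mathcal{D})$ and the chosen $\rho$, gives the claim.

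The only delicate point is the first step: identifying $\phi_{A,\mathcal{D}}^{\ell-k}$ as a digit shift when $\ell-k$ exceeds the length of the expansion, and checking that $\phi_{A,\mathcal{D}}$ is well defined, i.e.\ that discrete expansions are unique up to leading zeros. Uniqueness follows because reducing modulo $A$ forces $d_{0}$, and one then inducts on $(\zeta-d_{0})A^{-1}$. With that settled, the estimate is a routine geometric series.
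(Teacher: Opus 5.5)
Your proof is correct and follows the paper's argument. You expand $\zeta$ in its discrete $(A,\mathcal{D})$-expansion, note that $A^{-\ell}\zeta - A^{-k}\phi_{A,\mathcal{D}}^{\ell-k}(\zeta)=\sum_{i=k+1}^{\ell}A^{-i}d_{\ell-i}$ (with $d_j=0$ beyond the length of the expansion), and bound this by a geometric series using the norm $\norm{\cdot}_{A}$ of Lemma~\ref{lem:contractingNorm} and equivalence of norms; your version is tidier than the paper's, since you do the whole estimate in $\norm{\cdot}_{A}$, convert with the explicit constant $\norm{A}_{\text{op}}/\rho$, and justify that $\phi_{A,\mathcal{D}}$ is well defined via uniqueness of discrete expansions, which the paper takes for granted here.
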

\begin{proof}
The element $\zeta$ has a discrete $(A, \mathcal{D})$-expansion: $$d_{0} + Bd_{1} + \cdots + A^{m-1}d_{m-1}.$$ The image of $\zeta$ under $\phi_{A, \mathcal{D}}$ after $\ell-k$ applications is $\phi_{A, \mathcal{D}}^{\ell-k}(\zeta) = d_{\ell-k}+Ad_{\ell-k+1} + \cdots + A^{m-1-(\ell-k)}d_{m-1}$. It follows that 
\begin{equation}
A^{-k}\phi_{A, \mathcal{D}}^{\ell-k}(\zeta) = A^{-k}d_{\ell-k} + A^{-k+1}d_{\ell-k+1} + \cdots + A^{m-1-\ell}d_{m-1}.
\end{equation}
Meanwhile,
\begin{equation}
A^{-\ell}\zeta = A^{-\ell}d_{0} + A^{-\ell + 1}d_{1} + \cdots + A^{m-1-\ell}d_{m-1}. 
\end{equation}
Therefore by Lemma~\ref{lem:contractingNorm} and the equivalence of norms on $\mathbb{R}^{n}$,
\begin{align}
\norm{A^{-\ell}\zeta - A^{-k}\phi_{A, \mathcal{D}}^{\ell-k}(\zeta)}&= \norm{A^{-\ell} d_{0} + A^{-\ell + 1}d_{1} + \cdots + A^{-k-1}d_{\ell-k-1}} \\
&\leq \max_{d\in\mathcal{D}}\norm{d} \sum_{j=k+1}^{\ell}\rho^{-j} \\
&\leq C\rho^{-(k+1)}\max_{d\in\mathcal{D}}\norm{d} \sum_{j=0}^{\infty}\rho^{-j}\\
&= \frac{\max_{d\in\mathcal{D}}\norm{d}}{\rho-1}\rho^{-k}. 
\end{align}
\end{proof}

\begin{lemma}\label{lem:phiNeighbourhood} 
Suppose that $(A, \mathcal{D})$ is a number system and that $E$ is a subset of $\mathbb{Z}^{n}$. For each integer $k\geq1$, define $X_{k} := A^{-k}(E\cap A^{k}T_{A, \mathcal{D}})$. Then the following implications hold where $\rho \in (1, \min_{\lambda\in\sigma(A)}|\lambda|)$ and $C$ is a constant that depends on $A$ and $\mathcal{D}$. 
\begin{itemize}
\item[(i)] If $\phi_{A, \mathcal{D}}(E)\subset E$, then for all $k, \ell\in\mathbb{Z}_{>0}, \ell\geq k, X_{\ell}\subset [X_{k}]_{C\rho^{-k}/(\rho-1)}$. 
\item[(ii)] If $E\subset \phi_{A, \mathcal{D}}(E)$, then for all $k, \ell\in\mathbb{Z}_{>0}, \ell\geq k, X_{k}\subset [X_{\ell}]_{C\rho^{-k}/(\rho-1)}$. 
\end{itemize}
Thus, if $\phi_{A, \mathcal{D}}(E) = E$ then for all $\ell\geq k$, we have $d_{H}(X_{\ell}, X_{k})\leq C\rho^{-k}/(\rho-1)$. 
\end{lemma}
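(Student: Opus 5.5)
The plan is to reduce both inclusions to Lemma~\ref{lem:phiBound}. The one new ingredient is a description of membership in $A^{k}T_{A, \mathcal{D}}$ in terms of the length of the discrete expansion. The key observation is this: for $\zeta\in\mathbb{Z}^{n}$ with discrete $(A, \mathcal{D})$-expansion $d_{0} + Ad_{1} + \cdots + A^{m-1}d_{m-1}$, we have
\begin{equation*}
A^{-k}\zeta = \sum_{j=1}^{k}A^{-j}d_{k-j} + (\text{terms } A^{j}d_{k+j},\ j\geq 0).
\end{equation*}
So $\zeta\in A^{k}T_{A, \mathcal{D}}$ holds whenever the expansion has at most $k$ digits. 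Conversely, membership should force this, using Theorem~\ref{thm:radixExist}: $T_{A, \mathcal{D}}\cap\mathbb{Z}^{n} = \{0\}$, together with the complete residue property. Writing $\zeta = \eta + A^{k}\xi$ with $\eta$ the truncation, $A^{-k}\zeta\in T$ gives $\xi + A^{-k}\eta\in T$. Here $A^{-k}\eta\in T$ is coded by a sequence ending in zeros, so $\xi$ is a neighbour-type integer vector. I would try to pin down this characterization first. If it resists, I will work with the weaker statement that $\phi^{\ell-k}_{A, \mathcal{D}}$ maps $E\cap A^{\ell}T$ into the set of lattice points with at most $k$ digits, which lie in $A^{k}T$. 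This weaker statement is all that is used below.

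For (i), take $x\in X_{\ell}$, so $x = A^{-\ell}\zeta$ with $\zeta\in E\cap A^{\ell}T_{A,\mathcal{D}}$. Set $\zeta' := \phi_{A, \mathcal{D}}^{\ell-k}(\zeta)$. Since $\phi(E)\subset E$, iterating gives $\zeta'\in E$. Since $\phi$ strips the lowest $\ell-k$ digits, $\zeta'$ has $\ell-k$ fewer digits, and by the observation above $\zeta'\in A^{k}T$. Thus $A^{-k}\zeta'\in X_{k}$, and Lemma~\ref{lem:phiBound} gives $\norm{x - A^{-k}\zeta'}\leq C\rho^{-k}/(\rho-1)$. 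Hence $x\in[X_{k}]_{C\rho^{-k}/(\rho-1)}$, after possibly enlarging $C$ so that the inequality is strict as the neighbourhood definition requires.

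For (ii), take $y = A^{-k}\zeta'\in X_{k}$. Since $E\subset\phi(E)$, iterating gives some $\zeta\in E$ with $\phi^{\ell-k}(\zeta) = \zeta'$. The issue is that $\zeta$ must lie in $A^{\ell}T$, i.e.\ have at most $\ell$ digits. I would argue that any preimage $\zeta$ is obtained by prepending $\ell-k$ low-order digits to the expansion of $\zeta'$, because $\phi^{\ell-k}$ deletes exactly the first $\ell-k$ digits. So its digit count is that of $\zeta'$ plus $\ell-k$, which is at most $\ell$. Then $A^{-\ell}\zeta\in X_{\ell}$, and Lemma~\ref{lem:phiBound} again bounds the distance. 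One subtlety is handling the uniqueness and normalization of discrete expansions (leading zero digits). I would treat this by taking the canonical expansion from the remainder sequence (Lemma~\ref{lem:euclideanAlgo}), which is unique because $\mathcal{D}$ is a complete residue system containing $0$.

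The final claim follows by combining (i) and (ii) with the definition of $d_{H}$. I expect the main obstacle to be the digit-count characterization of $E\cap A^{k}T_{A, \mathcal{D}}$. If only the one-sided inclusion (at most $k$ digits implies membership in $A^{k}T$) is available, then (ii) is still fine. For (i), however, I need that every $\zeta\in A^{\ell}T$ has at most $\ell + c$ digits for a fixed $c$, and that the resulting $\zeta'$ still lies in $A^{k}T$. The fallback would be to redefine the comparison point as $A^{-k}\phi^{\ell-k+c}(\zeta)$ and absorb the extra error into the constant $C$.
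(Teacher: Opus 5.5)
Your argument is the paper's. In (i) you push $\zeta\in E\cap A^{\ell}T_{A,\mathcal{D}}$ down by $\phi_{A,\mathcal{D}}^{\ell-k}$, in (ii) you lift $\zeta'$ back using $E\subset\phi_{A,\mathcal{D}}(E)$, and in both cases Lemma~\ref{lem:phiBound} gives the distance bound.

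The paper also uses the digit-count description of $\mathbb{Z}^{n}\cap A^{k}T_{A,\mathcal{D}}$ that you flag, but only implicitly. The direction you leave open closes in one line. (Your observation that $\xi$ is a neighbour does not by itself force $\xi=0$.) If $A^{-k}\zeta=\sum_{j\geq1}A^{-j}t_{j}$ with $t_{j}\in\mathcal{D}$, then
\[
\sum_{j\geq1}A^{-j}t_{k+j}=\zeta-\sum_{j=1}^{k}A^{k-j}t_{j}\in T_{A,\mathcal{D}}\cap\mathbb{Z}^{n}=\{0\}
\]
by Theorem~\ref{thm:radixExist}. Hence $\zeta=t_{k}+At_{k-1}+\cdots+A^{k-1}t_{1}$, and by uniqueness of discrete expansions $\zeta$ has at most $k$ digits.

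So $\mathbb{Z}^{n}\cap A^{k}T_{A,\mathcal{D}}$ is exactly the set of lattice points with at most $k$ digits. Your proof of (i) therefore goes through with $c=0$. The fallback is unnecessary, and it would not work anyway: $A^{-k}\phi_{A,\mathcal{D}}^{\ell-k+c}(\zeta)$ lies close to $A^{-c}A^{-\ell}\zeta$, not to $A^{-\ell}\zeta$.
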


\begin{proof}
In both parts we assume that $\ell$ and $k$ are positive integers with $k \leq \ell$.

We begin with (i). Let $y$ be an element of $X_{\ell}$. By definition, $y = A^{-1}v$, with $v\in A\cap B^{\ell}T_{B, \mathcal{D}}$. If $\phi_{A, \mathcal{D}}(E) \subset E$, then $\phi_{A, \mathcal{D}}^{i}(E\cap B^{\ell}T_{B, \mathcal{D}}) = E\cap A^{\ell-i}T_{A, \mathcal{D}}$ for some positive integer $i$. It follows that $\tilde{v} := \phi_{A, \mathcal{D}}^{\ell-k}(v)$ is an element of $E \cap A^{\ell - (\ell-k)}T_{A, \mathcal{D}} = E \cap A^{k}T_{A, \mathcal{D}}$. Therefore $\tilde{y} := A^{-k}\tilde{v}$ is an element of $X_{k}$. A direct application of Lemma~\ref{lem:phiBound} yields $\norm{y - \tilde{y}} \leq C\rho^{-k}$. 

We now prove (ii). Suppose $x\in X_{k}$. By definition, $x = A^{-\ell}w$, with $w\in E\cap A^{k}T_{A, \mathcal{D}}$. If $A \subset \phi_{A, \mathcal{D}}(A)$, then there exists $\tilde{w}\in A$ such that $\phi_{A, \mathcal{D}}^{\ell-k}(\tilde{w}) = w$. The highest power of $A$ in the discrete $(A, \mathcal{D})$-expansion of $w$ is less than or equal to $k-1$. Since the expansion is unique, it must be that $\tilde{w}$ is an element of $A^{\ell}T_{A, \mathcal{D}}$. Therefore $A^{-\ell}\tilde{w}$ is an element of $\ell$. Now we apply Lemma~\ref{lem:phiBound} to obtain the desired inequality. 

If $\phi_{A, \mathcal{D}}(E) = E$ then the conclusions of both (i) and (ii) hold. The desired result now follows from the definition of Hausdorff distance. 
\end{proof}

\begin{lemma}\label{lem:cauchyTrick} 
Suppose $(A, D)$ is a number system for $\mathbb{Z}^{n}$. If $E\subset\mathbb{Z}^{n}$ satisfies $\phi_{A, \mathcal{D}}(E)\subset E$, then $\lim_{k\rightarrow\infty}d_{H}(X_{k}, X_{k}^{'}) = 0$ where $X_{k} = A^{-k}(E\cap A^{k}T_{A, \mathcal{D}})$ and $X_{k}^{'} = A^{-k}(E^{'}\cap A^{k}T_{A, \mathcal{D}})$ with $E^{'} = \bigcap\limits_{k=1}^{\infty}\phi_{A, \mathcal{D}}(E)$. 
\end{lemma}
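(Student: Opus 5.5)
The plan is to read $E^{'}$ as $\bigcap_{j\geq1}\phi_{A, \mathcal{D}}^{j}(E)$, the natural meaning given that $\phi_{A, \mathcal{D}}(E)\subset E$ makes $(\phi_{A, \mathcal{D}}^{j}(E))_{j}$ a decreasing chain. Since $E^{'}\subset E$, we have $X_{k}^{'}\subset X_{k}$, so $X_{k}^{'}\subset[X_{k}]_{\delta}$ for every $\delta>0$. It therefore suffices to show that for every $\varepsilon>0$ and all sufficiently large $k$ we have $X_{k}\subset[X_{k}^{'}]_{\varepsilon}$.

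Throughout I use the fact, already used in the proof of Lemma~\ref{lem:phiNeighbourhood}(ii), that for a number system an element $\zeta\in\mathbb{Z}^{n}\cap A^{m}T_{A, \mathcal{D}}$ has a discrete expansion using only $A^{0}, \ldots, A^{m-1}$. Consequently $\mathbb{Z}^{n}\cap A^{m}T_{A, \mathcal{D}}$ is finite, and $\phi_{A, \mathcal{D}}^{k-m}$ maps $\mathbb{Z}^{n}\cap A^{k}T_{A, \mathcal{D}}$ into $\mathbb{Z}^{n}\cap A^{m}T_{A, \mathcal{D}}$.

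\emph{Step 1 (stabilization).} Fix $m$. The sets $\phi_{A, \mathcal{D}}^{j}(E)\cap A^{m}T_{A, \mathcal{D}}$ form a decreasing chain of subsets of a finite set. Hence there is $J(m)$ with $\phi_{A, \mathcal{D}}^{j}(E)\cap A^{m}T_{A, \mathcal{D}}=E^{'}\cap A^{m}T_{A, \mathcal{D}}$ for all $j\geq J(m)$.

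\emph{Step 2 ($E^{'}\subset\phi_{A, \mathcal{D}}(E^{'})$).} Let $y\in E^{'}$. For each $j$ choose $z_{j}\in\phi_{A, \mathcal{D}}^{j}(E)$ with $\phi_{A, \mathcal{D}}(z_{j})=y$. Every preimage of $y$ under $\phi_{A, \mathcal{D}}$ has the form $d+A y$ with $d\in\mathcal{D}$, so there are only finitely many. Some fixed $z$ therefore equals $z_{j}$ for infinitely many $j$. Because the chain is decreasing, $z$ lies in every $\phi_{A, \mathcal{D}}^{j}(E)$, so $z\in E^{'}$ and $y=\phi_{A, \mathcal{D}}(z)$. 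Lemma~\ref{lem:phiNeighbourhood}(ii), applied to $E^{'}$, then gives $X_{m}^{'}\subset[X_{k}^{'}]_{C\rho^{-m}/(\rho-1)}$ for all $k\geq m$.

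\emph{Step 3 (approximation).} Given $\varepsilon$, choose $m$ with $2C\rho^{-m}/(\rho-1)<\varepsilon$, and let $k\geq m+J(m)$. Take $x=A^{-k}v\in X_{k}$, where $v\in E\cap A^{k}T_{A, \mathcal{D}}$. Set $w:=\phi_{A, \mathcal{D}}^{k-m}(v)$. Then $w\in\phi_{A, \mathcal{D}}^{k-m}(E)\cap A^{m}T_{A, \mathcal{D}}$, and since $k-m\geq J(m)$, Step 1 gives $w\in E^{'}\cap A^{m}T_{A, \mathcal{D}}$. Thus $A^{-m}w\in X_{m}^{'}$. Lemma~\ref{lem:phiBound} gives $\norm{x-A^{-m}w}\leq C\rho^{-m}/(\rho-1)$. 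Step 2 gives a point of $X_{k}^{'}$ within a further $C\rho^{-m}/(\rho-1)$ of $A^{-m}w$. By the triangle inequality, $x\in[X_{k}^{'}]_{\varepsilon}$.

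I expect the main obstacle to be Steps 1 and 2. Both rely on the finiteness and degree control of expansions of points in $A^{m}T_{A, \mathcal{D}}$, namely that $\zeta\in A^{m}T_{A, \mathcal{D}}$ forces the highest power in its expansion to be at most $m-1$. The earlier lemma uses this implicitly, and I would have to check it carefully using the uniqueness of discrete expansions.
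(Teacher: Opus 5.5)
Your proposal is correct and follows the same route as the paper. Both arguments stabilize the decreasing chain $\phi_{A,\mathcal{D}}^{j}(E)\cap A^{m}T_{A,\mathcal{D}}$, use Lemma~\ref{lem:phiBound} to place $X_{k}$ within $C\rho^{-m}/(\rho-1)$ of $X_{m}^{'}$, and then apply Lemma~\ref{lem:phiNeighbourhood} to $E^{'}$ with the triangle inequality, letting $X_{k}^{'}\subset X_{k}$ handle the other inclusion. The one difference is that your Step 2 proves, via the finitely many preimages $Ay+d$ with $d\in\mathcal{D}$, the inclusion $E^{'}\subset\phi_{A,\mathcal{D}}(E^{'})$, which the paper only asserts as $\phi_{A,\mathcal{D}}(E^{'})=E^{'}$. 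The degree-control fact you flag does hold: any $\zeta\in\mathbb{Z}^{n}\cap A^{m}T_{A,\mathcal{D}}$ differs from a degree-$(m-1)$ discrete expansion by an element of $T_{A,\mathcal{D}}\cap\mathbb{Z}^{n}=\{0\}$ (Theorem~\ref{thm:radixExist}).
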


\begin{proof}
Let $C$ be the constant appearing in Lemma~\ref{lem:phiNeighbourhood}. Let $\varepsilon>0$ and choose a positive integer $m$ large enough that $2\rho^{-m}/(\rho-1) < \varepsilon/C$. 

The property $\phi_{r}(E)\subset E$ implies that the sequence of sets $(\phi_{A, \mathcal{D}}^{j}(E) \cap A^{m}T_{A, D})_{j=1}^{\infty})$ is a decreasing sequence of sets (in terms of set inclusion). This sequence must stabilize because the sets are finite. That is, for all sufficiently large $k$, $\phi_{A, \mathcal{D}}^{k}(E) \cap A^{m}T_{A, \mathcal{D}} = E^{'}\cap A^{m}T_{A, \mathcal{D}}$. In fact, for any $m$, we can choose $k\geq m$ such that 
\begin{equation} \label{eq:stabilize}
\phi_{A, \mathcal{D}}(E)^{k-m}\cap = E^{'}\cap A^{m}T_{A, \mathcal{D}}. 
\end{equation}

We have from Lemma~\ref{lem:phiBound} that 
\begin{equation}
X_{k} \subset \bigg[A^{-m}(\phi_{A, \mathcal{D}}^{k-m}(E)\cap A^{m}T_{A, \mathcal{D}})\bigg]_{C\rho^{-m}/(\rho-1)}. 
\end{equation}
By (\ref{eq:stabilize}), this is precisely $X_{k}\subset [X_{m}^{'}]_{C\rho^{-m}/(\rho-1)}$. We now make use of the fact that $\phi_{A, \mathcal{D}}(E^{'}) = E^{'}$. By Lemma \ref{lem:phiBound}, we have $d_{H}(X_{k}^{'}, X_{m}^{'}) < C\rho^{-m}/(\rho-1)$ for $k\geq m$. Therefore $[X_{m}^{'}]_{C\rho^{-m}/(\rho-1)}\subset [X_{k}^{'}]_{2C\rho^{-m}/(\rho-1)}$. On the other hand, $X_{k}^{'} \subset X_{k}$ because $E^{'}\subset E$. Therefore $X_{k}^{'}\subset [X_{k}]_{2C\rho^{-m}/(\rho-1)}$ and $X_{k}\subset [X_{k}^{'}]_{2C\rho^{-m}/(\rho-1)}$. Therefore $d_{H}(X_{k}, X_{k}^{'}) < \varepsilon$. 
\end{proof}

We now prove Theorem~\ref{thm:invTransfer}. 

\begin{proof}[Proof of Theorem~\ref{thm:invTransfer}]
By Lemma~\ref{lem:cauchyTrick}, the sequence $(X_{k})_{k=1}^{\infty}$ converges if and only if $(X_{k}^{'})_{k=1}^{\infty}$ converges. We then appeal to Lemma~\ref{lem:phiNeighbourhood} to observe that $(X_{k}^{'})_{k=1}^{\infty}$ is a cauchy sequence of nonempty compact sets. This is sufficient for convergence because the collection of nonempty compact subsets of Euclidean space equipped with the Hausdorff distance is a complete metric space \cite{DD02}. Therefore $(X_{k})_{k=1}^{\infty}$ converges to a nonempty compact subset of $\mathbb{R}^{n}$ in Hausdorff distance. 

To finish the proof, we verify that $\varphi(X)$ is $\times A$-invariant. Every point in $X_{k}$ has an $(A, \mathcal{D})$-representation of the form $(d_{1}, \ldots, d_{k}, \overline{0})$. Therefore the elements of $A\varphi(X_{k})$ have representatives whose $(A, \mathcal{D})$-representations have the form  $(d_{2}, \ldots, d_{k}, \overline{0})$. Therefore $A\varphi(X_{k})$ is a subset of $\varphi(A^{-k}(\mathbb{Z}^{n} \cap A^{k-1}T_{A, \mathcal{D}}))$. Recall that part of the assumption that $E$ is $\times (A, \mathcal{D})$-invariant is that $\psi_{A, \mathcal{D}}(E)$ is a subset of  $E$. This means that whenever $A^{k-1}d_{1} + A^{k-2}d_{2} +\cdots+d_{k}$ is in $E$, $A^{k-2}d_{2} + \cdots + d_{k}$ is also in $E$. It follows that $A\varphi(X_{k})$ is contained in $\varphi(A^{-(k-1)}E)$. We conclude that $A\varphi(X_{k}) \subset \varphi(X_{k-1})$. Both multiplication by $A$ on $\mathbb{T}^{n}$ and $\varphi$ are uniformly continuous functions. We conclude that $A\varphi(X) = \lim_{k\rightarrow\infty}A\varphi(X_{k})$ is a subset of $\lim_{k\rightarrow\infty}\varphi(X_{k-1}) = \varphi(X)$. 
\end{proof}

\chapter{Conclusion and Future Work}\label{chp:mirai}

This final chapter is organized into four sections discussing potential future work. The first two parts involve questions about the intersection of attractors and their translates, and the latter two are about multiplicative invariance. The first of each pair present opportunities that look to be within an arm's reach. The remaining two contain problems that may require a ladder.

\section{Digit Sets with Uniform Gaps} 

Suppose that $A\in M_{n}(\mathbb{R})$ is an expanding matrix and suppose $D\subset\mathbb{R}^{n}$ is finite. The self-similarity of an intersection $T(\alpha) := T_{A, D} \cap (T_{A, D} + \alpha)$ was shown to imply that the sequence of sets $(D\cap(D+\alpha_{j}) - \beta_{j})_{j=1}^{\infty}$ is SEP for some sequence $(\beta_{j})_{j=1}^{\infty} \in D^{\mathbb{N}}$ under two assumptions. These were the conditions that all $(A, D-D)$-representations are unique and that the IFS generating the intersection exclusively contains functions of the form $f(x) = A^{-p}x + u$ where $p$ is a positive integer and $u$ is some element of $\mathbb{R}^{n}$ (Theorem~\ref{thm:attractSEP}). Suppose we restrict to the special case that $A$ corresponds to multiplication by the Gaussian integer $-n+i$ and $D$ is equal to $\{0, m\}$ where $2\leq m \leq n^{2}$. We showed that if $T(\alpha)$ is the attractor of a collection of similarities containing $g(x) = rx + (1-r)\gamma$, where $\gamma = \pi_{A, D}(\gamma_{j})_{j=1}^{\infty}, \gamma_{j} = \min{D\cap(D+\alpha_{j})}$ for each $j$, then $(m - |\alpha_{j}|)_{j=1}^{\infty}$ is an SEP sequence of integers (Theorem~\ref{thm:selfSimSEPSp}). This raises the following question: can we remove either of the assumptions that all $(A, D-D)$-representations are unique or that the elements of the IFS have the form $A^{-p}x + u$?

In Example~\ref{ex:nonSEP} it was shown that there exist self-similar sets of the form $\pi_{A, D}\left(\prod\limits_{j=1}^{\infty}D_{j}\right)$ for which $(D_{j} - \beta_{j})$ is not SEP. It is suggested that the uniqueness of $(A, D-D)$-expansions cannot be removed without restricting to a subclass of the attractors $T_{A, D}$. On the other hand, theorem 1.2 in \cite{LYZ11} states that the self-similarity of $T_{r, D} + (T_{r, D} + \alpha)$ where $r$ is a positive integer and $D$ is a $k$-length arithmetic progression with step $m$ holds exactly when $(k-1-\alpha_{j}/m)$ is an SEP sequence of integers. The proof does not use the uniqueness of $(r, D-D)$-expansions (although it may hold for all sufficiently large $m$). The following conjecture is a generalization Theorem~\ref{thm:selfSimSEPSp} inspired by theorem 1.2 in \cite{LYZ11}.

\begin{conjecture}\label{thm:selfSimSEPSpExt} 
Suppose that $A\in M_{n}(\mathbb{R})$ is invertible and multiplication by its inverse is a similarity with respect to Euclidean distance. Suppose $D$ is a $k$-length arithmetic progression with step $m$ and that $\alpha$ has a unique $(A, D-D)$-representation. Let $\gamma$ be the minimal element of $T(\alpha) := T_{A, D} \cap (T_{A, D} + \alpha)$. If $T(\alpha)$ is the attractor of a collection of similarities containing a function of the form $f(x) = rx + (1 - r)\gamma$ for some $r\in\mathbb{C}$, then $((k-1)-\alpha_{j}/m)_{j=1}^{\infty}$ is SEP. 
\end{conjecture}

The challenge of moving from, say, $D = \{0, m\}$ to $D = \{0, m, 2m, \ldots, (k-1)m\}$ lies in surmounting the lack of an ordering on $\mathbb{R}^{n}$ and the behaviour of the $(A, D)$-representations. 

\section{Relaxing Homogeneity} 

The paper \cite{ZLY11} stands out from the existing literature on the self-similarity of $T\cap(T+\alpha)$ because it treats a self-similar set generated by a nonhomogeneous collection of similarities. An IFS is homogeneous if all the contraction coefficients of its elements are equal. Suppose $A$ and $B$ are distinct expanding matrices and that $T$ is the attractor of $\{f(x) = A^{-1}x + u, g(x) = B^{-1}x + v\}$. For which $\alpha \in T-T$ is the intersection $T\cap (T+\alpha)$ self-affine? What can be said about the function $\alpha \mapsto \dim_{H}(T\cap(T+\alpha))$?

\section{Equating Discrete and ``Continuous" Fractal Dimensions} 

Theorem~\ref{thm:invTransfer} states that if $E\subset\mathbb{Z}^{n}$ is a $\times (A, \mathcal{D})$-invariant set, then the sequence of sets of the form $A^{-k}(E\cap A^{k}T_{A, \mathcal{D}})$ converges to $X\subset\mathbb{R}^{n}$ such that $\varphi(X) := \bigcup\limits_{x\in X}(x + \mathbb{Z}^{n})$ is a $\times A$-invariant subset of $\mathbb{T}^{n}$. In \cite{GMR24}, it is shown that the $r^{-k}(F \cap [0, r^{k})$ converges to $Y \subset [0, 1]$ if $F$ is a $\times r$-invariant subset of the nonnegative integers, $\mathbb{N}_{0}$. The set $Y$, as a subset of $\mathbb{T}$ is $\times r$-invariant and its box-counting dimension agrees with a discrete version of box-counting dimension first introduced in \cite{BT89}. We present this version of dimension, called mass dimension:

\begin{definition}
Let $F$ be a subset of $\mathbb{N}_{0}$. We call the quantity
\begin{equation}\label{eq:massDim}
\lim_{N\rightarrow\infty}\frac{\log(|F\cap[0, N)|)}{\log(N)}
\end{equation}
the \textit{mass dimension of $F$} provided the limit exists and denote it by $\dim_{M}F$. 
\end{definition}

A reasonable generalization to subsets of $\mathbb{Z}^{n}$ is to use either squares or balls centered at the origin. For example, we could call the limit of 
\begin{equation}
\lim_{N\rightarrow\infty}\frac{\log(|E\cap (-N, N)^{n}|)}{\log(N)}
\end{equation}
the mass dimension of $E\subset\mathbb{Z}^{n}$. This is the version given in \cite{BT89}. In the particular case of a multiplicatively invariant set $E$, which yields a multiplicative invariant set $\varphi(X) \subset \mathbb{T}^{n}$, we want to tailor this definition to multiplicatively invariant sets in the same way that we measured the box-counting dimension of subsets of $T_{A, \mathbb{D}}$ using $k$-tiles (Corollary~\ref{lem:boxtile2}). In \cite{GMR24}, the main idea is to swap $(N)_{N=1}^{\infty}$ in (\ref{eq:massDim}) with the subsequence $(r^{k})_{k=1}^{\infty}$ when $F\subset\mathbb{N}_{0}$ is $\times r$-invariant. Observe that $\mathbb{N}_{0}\cap [0, r^{k})$ is equal to the set of nonnegative integers with discrete $(r, \{0, 1, \ldots, r-1)\})$-expansions $d_{0} + d_{1}r + \cdots + d_{k-1}r^{k-1}$. This is the set of nonnegative integers contained in $r^{k}(T_{r, \{0, 1, \ldots, r-1\}}\setminus\{1\})$. While this notation is superfluous in one dimension (for all $r$, $T_{r, \{0, 1, \ldots, r-1\}} = [0, 1])$, it is not when we consider number systems on $\mathbb{Z}^{n}$. Given $(A, \mathcal{D})$, the suggested approach is to measure the mass dimension using the ``neighbourhoods" $\mathbb{Z}^{n} \cap A^{-k}T_{A, \mathcal{D}}$. 

There is a snag. Our definition of $\times (A, D)$-invariance allows for matrices that are not scalings of orthogonal matrices (norm preserving). The following theorem of in \cite{J23} shows that there exist subsets of $\mathbb{T}^{2}$ which are invariant under multiplication by matrices of this kind, but do not have a box-counting dimension to speak of. We do not think it unreasonable that examples could be extrapolated to $\mathbb{T}^{n}$, $n\geq2$. 

\begin{theorem} 
There exists integers $n > m\geq2$ and a compact subset $F \subset \mathbb{T}^{2}$ such that $T(F) = F$ where $T(x, y) = (mx, ny)$ and $\underline{\dim}_{B}F < \overline{\dim}_{B}F$. 
\end{theorem}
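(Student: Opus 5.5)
The plan is to build $F$ symbolically, as the image of a closed shift-invariant set of digit sequences, and to estimate box-counting at the natural scales $n^{-k}$ using approximate squares, in the spirit of Corollary~\ref{lem:boxtile2}. I would fix $n>m\geq 2$ with $\theta:=\log m/\log n\in(0,1)$ and irrational (for instance $m=2$, $n=3$). A point $(x,y)\in\mathbb{T}^{2}$ is coded by a sequence of pairs $((x_{j},y_{j}))_{j\geq1}$ with $x_{j}\in\{0,\ldots,m-1\}$ and $y_{j}\in\{0,\ldots,n-1\}$. The coding is $x=\sum x_{j}m^{-j}$ and $y=\sum y_{j}n^{-j}$. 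Under this coding $T(x,y)=(mx,ny)$ is the left shift $\sigma$. For any closed $\Sigma$ with $\sigma(\Sigma)=\Sigma$, the image $F$ is compact and satisfies $T(F)=F$; this is the same mechanism as the example after the definition of $\times A$-invariance. So everything reduces to choosing $\Sigma$.

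Next I would set up the counting. Put $\ell(k):=\lceil k/\theta\rceil>k$. The approximate squares of side roughly $n^{-k}$ are determined by the word $(x_{1},\ldots,x_{\ell(k)})$ together with $(y_{1},\ldots,y_{k})$. Each one meets boundedly many $n^{-k}$-mesh squares and vice versa. Hence, by the argument of Lemma~\ref{lem:boxtile} (its hypotheses are checked exactly as in Corollary~\ref{lem:boxtile2}), the upper and lower box dimensions of $F$ are the $\limsup$ and $\liminf$ of $\log N_{k}/(k\log n)$. Here $N_{k}$ is the number of such combined words occurring in $\Sigma$. It splits as a sum, over admissible length-$k$ pair words $w$, of the number of admissible $x$-extensions of $w$ of length $\ell(k)-k$.

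Because this is a sum over $w$ of \emph{non-uniform} extension counts at two incommensurable lengths $k$ and $\ell(k)$, subadditivity alone does not force convergence. The construction will exploit exactly this. I would take $\Sigma$ to be the shift-orbit closure of sequences built from blocks on a lacunary schedule of lengths $L_{1}\ll L_{2}\ll\cdots$, with $L_{i+1}/L_{i}\to\infty$. In blocks of \emph{rich} type, the $x$-digits are free and the $y$-digits are $0$. In blocks of \emph{poor} type, the $y$-digits are free and the $x$-digits are $0$. The types alternate and are arranged so that for $k$ near a rich-block scale, $[k,\ell(k)]$ typically falls inside free $x$-stretches, while near a poor-block scale it falls inside forced-zero stretches. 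I would then estimate $\log N_{k}$ separately along the two subsequences of $k$. Along the first I expect roughly $k\log n+(\ell(k)-k)\cdot c_{1}\log m$; along the second, the same with a strictly smaller constant $c_{2}<c_{1}$. This yields $\underline{\dim}_{B}F<\overline{\dim}_{B}F$.

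The main obstacle is the orbit closure. Taking closure adds every shifted and limiting sequence, so $N_{k}$ counts words from \emph{all} positions inside the blocks, not just from the start. A single rich block of length $L_{i}$ then contributes its free words at every scale $k\leq L_{i}$, which threatens to make the count uniformly large and destroy the oscillation. To handle this I would first try forcing the block lengths to grow so fast that, at scale $k$, every admissible word is dominated by one block type. Then the windows of length $\ell(k)$ straddling a type change contribute only a subexponential factor, by a crude bound on the number of boundary positions. Making this domination quantitative is the step I expect to require the most care. The choice of $\theta$ irrational, and of $m,n$ in general, enters only through $\ell(k)-k$ being a positive proportion of $k$.
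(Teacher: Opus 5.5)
The thesis does not prove this statement; it quotes it from \cite{J23}. Your argument therefore has to stand on its own, and as designed it does not. The reduction to counting $N_k$, the number of combined words $(x_1\ldots x_{\ell(k)},\,y_1\ldots y_k)$, is the right framework. The gap is that the $\Sigma$ you propose has a box dimension, namely $2-\theta$.

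\textbf{Lower bound at every scale.} $N_k$ counts windows at \emph{all} positions. Block lengths tend to infinity and the types alternate, so for every $k$ there is, far out, a poor block of length $\ge k$ immediately followed by a rich block of length $\ge \ell(k)-k$. Take the window starting $k$ symbols before that boundary. On $[1,k]$ it has $y$ free and $x=0$; on $[k+1,\ell(k)]$ it has $x$ free and $y=0$. This window alone gives
\[
N_k \ \ge\ n^{k}\, m^{\ell(k)-k}.
\]

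\textbf{Matching upper bound.} Let $\mathcal{L}_j$ denote the set of length-$j$ words occurring in a shift space. Whenever $\sigma\Sigma\subseteq\Sigma$, splitting a combined prefix at position $k$ gives $N_k\le |\mathcal{L}_k(\Sigma)|\cdot|\mathcal{L}_{\ell(k)-k}(\pi_x\Sigma)|$. In your $\Sigma$ each position is free in at most one coordinate, and a window sees only polynomially many block patterns. So this product is at most $\mathrm{poly}(k)\,n^{k}m^{\ell(k)-k}$.

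Hence $\log N_k/(k\log n)\to 1+(\theta^{-1}-1)\theta=2-\theta$ along every sequence of $k$. In your notation, $c_1=c_2=1$ and there is no oscillation.

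The repair you propose also fails. The number of boundary positions is subexponential, but each straddling window carries exponentially many words. Straddling windows are exactly the ones that dominate: they beat pure windows, which give only about $n^k$, by a factor $m^{\ell(k)-k}$. Your heuristic that ``$[k,\ell(k)]$ typically falls inside free $x$-stretches'' is a statement about prefixes at a fixed starting position. That is the mechanism behind the non-invariant example in Chapter~\ref{chp:limFormula}, where $\pi_{A,D}(\prod_j D_j)$ has no box dimension. Invariance destroys it, because the maximum over starting positions is attained far out, where the lacunary schedule is invisible.

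What is missing is a mechanism that survives passing to all shifts. By subadditivity, $\frac1k\log|\mathcal{L}_k(\Sigma)|$ and $\frac1j\log|\mathcal{L}_j(\pi_x\Sigma)|$ both converge. Any oscillation of $\frac{\log N_k}{k\log n}$ must therefore come from the \emph{correlation} between $y$-freedom on the first $k$ symbols and the number of $x$-continuations of length $\ell(k)-k$. You need a $\Sigma$ with infinitely many scales at which \emph{no} window anywhere combines near-maximal $y$-freedom on $[1,k]$ with near-maximal $x$-freedom on $[k+1,\ell(k)]$, and infinitely many other scales at which such windows exist. Any construction where arbitrarily long maximally $y$-free stretches are immediately followed by arbitrarily long maximally $x$-free stretches is pinned at the upper bound for every $k$, whatever the schedule. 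The amount of $x$-freedom following a $y$-free stretch must itself depend on absolute scale in a way that is not erased by taking all shifts, for instance through a multi-level density structure. That is what a genuine construction, such as the one in \cite{J23}, has to achieve.

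A minor point: an orbit closure need only satisfy $\sigma\Sigma\subseteq\Sigma$. To get $T(F)=F$, pass to $\bigcap_j\sigma^j\Sigma$. For your construction this again consists only of windows from deep in the sequence, so it does not change the conclusion above.
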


This a case when $T_{A, \mathcal{D}}$ is strictly self-affine since the transformation $T$ corresponds to the matrix $\begin{bmatrix} m & 0 \\ 0 & n \end{bmatrix}$ which is used in the definition of a Bedford-McMullen Carpet. To show that $\dim_{M}E = \dim_{B}\varphi(X)$, we suggest restricting ourselves to the case when multiplication by $A^{-1}$ is a similarity with respect to some norm on $\mathbb{R}^{n}$. 

\section{Multiplicative Independence in $\mathbb{Z}^{n}$}\label{sec:multConj}

A pair of integers $r, s > 1$ are said to be \textit{multiplicatively independent} if the ratio $\log(r)/\log(s)$ is an irrational number. In other words, no positive integer power of $r$ is equal to some positive integer power of $s$. For example, take any pair of positive integers that are coprime. It has been proved that if a pair of integers $r, s > 1$ are multiplicatively independent and an infinite subset of $\mathbb{N}_{0}$ is simultaneously $\times r$-invariant and $\times s$-invariant, then the subset is equal to $\mathbb{N}_{0}$ \cite{GMR24}. This suggests an investigation into a version for multiplicatively invariant subsets of $\mathbb{Z}^{n}$. For example, recall that by Theorem~\ref{thm:radixExistCanon}, the pair $(b, \mathcal{D}) := (-n+i, \{0, 1, \ldots, n^{2})$ is a number system on the Gaussian integers for $n\geq1$. If an infinite subset of the Gaussian integers is simultaneously $\times (-2+i, \mathcal{D})$ and $\times (-5+i, \mathcal{D})$ invariant, then must it contain every Gaussian integer?

The result that inspired the version about subset of $\mathbb{N}_{0}$ states that if a pair of integers $r, s > 1$ are multiplicatively independent, then the only closed infinite subset of $\mathbb{T} = \faktor{\mathbb{R}}{\mathbb{Z}}$ that is $\times r$-invariant and $\times s$-invariant is itself. This result was generalized to $\mathbb{T}_{n}$ in \cite{B83}. Suppose that $A$ and $B$ are $n$ by $n$ integer matrices which commute and satisfy the following properties:
\begin{itemize}
\item[(i)] For either $A$ or $B$, the characteristic polynomial of the $k$th power of the matrix is irreducible over $\mathbb{Z}$ for every positive integer $k$,
\item[(ii)] For any common eigenvector between $A$ and $B$, at least one of the corresponding eigenvalues lies outside of the unit disc in the complex plane,
\item[(iii)] There exists no pair of non-zero integers, $m$ and $\ell$, such that $A^{m} = B^{\ell}$. 
\end{itemize}
The theorem in \cite{B83} asserts that if a closed infinite subset of $\mathbb{T}^{n}$ is invariant under the action of both matrices, then it must be $\mathbb{T}^{n}$. 

\begin{remark}
We mention that the original theorem is stated for commutative semigroups of endomorphisms on $\mathbb{T}^{n}$. 
\end{remark}

The conditions (i), (ii), and (iii) along with commutativity may be sufficient for a version to hold in $\mathbb{Z}^{n}$. 

\section{Conclusion}

I like to believe at the end of all this formalism, I've earned the right to be candid. I am fairly pleased that this thesis has something of a theme and, from Chapters 3-5, a logical progression. I say this because the truth is that the learning that went into this thesis is actually in reverse. In the fall of 2020, Florian Richter of \cite{GMR24} gave a virtual talk at Queen's University on the topic of multiplicative invariance. Given his grounding in ergodic theory with a view towards number theory, it seemed like the topic would be a good choice. I had both experience doing research in topological dynamics from my master's, had worked through Furstenberg's paper on Szemeredi's theorem, and have an advisor who specializes in the application of ergodic theory to number theory. Little did I know there would be so much fractal geometry. 

The original plan, after consultation with Florian, was to generalize the theorems about multiplicative invariance from expansions of the form $d_{1}m^{-1} + d_{2}m^{-2} + \cdots$ where $d_{j}\in\{0, 1, \ldots, m-1\}$ for each $j$ to expansions $d_{1}m_{1}^{-1} + d_{2}(m_{1}m_{2})^{-1} + \cdots$ where $d_{j} \in\{0, 1, \ldots, m_{j}-1\}$ for each $j$. This is fairly difficult because expansions of the latter type do not have a correspondence to a natural sequence space that is invariant under the left-shift operator. I remember reaching out to Pablo Schmerkin, arguably \emph{the} expert on the topic of multiplicative invariance/independence, and hearing back that while it was interesting to think about, he had no idea what the statements should even be. It is likely a problem worth somebody's time and effort, but perhaps not a graduate student who has limited funding and is on a tight schedule. 

At the suggestion of my advisors, I pivoted to the case of Furstenberg's and Florian and his co-author's theorems when the base of the expansions were not merely integers, but Gaussian integers. This is what started my journey down the rabbit hole of canonical representations of complex numbers with respect to a Gaussian integer base. This part of the project was fairly enjoyable. I remember my three-part indoctrination: I was initially vexed by Professor Gilbert's presentation of the equivalence of $(-n+i, \{0, 1, \ldots, n^{2}\})$-representations, then neutral, and then finally believing it was the only sensible way to do it. Today, I still have some fondness for it, but its bewildering complexity is best compartmentalized as a special case of the neighbour graphs that have been treated by Shigeki Akiyama and J\"{o}rg Thuswaldner, among others, for more general number systems. It was also around this time that I came across Steen Pedersen and Vincent Shaw's paper on the intersections of Cantor sets derived from $-n+i$ with their translates. I realized that Gilbert's work improved their results after staring at Gilbert's graphs for an embarrassingly long amount of time. It was neat, but I still felt multiplicative invariance was the core of my research. I had painstakingly worked through and presented proofs about the multiplicative independence that has thematic connections to the $\times 2, \times 3$ conjecture and felt that it had to lead somewhere. I then attended my first fractal geometry conference. 

Under blue skies, on sandy beaches, surrounded by friendly attendees, I realized how much of the subject I was missing. There are of course worse places and situations to be in when such a thing happens, but it was somewhat shocking to be hearing the words self-similar and self-affine for the first time and for them to be used practically every other sentence after putting considerable effort into understanding multiplicative invariance. Being shaken up like that made me reflect on multiplicative invariance and the talks that week which featured it. I realized that despite being able to follow and even fill in the technical details of the proofs, I didn't understand why any of it was happening. It was then months after the fact, months filled with the distractions of writing up what I had, TAing, and running a class, that I came to the conclusion that it was a bad idea to force myself to defend a thesis I did not yet have the mathematical maturity to comprehend what even motivated it. Meanwhile, I had a much better grasp of the results about intersections of Cantor sets with their translates. Even if it was not as sophisticated as my original project, I was making progress. Not to mention that the second specialized conference I attended on number systems was a far better experience. 

Hyperfocusing on improving Pedersen and Shaw's theorem led me to the question of the self-similarity of $T\cap(T+\alpha)$. It was there that I eventually noticed that the proof techniques used to treat the intersection really only relied on the group structure of the vector space $\mathbb{R}^{n}$. That is how I wrote my thesis in reverse. Hopefully it explains why $-n+i$ received such an extensive treatment. Given more time I would have liked to showcase the neighbour graphs of Sierpinski-like carpets. For now, the Bedford-McMullen carpet example in Chapter $3$ will have to do. Despite my change of focus, I'd be very interested to know the truth of my musings in Section~\ref{sec:multConj}. 

Thank you for reading.


\appendix
\addtocontents{toc}{\protect\setlength{\protect\cftchapnumwidth}{2.5cm}}
\addtocontents{toc}{\protect\renewcommand{\protect\cftchappresnum}{Appendix }}

\chapter{The Neighbour Graphs for $(-n+i, \{0, 1, \ldots, n^{2}\})$, $n\geq2$}

\section{Derivation of the Neighbour Graph ($n\geq3$)}\label{app:a}
This appendix is a supplement to the discussion of Figure~\ref{fig:radix} in Section~\ref{sec:neighGraphsss}. The goal of this appendix is to demonstrate how Theorem~\ref{thm:neighboursetrepeat} is used to derive the augmented neighbour graph in Figure~\ref{fig:radix}. For convenience, the graph can be found in Figure~\ref{fig:radixrepeat2} below and Theorem~\ref{thm:neighboursetrepeat} is simply a repetition of Theorem~\ref{thm:neighbourset}. 

Recall that the claim is that any triple of $(-n+i, \{0, 1, \ldots, n^{2}\})$ represent the same complex number if and only if they can be obtained from an infinite path through the neighbour graph starting from the top node (state). The diagrams for the states and the labeling system for the edges is the same as it is in Section~\ref{sec:neighGraphsss}. 

\begin{figure} 
\centering
\includegraphics[scale=0.63]{StateGraph.png}
\caption{The neighbour graph for $(-n+i, \{0, 1, \ldots, n^{2}\})$-representations, $n\geq3$.}
\label{fig:radixrepeat2}
\end{figure}

\begin{theorem}[W. Gilbert, \cite{G82}, proposition 1]\label{thm:neighboursetrepeat}
A Gaussian integer $s$ is a neighbour of $T_{-n+i, \{0, 1, \ldots, n^{2}\}}$ if and only if
\begin{itemize}
\item[(i)] $s\in\{0, \pm1, \pm (n-1+i), \pm (n+i)\}$ and $n\geq3$.
\item[(ii)] $s\in\{0, \pm1, \pm (1+i), \pm (2+i), \pm i, \pm (2+2i)\}$ and $n=2$. 
\end{itemize}
\end{theorem}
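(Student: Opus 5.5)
The statement is a repetition of Theorem~\ref{thm:neighbourset}, so I would prove it along the lines already used in the body for $n\geq3$ and handle $n=2$ separately. There are two directions. First, every Gaussian-integer neighbour must lie in the listed set. Second, each listed element really is a neighbour, which I would show by exhibiting explicit equal pairs of representations.

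\textbf{Case $n\geq3$, exclusion.} Let $s$ be a Gaussian-integer neighbour. By Lemma~\ref{lem:reimbound}, $|\Re(s)-n\Im(s)|<2$.

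If $\Im(s)=0$, this forces $s\in\{0,\pm1\}$.

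Next I would bound the imaginary part. Writing $s=\sum_j\delta_j b^{-j}$ with $|\delta_j|\le n^2$ gives $|s|\le \sqrt{n^2+1}+1<n+2$. If $|\Im(s)|=\beta\ge2$, the lemma gives $|\Re(s)|\ge n\beta-1\ge 2n-1\ge n+2$ for $n\ge3$. This contradicts the modulus bound, so $|\Im(s)|\le1$.

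With $\Im(s)=\pm1$, the lemma leaves only $\pm(n-1+i)$, $\pm(n+i)$ and $\pm(n+1+i)$. To rule out $n+1+i$ I would use Lemma~\ref{lem:infinitewalk}. If $n+1+i$ were a neighbour, then $b(n+1+i)+\delta=-(n^2+n+1)+i+\delta$ would also be a neighbour for some real $\delta$ with $|\delta|\le n^2$. I would then check each remaining candidate neighbour $v$ and compute the required $\delta=v+(n^2+n+1)-i$. Every such $\delta$ is either non-real or exceeds $n^2$, a contradiction. The case $-(n+1+i)$ follows by symmetry, since $-s$ is a neighbour whenever $s$ is.

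\textbf{Case $n\geq3$, inclusion.} I would exhibit periodic representations: $z_1=\pi(0,\overline{c,0})$ and $z_2=\pi(0,\overline{0,c})$ with $c=(n-1)^2+1$. Summing the geometric series gives closed forms for $z_1$ and $z_2$. Then, as in the proof of Theorem~\ref{thm:realNeighbours}, I would verify the six identities listed there, namely $1+\pi(2n-1,\overline{c,0})=\pi(0,\overline{0,c})$ and its analogues for $\pm(n+i)$ and $\pm(n-1+i)$. Each identity reduces to a polynomial identity in $n$.

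\textbf{Case $n=2$.} Lemma~\ref{lem:reimbound} does not apply here. I would run the neighbour-finding algorithm of \cite{ST03}: iterate $v\mapsto bv+\delta$ with $|\delta|\le4$, restricted to the finite box $|v|\le\sqrt5+1$, and keep only those vertices from which an infinite walk exists. This finite computation should return exactly the twelve listed nonzero elements together with $0$. I expect this case to be the main obstacle, because it is a genuine finite search rather than a clean inequality. To certify inclusion I would read off cycles in the resulting graph and convert each into an explicit pair of periodic representations.
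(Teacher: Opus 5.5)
Your proposal is correct and follows the paper's proof essentially step for step. You use Lemma~\ref{lem:reimbound} to keep $\Re(s)$ within distance $2$ of $n\Im(s)$, the bound $\sqrt{n^2+1}+1<n+2\le 2n-1$ to force $|\Im(s)|\le 1$, Lemma~\ref{lem:infinitewalk} to eliminate $\pm(n+1+i)$, the explicit periodic representations with digit $(n-1)^2+1$ for inclusion, and the algorithm of \cite{ST03} for $n=2$. The only slips are trivial: the $n=2$ list has ten nonzero elements, not twelve, and the six identities appear in the proof of this theorem itself, while Theorem~\ref{thm:realNeighbours} supplies only the $\pm1$ identity and the verification technique.
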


We proceed under the assumption that $n\geq3$. We discuss the special case of $n=2$ in Appendix~\ref{app:b}. In \cite{G82}, Gilbert gives some of the calculations pertaining to the $n=1$ neighbour graph. The derivation of that graph does not exhibit all the reasoning featured in the derivation of the graph governing the cases $n\geq3$. 

Let $p, q,$ and $r$ be $(-n+i, \{0, 1, \ldots, n^{2}\})$. The $k$th state is defined to be $S(k) := (\zeta_{k}, \xi_{k}, -\zeta_{k} - \xi_{k}, )_{k=1}^{\infty}$ where $(\zeta_{k})_{k=1}^{\infty}$ and $(\xi_{k})_{k=1}^{\infty}$ are the integer sequences of $(p, q)$ and $(q, r)$ respectively. 

Although the sum of the components of $S(k)$ is zero, our notation lists them all. This is because we wish to explicitly compute the digits of all three representations in the $k$th place. We recall (\ref{eq:states}),
\begin{equation} \label{eq:graphkey}
S(k+1) = (p_{k+1}-q_{k+1}, q_{k+1}-r_{k+1}, r_{k+1}-p_{k+1}) + bS(k).
\end{equation}

It says that the $k$th state can be used to find the possible values of $S(k+1)$. From our discussion in Section~\ref{sec:neighGraphsss}, recall that $S(0) = (0, 0, 0)$. This state corresponds with the top node of Figure \ref{fig:radixrepeat2} with the diagram
$$\begin{tikzpicture}\draw (0,0) rectangle node{pqr} (.75,.75); \end{tikzpicture}.$$
We compute, using Theorem~\ref{thm:neighboursetrepeat}, the possible values of $S(k+1)$. Each value will correspond to a node in the state graph that is a successor of the node corresponding to $S(k) = (0, 0, 0)$. 

Observe that by (\ref{eq:graphkey}) the $k$th state must satisfy $S(k+1) = (p_{k+1}-q_{k+1}, q_{k+1}-r_{k+1}, r_{k}-p_{k+1})$. This forces the components of $S(k+1)$ to be integers since each digit is an integer. In accordance with Theorem~\ref{thm:neighboursetrepeat}, the components must be $0$ or $\pm1$. This splits into cases. It is that either all three digits are the same ($S(k+1) = (0, 0, 0)$) or at least one digit differs from the other two. 

The case $S(k+1) = (0, 0, 0)$ implies the existence of an arrow from the state $(0, 0, 0)$ back to itself. The triple of digits $(p_{k+1}, q_{k+1}, r_{k+1})$ could be any $(a, a, a)$ where $a\in\{0, 1, \ldots, n^{2}\}$. This is indicated by the label on the corresponding edge in the state graph given by 
$$\begin{matrix} 0\\0\\0\end{matrix}+.$$
We proceed with the case of differing digits. The digits cannot all be distinct because this would mean one of the pairs would necessarily have a difference of magnitude greater than or equal to $2$. Without loss of generality, let us say that $r$ is the expansion that differs in the $k+1$st digit and $p_{k+1}=q_{k+1}$. Either $r_{k+1}$ is one more than $p_{k+1}$ or one less. We either have $S(k+1) = (0, -1, 1)$ or $S(k+1) = (0, 1, -1)$. These states correspond to the diagrams 
$$\begin{tikzpicture}\draw (0,0) rectangle node{pq} (0.75,0.75); \draw (0.75, 0.75) rectangle  node{r} (1.5, 0); \end{tikzpicture}\;\;\text{and}\;\;\begin{tikzpicture}\draw (0,0) rectangle node{r} (0.75,0.75); \draw (0.75, 0.75) rectangle  node{pq} (1.5, 0); \end{tikzpicture}$$ respectively and result in the remaining two edges from the top node in Figure~\ref{fig:radixrepeat2}. 

The triples $(p_{k+1}, q_{k+1}, r_{k+1})$ are either of the form $(a, a, a + 1)$ or $(a + 1, a + 1, a)$ where $a\in\{0, 1, \ldots, n^{2}-1\}$. This is indicated by the respective labels 
$$\begin{matrix} 0\\0\\1\end{matrix}+\;\;\text{and}\;\;\begin{matrix} 1\\1\\0\end{matrix}+$$
 on the corresponding edges.

This first step provides the flavour of the calculations that appear in the full derivation of the graph. We compute a second step which will include the possibility that all three of the digits $p_{k+1}, q_{k+1}$, and $r_{k+1}$ are distinct. Let us re-index such that $S(k) = (0, 1, -1)$. Again, we refer to (\ref{eq:graphkey}) to direct our calculations. We have
\begin{equation}\label{eq:potentialtriple}
S(k+1) = (p_{k+1}-q_{k+1}, q_{k+1}-r_{k+1}, r_{k+1}-p_{k+1}) + (0, -n+i, n-i).
\end{equation}
It is clear that, at least one of the digits must differ from the other two. Let us investigate the case of exactly one distinct digit. Without loss of generality we assume $p_{k+1} = q_{k+1}$ and $r_{k+1}\neq p_{k+1}$. Consider the second component of $S(k+1)$: $q_{k_1} - r_{k+1} - n + i$. 

The digits are integers and thus there is no way of changing the positive imaginary part. According to Theorem~\ref{thm:neighboursetrepeat}, we can choose digits $q_{k+1}$ and $r_{k+1}$ such that $q_{k+1} - r_{k+1} = 2n$ or $2n-1$. The choice of a difference of $2n$ implies that the third component is $-n-i$. The resulting state is $S(k+1) = (0, n+i, -n-i)$. Its corresponding diagram in Figure~\ref{fig:radixrepeat2} is 
$$\begin{tikzpicture}\draw (0,0) rectangle node{r} (0.75,0.75); \draw (0.75, 0.75) rectangle  node{pq} (1.5, 1.5); \end{tikzpicture}.$$

The triple of digits $(p_{k+1}, q_{k+1}, r_{k+1})$ is of the form $(2n + a, 2n + a, a)$ where $a\in\{0, 1, \ldots, n^{2}-2n\}$. This is indicated by the label on the corresponding edge given by 
$$\begin{matrix} 2n\\2n\\0\end{matrix}+.$$

If we made the other choice, the resulting state is $S(k+1) = (0, n-1+i, -n+1-i)$ whose diagram is given by
$$\begin{tikzpicture}\draw (0,0) rectangle node{pq} (0.75,0.75); \draw (0, 0) rectangle  node{r} (.75, -.75); \end{tikzpicture}\;\;\text{and has the label}\;\;\begin{matrix} 2n-1\\2n-1\\0\end{matrix}+$$ on the incoming edge. 

Now we consider the case where all three are different and, in particular, $p_{k+1}\neq q_{k+1}$. We can see in (\ref{eq:potentialtriple}) that the first component of $S(k+1)$ is precisely $p_{k+1}-q_{k+1}$. It follows from Theorem~\ref{thm:neighboursetrepeat} that either $p_{k+1}$ is one more than $q_{k+1}$ or one less. The expansions $p$ and $q$ have the same digits for all places $k+j$ for all $j\geq2$. We are distinguishing them for the first time. Without loss of generality we may assume $p_{k+1} = q_{k+1} - 1$. 

In order for the remaining components of $S(k+1)$ to obey Theorem~\ref{thm:neighboursetrepeat}, we must have $q_{k+1} - r_{k+1} = 2n$ and thus $r_{k+1} - p_{k+1} = -2n+1$. The resulting state is $S(k+1) = (1, n-1+i, -n-i)$ and its corresponding diagram is $$\begin{tikzpicture}\draw (0,0) rectangle node{r} (0.75,0.75);\draw (0.75, 0.75) rectangle node{p} (0, 1.5); \draw (0.75, 0.75) rectangle node{q} (1.5, 1.5); \end{tikzpicture}.$$

The remaining structure of the neighbour graph can be deduced by iterating this procedure until all the successive states $S(k+1)$ are found. We leave this task to the interested reader. 

\section{The Remaining Graph ($n=2$)}\label{app:b}

Here we present the state graph governing equivalent radix expansions in base $-2+i$. 

The neighbour sequence of two representations may take on a larger number of values when $n=2$. This increases the number of realizable states and thus complicates the corresponding neighbour graph. The method used to derive the state graph for $n\geq3$ applies in the case $n=2$. We do not include the details. We include the notation required to parse the diagrams for the new states in the state graph, the primary claim from \cite{G82} about the graph (Theorem~\ref{thm:rules2}), and the graph itself (Figure~\ref{fig:radix2}). The new edges particular to $n=2$ are highlighted in blue and any successor of a blue edge is also a new state particular to the $n=2$ case. 

We make special mention that we only label the edges that correspond to the first distinction between a pair of expansions. The interested reader can 
derive any edge label using the value of the source and successor states of the edge and (\ref{eq:graphkey}). 

Let $p$ and $q$ be two equivalent $(-2+i, \{0, 1, 2, 3, 4, 5\})$-representations. We extend the list of diagrams that communicate the entries of the neighbour sequence of $p$ and $q$. The additions are as follows:
\begin{enumerate}
\item[(v)]  $p(k)-q(k) = i$ corresponds to \begin{tikzpicture}\draw (0,0) rectangle node{q} (.75,.75); \draw (0, .75) rectangle node{p} (-0.75, 1.5); \end{tikzpicture}.
\item[(vi)] $p(k)-q(k) = 2+2i$ corresponds to  \begin{tikzpicture}\draw (0,0) rectangle node{q} (0.75,0.75); \draw (0, 1.5) rectangle  node{p} (0.75, 2.25); \end{tikzpicture}. 
\end{enumerate}
We can communicate the value of additional states using these diagrams. For example, the state $(-1-i, 1+i, -2-2i)$ is communicated by the diagram
$$\begin{tikzpicture}\draw (0,0) rectangle node{r} (0.75,0.75);\draw (0.75,0.75) rectangle node{q} (0,1.5);\draw (0,1.5) rectangle node{p} (0.75,2.25);\end{tikzpicture}.$$

\begin{theorem}[W. J. Gilbert, \cite{G82}, theorem 8]\label{thm:rules2}
Let $p, q$ and $r$ be three $(-2+i, \{0, 1, 2, 3, 4\})$-representations. These representations are equivalent if and only if they can be obtained from an infinite path through the state graph in Figure~\ref{fig:radix2} starting at state $(0, 0, 0)$, if necessary relabeling $p, q$ and $r$ and in some cases, when $p=q$, replacing $q$ with another expansion. 
\end{theorem}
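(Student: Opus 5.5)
The plan is to repeat, for $n=2$, the derivation carried out for $n\geq3$ in Appendix~\ref{app:a}. The neighbour set is replaced by the larger one from Theorem~\ref{thm:neighbourset}(ii), namely $N_{2}:=\{0,\pm1,\pm(1+i),\pm(2+i),\pm i,\pm(2+2i)\}$. The whole argument rests on Theorem~\ref{thm:neighSeqEquiv} together with the state recursion (\ref{eq:graphkey}) with $b=-2+i$.

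\textbf{Forward direction.} Let $p,q,r$ be pairwise equivalent $(-2+i,\{0,1,\ldots,4\})$-representations.
\begin{itemize}
\item[(1)] Theorem~\ref{thm:neighSeqEquiv} shows that the integer sequences $(\zeta_{k})$ of $(p,q)$ and $(\xi_{k})$ of $(q,r)$ are neighbour sequences. Their negated sum is the integer sequence of $(r,p)$, so it is one as well.
\item[(2)] Hence every state $S(k)$ lies in the finite set $\mathcal{S}$ of triples with components in $N_{2}$ summing to zero.
\item[(3)] By (\ref{eq:graphkey}), $S(k+1)-bS(k)$ is a triple of digit differences with components in $\{0,\pm1,\ldots,\pm4\}$, and these differences must come from an actual digit triple $(p_{k+1},q_{k+1},r_{k+1})\in\{0,\ldots,4\}^{3}$.
\end{itemize}
This gives a finite directed graph on $\mathcal{S}$ whose edges are exactly the admissible transitions. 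Starting at $S(0)=(0,0,0)$, I would enumerate the successors as in Appendix~\ref{app:a}:
\begin{itemize}
\item[(a)] fix a state $S(k)$, compute $bS(k)$ componentwise;
\item[(b)] for each component, list which elements of $N_{2}$ differ from it by a real integer in $[-4,4]$;
\item[(c)] keep only those combinations realizable by a single digit triple.
\end{itemize}
Relabeling $p,q,r$ lets us quotient by the symmetric group on the three coordinates, which is what cuts the graph down to the displayed Figure~\ref{fig:radix2}.

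Finally, I would prune every state from which no infinite forward path exists. An actual triple of equivalent representations produces an infinite path, so it can only pass through surviving states. The surviving graph should coincide with Figure~\ref{fig:radix2}, with the blue edges being exactly the transitions that use the new neighbours $\pm i$ and $\pm(2+2i)$.

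\textbf{Converse.} An infinite path from $(0,0,0)$, together with its edge labels, determines digit sequences $p,q,r$. Each pairwise integer sequence then takes values in $N_{2}$, so it is a neighbour sequence, and Theorem~\ref{thm:neighSeqEquiv} gives pairwise equivalence.

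\textbf{Main obstacle.} The step I expect to be hardest is the clause ``when $p=q$, replacing $q$ with another expansion.'' For $n=2$ the larger neighbour set apparently lets a point carry a pair of representations whose state sequence, read as a triple with a repeated entry, leaves the pruned graph. My first attempt would be to classify the pruned states reached from triples of the form $(p,p,r)$. For each such state I would show that an alternative third representation $q'$ equivalent to $p$ exists, built by following a cycle of the graph. The finite bookkeeping for this, like the enumeration above, is a mechanical but lengthy case check. It could be cross-validated against the neighbour-finding algorithm of \cite{ST03}, which the paper already invokes for $n=2$.
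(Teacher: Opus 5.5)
Your enumeration and pruning step and your converse are fine. They follow the only route the paper indicates, namely that the $n\geq3$ derivation of Appendix~\ref{app:a} carries over. The gap is the replacement clause, and your proposal is inconsistent about it.

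Any pairwise equivalent triple, including a degenerate one $(p,p,r)$, gives an infinite walk from $(0,0,0)$ through admissible states of $\mathcal{S}$. None of those states can be removed by your pruning rule. So what you outline proves the statement for the graph you construct with no replacement ever needed. The clause has content only if Figure~\ref{fig:radix2} omits some degenerate states. In that case your graph is strictly larger than the figure, and ``the surviving graph should coincide with Figure~\ref{fig:radix2}'' is false. The degenerate walks through $\pm1,\pm(1+i),\pm(2+i)$ are the red subgraph inherited from $n\geq3$, so the omitted states must involve the new neighbours $\pm i,\pm(2+2i)$. What you actually need is a lemma: every equivalent pair whose integer sequence visits $\pm i$ or $\pm(2+2i)$ admits a third expansion with which it traces non-degenerate blue states.

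That lemma is false for $(-2+i,\{0,\ldots,4\})$-representations in the paper's one-sided sense, so your plan to build $q'$ inside $T$ cannot work. With $b=-2+i$, the only successor of $\pm i$ is $\mp(2+2i)$, and the only successor of $\pm(2+2i)$ is $\mp(2+2i)$, with digit differences $\mp1$ and $\pm4$. Hence such a pair ends, from some index $k$ on, with tails $\overline{0,4}$ and $\overline{4,0}$. The point $\pi(\overline{0,4})=(2+4i)/5$ lies in exactly the tiles $T$, $T+2+2i$ and $T+1+i$, with unique representations $\overline{0,4}$, $\overline{4,0}$ and $\overline{2}$ respectively. A third representation $w$ is therefore forced to satisfy $w_j=2$ for $j>k$ and $\sum_{j\le k}b^{k-j}w_j=\sum_{j\le k}b^{k-j}x_j+1+i$.

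Take $x=(1,2,2,\overline{0,4})$ and $y=(0,0,3,\overline{4,0})$. Their integer sequence is $1,i,-2-2i,2+2i,\ldots$, so $k=3$. The required head is $2-i=b^{3}+4b^{2}+4b$, which has no three-digit expansion, so this point has exactly two representations. A third expansion exists only in Gilbert's two-sided sense, namely $1+4b^{-1}+4b^{-2}+\sum_{j\ge4}2b^{-j}$.

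You must therefore do one of two things. Either work with two-sided radix expansions, obtaining the head from the fact that $(-2+i,\{0,\ldots,4\})$ is a number system for $\mathbb{Z}[i]$. Or keep the degenerate blue states in the graph, in which case it is not the figure and the clause disappears.
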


\begin{figure} [h]
\centering
\includegraphics[scale=0.63]{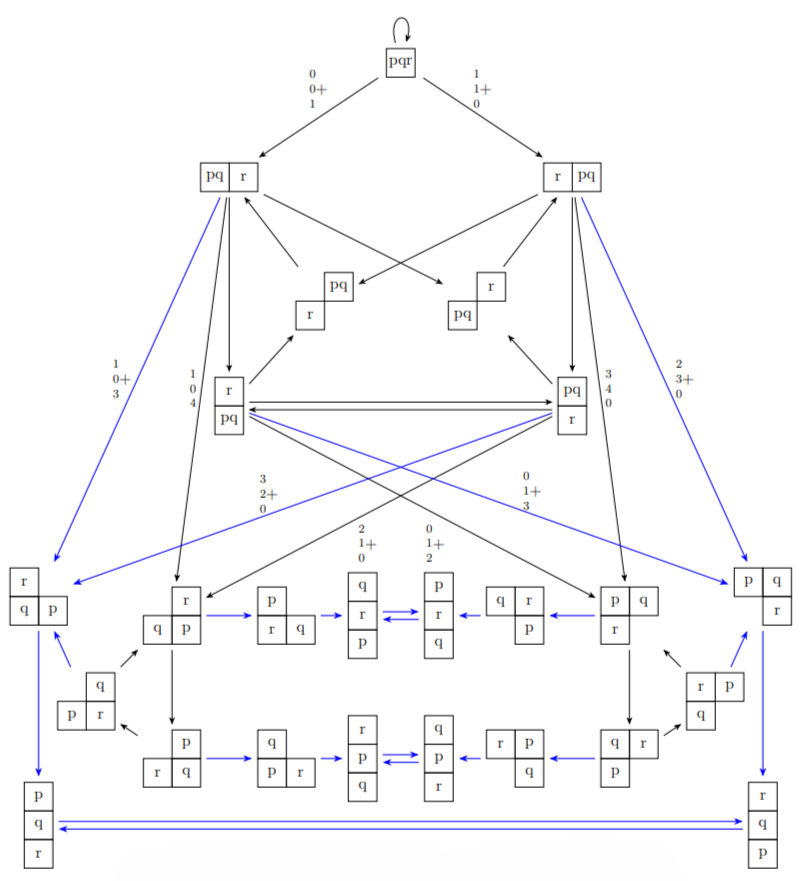}
\caption{The neighbour graph for $(-2+i, \{0, 1, 2, 3, 4\})$-representations.}
\label{fig:radix2}
\end{figure}

\end{document}